\newtheorem{theorem}{Theorem}[section]
\newtheorem{conjecture}[theorem]{Conjecture}
\newtheorem{proposition}[theorem]{Proposition}
\newtheorem{lemma}[theorem]{Lemma}
\newtheorem{corollary}[theorem]{Corollary}
\newtheorem{question}[theorem]{Question}
\newtheorem{example}[theorem]{Example}
\newtheorem{fact}[theorem]{Fact}
\theoremstyle{definition}
\newtheorem{definition}[theorem]{Definition}
\newtheorem{remark}[theorem]{Remark}
\numberwithin{equation}{section}
\numberwithin{equation}{section}
\let\save@mathaccent\mathaccent
\newcommand*\if@single[3]{%
\setbox0\hbox{${\mathaccent"0362{#1}}^H$}%
\setbox2\hbox{${\mathaccent"0362{\kern0pt#1}}^H$}%
\ifdim\ht0=\ht2 #3\else #2\fi
}
\newcommand*{\transpose}{%
{\mathpalette\@transpose{}}%
}
\newcommand*{\@transpose}[2]{%
\raisebox{\depth}{$\m@th#1\intercal$}%
}
\def\pgfutil@Repeat#1#2{#2\ifnum#1>0
  \expandafter\pgfutil@firstofone\else\expandafter\pgfutil@gobble\fi
  {\expandafter\pgfutil@Repeat\expandafter{\the\numexpr#1-1\relax}{#2}}}
\tikzset{
  dash between/.code args={#1 and #2}{%
    \tikz@addoption{%
      \pgfgetpath\currentpath
      \pgfprocessround{\currentpath}{\currentpath}%
      \pgf@decorate@parsesoftpath{\currentpath}{\currentpath}%
      \pgfmathsetlengthmacro\firstpart{(#1)*\pgf@decorate@totalpathlength}%
      \pgfmathsetlengthmacro\secondpart{(#2-(#1))*\pgf@decorate@totalpathlength}%
      \pgfmathsetlengthmacro\thirdpart{(1-(#2))*\pgf@decorate@totalpathlength}%
      \edef\thirdpart{{\thirdpart}{0pt}}%
      \edef\firstpart{{\firstpart}{0pt}}%
      \pgfmathsetlengthmacro\secondpartlength{\pgfkeysvalueof{/tikz/dash between on}
                                            +(\pgfkeysvalueof{/tikz/dash between off})}%
      \pgfmathtruncatemacro\repetitions{\secondpart/\secondpartlength}%
      \pgfmathsetlengthmacro\secondexpand{\secondpart/\repetitions-\secondpartlength}%
      \edef\secondexpand{\the\dimexpr\pgfkeysvalueof{/tikz/dash between off}+\secondexpand\relax}%
      \edef\secondpart{%
        \pgfutil@Repeat{\the\numexpr\repetitions-1\relax}%
          {{\pgfkeysvalueof{/tikz/dash between on}}{\secondexpand}}%
      }%
      \edef\tikz@temp{\firstpart\secondpart\thirdpart}%
      \expandafter\pgfsetdash\expandafter{\tikz@temp}{+0pt}%
    }
  }
}
\tikzset{
  dash between style/.is choice,
  dash between style/dotted/.style        ={dash between on=\pgflinewidth,dash between off=2pt},
  dash between style/densely dotted/.style={dash between on=\pgflinewidth,dash between off=1pt},
  dash between style/loosely dotted/.style={dash between on=\pgflinewidth,dash between off=4pt},
  dash between style/dashed/.style        ={dash between on=3pt,dash between off=2pt},
  dash between style/loosely dashed/.style={dash between on=3pt,dash between off=6pt},
  dash between style/densely dashed/.style={dash between on=3pt,dash between off=2pt},
  dash between style/no/.style={dash between on=0pt, dash between off=1pt},
  dash between on/.initial=\pgflinewidth,
  dash between off/.initial=2pt,
  middle dotted line/.style={
    thick,
    dash between=.35 and .65}}
\newcommand\QQ{\mathbb{Q}}
\newcommand\CC{\mathbb{C}}
\newcommand\PP{\mathbb{P}}
\newcommand\RR{\mathbb{R}}
\newcommand\ZZ{\mathbb{Z}}
\newcommand\FF{\mathbb{F}}
\DeclareMathOperator{\Aut}{Aut}
\DeclareMathOperator{\homo}{Hom}
\DeclareMathOperator{\Tor}{Tor}
\DeclareMathOperator{\et}{\acute{e}t}
\DeclareMathOperator{\Gal}{Gal}
\def\holim{\qopname\relax m{holim}}
\newcommand{\spec}{\mathrm{Spec}}
\title{Formal Manifold Structures on Positive Characteristic Varieties}
\author{Runjie Hu, Siqing Zhang}
\newcommand{\Addresses}{{
  \bigskip
  \footnotesize

  Runjie Hu, \textsc{Department of Mathematics, Texas A\&M University,
    College Sta, TX 77843}\par\nopagebreak
  \textit{E-mail address}, \texttt{ runjie.hu@tamu.edu}
  
  Siqing Zhang, \textsc{Department of Mathematics, Yale University, New Haven, CT 06511}\par\nopagebreak
  \textit{E-mail address}, \texttt{siqing.zhang@yale.edu}
}}
\date{}
\begin{document}

\maketitle

\begin{abstract}
In his 1970 ICM report \cite{Sullivan-Galois}, Sullivan proposes the program of $l$-adic formalization of the concept of manifolds. In this program, he claims that smooth positive characteristic varieties should carry $l$-adic formal manifold structures. He also claims the existence of an abelianized Galois symmetry on $l$-adic formal manifold structures. This paper carries out this program, establishes the claims for certain varieties, and relates the abelianized Galois symmetry on $l$-adic formal manifold structures to the Galois symmetry of varieties. Meanwhile, we prove that a simply-connected variety is $l$-adic homotopic equivalent to a simply-connected finite CW complex if and only if the $l$-profinite completion of its \'etale homotopy type admits an $l$-local lifting.
\end{abstract}

\section*{Introduction}

The Galois symmetry of $\overline{\QQ}$ or finite fields is used to prove the Adams conjecture by Sullivan in \cite{Sullivan-adams-conjecture} and Quillen-Friedlander in \cite{Quillen-Adams-Conjecture-1}\cite{Friedlander-Adams-Conjecture}. With this idea and the surgery theory, we $l$-adically complete the concept of manifolds to obtain the so-called $l$-adic formal manifold structures, and construct an abelianized Galois symmetry on them.
Within this framework, we prove that the Frobenius action on positive characteristic varieties, the Galois symmetry on characteristic zero varieties and the abelianized Galois symmetry on $l$-adic formal manifolds can be unified. 

This paper is divided into three parts: firstly we prove that simply-connected varieties are homotopically finite CW complexes in the $l$-adic sense with $l$ away from the characteristic when certain topological conditions hold; then we develop the theory of $l$-adic formal manifolds for simply-connected finite CW complexes and include smooth projective varieties as examples; lastly we unify the Galois symmetry on varieties and that on $l$-adic normal structure sets (intuitively, they are a generalized version of the set of all $l$-adic formal manifolds in an $l$-adic homotopy type).

\textbf{Finiteness results.}

In his letter to Larry Breen, Grothendieck explains his vision on representing the ``homotopy types'' of good schemes by finite polyhedra.\footnote{In \cite{grothendieck2021pursuing}*{p.~45, Appendix \S17}, he writes: ``... In the case of a scheme of finite type on
an algebraically closed field $k$ say, the strongest cohomological and
homotopical finiteness theorem would be expressed precisely in terms
of a fine homotopy type, and would say that the ordinary homotopy
types which are their constituents are essentially “finite polyhedra” - and
even compact manifolds with boundary - or in more precise fashion,
their profinite completions (in the sense of Artin-Mazur) prime to the
characteristic $p$ of $k$ are those of such polyhedra. One sees clearly
how to begin on such programme in characteristic $0$, but one foresees
supplementary amusement, or even mystery, in the case $p>0$, for the
varieties which, even birationally, resist being lifted to characteristic $0$!"} 
In the first part of this paper, we investigate the simply connected case of this vision.

Recall that the $l$-profinite completion of a group is the inverse limit of all finite quotients of $G$ with $l$-primary order. This procedure can be generalized to any space. The fundamental group of the $l$-adic completion $Y^{\wedge}_l$ of a space $Y$ is the $l$-adic completion of $\pi_1(Y)$. If $Y$ is a simply-connected finite CW complex, each homotopy group of $Y^{\wedge}_l$ is the $l$-adic completion of the corresponding homotopy group of $Y$.

The concept of $l$-adic complete spaces can be used to study algebraic varieties. Concretely, for any variety $X$, the \'etale fundamental group and \'etale cohomology can be upgraded to a (pro) homotopy type $X_{\et}$ by Artin-Mazur (\cite{artin-mazur-etale-homotopy}). If $X$ is a complex variety, then the $l$-adic completion $X_{\et,l}^{\wedge}$ of $X_{\et}$ is homotopy equivalent to that of the analytification of $X$.

The first result of this paper is the following.

\begin{theorem}[\textbf{Homotopical Finiteness}, Theorem \ref{finiteness-theorem-1} and Corollaries \ref{finiteness-thereom-on-variety}, \ref{characteristic-0-lifting-homotopically}]\label{thm: one big finite thm}
Let $X$ be a pointed, connected variety over a separably closed field $k$ of characteristic $p\geq 0$.
Let $l$ be a prime number.
Assume that either $l\neq p$ or $X$ is complete.
Further assume that the $l$-adic completion of the \'etale fundamental group $\pi_1^{\et}(X)^{\wedge}_l=0$.
Then
\begin{enumerate}
    \item each homotopy group $\pi_i(X^{\wedge}_{\et,l})$ of the $l$-profinite completion $X^{\wedge}_{\et,l}$ of the \'etale homotopy type $X_{\et}$ is a finitely generated $\widehat{\ZZ}_l$-module;
    \item $X_{\et}$ is $l$-adic homotopy equivalent to a simply-connected finite polyhedron, if and only if it is $l$-adic homotopy equivalent to a simply-connected complex variety, if and only if $X^{\wedge}_{\et,l}$ admits an $l$-local lifting (see Definition \ref{l-local-lifting-definition}).
\end{enumerate}
\end{theorem}

The same results of item (1) also hold for many other cases, such as the \'etale homotopy types of suitable topoi. It can also be deduced from \cite[Theorem 3.4.12]{Lurie-rational-p-profinite}, which is proved using the $\mathbb{E}_{\infty}$-algebra structure on the singular cochains. 
Our proof is more elementary, based on inductive arguments using Postnikov tower and Leray-Serre spectral sequence.  One technical difficulty arises when we try to apply topological results of CW complexes to pro-spaces: homotopy inverse limits do not commute with the homology functor $H_*(-;A)$ even for a finite coefficient $A$ in general (\cite{Goerss-homology-homotopy-inverse-limit}).  
Another difficulty is that Serre's mod-$\mathcal{C}$ Hurewicz theorem does not apply since the class of finitely generated $\widehat{\ZZ}_l$-modules does not satisfy Serre's axioms ($\text{II}_A$) and (III) as in \cite{Serre-mod-C-Hurewicz}. 

In item (2), the notion ``$l$-local lifting''  for a simply-connected, $l$-complete space $Y$ means that it is the $l$-profinite completion of some simply-connected, $l$-local space. An algebraic chain model description for this might be interesting.

The item (2) provides a necessary condition for lifting an $l$-adic-simply-connected positive characteristic variety to a complex variety with finite fundamental group (see Corollary \ref{characteristic-0-lifting-obstruction}).

\textbf{$l$-adic Formal Manifolds}

In the surgery theory, a simply-connected topological manifold is a homotopy type equipped with a Poincar\'e duality and a geometric bundle structure on the canonical spherical fibration induced by the Poincar\'e duality (the so-called Spivak normal spherical fibration). An $l$-adic formal manifold is the $l$-adic completion of the above data for a manifold. More concretely, we first prove the following theorem, which is the $l$-adic version of existence and uniqueness of a homotopy tangent bundle on a Poincar\'e duality space.

\begin{theorem}[Theorems \ref{Z/l-poincare-to-spivak} and \ref{uniqueness-of-l-adic-Spivak}]
Any simply-connected finite CW complex with $\ZZ/l$-coefficient Poincar\'e duality admits a \textbf{mod-$l$ Spivak normal spherical fibration} as in Definition \ref{definition-of-mod-l-spivak}, unique up to the equivalences as in Definition \ref{definition-of-equivalence-of-spivak}.
\end{theorem}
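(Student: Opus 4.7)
The plan is to adapt Spivak's classical construction of the normal spherical fibration over an integral Poincar\'e duality space to the $\mathbb{Z}/l$ setting. For existence, I realize $X$ up to homotopy as a finite simplicial complex and embed it in $\mathbb{R}^N$ for some large $N$. Taking a regular neighborhood $V \supset X$ produces a compact PL manifold with boundary $\partial V$ that deformation retracts onto $X$. Converting the composite $\partial V \hookrightarrow V \simeq X$ into a fibration gives a candidate $\xi$ whose Thom space is $V/\partial V$, equipped with a canonical Thom collapse $S^N \to V/\partial V$ coming from the one-point compactification of $\mathbb{R}^N$.

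To verify that $\xi$ is a mod-$l$ spherical fibration in the sense of Definition \ref{definition-of-mod-l-spivak}, I examine the Serre spectral sequence
\[
E_2^{p,q} = H^p(X; H^q(F; \mathbb{Z}/l)) \Rightarrow H^{p+q}(\partial V; \mathbb{Z}/l),
\]
where $F$ is the homotopy fiber. Simple-connectivity of $X$ makes the local coefficient system trivial. Poincar\'e--Lefschetz duality for the PL manifold with boundary $(V, \partial V)$ determines $H^*(\partial V; \mathbb{Z}/l)$ from $H^*(V; \mathbb{Z}/l) \cong H^*(X; \mathbb{Z}/l)$, and comparing this with the spectral sequence output using the hypothesized $\mathbb{Z}/l$-Poincar\'e duality on $X$ forces $H^*(F; \mathbb{Z}/l) \cong H^*(S^{N-n-1}; \mathbb{Z}/l)$, where $n$ is the Poincar\'e duality dimension. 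The Thom isomorphism then supplies a Thom class whose pullback along the collapse recovers a $\mathbb{Z}/l$-fundamental class of $X$, packaging the data into a mod-$l$ Spivak fibration.

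For uniqueness, given two mod-$l$ Spivak fibrations $\xi_1, \xi_2$ on $X$, I stabilize them to a common fiber dimension and use $\mathbb{Z}/l$-Poincar\'e duality together with the collapse-to-Thom-space data to match their Thom classes up to a unit in $(\mathbb{Z}/l)^\times$, as allowed by Definition \ref{definition-of-equivalence-of-spivak}. An obstruction-theoretic argument over the simply-connected finite base $X$ then yields a fiber-homotopy equivalence $\xi_1 \simeq \xi_2$, with obstructions living in groups of the form $H^{*+1}(X; \pi_*(\mathrm{Aut}(S^{N-n-1})))$ with appropriately $l$-completed coefficients. The main obstacle is this uniqueness step: whereas Spivak's integral argument invokes rigidity of spherical fibrations with a prescribed integral Thom class, in the mod-$l$ setting the Thom class only lives in $\mathbb{Z}/l$-cohomology and is defined only up to $(\mathbb{Z}/l)^\times$, so the rigidity must be reproven via a careful Postnikov-tower analysis of the $l$-adically completed classifying space for mod-$l$ spherical fibrations, keeping track of the $(\mathbb{Z}/l)^\times$-ambiguity in the fundamental class.
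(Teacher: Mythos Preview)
Your existence argument is essentially the paper's: embed in Euclidean space, take a regular neighborhood, and show the fiber of $\partial V\to V\simeq X$ is a mod-$l$ sphere via the interplay of Poincar\'e--Lefschetz duality on $(V,\partial V)$ and the assumed mod-$l$ duality on $X$. One small point you glossed over: having $H^*(F;\ZZ/l)\cong H^*(S^{N-n-1};\ZZ/l)$ is not yet the definition of an $l$-adic spherical fibration; you must fiberwise $l$-complete to obtain an honest $(S^{N-n-1})_l$-fibration, as the paper does.

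The genuine gap is in uniqueness. Your plan---match Thom classes and then run obstruction theory in $H^{*+1}(X;\pi_*(\Aut(S^{N-n-1}))_l)$---does not explain why those obstructions should vanish. Two $l$-adic spherical fibrations can have identical Thom classes in $H^*(-;\ZZ/l)$ yet differ as maps to $BSG(N)_l$; the Thom class sees only the bottom of the Postnikov tower. Your parenthetical that ``Spivak's integral argument invokes rigidity of spherical fibrations with a prescribed integral Thom class'' is also not how the classical proof goes: already integrally, uniqueness is proved via Spanier--Whitehead duality, not by Thom-class rigidity.

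The paper's route is to build an $l$-adic $S$-duality theory (an entire subsection) and prove an $l$-adic Atiyah duality $M(\nu)\sim_l Z_+$. Then, given two Spivak data $(\xi_1,\phi_1)$ and $(\xi_2,\phi_2)$, the $S$-dual of $\phi_1:S^{m+k}\to M(\xi_1)_l$ is a map $M(\xi_1\oplus(-\xi_2))_l\to (S^r)_l$ which, because $\phi_1$ hits the fundamental class, restricts to a degree-one map on each fiber and hence trivializes $\xi_1\oplus(-\xi_2)$. This gives a stable fiber homotopy equivalence $b:\xi_1\to\xi_2$. A second $S$-duality argument identifies the orientation-preserving stable self-equivalences of $\xi_2$ with the admissible collapse maps, allowing $b$ to be corrected so that $M(b)\circ\phi_1\simeq\phi_2$. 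None of this is obstruction theory over $BSG_l$; the collapse map $\phi$ is used essentially, not just its effect on the Thom class.
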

For the uniqueness part, we need the $l$-adic version of Spanier-Whitehead duality theory, which is developed in Section \ref{subsection-l-adic-s-duality}. $l$-adic formal manifolds are defined as follows. This concept is vaguely suggested by Sullivan in \cite{Sullivan-Galois}.

\begin{definition}[Definition \ref{definition-of-l-adic-formal-manifolds}]
An $l$-adic formal manifold structure on a $\ZZ/l$-coefficient Poincar\'e duality space $Z$ is a lifting of its mod-$l$ Spivak normal spherical fibration $\nu_Z$ to a topological bundle of Eulidean spaces in the $l$-adic sense. More explicitly, when $l$ is odd, this is equivalent to a real $K$-theory orientation on $\nu_Z$; when $l=2$, this is equivalent to the vanishing of certain $\ZZ/2$-characteristic classes of $\nu_Z$ and a $\widehat{\ZZ}_2$-coefficient lifting of certain $\ZZ/8$ characteristic classes.
\end{definition}

The following is our third result, which is conjectured by Sullivan in \cite{Sullivan-Galois}.

\begin{theorem}[\textbf{$l$-adic Formal Manifold Structures on Varieties}, Theorem \ref{existence-of-formal-manifold-structure}]\label{formal manifold theorem in intro}
Let $X$ be a connected, smooth, projective variety $X$ of dimension at least $3$ over a separably closed field of characteristic $p\geq 0$. Assume that $X^{\wedge}_{\et,l}$ admits an $l$-local lifting and $\pi_1^{\et}(X)^{\wedge}_l=0$. Then $X$ carries a canonical $l$-adic formal manifold structure for any $l\neq p$.
Furthermore, if $X$ has a characteristic zero lifting $X_{\CC}$,  then this $l$-adic formal manifold structure on $X$ is compatible with the underlying manifold structure of $X_{\CC}$.
\end{theorem}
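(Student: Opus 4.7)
The plan is to construct the $l$-adic formal manifold structure canonically from the algebraic tangent bundle $TX$, via an \'etale-to-topological $K$-theory comparison, with compatibility to any complex lifting deduced from smooth--proper base change. First, since $X$ is smooth projective of dimension $n\ge 3$ over a separably closed field of characteristic $p\ge 0$, \'etale Poincar\'e duality equips $X_{\et,l}^{\wedge}$ with $\ZZ/l$-coefficient Poincar\'e duality for every $l\neq p$; combined with Theorem~\ref{thm: one big finite thm}, this space is $l$-adic equivalent to a simply-connected finite CW complex. The preceding theorem then produces a canonical mod-$l$ Spivak normal spherical fibration $\nu_X$. By the definition of $l$-adic formal manifold structure, the task reduces to producing a canonical real $K$-theory orientation on $\nu_X$ when $l$ is odd, and, when $l=2$, to verifying the specified vanishing of $\ZZ/2$-characteristic classes together with $\widehat{\ZZ}_2$-lifts of the specified $\ZZ/8$-classes.

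The central construction is $K$-theoretic. The algebraic tangent bundle defines a map $X\to BGL_n$ in the \'etale topology; after $l$-adic completion, Friedlander's \'etale-topological comparison $(BGL_n)_{\et,l}^{\wedge}\simeq BU(n)_l^{\wedge}$ (valid for $l\neq p$) yields a canonical element $[TX]\in KU^0(X_{\et,l}^{\wedge};\widehat{\ZZ}_l)$. Realification followed by negation produces $-[TX]_{\RR}\in KO^0(X_{\et,l}^{\wedge};\widehat{\ZZ}_l)$, and by comparing Thom classes this is identified with a $KO^{\wedge}_l$-orientation of $\nu_X$; for odd $l$ this is exactly the data demanded by the definition. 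For $l=2$, the complex structure on the tangent bundle forces the odd Stiefel--Whitney classes of the underlying real bundle to vanish, and expresses the relevant integral characteristic classes as polynomials in Chern classes, which live tautologically in $H^*(X_{\et};\widehat{\ZZ}_2)$, supplying both the vanishing and the $\widehat{\ZZ}_2$-lifts required.

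Canonicity is built in, since the construction depends only on the intrinsic algebraic bundle $TX$. For compatibility with a characteristic-zero lifting $X_{\CC}$, smooth--proper base change supplies an $l$-adic equivalence $X_{\et,l}^{\wedge}\simeq (X_{\CC})_{\et,l}^{\wedge}$ carrying $TX$ to the algebraic tangent bundle of $X_{\CC}$, whose underlying real bundle on the analytification $X_{\CC}^{\mathrm{an}}$ is its genuine topological tangent bundle; hence the structure constructed from $TX$ agrees with the one induced by the topological manifold $X_{\CC}^{\mathrm{an}}$. The main obstacle is the careful matching of Thom classes at $l=2$: the definition there involves subtle Sullivan-type characteristic-class conditions (descending from his analysis of $G/\Top$ at $2$), and verifying that the $K$-theoretic lift of $TX$ produces precisely these invariants—rather than merely the correct $KO^{\wedge}_2$-class up to $2$-local indeterminacy—is expected to be the most delicate point of the argument.
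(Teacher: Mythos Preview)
Your outline follows the same strategy as the paper: lift the Spivak normal fibration to $BU_l$ via the algebraic tangent (or normal) bundle and the comparison $[*/GL_n]^{\wedge}_{\et,l}\simeq BU(n)^{\wedge}_l$, then conclude. But there is a real gap at the sentence ``by comparing Thom classes this is identified with a $KO^{\wedge}_l$-orientation of $\nu_X$.'' What you have produced is a class $-[TX]_{\RR}\in KO^0(X^{\wedge}_{\et,l})$, i.e.\ a stable $l$-adic bundle over $Z$. A $KO$-orientation on $\nu_Z$ is a Thom class in $\widetilde{KO}{}^{\wedge}_l(M(\nu_Z))$; to pass from the former to the latter you must know that the spherical fibration underlying your bundle \emph{is} the mod-$l$ Spivak normal fibration $\nu_Z$. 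This is not automatic and is exactly where the paper spends its effort: Joshua's theorem (Fact~\ref{construction-of-algebraic-normal-bundle}) shows the \'etale Thom space $M(\nu_X)_{\et}$ is mod-$l$ $S$-dual to $X_{\et,+}$, Corollary~\ref{cor: duality for sullivan} transports this to an $l$-adic $S$-duality on $Z$, and then the uniqueness of the Spivak fibration (Theorem~\ref{uniqueness-of-l-adic-Spivak}) identifies the resulting $\tau:Z\to BU(N)_l\to BSG(N)_l$ with $\nu_Z$. Without this $S$-duality step you have oriented \emph{some} $l$-adic spherical fibration, not the Spivak one.

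Your $l=2$ argument is also misdirected. The classes $k^G\in H^{4*+3}(-;\ZZ/2)$ and $l^G\in H^{4*}(-;\ZZ/8)$ are the Brumfiel--Morgan obstruction classes for lifting a spherical fibration to $BSTOP$ at $2$; they are not Stiefel--Whitney classes, and the vanishing of odd $w_i$ does not directly address $k^G$. The paper's route is cleaner and uniform in $l$: once $\nu_Z$ lifts to $BU_l$ (hence to $BSTOP_l$), Proposition~\ref{prop: structure iff lift} immediately yields the $l$-adic formal manifold structure, with no case split. Finally, a minor point: the construction is canonical only after fixing the field embeddings in condition~\hyperlink{field-embeddings}{(*)}, since the chain of equivalences in Diagram~(\ref{Bundle-Diagram}) routes through $W(\overline{\FF}_p)$ and $\CC$.
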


\textbf{Galois Symmetry}

To study the Galois symmetry, we need the concept of $l$-adic normal structure set. Intuitively, this is the set of all $l$-adic formal manifolds in an $l$-adic homotopy type.

\begin{definition}[Definitions \ref{definition-of-l-adic-homotopy-manifold-structures} and \ref{definition-of-l-adic-normal-structure-set}]
Let $Z$ be a simply-connected finite CW complex with $\ZZ/l$-coefficient Poincar\'e duality, with an $l$-adic formal manifold structure. The \textbf{$l$-adic normal structure set $\mathbf{S}(Z)^{\wedge}_{N,l}$ of $Z$} consists of equivalent classes of all $l$-adic topological bundle liftings of the mod-$l$ Spivak normal spherical fibration of $Z$.
\end{definition}

For example, if $Z$ itself is a smooth, projective variety over a separably closed field $k$, then an $l$-adic homotopy equivalence map $M\rightarrow Z_{\et}$ with $M$ some other manifold is an element of $\mathbf{S}(Z)^{\wedge}_{N,l}$. In this case, within $\mathbf{S}(Z)_{N,l}^{\wedge}$, there are elements represented by algebraic maps $M\rightarrow Z$ with $M$ another smooth projective variety, which  are called  the \textbf{$k$-algebraic elements in $\mathbf{S}(Z)_{N,l}^{\wedge}$} (see Definition \ref{definition-of-k-algebraic-elements}).

Following the proofs of the Adams conjecture, we construct the \textbf{abelianized Galois action of $\widehat{\ZZ}^{\times}_l$ on $\mathbf{S}(Z)^{\wedge}_{N,l}$} as in Definition \ref{Def: ablianized Galois action}. When $l$ is odd, it is the Adams operations on the real $K$-theory orientations, as suggested in \cite{SullivanMITnotes}*{Chapter 6}; when $l=2$, we use the idea in the first author's thesis (\cite{hu2024revisiting}).

Consider the following group homomorphisms.
In the case when the ground field $k$ has characteristic $p>0$, let $\omega_k$ be the composition $\Gal(k)\rightarrow \Gal(\overline{\FF}_p)=\widehat{\ZZ}\rightarrow \widehat{\ZZ}^{\times}_l$, where the first arrow is the natural quotient map, the second arrow is $x\rightarrow p^x$.
In the case when $k$ has characteristic $0$, let $\omega_{k}$ be the composition $\Gal(k)\rightarrow \widehat{\ZZ}^{\times}=\prod_q \widehat{\ZZ}^{\times}_q\rightarrow \widehat{\ZZ}^{\times}_l$, where the first arrow is the restriction to the roots of unity and the second arrow is the projection map.
We can now state our final result:

\begin{theorem}(\textbf{Galois Symmetry}, Theorem \ref{Galois-action-on-l-adic-normal-structure-set})
Let $k$ be a separably closed field of characteristic $p\geq 0$. Let $Z$ be a smooth, projective variety over $k$ of dimension at least $3$. Assume that $Z^{\wedge}_{\et,l}$ admits an $l$-local lifting and $\pi_1^{\et}(Z)^{\wedge}_l=0$. 

Then the Galois action of $\Gal(k)$ on the $k$-algebraic elements in the $l$-adic normal structure set $\mathbf{S}(Z)^{\wedge}_{N,l}$ factors through the abelianized Galois action of $\widehat{\ZZ}^{\times}_l$ on the entire $l$-adic normal structure set (as in Definition \ref{Def: ablianized Galois action}), via the homomorphism $\omega_k$.
\end{theorem}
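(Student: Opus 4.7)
The plan is to reduce the compatibility statement to a question about the Galois action on the $K$-theoretic data (for odd $l$) or on certain characteristic classes (for $l=2$) defining the formal manifold structure of the algebraic representative, and then to invoke the Frobenius--Adams correspondence that underlies the classical proofs of the Adams conjecture. A $k$-algebraic element of $\mathbf{S}(Z)^{\wedge}_{N,l}$ is represented by a smooth projective variety $M$ together with an algebraic map $f:M\rightarrow Z$ inducing an $l$-adic homotopy equivalence. By Theorem \ref{formal manifold theorem in intro}, $M$ itself carries a canonical $l$-adic formal manifold structure, constructed from the algebraic tangent bundle $TM$ viewed as a class in étale $K$-theory; the element $[f]\in \mathbf{S}(Z)^{\wedge}_{N,l}$ is obtained by transferring this structure along the $l$-adic equivalence $f$ to a bundle lifting of $\nu_Z$. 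By construction, the $\Gal(k)$-action on $k$-algebraic elements is induced by the Galois action on the algebraic data $(M, TM, f)$.

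For odd $l$, the formal manifold structure is equivalent to a real $K$-theory orientation of $\nu_Z$, and the abelianized action of $\alpha \in \widehat{\ZZ}^{\times}_l$ is the Adams operation $\psi^{\alpha}$. The theorem therefore reduces to showing: for each $\sigma \in \Gal(k)$, the action of $\sigma$ on the étale $K$-theory class $[TM]$ agrees with $\psi^{\omega_k(\sigma)}$ applied to the same class. In characteristic $p>0$ with $k = \overline{\FF}_p$, this is the classical Frobenius--Adams compatibility implicit in \cite{Quillen-Adams-Conjecture-1}, \cite{Friedlander-Adams-Conjecture}, and \cite{Sullivan-adams-conjecture}: Frobenius acts on $H^{2i}_{\et}(-;\widehat{\ZZ}_l(i))$ by $p^i$, matching $\psi^p$ on the degree-$2i$ part via the Chern character, and $\omega_k$ sends the Frobenius generator of $\Gal(\overline{\FF}_p)$ to $p$. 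In characteristic zero, $\Gal(k)$ acts on the étale $K$-theory of a variety via the cyclotomic character on $l$-power roots of unity, which is exactly $\omega_k$, and this matches $\psi^{\omega_k(\sigma)}$ by the same Chern-character argument.

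For $l=2$, the $K$-theoretic reformulation is unavailable, and one must work with the characteristic class formulation from the first author's thesis \cite{hu2024revisiting}. The claim then becomes that $\Gal(k)$ acts on the relevant mod-$2$ characteristic classes of $\nu_Z$ trivially --- they have pure Tate weight zero --- and on the $\widehat{\ZZ}_2$-coefficient lifts of the mod-$8$ classes by multiplication by $\omega_k(\sigma)$ on the appropriate Tate twist. I expect this last computation to be the main obstacle: the mod-$8$ lifts constructed in \cite{hu2024revisiting} are not purely $K$-theoretic, so verifying that Galois transforms them by the claimed power of the cyclotomic character requires tracking the action through the explicit construction of these classes on the relevant classifying spaces, and then checking naturality with respect to $f$ to transfer the identity from $M$ to $Z$.
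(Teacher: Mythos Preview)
Your plan for odd $l$ is on the same track as the paper: the heart of the argument is indeed the Frobenius--Adams correspondence on $BU^{\wedge}_l$ from \cite{Sullivan-adams-conjecture}\cite{Quillen-Adams-Conjecture-1}\cite{Friedlander-Adams-Conjecture}, packaged in the paper as Lemma~\ref{Galois-action-on-homotopy-manifolds}. But you are skipping a structural step. Knowing that $\sigma$ acts on the $KO^{\wedge}_l$-orientation $\Delta_M$ as $\psi^{\omega_k(\sigma)}$ does not immediately yield the factorization, because the normal-structure element is the \emph{ratio} $\phi_f=(f^{-1})^*\Delta_M/\Delta_Z$, and the abelianized action is $\omega_k(\sigma)\cdot\phi=\psi^{\omega_k(\sigma)}\phi\cdot(\psi^{\omega_k(\sigma)}\Delta_Z/\Delta_Z)$. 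To match these you need the multiplicative composition law $\phi_{gf}=\phi_f\cdot(f^{-1})^*\phi_g$ (Corollary~\ref{composition-of-manifolds}), which is a statement about the $H$-space structure of $(G/TOP)_l$ and is not automatic from your setup. You also need the Frobenius--Adams identity applied to \emph{both} source and target, not just to $M$. Finally, the paper's $k$-algebraic elements are represented by morphisms $f:X'\to X$ lying over an arbitrary field automorphism $\tau$ of $k$, not only $\tau=\mathrm{Id}$; the paper handles this by factoring $f$ as $X'\xrightarrow{\tau}(X')^{\tau}\xrightarrow{f'}X$ and reducing to the $\tau=\mathrm{Id}$ case.

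For $l=2$ there is a genuine gap in your formulation. A $2$-adic normal homotopy manifold structure is a pair $(L,K)$ with $K\in H^{4*-2}(Z;\ZZ/2)$; this $K$ is \emph{not} a characteristic class of $\nu_Z$ but the Kervaire class of the degree-one normal map determined by $f$, i.e.\ the pullback of the universal class on $(G/TOP)^{\wedge}_2$. The abelianized action on it is $\sigma_2\cdot K=K+K^{\sigma_2}_Z$, an additive shift by a class built from the factorization of $\psi^{\sigma_2}-1:BU^{\wedge}_2\to BU^{\wedge}_2$ through $(G/TOP)^{\wedge}_2$. So ``Galois acts trivially on the mod-$2$ classes because they have weight zero'' is not the statement to prove. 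What the paper actually does is use the commutative square $f\circ\sigma=\sigma\circ f^{\sigma}$ to write $K^{\sigma}=K^{\omega_k(\sigma)}_X+(\sigma^{-1})^*K(f^{\sigma})$, and then identify $(\sigma^{-1})^*K(f^{\sigma})$ with $K(f)$ by a diagram chase in which the two $BSTOP^{\wedge}_2$-lifts over $BSG^{\wedge}_2$ are rewritten with $\psi^{\omega_k(\sigma)}$ inserted on $BU^{\wedge}_2$, and only \emph{then} is the weight-zero fact $\psi^{\omega_k(\sigma)}_H=\mathrm{Id}$ on $\ZZ/2$-cohomology invoked. Your heuristic captures the last sentence but not the surgery-theoretic identification that precedes it, which is the substance of the $l=2$ verification.
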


The proof of this theorem relies on the implications of the proofs of the Adams conjecture in \cite{Quillen-Adams-Conjecture-1}\cite{Friedlander-Adams-Conjecture}\cite{Sullivan-adams-conjecture} when $l$ is odd.  The case of $l=2$ is more technical, where we need more sophisticated results about topological bundle liftings of a spherical fibrations in the $2$-local case in \cite{MOrgan-Sullivan-surgery}\cite{Brumfiel-Morgan}.

\subsection*{Acknowledgements.}
We would like to thank Piotr Achinger, Bhargav Bhatt, Irina Bobkova, James F. Davis, Mark de Cataldo,  Andres Fernandez Herrero, Ruochuan Liu, James Myer, Longke Tang and Shmuel Weinberger for useful conversations and comments. We are particularly grateful to John Pardon, Dennis Sullivan and Guozhen Wang for many ideas and details in this paper. 
This material is based
upon work supported by the National Science Foundation under Grant No. DMS-1926686. The first author is partially supported by the Simons Foundation International and
NSF Grant 2247322 during this research.
The second author is grateful for the excellent working environment provided by IAS while he was working on this project.
\tableofcontents

\subsection*{Terminologies and Notations}
\begin{itemize}
    \item A pro-space in this paper is a pro-object in the category of pointed, connected CW complexes. But in some statements, we also use Artin-Mazur's notion of pro-space, i.e., a pro-object in the homotopy category of pointed, connected CW complexes. We will make it clear whenever we use Artin-Mazur's notion.
    \item The \'etale homotopy type of a pointed, locally Noetherian scheme $X$ is a pro-space defined by Friedlander \cite{Friedlander-etale-homotopy} or equivalently by Barnea-Schlank \cite{Barnea-Schlank-Etale-Homotopy-Type}; for the \'etale homotopy type of a stack we use the definition by Chough \cite{Chough-Etale-Homotopy-Algebraic-Stacks}.
    \item For a CW complex $Z$, the space $Z_l$ is Bousfield's $l$-completion \cite{Bousfield-localization-of-spaces}; the space $Z^{\wedge}_l$ is Sullivan's $l$-profinite completion \cite{Sullivan-adams-conjecture}. 
    For simply-connected, finite type (i.e., each homotopy group is a finitely generated abelian group) CW complexes, Bousfield's $l$-completion, Bousfield-Kan's $\ZZ/l$-localiztion \cite{Bousfield-Kan} and Sullivan's $l$-profinite completion are equivalent, see \cite[\S2.3]{Barthel-Bousfield-Comparison-of-p-completion}.
    
    \item For a pro-space $Z=\{Z_i\}$, $Z^{\wedge}_l$ is the pro-space by Artin-Mazur's $l$-profinite completion \cite{artin-mazur-etale-homotopy}.
     Note that when the pro-space is given by a constant space $Z$, there is a clash of notation for $Z^{\wedge}_l$: it sometimes denotes Artin-Mazur's pro-space completion and sometimes denotes Sullivan's $l$-profinite space completion. They differ by taking homotopy limit functor (e.g., one way to define this is to use the compact Brownian functor formalism in \cite{Sullivan-adams-conjecture}). We will make the context clear so that no ambiguity is possible in the paper.
    \item For a pro-space $Z=\{Z_i\}$, $\holim Z$ is the homotopy limit of $\{Z_i\}$.
    \item For a (pro-)group $G$, $G^{\wedge}_l$ is the $l$-profinite completion of $G$. For an abelian group $A$, $A_l$ is the $l$-completion of $A$, which is the inverse limit of $A/l^nA$ for all $n$. If $A$ is a finitely generated abelian group, $A^{\wedge}_l$ is canonical isomorphic to $A_l$. $L_0$ and $L_1$ are the left derived functors of the $l$-completion of abelian groups (see \cite{MayMoreConcise}*{p.~193-194}).
    \item The pro-category of finite groups is naturally equivalent to the category of profintie groups, i.e. the compact Hausdorff totally disconnected groups, see e.g. \cite[Prop. 3.2.12]{Lurie-rational-p-profinite}. Therefore, we can and will safely confuse pro systems of finite groups with profinite groups.
    \item Let $E$ be a spectrum. $L_E$ is the $E_*$-theory localization functor of spaces or spectra.
    \item For an $l$-adic spherical fibration $\gamma:Z\rightarrow BG(n)_l$, $S(\gamma)$ is the total space, $D(\gamma)$ is the mapping cylinder of $\gamma:S(\gamma)\rightarrow Z$ and $M(\gamma)$ is the mapping cone of $\gamma$.
    \item If $k$ is a field of characteristic $p>0$, then $W(k)$ is the ring of $p$-typical Witt vectors.
    \item $S^n$ is the space $n$-sphere. $\Sigma^n$ is the n-sphere spectrum and $\Sigma^n_l$
    is the $l$-completion of $\Sigma^n$.
\end{itemize}

The following definition for pro-spaces is needed.

\begin{definition}[\cite{artin-mazur-etale-homotopy}*{Theorem 4.3}, \cite{morel1993quelques}*{Theorem 2.4.1}]\label{definition-of-l-adic-weak-equivalence}
A pro-map $f:X\rightarrow Y$ of pro-spaces is an $l$-adic weak equivalence if one of the following equivalent conditions is satisfied:
\begin{enumerate}
    \item $\widehat{f}_l:\widehat{X}_l\rightarrow \widehat{Y}_l$ induces an isomorphism on pro homotopy groups;
    \item $\holim\widehat{f}_l:\holim\widehat{X}_l\rightarrow \holim\widehat{Y}_l$ is a homotopy equivalence of CW complexes;
    \item $f_*:\pi_1(X)^{\wedge}_l\rightarrow \pi_1(Y)^{\wedge}_l$ is an isomorphism and for any local system of finite abelian $l$-group $M$ over $Y$, $f$ induces an isomorphism $H^*(Y;M)\rightarrow H^*(X;M)$.
    \item $f$ induces an isomorphism $H^*((Y)^{\wedge}_l;\ZZ/l)\rightarrow H^*((X)^{\wedge}_l;\ZZ/l)$.
\end{enumerate}
\end{definition}

\section{Homotopical Finiteness of Simply-Connected Varieties}\label{Section-Finiteness-of-Varieties} 

For a complex variety $X$ with its analytification $X_{\CC}^{an}$, Artin-Mazur's comparison theorem \cite[Theorem 12.9]{artin-mazur-etale-homotopy} states that the \'etale homotopy type  $X_{\et}$ is homotopy equivalent to the profinite completion of $X_{\CC}^{an}$.

In the case where $X$ is instead a scheme over a separably closed field of prime characteristic $p$, there are several recent advances on the fundamental groups:
when $X$ is proper and connected, it is shown in \cite[Theorem 1.2]{lara2024fundamental} that $\pi_1^{\et}(X)$ is topologically finitely presented.
In contrast, it is shown in \cite[Theorem C]{esnault2021obstruction} that there exists a smooth projective non-liftable $X$ such that $\pi_1^{\et}(X)$ is not the profinite completion of a finitely presented discrete group. 

For higher homotopy groups in prime characteristic, we are aware of the following results.
The classical result is the specialization isomorphism in \cite[Corollaries 12.12, 12.13]{artin-mazur-etale-homotopy}, which entails that if $X$ has a smooth proper lift $X_{\CC}$ to characteristic 0, then the homotopy groups $\pi_i(X^{\wedge}_{\et,l})$ and $\pi_i((X^{an}_{\CC})^{\wedge}_l)$ are isomorphic, where $X_{\et}$ is the \'etale homotopy type of $X$, $(-)^{\wedge}_l$ is the $l$-profinite completion of (pro)-spaces and $X^{an}_{\CC}$ is the analytification of $X_{\CC}$ .
On the other hand, when $X$ is affine and connected, then \cite[Theorem 1.1.1]{achinger2017wild} entails that $X$ is a $K(\pi,1)$ space.

The results above leave open the case of higher homotopy groups of non-affine not necessarily liftable varieties, which is the case we study here.
In general, higher homotopy groups of varieties are not finitely generated in any sense. 
For example, over $\CC$, the topological $\pi_2$ of the one-point union of $\mathbb{P}^1$ and $\mathbb{G}_m$ is a direct sum of infinite copies of $\ZZ$, which is in fact a rank one free $\ZZ[\pi_1]$-module. However, its topological $\pi_3$ is not even a finitely generated $\ZZ[\pi_1]$-module.
The problem in this example is that the fundamental group in this case is non trivial.
Indeed, by Serre's mod-$\mathcal{C}$ Hurewicz theorem \cite{Serre-mod-C-Hurewicz}, any simply-connected space has finitely generated homologies if and only if it has finitely generated homotopy groups.

The mod-$\mathcal{C}$ argument cannot work for pro-spaces such as $X_{\et,l}^{\wedge}$,
whose homotopy groups are finitely generated $\widehat{\ZZ}_l$-modules, because such modules do not satisfy Serre's axioms ($\text{II}_A$) and (III) as in \cite{Serre-mod-C-Hurewicz}. However, Theorem \ref{finiteness-theorem-1} still holds.

Recall the following definition for $l$-complete finite type.

\begin{definition}
A simply-connected CW complex or a pro space $Y$ is \textbf{$l$-complete finite type} if each homotopy group $\pi_q(Y)$ is a finitely generated $\widehat{\ZZ}_l$-module.
\end{definition}

\begin{theorem}\label{finiteness-theorem-1}
Let $X$ be a pointed, connected, locally Noetherian scheme and $l$ be a prime number. Assume that $\pi^{\et}_1(X)^{\wedge}_l=0$. Then the $l$-profinite completion $X^{\wedge}_{\et,l}$ of the \'etale homotopy type $X_{\et}$ of $X$ is $l$-complete finite type if and only if the \'etale cohomology $H^q_{\et}(X;\ZZ/l)$ is finite for any $q$.
\end{theorem}

Note that the terminology ``finite type'' in algebraic geometry often refers to the finite dimensional property of a variety. This terminology is also used in topology, where finite type for a CW complexes means that each homotopy group is a finitely generated abelian group.

For some topologists, whether a space is homotopy equivalent to a finite CW complex might be more interesting. The previous theorem can improved as follows.

\begin{theorem}\label{finiteness-theorem-11}
Let $X$ be a pointed, connected, locally Noetherian scheme and $l$ be a prime number. Assume that $\pi^{\et}_1(X)^{\wedge}_l=0$. Then 
\begin{enumerate}
    \item the \'etale homotopy type $X_{\et}$ of $X$ is $l$-adic weak equivalent to a simply-connected CW complex with finitely many cells in each dimension if and only if the \'etale cohomology $H^q_{\et}(X;\ZZ/l)$ is finite for any $q$ and the $l$-profinite completion $X^{\wedge}_{\et,l}$ of the \'etale homotopy type $X_{\et}$ admits an $l$-local lifting (see Definition \ref{l-local-lifting-definition});
    \item $X_{\et}$ is $l$-adic weak equivalent to a simply-connected finite CW complex of dimension at most $N$ if and only if the previous conditions hold and $H^q_{\et}(X;\ZZ/l)=0$ for $q>N$.
\end{enumerate}
\end{theorem}

For a more algebraic discussion of the condition ``$l$-local lifting'', see Remark \ref{l-local-lifting-remark}.

\begin{corollary}\label{finiteness-thereom-on-variety} 
Let $X$ be a variety over a separably closed field $k$ of characteristic $p\geq 0$. Let $l$ be a prime number. Assume that $(\pi_1^{\et}X)^{\wedge}_l=0$.
Assume either of the following is true:
\begin{enumerate}
    \item[(a)] $p\neq l$;
    \item[(b)] $X$ is proper over $k$.
\end{enumerate}
Then 
\begin{enumerate}
    \item $X^{\wedge}_{\et,l}$ is $l$-complete finite type;
    \item $X_{\et}$ is $l$-adic weak equivalent to a simply-connected finite CW complex if and only if the $l$-profinite completion of $X_{\et}$ admits an $l$-local lifting.
\end{enumerate}
\end{corollary}
\begin{proof}
\cite[VI, Theorem 1.1]{Milne-Etale-cohomology} entails that $H^q(X;\mathbb{Z}/l)=0$ for $l> 2 \mathrm{dim}(X)$.
It remains to show that the group $H^q(X,\mathbb{Z}/l)$ is finite.
In the case (a), this follows from  \cite[Th. finitude, Corollary 1.10]{deligne1977sga4.5}.
In the case (b), this follows from
\cite[VI, Cor 2.8]{Milne-Etale-cohomology}.
\end{proof}

Then we provide a homotopical obstruction for characteristic zero lifting of a simply-connected positive characteristic variety.

\begin{corollary}\label{characteristic-0-lifting-homotopically}
Let $X$ be a variety over a separably closed field $k$ of characteristic $p$. Assume that either $l\neq p$ or $X$ is complete.
Assume that $\pi^{\et}_1(X)^{\wedge}_l=0$. Then $X_{\et}$ is $l$-adic weak equivalent to a simply-connected complex variety if and only if the $l$-profinite completion of $X_{\et}$ admits an $l$-local lifting. 
\end{corollary}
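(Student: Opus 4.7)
The approach combines Corollary~\ref{finiteness-thereom-on-variety} with a separate realization step: first reduce the problem to a simply-connected finite CW complex, then realize that complex by a simply-connected complex variety up to $l$-adic equivalence.

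\textbf{Step 1 (Reduction).} Under either of the hypotheses ($l\neq p$, or $X$ complete), Corollary~\ref{finiteness-thereom-on-variety} produces a simply-connected finite CW complex $Z$ together with an $l$-adic weak equivalence $X_{\et}\simeq Z$. It therefore suffices to exhibit a simply-connected complex variety $Y$ whose \'etale homotopy type is $l$-adic weak equivalent to $Z$. By Artin--Mazur's comparison theorem~\cite[Theorem 12.9]{artin-mazur-etale-homotopy}, this amounts to producing a simply-connected complex variety $Y$ whose analytification $Y(\CC)^{an}$ satisfies $(Y(\CC)^{an})^\wedge_l\simeq Z^\wedge_l$.

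\textbf{Step 2 (Algebraic realization).} Given a simply-connected finite CW complex $Z$, I would build $Y$ by induction on the cellular filtration of $Z$. Low-dimensional spheres admit natural algebraic models: $S^2\simeq \PP^1(\CC)$, and the odd spheres $S^{2n-1}$ are homotopy equivalent to the complex algebraic variety $\mathrm{SL}_n(\CC)/\mathrm{SL}_{n-1}(\CC)$. Even spheres of dimension at least $4$ can be modelled $l$-adically by appropriate algebraic constructions, for instance by complements of subvarieties in projective space or by iterated blow-ups. At each inductive stage, one attaches algebraic cells to a variety realizing the $(k-1)$-skeleton $Z_{k-1}$ so as to realize the attaching maps of the next cells of $Z$ up to $l$-adic equivalence.

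\textbf{Main obstacle.} The principal technical difficulty is to realize the attaching maps $S^{k-1}\to Z_{k-1}$ algebraically after $l$-adic completion. Since algebraic morphisms are rigid, one cannot directly realize an arbitrary continuous map by an algebraic one; instead, one must show that every $l$-adic homotopy class between the algebraic models of the skeleton and of the sphere is represented by an algebraic morphism up to $l$-adic equivalence. An alternative, perhaps cleaner route is to use Sullivan-style algebraic or schematic models for $l$-adic homotopy types in order to produce a pro-algebraic model of $Z^\wedge_l$ directly, and then extract a genuine complex variety realization via a carefully controlled Postnikov-tower argument, where each algebraic Postnikov stage is resolved by a smooth variety over $\CC$.
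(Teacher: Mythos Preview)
Your Step~1 is correct and matches the paper. The gap is in Step~2: you never actually produce the algebraic realization. Your own ``Main obstacle'' paragraph is an honest acknowledgement that the inductive cell-attachment scheme does not go through---realizing an arbitrary $l$-adic homotopy class of attaching maps by a morphism of varieties is exactly the kind of rigidity problem that algebraic geometry does not solve for free, and neither the Postnikov nor the ``Sullivan-style'' alternative you sketch comes with any mechanism for extracting a genuine variety at the end. As written, Step~2 is a wish list, not an argument.

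The paper bypasses this entirely with a one-line trick due to Deligne--Sullivan. Replace $Z$ by a homotopy equivalent finite simplicial complex, embed it piecewise-linearly in some $\RR^N$, and then \emph{complexify}: take the union in $\CC^N$ of the complex affine subspaces spanned by the simplices. This finite union of complex affine subspaces is an affine complex variety $Y$, and it deformation retracts onto the real polyhedron, so $Y$ is genuinely homotopy equivalent to $Z$ (not merely $l$-adically). In particular $Y$ is simply-connected because $Z$ is. No attaching maps need to be realized algebraically, no induction is required, and the variety is allowed to be singular---which is what makes the construction so cheap.
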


\begin{proof}
The `only if' part is obvious. For the `if' part, we use the construction in \cite{Deligne-Sullivan}*{paragraph below Lemma on p.~1082}. $X_{\et}$ is $l$-adic weak equivalent to a simply-connected finite CW complex. Every finite CW complex is homotopy equivalent to a finite polyhedron. Embed this polyhedron piecewise linearly into $\RR^N$ for some large $N$. Complexify the affine subspaces spanned by all simplices of the polyhedron in $\RR^N$. We obtain a finite union of complex affine subspaces in $\CC^N$, which is homotopic to the polyhedron.
\end{proof}

\begin{corollary}\label{characteristic-0-lifting-obstruction}
Let $X$ be a smooth, proper variety over a separably closed field $k$ of characteristic $p>0$. 
\begin{enumerate}
    \item If $\pi^{\et}_1(X)^{\wedge}_l=0$ for a prime $l\neq p$, then a necessary condition for $X$ to lift to a characteristic zero with finite fundamental group is that the $l$-profinite completion of $X_{\et}$ admits an $l$-local lifting. 
    \item If $\pi^{\et}_1(X)^{\wedge}_l=0$ for any prime $l\neq p$, then a necessary condition for $X$ to lift to a characteristic zero with finite fundamental group is that the $l$-profinite completion of $X_{\et}$ admits an $l$-local lifting for any prime $l\neq p$. 
\end{enumerate}
\end{corollary}

\begin{proof}
Let $Y$ be the characteristic zero lifting of $X$ such that $\pi_1(Y_{\CC})$ is finite. Since $X$ is $l$-adic weak equivalent to $Y_{\CC}$, $X^{\wedge}_{\et,l}$ is also $l$-adic weak equivalent to the universal cover $Y_{\CC}$. Then the rest follows from Theorem \ref{finite-CW-complex-1}.
\end{proof}

\begin{remark}
The requirement for the finiteness condition the fundamental group for the lifting in item (2) is needed, because we do not know whether there exists an infinite, finitely presented group $G$ such that the $l$-profinite completion of $G$ is trivial for any prime $l\neq p$. 
\end{remark}

The proof of Theorem \ref{finiteness-theorem-1} relies on the following topological result.

\begin{theorem}\label{finite-type-theorem}
Let $\{Y_i\}$ be a pro system of objects in the homotopy category of pointed, connected CW complexes such that $\pi_q(Y_i)$ is a finite $l$-group for any $q,i$. Assume that $\varprojlim_i \pi_1(Y_i)=0$. Then $\varprojlim_i H_q(Y_i;\ZZ/l)$ is finite for any $q$ if and only if $\holim_{i} Y_i$ is $l$-complete finite type.
\end{theorem}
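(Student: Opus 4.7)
The plan is to show that $Y$ is simply-connected with each $\pi_q(Y)$ finitely generated over $\widehat{\ZZ}_l$ by constructing an $l$-adic equivalence $Z \to Y$ from a simply-connected CW complex $Z$ having finitely many cells in each dimension; then $\pi_q(Y) \cong \pi_q(Z)^{\wedge}_l$ is automatically finitely generated over $\widehat{\ZZ}_l$.

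First I would compute $\pi_*(Y)$. Since every $\pi_q(Y_i)$ is a finite $l$-group, each inverse system $\{\pi_q(Y_i)\}$ is automatically Mittag-Leffler and so ${\varprojlim_i}^1 \pi_q(Y_i) = 0$. Milnor's $\varprojlim^1$ exact sequence for the homotopy limit then gives
\[0 \to {\varprojlim_i}^1 \pi_{q+1}(Y_i) \to \pi_q(Y) \to \varprojlim_i \pi_q(Y_i) \to 0,\]
yielding $\pi_q(Y) \cong \varprojlim_i \pi_q(Y_i)$, a profinite abelian pro-$l$ group for $q \geq 2$, and $\pi_1(Y) = 0$ by hypothesis. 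Thus $Y$ is simply-connected and each $\pi_q(Y)$ is a compact topological $\widehat{\ZZ}_l$-module.

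A profinite $\widehat{\ZZ}_l$-module $M$ is finitely generated exactly when $M/lM$ is finite (Nakayama for compact modules), so it suffices to show $\pi_q(Y)/l$ is finite for each $q \geq 2$. By induction on the Postnikov stage, using the mod-$l$ Hurewicz theorem for simply-connected spaces, this in turn reduces to showing that $H^q(Y; \ZZ/l)$ is finite for every $q$. With the latter finiteness in hand, one inductively builds a simply-connected finite-type CW complex $Z$ by attaching finitely many cells in each degree to realize $H^*(Y; \ZZ/l)$, producing a map $f\colon Z \to Y$ inducing a mod-$l$ cohomology isomorphism. Since $Y$ is $l$-complete (being a homotopy limit of $l$-complete spaces), the $l$-adic Whitehead theorem promotes $f$ to an equivalence $Z^{\wedge}_l \simeq Y$, and $\pi_q(Y) \cong \pi_q(Z)^{\wedge}_l$ is finitely generated over $\widehat{\ZZ}_l$.

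The main obstacle is the finiteness of $H^q(Y; \ZZ/l)$ from the hypothesis that only $\varprojlim_i H_q(Y_i; \ZZ/l)$ is finite. Individual mod-$l$ (co)homology groups of the $Y_i$ need not be finite (as $K(\ZZ/l, n)$ illustrates), so Mittag-Leffler cannot be invoked directly. The approach I would take combines Pontryagin duality between mod-$l$ homology and mod-$l$ cohomology with a spectral-sequence analysis of the homotopy limit (in the style of Bousfield-Kan), propagating the hypothesized limit finiteness through the filtration contributions to $H^*(Y; \ZZ/l)$. The delicate point is controlling the derived inverse limit terms that arise from the pro-system of cohomologies, which is where the full strength of the hypothesis on $\varprojlim_i H_q(Y_i; \ZZ/l)$ is essential.
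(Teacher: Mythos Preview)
Your proposal correctly identifies its own gap, and that gap is fatal as stated. You need $H^*(Y;\ZZ/l)$ (equivalently $H_*(Y;\ZZ/l)$) to be finite in each degree, and you propose to extract this from the hypothesis on $\varprojlim_i H_q(Y_i;\ZZ/l)$ via a Bousfield--Kan type spectral sequence and ``controlling derived inverse limit terms.'' But the individual $H^q(Y_i;\ZZ/l)$ are typically infinite, the pro-system of cohomologies is not Mittag--Leffler, and there is no general mechanism by which finiteness of $\varprojlim$ alone forces the cohomology of the homotopy limit to be finite. In fact, in this paper the isomorphism $H_q(Y;\ZZ/l)\cong \varprojlim_i H_q(Y_i;\ZZ/l)$ is proved \emph{after} the present theorem (Lemma~\ref{commutativity-of-limit-and-homology}) and its proof invokes Theorem~\ref{finite-type-theorem}, so your intended route is circular relative to the architecture here.

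The paper avoids this entirely by never passing to $Y$. It runs the Postnikov/Hurewicz/Serre argument at the level of the pro-system $\{Y_i\}$: Lemma~\ref{Hurewicz-type-lemma} turns finiteness of $\varprojlim_i H_n(Y_i;\ZZ/l)$ directly into finite generation of $\varprojlim_i \pi_n(Y_i)$ over $\widehat{\ZZ}_l$, and Lemma~\ref{Serre-spectral-sequence} shows that the inverse limit of a pro-system of Serre spectral sequences (with all terms finite $l$-groups) is again a convergent spectral sequence, so finiteness propagates to $\varprojlim_i H_*(F_i^{(n+1)};\ZZ/l)$ for the Postnikov fibers. The induction then proceeds using only the hypothesis, with no need to compare $H_*(Y)$ to $\varprojlim H_*(Y_i)$. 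Your reduction to $\pi_q(Y)/l$ finite via Nakayama, and your Postnikov induction on $Y$, would become a valid alternative \emph{only after} one knows $H_*(Y;\ZZ/l)$ is degreewise finite; the missing idea is precisely to carry out the spectral-sequence bookkeeping on the pro-system rather than on its limit.
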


\begin{proof}[Proof of Theorem \ref{finiteness-theorem-1}]
Since $\varinjlim H^q(X_{\et};\ZZ/l)\cong H^q_{\et}(X;\ZZ/l)$ is finite, by Pontryagin duality, $\varprojlim H_q(X_{\et};\ZZ/l)$ is also finite. 
Then the theorem follows from Corollary \ref{cor of finite} by setting $X_{\et}=\{Y_i\}$.
\end{proof}

Theorem \ref{finiteness-theorem-11} is a direct corollary of Theorem \ref{finite-CW-complex-1} and Corollary \ref{cor of finite} below.

Recall that a simply-connected space $Y$ is $l$-local if each $\pi_i(Y)$ is a $\ZZ_{(l)}$-module, where $\ZZ_{(l)}$ is the $l$-localization of $\ZZ$ (see \cite{SullivanMITnotes}*{p.~32, Theorem 2.2}). This is equivalent to saying that each reduced $\widetilde{H}_i(Y;\ZZ)$ is a $\ZZ_{(l)}$-module.

\begin{definition}\label{l-local-lifting-definition}
A simply-connected, $l$-complete finite type CW complex $Y$ admits an \textbf{$l$-local lifting} if there exists a simply-connected, $l$-local finite type CW complex $Y'$ such that the $l$-completion of $Y'$ is homotopy equivalent to $Y$.
\end{definition}

\begin{theorem}\label{finite-CW-complex-1}
Let $Y$ be a simply-connected CW complex. 
\begin{enumerate}
    \item There exists a simply-connected CW complex $Z$ with finitely many cells in each dimension such that $Z$ is $l$-adic weak equivalent to $Y$ if and only if the $l$-completion $Y_l$ of $Y$ is $l$-complete finite type and admits an $l$-local lifting.
    \item 
    The space $Z$ above can be made a finite CW complex of dimension at most $N$ if and only if the previous conditions hold and $H^q(Y_l; \ZZ/l)=0$ for all $q>N$.
\end{enumerate}
\end{theorem}

\begin{remark}\label{l-local-lifting-remark}
Recall the arithmetic square of rings (see \cite{SullivanMITnotes}*{p.~87})
\[
\begin{tikzcd}
  \ZZ_{(l)} \arrow[r] \arrow[d] & \QQ \arrow[d] \\  
  \widehat{\ZZ}_{l} \arrow[r] &  \QQ_l
\end{tikzcd}
\]
and its homotopy analogue for $X$ a simply-connected, finite type CW complex:
\[
\begin{tikzcd}[row sep=huge,column sep=width("rationalization")]
  X_{(l)} \arrow[r,"\text{rationalization}"] \arrow[d,"\text{$l$-completion}"] & X_{\QQ} \arrow[d,"\text{$l$-formal completion}"] \\  
  \widehat{X}_{l} \arrow[r,"\text{rationalization}"] &  X_{\QQ_l}.
\end{tikzcd}
\]
The arithmetic square entails that whether a simply-connected, $l$-complete finite type CW complex $Y=Y_l$ admits an $l$-local lifting is equivalent to whether the rationalization $Y_{\QQ}=Y_{\QQ_l}$ admits a rational finite type space lifting. Our conjecture is that this might be equivalent to whether the minimal model of $Y_{\QQ_l}$ in differential graded algebras over $\QQ_l$ can be lifted to $\QQ$ (see \cite{SullivanMITnotes}*{p.~88}).
\end{remark}

We will prove Theorem \ref{finite-type-theorem} in Section \ref{sec: proof of finite-type-theorem} and Theorem \ref{finite-CW-complex-1} in Section \ref{section: pf of finite-cw-complex-1}.

\begin{corollary}
\label{cor of finite}
With the same assumptions as Theorem \ref{finite-type-theorem}, 
\begin{enumerate}
    \item $\{Y_i\}$ is $l$-adic weak equivalent to a simply-connected CW complex $Z$ with finitely many cells in each dimension if and only if $\varinjlim_i H_q(Y_i;\ZZ/l)$ is finite for any $q$ and $\holim_{i} Y_i$ admits an $l$-local lifting;
    \item Furthermore, $Z$ above can be a finite CW complex of dimension at most $N$ if and only if the previous conditions and $\varinjlim_i H^q(Y_i;\ZZ/l)=0$ for all $q>N$.
\end{enumerate}
\end{corollary}

\begin{proof}
The `only if' parts of the two statements are both obviously. For the  `if' parts, let $Y=\holim_i Y_i$. 
By Theorem \ref{finite-type-theorem}, $Y$ satisfies the assumptions in Theorem \ref{finite-CW-complex-1}.  
Let $Z$ and $Z_l$ be as in Theorem \ref{finite-CW-complex-1}.
Since $\pi_q(Z)\rightarrow\pi_q(Z_l)$ is the $l$-profinite completion of a finitely generated abelian group for any $q$, $Z_l\rightarrow Y$ is a weak equivalence. Hence, the induced pro map $Z\rightarrow Z_l\rightarrow Y\rightarrow \{Y_i\}$ is an $l$-adic weak equivalence by definition.
\end{proof}

\subsection{Proof of Theorem \ref{finite-type-theorem}}\label{sec: proof of finite-type-theorem}

\begin{lemma}\label{Eilenberg-Maclane-space}
Let $\{A_i\}$ be a pro system of finite abelian $l$-groups. Let $A=\varprojlim A_i$. Assume that $A$ is a finitely generated $\widehat{\ZZ}_l$-module. Then $H_q(K(A,n);\ZZ/l)\rightarrow \varprojlim_i H_q(K(A_i,n);\ZZ/l)$ is an isomorphism for any $q,n$. In particular, $\varprojlim_i H_q(K(A_i,n);\ZZ/l)$ is finite.
\end{lemma}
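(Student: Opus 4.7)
The plan is to reduce to two basic building blocks via a Künneth decomposition and then handle each case. Since $A$ is a finitely generated $\widehat{\ZZ}_l$-module, decompose $A \cong \widehat{\ZZ}_l^r \oplus T$ with $T$ a finite abelian $l$-group. Using that any two pro-systems of finite abelian $l$-groups with the same profinite limit are pro-isomorphic (via the equivalence of the pro-category of finite abelian $l$-groups with profinite abelian $l$-groups noted in the paper's preliminaries), I would replace $\{A_i\}$ with the compatible tower $A_k' := (\ZZ/l^k)^r \oplus T$. The Künneth formula over $\ZZ/l$, together with the (constant) handling of the finite $T$-factor, reduces the statement to two basic cases: $A = \widehat{\ZZ}_l$ with canonical tower $A_k = \ZZ/l^k$, and $A$ finite.

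The finite case is tautological: Mittag--Leffler together with finiteness of the limit forces the tower of kernels of $A \to A_i$ (a decreasing chain of subgroups of finite $A$) to stabilize at zero, so after passing to a cofinal subsystem the pro-system is pro-isomorphic to the constant tower $\{A\}$, and both sides trivially agree.

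For $A = \widehat{\ZZ}_l$, I first establish finiteness of $H_q(K(\widehat{\ZZ}_l, n); \ZZ/l)$ in each degree. The crucial input is that the abstract abelian group $\widehat{\ZZ}_l/\ZZ$ is uniquely $l$-divisible: it is $l$-torsion-free (if $lx \in \ZZ$ and $x \in \widehat{\ZZ}_l$ then $x \in \ZZ$) and $l$-divisible (for $x = a_0 + a_1 l + \cdots$ one has $x - a_0 = l(a_1 + a_2 l + \cdots)$), and is $p$-divisible for $p \neq l$ by inheritance from $\widehat{\ZZ}_l$. Hence $\widehat{\ZZ}_l/\ZZ$ is a $\ZZ[1/l]$-module; by functoriality $H_*(\widehat{\ZZ}_l/\ZZ;\ZZ)$ is also a $\ZZ[1/l]$-module in each degree, so universal coefficients together with an induction on $n$ via path fibrations gives $\tilde H_*(K(\widehat{\ZZ}_l/\ZZ, n); \ZZ/l) = 0$. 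The Serre spectral sequence applied to the fibration $K(\ZZ, n) \to K(\widehat{\ZZ}_l, n) \to K(\widehat{\ZZ}_l/\ZZ, n)$ coming from $0 \to \ZZ \to \widehat{\ZZ}_l \to \widehat{\ZZ}_l/\ZZ \to 0$ then collapses, yielding $H^*(K(\widehat{\ZZ}_l, n); \ZZ/l) \cong H^*(K(\ZZ, n); \ZZ/l)$, which is of finite type in each degree by Serre's theorem; dualizing gives finiteness of $H_q$.

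For the isomorphism to the inverse limit, I would pass to cohomology (valid since everything is finite-dimensional over $\ZZ/l$) and establish the dual statement $\varinjlim_i H^q(K(\ZZ/l^i, n); \ZZ/l) \cong H^q(K(\widehat{\ZZ}_l, n); \ZZ/l)$. The key mechanism is that the transition map $K(\ZZ/l^{i+1}, n) \to K(\ZZ/l^i, n)$ annihilates the level-$i$ higher Bockstein classes in mod-$l$ cohomology: a class of the form $\delta_i(\iota^{(i)})$, with $\delta_i$ the Bockstein for $0 \to \ZZ/l \to \ZZ/l^{i+1} \to \ZZ/l^i \to 0$, pulls back to $\delta_i$ applied to the reduction mod $l^i$ of the fundamental class $\iota^{(i+1)} \in H^n(-;\ZZ/l^{i+1})$, which vanishes by Bockstein exactness since that reduction already lifts through $\ZZ/l^{i+1}$. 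Consequently only the stable Serre generators survive the colimit, and these match the Serre generators of $H^*(K(\ZZ, n); \ZZ/l)$ (which, by the previous paragraph, computes $H^*(K(\widehat{\ZZ}_l, n); \ZZ/l)$). The main obstacle is the bookkeeping across all admissible Steenrod monomials and higher Bocksteins simultaneously; I would organize this by induction on $n$ via the path-loop fibration $K(A, n-1) \to * \to K(A, n)$ and naturality of the Serre spectral sequence in the tower, with the base case $n=1$ verified by the direct calculation $H^*(B\widehat{\ZZ}_l; \ZZ/l) \cong H^*(S^1; \ZZ/l)$ that one obtains from the same fibration argument above applied with $n=1$.
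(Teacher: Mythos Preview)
Your approach is substantially different from the paper's. The paper's proof is three lines: since $A$ is a finitely generated $\widehat{\ZZ}_l$-module, $K(A,n)$ coincides with its own $l$-completion, so $\pi_q(K(A,n))\cong\varprojlim_i\pi_q(K(A_i,n))$; this exhibits the pro-map $K(A,n)\to\{K(A_i,n)\}$ as an $l$-adic weak equivalence (condition~(1) of Definition~\ref{definition-of-l-adic-weak-equivalence}, i.e.\ Artin--Mazur's Theorem~4.3), and condition~(3) of that same theorem immediately gives the cohomology isomorphism, which Pontryagin duality converts to homology. Your route---K\"unneth reduction to $A=\widehat{\ZZ}_l$, explicit identification of $H^*(K(\widehat{\ZZ}_l,n);\ZZ/l)$ with $H^*(K(\ZZ,n);\ZZ/l)$, then a Cartan--Serre/Bockstein analysis of the colimit tower---avoids the Artin--Mazur black box and makes the finiteness clause genuinely explicit (the paper leaves this to the reader), at the cost of considerably more work.

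There is, however, one genuine gap: the sentence ``by functoriality $H_*(\widehat{\ZZ}_l/\ZZ;\ZZ)$ is also a $\ZZ[1/l]$-module'' does not hold as stated. Functoriality only says that the automorphism $\times l$ of $G=\widehat{\ZZ}_l/\ZZ$ induces an automorphism $(\times l)_*$ of $H_q(K(G,n);\ZZ)$, but $(\times l)_*$ is \emph{not} multiplication by $l$ on the homology group (already for $K(\ZZ,2)=\CC P^\infty$ the map induced by $\times l$ on $H_{2k}$ is $\times l^k$), so unique $l$-divisibility of the homology does not follow from this alone. The conclusion $\tilde H_*(K(G,n);\ZZ/l)=0$ is nevertheless correct; the cleanest repair bypasses your fibration argument altogether: $K(\ZZ,n)$ is simply-connected of finite type for $n\geq 2$, so its Bousfield $l$-completion has $\pi_n=\widehat{\ZZ}_l$ and all other homotopy trivial, hence equals $K(\widehat{\ZZ}_l,n)$, and $l$-completion is by construction a mod-$l$ homology equivalence. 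With this in hand (and the easy $n=1$ case you already note), your Bockstein argument for the colimit then completes the proof.
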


\begin{proof}
Since $A$ is a finitely generated $\widehat{\ZZ}_l$-module, $K(A,n)$ is homotopy equivalent to its $l$-completion. Therefore,
$\pi_q(K(A,n)_l)\cong\pi_q(K(A,n))\cong \varprojlim \pi_q(K(A_i,n))$.
It then follows from \cite{artin-mazur-etale-homotopy}*{Theorem 4.3} that $H^q(K(A,n);\ZZ/l)\leftarrow \varinjlim_i H^q(K(A_i,n);\ZZ/l)$ is an isomorphism. 
The lemma follows from the Pontryagin duality.
\end{proof}

\begin{lemma}\label{Hurewicz-type-lemma}
Let $\{Y_i\}$ be a pro system of objects in the homotopy category of pointed, connected CW complexes with $\pi_q(Y_i)$ a finite $l$-group for any $q,i$. For any $i$ assume that $\pi_q(Y_i)=0$ for $1\leq q\leq n-1$ with $n\geq 2$, and that $\varprojlim_i H_n(Y_i;\ZZ/l)$ is finite. Then $\varprojlim_i \pi_n(Y_i)$ is a finitely generated $\widehat{\ZZ}_l$-module.
\end{lemma}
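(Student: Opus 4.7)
The plan is to reduce the claim to a standard profinite Nakayama-type statement for compact $\widehat{\ZZ}_l$-modules, via Hurewicz and the universal coefficient theorem applied levelwise.

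First I would set $A_i=\pi_n(Y_i)$, which is a finite $l$-group by hypothesis, and $A=\varprojlim_i A_i$. Since each $Y_i$ is $(n-1)$-connected with $n\geq 2$, the Hurewicz theorem gives a natural isomorphism $\pi_n(Y_i)\cong H_n(Y_i;\ZZ)$. The universal coefficient sequence
\[
0\to H_n(Y_i;\ZZ)\otimes\ZZ/l\to H_n(Y_i;\ZZ/l)\to \Tor(H_{n-1}(Y_i;\ZZ),\ZZ/l)\to 0
\]
then degenerates because $H_{n-1}(Y_i;\ZZ)=0$ (again by $(n-1)$-connectedness and Hurewicz), yielding a natural isomorphism $A_i/lA_i\cong H_n(Y_i;\ZZ/l)$.

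Next I would pass to the inverse limit. The short exact sequence $0\to A_i\xrightarrow{l}A_i\to A_i/lA_i\to 0$ gives, upon taking $\varprojlim_i$, the exact sequence
\[
0\to A\xrightarrow{l} A\to \varprojlim_i A_i/lA_i\to \varprojlim{}^1_{i}A_i,
\]
and $\varprojlim^1_i A_i=0$ since the $A_i$ are finite (Mittag--Leffler is automatic for an inverse system of finite groups). Therefore $A/lA\cong \varprojlim_i A_i/lA_i\cong \varprojlim_i H_n(Y_i;\ZZ/l)$, which is finite by hypothesis.

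Finally I would invoke the pro-Nakayama lemma for compact $\widehat{\ZZ}_l$-modules: $A$ is a profinite abelian pro-$l$ group with $A/lA$ finite of $\FF_l$-dimension $d$, so lifting an $\FF_l$-basis of $A/lA$ to elements $a_1,\ldots,a_d\in A$ gives a continuous $\widehat{\ZZ}_l$-linear map $\phi:\widehat{\ZZ}_l^{\,d}\to A$ whose image $B$ is closed and satisfies $B+lA=A$. Then $M:=A/B$ is a profinite pro-$l$ abelian group with $M=lM$; for every open subgroup $U\subseteq M$ the finite $l$-group $M/U$ equals $l(M/U)$ and hence vanishes, so $M=0$ and $\phi$ is surjective. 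The main thing to be careful with is justifying that $\varprojlim^1_i A_i$ and related derived functors vanish for this (possibly non-countable, non-tower) pro-system; this is unproblematic because all groups in sight are finite, so the system is trivially Mittag--Leffler, and the Hurewicz/UCT identifications are functorial on the homotopy category and hence descend to the pro-category.
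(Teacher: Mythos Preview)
Your proof is correct and follows essentially the same route as the paper: Hurewicz plus universal coefficients levelwise to identify $\pi_n(Y_i)/l\pi_n(Y_i)\cong H_n(Y_i;\ZZ/l)$, then pass to the inverse limit and invoke a Nakayama-type statement for $l$-complete modules. The paper is terser---it cites \cite{stacks-project}*{Tag 0315} for the final step rather than spelling out the pro-Nakayama argument, and it handles the commutation of $\varprojlim$ with the quotient by $l$ directly rather than via the $\varprojlim{}^1$ vanishing---but the skeleton is identical.
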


\begin{proof}
By Hurewicz theorem, $H_n(Y_i;\ZZ)\cong \pi_n(Y_i)$ is a finite abelian $l$-group. Then the following map is an isomorphism.
\[
\pi_n(Y_i)/l\cdot \pi_n(Y_i)\cong H_n(Y_i;\ZZ)/l\cdot H_n(Y_i;\ZZ)\cong H_n(Y_i;\ZZ)\otimes \ZZ/l\rightarrow H_n(Y_i;\ZZ/l)
\]
Thus $\varprojlim_i (\pi_n(Y_i)/l\cdot \pi_n(Y_i))\rightarrow \varprojlim_i H_n(Y_i;\ZZ/l)$ is also an isomorphism. But $\varprojlim_i (\pi_n(Y_i)/l\cdot \pi_n(Y_i))\cong \varprojlim_i \pi_n(Y_i)/ \varprojlim_il\cdot \pi_n(Y_i)\cong \varprojlim_i \pi_n(Y_i)/ l\cdot\varprojlim_i \pi_n(Y_i)$. 
Since an $l$-profinite abelian group is $l$-complete, by \cite{stacks-project}*{Tag 0315}, $\varprojlim_i \pi_n(Y_i)$ is a finitely generated $\widehat{\ZZ}_l$-module. 
\end{proof}

\begin{lemma}\label{inverse-limit-commutes-with-coefficient}
Let $Y$ be a CW complex homotopy equivalent to a CW complex with finitely many cells in each dimension. Let $\{A_i\}$ be a pro-system of finite abelian groups. Then the canonical map $H_q(Y;\varprojlim_i A_i) \rightarrow \varprojlim_i H_q(Y;A_i)$ is an isomorphism for any $q$.
\end{lemma}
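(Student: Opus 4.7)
The plan is to reduce the statement to a cellular chain computation and exploit the fact that inverse limits are exact on cofiltered systems of finite abelian groups. First I would replace $Y$ by a CW complex with finitely many cells in each dimension, which is allowed by the hypothesis. Let $C_*(Y)$ denote the cellular chain complex; by construction each $C_q(Y)$ is a finitely generated free abelian group, so for any coefficient abelian group $A$ we have $H_q(Y;A)\cong H_q(C_*(Y)\otimes A)$. Because $C_q(Y)$ is a finite rank free abelian group, the functor $C_q(Y)\otimes(-)$ is just a finite direct sum and therefore commutes with arbitrary limits. Hence we obtain an isomorphism of chain complexes
\[
C_*(Y)\otimes \varprojlim_i A_i \;\cong\; \varprojlim_i\bigl(C_*(Y)\otimes A_i\bigr).
\]

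Next I would observe that each group $C_q(Y)\otimes A_i$ is a finite abelian group, since $C_q(Y)$ is a finite direct sum of copies of $\ZZ$ and $A_i$ is finite. The essential input is then the classical fact that the inverse limit functor, restricted to cofiltered systems of finite (equivalently, compact Hausdorff) abelian groups, is exact; that is, the derived inverse limit vanishes. This is the analogue in the pro-finite setting of the Mittag-Leffler vanishing of $\varprojlim^1$, and it may be viewed as a consequence of Tychonoff's theorem applied to the profinite topology.

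Given this exactness, I would apply $\varprojlim_i$ to the short exact sequences of chain groups
\[
0\to Z_q^i \to C_q(Y)\otimes A_i \to B_{q-1}^i\to 0,\qquad 0\to B_q^i\to Z_q^i\to H_q(Y;A_i)\to 0,
\]
where $Z_q^i$ and $B_q^i$ denote cycles and boundaries. Exactness of $\varprojlim_i$ identifies $\varprojlim_i Z_q^i$ and $\varprojlim_i B_q^i$ with the cycles and boundaries of the limit chain complex $\varprojlim_i(C_*(Y)\otimes A_i)$ in degree $q$, and then gives
\[
H_q\!\left(\varprojlim_i\bigl(C_*(Y)\otimes A_i\bigr)\right) \;\cong\; \varprojlim_i Z_q^i/\varprojlim_i B_q^i \;\cong\; \varprojlim_i H_q(Y;A_i).
\]
Combining this with the isomorphism from the first paragraph yields the desired natural isomorphism $H_q(Y;\varprojlim_i A_i)\cong \varprojlim_i H_q(Y;A_i)$.

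The only potentially delicate point is the exactness of $\varprojlim$ on cofiltered pro-systems of finite abelian groups when the indexing category is arbitrary (not a countable tower), but this is a standard fact about profinite abelian groups and requires no further structure on $\{A_i\}$. All other steps are formal consequences of the finite-type cellular structure on $Y$.
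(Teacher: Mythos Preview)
Your argument is correct. The paper's proof is different in organization, though it ultimately rests on the same key fact. The paper proves the lemma by induction on the skeleta of $Y$: the base case is a sphere, and the inductive step passes from $Y_n$ to $Y_{n+1}$ via the cofiber sequence $\bigsqcup_j S^n_j \to Y_n \to Y_{n+1}$, comparing the two long exact sequences in homology and (implicitly) using that $\varprojlim_i$ preserves exactness of sequences of finite abelian groups in order to apply the five-lemma. Your approach bypasses the skeletal induction by working directly with the cellular chain complex: finite-rank freeness of $C_q(Y)$ gives the chain-level isomorphism, and exactness of $\varprojlim$ on pro-finite abelian groups lets you commute it past taking homology. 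Both routes hinge on the same compactness/exactness input; yours is a bit more self-contained and makes the role of that input explicit, while the paper's cofiber-sequence argument is slightly more flexible (it would adapt more readily to generalized homology theories or to situations where one does not have a convenient chain model).
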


\begin{proof}
This is obvious when $Y$ is a sphere.
Let $Y_n$ be the $n$-skeleton of $Y$.
The lemma follows by induction using the cofiber sequence $\bigsqcup_j S^n_j\rightarrow Y_n\rightarrow Y_{n+1}$.
\end{proof}

\begin{lemma}\label{Serre-spectral-sequence}
Let $\{F_i\rightarrow E_i\rightarrow B_i\}_{i\in I}$ be a pro-system of fibrations in the homotopy category of pointed, connected CW complexes with $\pi_q(F_i),\pi_q(E_i),\pi_q(B_i)$ finite $l$-groups for any $q,i$. Assume that each $B_i$ is simply-connected. If $\varprojlim_i H_q(F_i;\ZZ/l)$ and $\varprojlim_i H_q(B_i;\ZZ/l)$ are finite for any $q$, then so is $\varprojlim_i H_q(E_i;\ZZ/l)$.
\end{lemma}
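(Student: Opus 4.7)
For each index $i$, the fibration $F_i \to E_i \to B_i$ with $B_i$ simply-connected yields a first-quadrant homology Serre spectral sequence
\[
E^2_{p,q}(i) = H_p(B_i;\, H_q(F_i;\ZZ/l)) \Rightarrow H_{p+q}(E_i;\ZZ/l)
\]
with trivial coefficient system. Because $H_q(F_i;\ZZ/l)$ is flat over the field $\ZZ/l$, the universal coefficient theorem identifies $E^2_{p,q}(i) \cong H_p(B_i;\ZZ/l)\otimes_{\ZZ/l} H_q(F_i;\ZZ/l)$. By naturality, the transition maps of $\{F_i\to E_i\to B_i\}$ assemble these into a pro-system of spectral sequences. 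A standard Postnikov tower argument, using the finiteness of $H_*(K(A,n);\ZZ/l)$ for each finite $l$-group $A$ (compare the proof of Lemma \ref{Eilenberg-Maclane-space}), shows that each $H_p(B_i;\ZZ/l)$ and $H_q(F_i;\ZZ/l)$ is itself a finite $\ZZ/l$-vector space, so every pro-system under consideration consists of finite abelian $l$-groups at every position.

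The core step is to show $\varprojlim_i E^2_{p,q}(i)$ is finite. Write $V_i := H_p(B_i;\ZZ/l)$ and $W_i := H_q(F_i;\ZZ/l)$, with finite inverse limits $V := \varprojlim V_i$ and $W := \varprojlim W_i$ by hypothesis. I claim $\varprojlim_i (V_i\otimes_{\ZZ/l} W_i) = V\otimes_{\ZZ/l} W$, which is finite. Define the eventual images $V_i^\infty := \bigcap_{j\geq i}\mathrm{Im}(V_j\to V_i)$; by Mittag-Leffler (automatic for finite groups) these are attained, the canonical map $V\to V_i$ surjects onto $V_i^\infty$, and the transition maps $V_j^\infty \twoheadrightarrow V_i^\infty$ are surjective with targets of cardinality at most $|V|$, so the system $\{V_i^\infty\}$ stabilizes cofinally to $V$. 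Analogously for $W$. Since tensor product over the field $\ZZ/l$ is exact, the image of the tensored transition map $V_j\otimes W_j \to V_i\otimes W_i$ is $\mathrm{Im}(V_j\to V_i) \otimes \mathrm{Im}(W_j\to W_i)$, so $(V_i\otimes W_i)^\infty = V_i^\infty \otimes W_i^\infty$ likewise stabilizes to $V\otimes W$, establishing the claim.

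With $\varprojlim_i E^2_{p,q}(i)$ finite for every $(p,q)$, and every term at every page a finite abelian $l$-group, $\varprojlim$ is exact on all relevant pro-systems. Since $E^{r+1}_{p,q}(i)$ is a subquotient of $E^r_{p,q}(i)$ via the differentials (compatible with transition maps by naturality), induction on $r$ shows $\varprojlim_i E^r_{p,q}(i)$ is finite for all $r\geq 2$. For each fixed $(p,q)$ in the first quadrant the spectral sequence stabilizes to $E^\infty_{p,q}(i)$ by page $r = \max(p, q+1) + 1$, independently of $i$, so $\varprojlim_i E^\infty_{p,q}(i)$ is finite. Finally, $H_n(E_i;\ZZ/l)$ carries the canonical finite filtration with subquotients $\{E^\infty_{p,q}(i)\}_{p+q=n}$, which $\varprojlim$ transfers to a finite filtration of $\varprojlim_i H_n(E_i;\ZZ/l)$ with finite graded pieces; hence $\varprojlim_i H_n(E_i;\ZZ/l)$ is finite. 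The principal subtlety is the tensor-product limit in the second paragraph, where both factors vary in the pro-system; it is resolved by the eventual-image stabilization.
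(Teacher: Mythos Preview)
Your proof is correct and follows essentially the same route as the paper: pass to the inverse limit of the Serre spectral sequences (exact since all terms are finite abelian $l$-groups), show the $E^2$ limits are finite, propagate to $E^\infty$, and conclude via the natural filtration on $H_n(E_i;\ZZ/l)$. The only difference is at the $E^2$ step, where the paper re-indexes over $I\times I$ and invokes Lemma~\ref{inverse-limit-commutes-with-coefficient} to obtain $\varprojlim_i H_p(B_i;H_q(F_i;\ZZ/l))\cong \varprojlim_i H_p(B_i;\varprojlim_j H_q(F_j;\ZZ/l))$, whereas you use the K\"unneth identification over $\ZZ/l$ together with an eventual-image stabilization argument to compute $\varprojlim_i(V_i\otimes W_i)\cong V\otimes W$; both are short and reach the same conclusion.
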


\begin{proof}

Consider the pro-system of Leray-Serre spectral sequence $E(i)^r_{p,q}$ for the fibration $F_i\rightarrow E_i\rightarrow B_i$ with $E(i)^2_{p,q}=H_p(B_i;H_q(F_i;\ZZ/l))$. Take the inverse limit $\varprojlim_i E(i)^r_{p,q}$ for fixed $r,p,q$. This is still a spectral sequence since each $E(i)^r_{p,q}$ is finite.
\begin{align*}
    \varprojlim_i E(i)^2_{p,q} & = \varprojlim_i H_p(B_i;H_q(F_i;\ZZ/l)) \cong \varprojlim_{i,j} H_p(B_i;H_q(F_j;\ZZ/l))  \\
    & = \varprojlim_{i}\varprojlim_{j} H_p(B_i;H_q(F_j;\ZZ/l)) \cong \varprojlim_{i}H_p(B_i;\varprojlim_{j} H_q(F_j;\ZZ/l)) \,\,\, \text{(Lemma \ref{inverse-limit-commutes-with-coefficient})} \\ 
\end{align*}
By the assumptions, $\varprojlim_i E(i)^2_{p,q}$ is finite for any $p,q$. $\varprojlim_i E(i)^{\infty}_{p,q}$ is finite for any $p,q$ as well.

Recall that the filtration of the homology groups $0=F_{-1}H_n\subset F_0H_n\subset F_1H_n\subset ... \subset H_n$ are given by the skeletons of the base space. So any map between fibrations preserve this filtration. Hence, 
\begin{align*}
\varprojlim_i E(i)^{\infty}_{p,q} & 
 =\varprojlim_i \Big(F_pH_{p+q}(E_i;\ZZ/l)/F_{p-1}H_{p+q}(E_i;\ZZ/l) \Big)\\ 
  & \cong \Big(\varprojlim_i F_pH_{p+q}(E_i;\ZZ/l)\Big)/\Big(\varprojlim_iF_{p-1}H_{p+q}(E_i;\ZZ/l)\Big) \\
  & \,\,\, \text{(finite colimits commute with cofiltered limits)} \\ 
  & \cong  F_p\varprojlim_i H_{p+q}(E_i;\ZZ/l)/F_{p-1}\varprojlim_iH_{p+q}(E_i;\ZZ/l) \\
  & \,\,\, \text{(maps preserve the filtration).} 
\end{align*}
Since for each $n$ the length of the filtration on $H_n$ is finite, this proves the lemma.
\end{proof}





\begin{proof}[Proof of Theorem \ref{finite-type-theorem}]
Let $\widetilde{Y_i}$ be the universal cover of $Y_i$.
Since $\varprojlim_i \pi_1(Y_i)=0$, the induced map $\holim_i \widetilde{Y_i}\rightarrow \holim_i Y_i$ is a homotopy equivalence, thus $\varinjlim_i H^q(\widetilde{Y_i};\ZZ/l)\leftarrow \varinjlim_i H^q(Y_i;\ZZ/l)$ is an isomorphism.
By Pontryagin duality, $\varprojlim_i H_q(\widetilde{Y_i};\ZZ/l)\rightarrow \varprojlim_i H_q(Y_i;\ZZ/l)$ is an isomorphism for all $q$.
Therefore, by replacing $Y_i$ with $\widetilde{Y_i}$, we can assume that each $Y_i$ is simply-connected. 

The `if' part can be directly deduced from Lemma \ref{Serre-spectral-sequence} and the Postnikov towers of $Y_i$'s. 

For the `only if' part, let $Y^{(n)}_i$ be the $n$-th space in the Postnikov tower of $Y_i$. Let $F^{(n+1)}_i$ be the homotopy fiber of $Y\rightarrow Y^{(n)}_i$. By Lemma \ref{Hurewicz-type-lemma}, $\varprojlim_i \pi_2(Y_i)$ is a finitely generated $\widehat{\ZZ}_l$-module. By Lemma \ref{Eilenberg-Maclane-space} and Lemma \ref{Serre-spectral-sequence}, $\varprojlim_iH_q(F^{(3)}_i;\ZZ/l)$ is finite for any $q$. Inductively assume that $\varprojlim_i \pi_q(Y_i)$ is a finitely generated $\widehat{\ZZ}_l$-module for any $q\leq n$ and that $\varprojlim_iH_q(F^{(n+1)}_i;\ZZ/l)$ is finite for any $q$. By Lemma \ref{Hurewicz-type-lemma}, $\varprojlim_i \pi_{n+1}(Y_i)\cong \varprojlim_i \pi_{n+1}(F^{(n+1)}_i)$ is a finitely generated $\widehat{\ZZ}_l$-module. Applying Lemma \ref{Eilenberg-Maclane-space} and Lemma \ref{Serre-spectral-sequence} to the fibration $F^{(n+2)}_i\rightarrow F^{(n+1)}_i\rightarrow K(\pi_{n+1}(Y_i),n+1)$, we have that $\varprojlim_iH_q(F^{(n+2)}_i;\ZZ/l)$ is finite for any $q$. Then the proof is completed by induction.
\end{proof}

\subsection{Proof of Theorem \ref{finite-CW-complex-1}}\label{section: pf of finite-cw-complex-1}\;

In this part, we prove Theorem \ref{finite-CW-complex-1}. 
The proof of Theorem \ref{finite-CW-complex-1} relies on the following result.

\begin{theorem}\label{thm: when l-local is finite}
Let $Y$ be a connected, simply-connected, $l$-local CW complex.  
\begin{enumerate}
    \item Each $\widetilde{H}_i(Y;\ZZ)$ is a finitely generated $\ZZ_{(l)}$-module if and only if there exists a simply-connected CW complex $Z$ with finitely many cells in each dimension such that its $l$-localization $Z_{(l)}$ is homotopy equivalent to $Y$;
    \item Each $\widetilde{H}_i(Y;\ZZ)$ is a finitely generated $\ZZ_{(l)}$-module and $H^q(Y;\ZZ)=0$ for all $q>N$ if and only if we can take the $Z$ above to be a finite CW complex of dimension at most $N$.
\end{enumerate}
\end{theorem}

\begin{remark}
This theorem can also be proved by local cells and local CW complexes (\cite{SullivanMITnotes}*{Definition 2.2 and p.~86}).
\end{remark}

\begin{proof}
(1) The `only if' part is obvious. We prove the `if' part here. We will inductively construct a simply-connected CW complex $Z$ with finitely many cells together with a map $f:Z\rightarrow Y$ which induces an isomorphism $f_{(l)}:\pi_*(Z)\otimes \ZZ_{(l)}\cong \pi_*(Z_{(l)})\rightarrow \pi_*(Y)$. 

Firstly, by the Hurewicz theorem, $\pi_2(Y)\cong H_2(Y;\ZZ)$ is a finitely generated $\ZZ_{(l)}$-module. Let  $x_1,\dots,x_m$ be $\ZZ_{(l)}$-module generators of $\pi_2(Y)$. Then construct a map $Z_2\rightarrow Y$ with $Z_2$ a bouquet of $m$ copies of $S^2$ such that the mapping on each $S^2$ represents $x_1,\dots,x_m$ correspondingly. Then this induces a surjection on $\pi_2(-)\otimes Z_{(l)}$.

Inductively assume that we have constructed a simply-connected finite CW complex $Z_n$ of dimension $n$ together with a map $f_n:Z_n\rightarrow Y$ so that the relative homotopy group $\pi_i((f_n)_{(l)})$ of the $l$-localization $(f_n)_l:(Z_n)_{(l)}\rightarrow Y$ vanishes for all $i\leq n$. Consider the following diagram.

\[
\begin{tikzcd}
  ... \arrow[r] & \pi_{i}(Z_n) \arrow[r,"(f_n)_*"] \arrow[d] & \pi_{i}(Y) \arrow[r] \arrow[d,] & \pi_{i}(f_n) \arrow[r] \arrow[d] & ... \\
  ... \arrow[r] & \pi_{i}(Z_n)\otimes \ZZ_{(l)} \arrow[r,"(f_n)_{*}\otimes \ZZ_{(l)}"] \arrow[d,"\cong"] & \pi_{i}(Y)\otimes \ZZ_{(l)} \arrow[r] \arrow[d,"\cong"] & \pi_{i}(f_n)\otimes \ZZ_{(l)} \arrow[r] \arrow[d,dotted,"\cong"] & ... \\  
  ... \arrow[r] & \pi_{i}((Z_n)_{(l)}) \arrow[r,"(f_n)_{(l),*}"] & \pi_{i}(Y) \arrow[r] & \pi_{i}((f_n)_{(l)}) \arrow[r] &  ...
\end{tikzcd}
\]

Since localization is an exact functor, the second row is exact. By the universal property of $l$-localization of abelian groups, the natural map $\pi_i(f)\rightarrow \pi_{i}((f_n)_{(l)})$ factors through $\pi_{i}(f_n)\otimes \ZZ_{(l)}$, which induces the dashed arrow. Then the five-lemma deduces that the dashed arrow is an isomorphism.

Since $H_i(Y;\ZZ)$ and $H_i((Z_n)_{(l)};\ZZ)$ are both finitely generated $\ZZ_{(l)}$-modules for any $i$, so is $H_i(f_{(l)};\ZZ)$, in particular, $H_{n+1}((f_n)_{(l)};\ZZ)\cong \pi_{n+1}((f_{n})_{(l)})$. So one may choose $y_1,\dots,y_r$ in $\pi_{n+1}(f_n)$ whose images generate $\pi_{n+1}((f_n)_{(l)})$ as a $\ZZ_{(l)}$-module. Each $y_i$ corresponds to a commutative diagram
\[
\begin{tikzcd}
S^n \arrow[r,"\alpha_i"] \arrow[d] & Z_n \arrow[d,"f"] \\
D^{n+1} \arrow[r,"\beta_i"] & Y
\end{tikzcd}
\]

Then attach cells $e^{n+1}_1,...,e^{n+1}_r$ to $Z_n$, whose boundaries are $\alpha_i(S^n)$ correspondingly. We also obtain a map $f_{n+1}:Z_{n+1}\rightarrow Y$ to be the gluing of $f_n$ and $\beta_i$'s.

It is left to check that $\pi_i((f_{n+1})_{(l)})=0$ for $i\leq n+1$. But this is obvious by considering the following long exact sequence induced by the Hurewicz theorem and the composition of maps $(f_n)_{(l)}:(Z_n)_{(l)}\xrightarrow{j_{(l)}} (Z_{n+1})_{(l)} \xrightarrow{(f_{n+1})_{(l)}} Y_l$:
\[
...\rightarrow H_i(j_{(l)};\ZZ)\rightarrow H_i((f_n)_{(l)};\ZZ)\rightarrow H_i((f_{n+1})_{(l)};\ZZ)\rightarrow ...
\]

(2) As above, the `only if' part is also obvious so we only prove the `if' part here. By the inductive construction previously, we can construct a simply-connected finite CW complex $Z_{N-1}$ of dimension $(N-1)$ together with a map $f_{N-1}:Z_{N-1}\rightarrow Y$ such that $\pi_i((f_{N-1})_{(l)})$ vanishes for $i\leq N-1$. By the Hurewicz theorem, $\pi_N((f_{N-1})_{(l)})\rightarrow H_N((f_{N-1})_{(l)};\ZZ)$ is an isomorphism. 

It is obvious that $\pi_N((f_{N-1})_{(l)})\cong H_N((f_{N-1})_{(l)};\ZZ)$ is a finitely generated $\ZZ_{(l)}$-module. Consider the long exact sequence
\[
...\rightarrow H_N((Z_{N-1})_{(l)};\ZZ)=0\rightarrow H_N(Y;\ZZ)\rightarrow H_N((f_{N-1})_{(l)};\ZZ)\rightarrow H_{N-1}((Z_{N-1})_{(l)};\ZZ)\rightarrow ...
\]
$H_N(Y;\ZZ)$ is a finitely generated free $\ZZ_{(l)}$-module by the univeresal coefficient theorem and the assumption that $H^q(Y;\ZZ)=0$ for $q>N$. Obviously, the same holds for $H_{N-1}((Z_{N-1})_{(l)};\ZZ)$. Then so is $H_N((f_{N-1})_{(l)};\ZZ)$. 

Let $y_1,...,y_r\in \pi_N((f_{N-1})_{(l)})\cong H_N((f_{N-1})_{(l)};\ZZ)$ be a basis over $\ZZ_{(l)}$. Then one can scale $y_1,...,y_r$ by products of primes not equal to $l$ so that there exists $z_1,...,z_r\in\pi_N(f_{N-1})$ such that the images of $z_i$'s are $y_i$'s correspondingly.

Then attach cells to $Z_{N-1}$ to kill $z_1,...,z_r$ and we obtain a map $f_N:Z_N\rightarrow Y$. Consider the long exact sequence of homology for the triple $((Z_{N-1})_{(l)},(Z_N)_{(l)},Y)$, it follows that $H_q(Y,(Z_N)_{(l)};\ZZ)=0$ for any $q$. Therefore, the inductive procedure stops at $Z_N$.
\end{proof}

\begin{proof}[Proof of Theorem \ref{finite-CW-complex-1}]
For the only if direction, we use the fact that the $l$-completion the $l$-localization of a simply-connected finite type CW complex $Z$ is the $l$-completion of $Z$.
For the if direction, given a simply connected CW complex $Y$ as in Theorem \ref{finite-CW-complex-1}, we apply Theorem \ref{thm: when l-local is finite} to the $l$-local complex whose $l$-completion is $Y_l$. 
\end{proof}

\subsection{Limits and Homologies}\label{sec: homological lemma}\;

We record the following lemma, to be used in the next sections, which gives a sufficient condition for when (homotopy) limit commute with homology. The general case is complicated (see \cite{Goerss-homology-homotopy-inverse-limit}).

\begin{lemma}\label{commutativity-of-limit-and-homology}
Let $\{Y_i\}$ be a pro system of objects in the homotopy category of pointed, connected CW complexes such that $\pi_q(Y_i)$ is a finite $l$-group for all $q,i$. Let $Y=\holim_i Y_i$. Assume that $\varprojlim_i \pi_1(Y_i)=0$ and $\varprojlim_i H_q(Y_i;\ZZ/l)$ is finite for any $q$. 
Let $A$ be a finite abelian $l$-group.

Then the canonical maps $H_q(Y;A)\rightarrow \varprojlim_i H_q(Y_i;A)$ and $H^q(Y;A)\leftarrow \varinjlim_i H^q(Y_i;A)$ are isomorphisms. 
\end{lemma}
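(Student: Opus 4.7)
The plan is to first establish the cohomological statement using Corollary \ref{cor of finite}, and then deduce the homological one by Pontryagin duality combined with an induction on the order of $A$.

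By Corollary \ref{cor of finite} there exists a simply-connected CW complex $Z$ with finitely many cells in each dimension, an $l$-adic weak equivalence $Z\to\{Y_i\}$ of pro-spaces, and, as the proof of that corollary shows, a weak equivalence $Z_l\simeq Y$. Applying Definition \ref{definition-of-l-adic-weak-equivalence}(3) with the constant finite abelian $l$-group local system $A$, and using that cohomology of a pro-space with finite coefficients is by convention the filtered colimit over the indexing category, I get $\varinjlim_i H^q(Y_i;A)\cong H^q(Z;A)$. To compare the right-hand side with $H^q(Y;A)=H^q(Z_l;A)$, I use that $Z$ has finitely generated homotopy and homology groups, so $Z_l$ is Bousfield's $l$-completion (equivalently Sullivan's $l$-profinite completion by the comparison recalled in the Terminologies) and in particular $Z\to Z_l$ induces an isomorphism on $H^*(-;\ZZ/l)$. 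A five-lemma induction along the coefficient short exact sequences $0\to\ZZ/l^{n-1}\to\ZZ/l^n\to\ZZ/l\to 0$ extends this to an isomorphism on $H^*(-;\ZZ/l^n)$ for every $n$, and any finite abelian $l$-group is then handled by filtering by a composition series with $\ZZ/l$-factors. Concatenating all these isomorphisms proves the cohomological half.

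For the homological half with $A=\ZZ/l$, each $H^q(Y_i;\ZZ/l)$ and $H_q(Y_i;\ZZ/l)$ is a finite-dimensional $\ZZ/l$-vector space, and they are naturally mutually dual; since $\varprojlim$ of finite groups is exact and interchanges with the finite dual, the plan is to compute
\[
\varprojlim_i H_q(Y_i;\ZZ/l)\;\cong\;\homo\!\bigl(\varinjlim_i H^q(Y_i;\ZZ/l),\,\ZZ/l\bigr)\;\cong\;\homo(H^q(Y;\ZZ/l),\ZZ/l)\;\cong\; H_q(Y;\ZZ/l),
\]
using the cohomological statement and the finiteness of $H^q(Y;\ZZ/l)$. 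For general $A$, I would induct on $|A|$ using a short exact sequence $0\to\ZZ/l\to A\to A'\to 0$: the long exact sequences in homology for $Y$ and for each $Y_i$, together with exactness of $\varprojlim$ on towers of finite abelian groups, yield a ladder of exact sequences, and the five lemma completes the induction. Finiteness of $H_q(Y;A)$ and $H_q(Y_i;A)$ at every stage is propagated from the $\ZZ/l$ base case through the same long exact sequences.

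The only real obstacle I anticipate is tracking finiteness: Pontryagin duality, exactness of $\varprojlim$, and the five-lemma step all require that the intervening groups be finite. This is ensured by the standing hypothesis that $\varprojlim_i H_q(Y_i;\ZZ/l)$ is finite for every $q$, together with the finite cell structure of $Z$. Once finiteness is controlled, the remaining content reduces to a routine diagram chase.
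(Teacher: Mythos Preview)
Your argument is correct, and it takes a genuinely different route from the paper's. The paper proceeds ``bottom up'': it reduces to $A=\ZZ/l$, passes to simply-connected $Y_i$, and then proves the homological isomorphism directly by climbing the Postnikov towers of $Y$ and of the $Y_i$, using a dedicated spectral-sequence comparison lemma (Lemma~\ref{Serre-spectral-sequence-2}) to propagate the isomorphism stage by stage; the cohomological statement is then deduced from the homological one by Pontryagin duality. You instead go ``top down'': you invoke Corollary~\ref{cor of finite} to produce a finite-type CW model $Z$ with $Z_l\simeq Y$ and an $l$-adic weak equivalence $Z\to\{Y_i\}$, read off the cohomological isomorphism from Definition~\ref{definition-of-l-adic-weak-equivalence}(3) together with the standard fact that $Z\to Z_l$ is a mod-$l$ cohomology isomorphism, and only then dualize to get homology. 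Your approach is shorter and makes Lemma~\ref{Serre-spectral-sequence-2} unnecessary for this lemma; the paper's approach is more self-contained and gives Lemma~\ref{Serre-spectral-sequence-2} as a byproduct. One small point: your Pontryagin-duality step uses that each $H_q(Y_i;\ZZ/l)$ is a \emph{finite} $\ZZ/l$-vector space (so that the double dual agrees with the original). This is true because all $\pi_q(Y_i)$ are finite $l$-groups, hence each Postnikov section $Y_i^{(n)}$ has finite $\ZZ/l$-homology and $H_q(Y_i;\ZZ/l)\cong H_q(Y_i^{(n)};\ZZ/l)$ for $q\le n$; you should add a sentence to that effect. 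You should also note explicitly that the composite $Z\to Z_l\simeq Y\to\{Y_i\}$ is the $l$-adic weak equivalence of Corollary~\ref{cor of finite}, so that your chain of isomorphisms really identifies the \emph{canonical} map with an isomorphism rather than merely exhibiting some isomorphism between the two groups.
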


\begin{corollary}\label{equivalence-of-l-completion-and-l-profinite-completion}
Let $Z$ be a simply-connected CW complex. Assume that $H_q(Z;\ZZ/l)$ is finite for any $q$. Then its $l$-profinite completion $Z^{\wedge}_l$ and its $l$-completion $Z_l$ are identical. In particular, $Z^{\wedge}_l$ is $l$-complete finite type.
\end{corollary}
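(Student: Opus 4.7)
The plan is two-fold: first apply Theorem \ref{finite-type-theorem} to the Artin-Mazur pro-space completion of $Z$ to deduce that the Sullivan completion $Z^{\wedge}_l$ is $l$-complete finite type, and then compare $Z^{\wedge}_l$ with Bousfield's $l$-completion $Z_l$ by appealing to the universal property of $H\ZZ/l$-localization.

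First I would realize $Z^{\wedge}_l$ as the homotopy limit $\holim_i Y_i$ of the Artin-Mazur $l$-profinite pro-space completion $\{Y_i\}$ of $Z$, each $Y_i$ being simply-connected with finite $l$-group homotopy groups. Then $\varprojlim_i \pi_1(Y_i)=0$ trivially, and by the defining property of Artin-Mazur's completion (Definition \ref{definition-of-l-adic-weak-equivalence}) we have $\varinjlim_i H^q(Y_i;\ZZ/l)\cong H^q(Z;\ZZ/l)$, which is finite by the hypothesis on $H_*(Z;\ZZ/l)$ together with Pontryagin duality. Dualizing again, $\varprojlim_i H_q(Y_i;\ZZ/l)$ is finite for every $q$. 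Theorem \ref{finite-type-theorem} then immediately gives that $Z^{\wedge}_l$ is $l$-complete finite type, which is the second assertion.

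For the first assertion, I would use Theorem \ref{finite-CW-complex-1}.(1) to find a simply-connected CW complex $Z'$ with finitely many cells in each dimension such that $Z'_l\simeq Z^{\wedge}_l$. Since $Z'$ is simply-connected and of finite type in the topological sense, the comparison recorded in the Terminologies identifies $Z'_l$ with the Sullivan $l$-profinite completion of $Z'$, and in particular shows that $Z^{\wedge}_l$ is $H\ZZ/l$-local. It then suffices to check that the canonical map $Z\to Z^{\wedge}_l$ is a $\ZZ/l$-homology equivalence; the universal property of Bousfield's $l$-completion will then yield $Z_l\simeq Z^{\wedge}_l$. For this $\ZZ/l$-homology comparison, Lemma \ref{commutativity-of-limit-and-homology} applies to $\{Y_i\}$ (with its hypotheses already verified above), giving $H_q(Z^{\wedge}_l;\ZZ/l)\cong \varprojlim_i H_q(Y_i;\ZZ/l)$; combined with $\varinjlim_i H^q(Y_i;\ZZ/l)\cong H^q(Z;\ZZ/l)$ and Pontryagin duality, this identifies both sides with $H_q(Z;\ZZ/l)$, so the map $Z \to Z^{\wedge}_l$ is indeed a $\ZZ/l$-homology equivalence.

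The most delicate step is this last $\ZZ/l$-homology comparison: one must navigate between Artin-Mazur's pro-space completion, its homotopy limit $Z^{\wedge}_l$, and the original space $Z$, while keeping track of finiteness in both the filtered colimit of $\ZZ/l$-cohomologies and the cofiltered limit of $\ZZ/l$-homologies so that Lemma \ref{commutativity-of-limit-and-homology} genuinely applies. Once this is in place, everything else is a direct application of Theorems \ref{finite-type-theorem} and \ref{finite-CW-complex-1} together with the standard universal property of Bousfield's $l$-completion.
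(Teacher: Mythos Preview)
Your proposal is correct and follows essentially the same route as the paper: realize $Z^{\wedge}_l$ as $\holim_i Y_i$ of the Artin--Mazur pro-completion, use the Artin--Mazur comparison on $\ZZ/l$-cohomology together with Pontryagin duality to verify the hypotheses of Theorem~\ref{finite-type-theorem}, apply Lemma~\ref{commutativity-of-limit-and-homology} to identify $H_*(Z^{\wedge}_l;\ZZ/l)\cong H_*(Z;\ZZ/l)$, and conclude that $Z\to Z^{\wedge}_l$ is a $\ZZ/l$-equivalence onto an $H\ZZ/l$-local space. The only difference is that your step through Theorem~\ref{finite-CW-complex-1}.(1) to produce an auxiliary $Z'$ is an unnecessary detour: once Theorem~\ref{finite-type-theorem} gives that $Z^{\wedge}_l$ is simply-connected with each $\pi_q$ a finitely generated $\widehat{\ZZ}_l$-module, the space is already $l$-complete (hence $H\ZZ/l$-local), and the paper concludes directly from this.
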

\begin{proof}
Let the pro-space $\{Z_j\}$ be the Artin-Mazur's $l$-profinite completion of $Z$. Then $Z^{\wedge}_l\simeq \holim_j Z_j$ by definition. By \cite{artin-mazur-etale-homotopy}*{Theorem 4.3}, the canonical map $H^q(Z;\ZZ/l)\leftarrow \varinjlim_j H^q(Z_j;\ZZ/l)$ is an isomorphism for any $q$. By Pontryagin duality, $\varprojlim_j H_q(Z_j;\ZZ/l)$ is finite. By Lemma \ref{commutativity-of-limit-and-homology}, $H_q(Z^{\wedge}_l;\ZZ/l)\rightarrow \varprojlim_j H_q(Z_i;\ZZ/l)\cong H_q(Z;\ZZ/l)$ is an isomorphism. By Theorem \ref{finite-type-theorem}, $Z^{\wedge}_l$ is $l$-complete finite type. The isomorphism $H_q(Z;\ZZ/l)\cong H_q(Z^{\wedge}_l;\ZZ/l)$ implies that $Z^{\wedge}_l$ is the $l$-completion of $Z$.
\end{proof}

\begin{lemma}\label{Serre-spectral-sequence-2}
Let $\{F_i\rightarrow E_i\rightarrow B_i\}_{i\in I}$ be a pro-system of fibrations in the homotopy category of pointed, connected CW complexes such that $\pi_q(F_i),\pi_q(E_i),\pi_q(B_i)$ are finite $l$-groups for any $q,i$. 
Let $F,E,B$ be the homotopy inverse limit of $\{F_i\},\{E_i\},\{B_i\}$ respectively. Assume that each $B_i$ is simply-connected. Further assume that $\varprojlim_i H_q(F_i;\ZZ/l)$ and $\varprojlim_i H_q(B_i;\ZZ/l)$ are finite for any $q$. If $H_q(F;\ZZ/l)\rightarrow \varprojlim_i H_q(F_i;\ZZ/l)$ and $H_q(B;\ZZ/l)\rightarrow \varprojlim_i H_q(B_i;\ZZ/l)$ are isomorphisms for any $q$, then $H_q(E;\ZZ/l)\rightarrow \varprojlim_i H_q(E_i;\ZZ/l)$ is also an isomorphism for any $q$. 
\end{lemma}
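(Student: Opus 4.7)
The plan is to compare two spectral sequences and match them at the $E^2$ page. Since the homotopy limit of a pro-system of fibrations is itself a fibration sequence, $F\to E\to B$ is a fibration, and $B$ is simply-connected because each $B_i$ is. The hypothesis gives $H_q(F;\ZZ/l)\cong \varprojlim_i H_q(F_i;\ZZ/l)$ finite for every $q$, so the Serre spectral sequence
\[
E^2_{p,q}\;=\;H_p\!\left(B;\,H_q(F;\ZZ/l)\right)\;\Longrightarrow\;H_{p+q}(E;\ZZ/l),
\]
with trivial local coefficients, is well-defined and convergent. On the other side, as in the proof of Lemma \ref{Serre-spectral-sequence}, the pro-system of Leray--Serre spectral sequences for $F_i\to E_i\to B_i$ has termwise finite entries, so $\varprojlim_i E(i)^r_{p,q}$ is a convergent spectral sequence abutting to $\varprojlim_i H_{p+q}(E_i;\ZZ/l)$, and the identification from that proof gives
\[
\varprojlim_i E(i)^2_{p,q}\;\cong\;\varprojlim_i H_p\!\left(B_i;\,\varprojlim_j H_q(F_j;\ZZ/l)\right).
\]

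Next I would identify the $E^2$ pages of the two spectral sequences naturally. The hypothesis rewrites the inner limit as $H_q(F;\ZZ/l)$. Then Lemma \ref{commutativity-of-limit-and-homology} applied to the pro-system $\{B_i\}$ (whose hypotheses hold since the $B_i$ are simply-connected and $\varprojlim_i H_q(B_i;\ZZ/l)$ is finite by assumption) with the finite abelian $l$-group coefficient $A=H_q(F;\ZZ/l)$ yields
\[
H_p\!\left(B;\,H_q(F;\ZZ/l)\right)\;\xrightarrow{\;\cong\;}\;\varprojlim_i H_p\!\left(B_i;\,H_q(F;\ZZ/l)\right).
\]
Combining the two displays, the natural comparison map of spectral sequences induced by the structural projections $E\to E_i$, $B\to B_i$, $F\to F_i$ is an isomorphism on $E^2$. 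Since the Serre spectral sequence is functorial in maps of fibrations and cofiltered limits of finite groups are exact, this isomorphism propagates to every $E^r$ and hence to $E^\infty$.

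Finally, as in the concluding paragraph of Lemma \ref{Serre-spectral-sequence}, the filtration of each abutment is of bounded length in a fixed total degree and is preserved by the comparison map. An inductive application of the five-lemma along the filtration, using the $E^\infty$-isomorphism on associated gradeds, then yields the desired isomorphism $H_q(E;\ZZ/l)\xrightarrow{\;\cong\;}\varprojlim_i H_q(E_i;\ZZ/l)$ for every $q$. The main technical point I expect to be delicate is the naturality of the comparison at higher $E^r$ pages, namely that the differentials on $\varprojlim_i E(i)^r_{p,q}$ coincide with those transported from the spectral sequence for $F\to E\to B$; this is forced by the finiteness of each $E(i)^r_{p,q}$ (so cofiltered limits are exact on them) together with the classical naturality of the Serre spectral sequence in maps of fibrations.
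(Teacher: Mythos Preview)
Your overall strategy—comparing the Serre spectral sequence for $F\to E\to B$ with the inverse limit of the individual spectral sequences and matching them at $E^2$—is exactly the paper's approach. However, there are two genuine gaps.

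First, your appeal to Lemma \ref{commutativity-of-limit-and-homology} is circular: that lemma is stated before Lemma \ref{Serre-spectral-sequence-2} but its proof, given afterwards, explicitly invokes Lemma \ref{Serre-spectral-sequence-2}. The paper avoids this by using the hypothesis on $B$ directly. Since $H_q(F;\ZZ/l)$ is a finite $\ZZ/l$-vector space, say $(\ZZ/l)^r$, one has $H_p(B;H_q(F;\ZZ/l))\cong H_p(B;\ZZ/l)^r$ and likewise for each $B_i$ (and for the inverse limit, as $r$ is finite). Thus the assumed isomorphism $H_p(B;\ZZ/l)\xrightarrow{\sim}\varprojlim_i H_p(B_i;\ZZ/l)$ already yields $H_p(B;H_q(F;\ZZ/l))\cong \varprojlim_i H_p(B_i;H_q(F;\ZZ/l))$, and together with Lemma \ref{inverse-limit-commutes-with-coefficient} (as in Lemma \ref{Serre-spectral-sequence}) this identifies the $E^2$ pages without any forward reference.

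Second, your assertion that ``the homotopy limit of a pro-system of fibrations is itself a fibration sequence'' is not immediate here: the pro-system lives in the \emph{homotopy} category, and $F,E,B$ are defined as three separate homotopy limits. The paper inserts a short argument: letting $\widetilde F$ be the actual homotopy fiber of $E\to B$, one compares the long exact sequence for $\widetilde F\to E\to B$ with the inverse limit of the long exact sequences for $F_i\to E_i\to B_i$ (the latter remains exact because all terms are finite), and the five-lemma gives $\widetilde F\xrightarrow{\sim} F$. Only then is the Serre spectral sequence for $F\to E\to B$ available. With these two corrections your argument coincides with the paper's.
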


\begin{proof}
Let $\widetilde{F}$ be the homotopy fiber of $E\rightarrow B$. Then we have a map $\widetilde{F}\rightarrow F_i$, which is compatible with respect to the pro-system $\{F_i\}$.
The map $\widetilde{F}\rightarrow \{F_i\}$ induces a map $\widetilde{F}\rightarrow F$. Consider the following commutative diagram. 
\[
\begin{tikzcd}
 ... \arrow[r] & \pi_q(\widetilde{F}) \arrow[r] \arrow[d] & \pi_q(E) \arrow[r] \arrow[d,phantom,sloped,"\cong"] &  \pi_q(B) \arrow[r] \arrow[d,phantom,sloped,"\cong"] & ... \\
 ... \arrow[r] & \varprojlim_i\pi_q(F_i) \arrow[r] & \varprojlim_i\pi_q(E_i) \arrow[r] &  \varprojlim_i\pi_q(B_i) \arrow[r] & ...    
\end{tikzcd}
\]
The indicated arrows are isomorphisms by definition. The second row is exact because it is exact before taking inverse limit and each term is finite abelian. By the five-lemma, $\pi_q(F)\cong \varprojlim_i\pi_q(F_i)$. So $\widetilde{F}\rightarrow F$ is a homotopy equivalence.

Consider the pro-system of Leray-Serre spectral sequence $E(i)^r_{p,q}$ for the fibration $F_i\rightarrow E_i\rightarrow B_i$ with $E(i)^2_{p,q}=H_p(B_i;H_q(F_i;\ZZ/l))$ and the Leray-Serre spectral sequence $E^r_{p,q}$ for the fibration $F\rightarrow E\rightarrow B$ with $E^2_{p,q}=H_p(B;H_q(F;\ZZ/l))$. The inverse limit $\varprojlim_i E(i)^r_{p,q}$ is also a spectral sequence. Because of the following equations, $E^{\infty}_{p,q}\cong \varprojlim_i E(i)^{\infty}_{p,q}$.
\begin{align*}
    \varprojlim_i E(i)^2_{p,q} & = \varprojlim_i H_p(B_i;H_q(F_i;\ZZ/l)) \\
    & \cong \varprojlim_{i}H_p(B_i;\varprojlim_{j} H_q(F_j;\ZZ/l)) \,\,\, \text{(Lemma \ref{inverse-limit-commutes-with-coefficient})} \\ 
    & \cong \varprojlim_{i}H_p(B_i;H_q(F;\ZZ/l)) \cong H_p(B;H_q(F;\ZZ/l)) = E^2_{p,q}  \\
\end{align*}
Consider the filtration of the homology groups $0=F_{-1}H_n\subset F_0H_n\subset F_1H_n\subset ... \subset H_n$.
\begin{align*}
F_pH_{p+q}(E;\ZZ/l)/F_{p-1}H_{p+q}(E;\ZZ/l) & = E^{\infty}_{p,q} \cong \varprojlim_i E(i)^{\infty}_{p,q} 
 =\varprojlim_i F_pH_{p+q}(E_i;\ZZ/l)/F_{p-1}H_{p+q}(E_i;\ZZ/l) \\ 
  & \cong  F_p\varprojlim_i H_{p+q}(E_i;\ZZ/l)/F_{p-1}\varprojlim_iH_{p+q}(E_i;\ZZ/l) \\
\end{align*}
Since for each $n$ the length of the filtraion on $H_n$ is finite, this proves the lemma.
\end{proof}

\begin{proof}[Proof of Lemma \ref{commutativity-of-limit-and-homology}]
By the Bockstein exact sequence, we can reduce to the case $A=\ZZ/l$.
By Pontryagin duality, the cohomological isomorphism in Lemma \ref{commutativity-of-limit-and-homology} follows from the homological isomorphism. By a similar argument as in the proof of Theorem \ref{finite-type-theorem}, we may assume that each $Y_i$ is simply-connected without loss of generality. By Theorem \ref{finite-type-theorem}, $Y$ is simply-connected, $l$-complete finite type.

Let $Y_i^{(n)},Y^{(n)}$ be the $n$-th spaces in the Postnikov tower of $Y_i,Y$ respectively. By Lemma \ref{Eilenberg-Maclane-space} and Lemma \ref{Serre-spectral-sequence-2}, $H_q(Y^{(n)};\ZZ/l)\rightarrow \varprojlim_i H_q(Y_i^{(n)};\ZZ/l)$ is an isomorphism for any $q$.
Since $Y^{(n)}$ is obtained by attaching cells of dimension larger than $n+1$, $H_q(Y^{(n)};\ZZ/l)\cong H_q(Y;\ZZ/l)$ for any $q<n$. The same is true for $Y_i$. So for any fixed $q$ and some large enough $n$, $H_q(Y;\ZZ/l)\cong H_q(Y^{(n)};\ZZ/l)\cong \varprojlim_i H_q(Y_i^{(n)};\ZZ/l) \cong \varprojlim_i H_q(Y_i;\ZZ/l)$.
\end{proof}

\section{$l$-adic Poincar\'e Duality}\label{Section-Foundations-of-l-adic-Spivak}

In this section, we develop the theories of $l$-adic Poincar\'e duality spaces, $l$-adic Spanier-Whitehead duality, and mod-$l$ Spivak normal spherical fibrations. 
On the first pass, one may only read Definition \ref{definition-l-adic-spherical-fibration}, \ref{definition-mod-l-Poincare-duality}, \ref{definition-of-mod-l-spivak}, \ref{definition-of-equivalence-of-spivak} and Proposition/Theorem \ref{Z/l-poincare-to-spivak}, \ref{uniqueness-of-l-adic-Spivak} and skip the details in Sections \ref{subsection-l-adic-s-duality}-\ref{subsec: normal spherical}.

In Section \ref{subsection-mod-l-poincare-duality-on-varieties}, we briefly review Joshua's mod-$l$ Spanier-Whitehead duality for projective varieties (\cite{Joshua-Spanier-Whitehead}) and prove the existence of topological mod-$l$ Poincar\'e duality for the \'etale homotopy type of a smooth projective variety (see Theorem \ref{existence-of-mod-Poincare-duality}).

\subsection{$l$-adic Spanier-Whitehead Duality}\label{subsection-l-adic-s-duality}
\;

There are two versions of $l$-completion of spectra: the $S(\ZZ/l)$-localization and the $H(\ZZ/l)$-localization, where $S(\ZZ/l)$ is the Moore spectrum and $H(\ZZ/l)$ is the Eilenberg-Maclane spectrum. These two $l$-completions are equivalent for connective spectra (see \cite{Bousfield-localiztion-of-spectra}*{Theorem 3.1}).

\begin{proposition}\label{l-complete-wedge-with-finite-spectra}
If a spectrum $V$ is $l$-complete, then $V\wedge Z$ is $l$-complete for any finite spectrum $Z$.    
\end{proposition}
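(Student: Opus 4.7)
The plan is to induct on the number of cells of the finite spectrum $Z$. Recall that a finite spectrum is, by definition, an object in the thick subcategory of the stable homotopy category generated by the sphere spectra $\Sigma^n$, so every finite spectrum can be built from spheres by iteratively attaching cells via cofiber sequences of the form $\Sigma^{d-1}\to Z_k\to Z_{k+1}$, or equivalently $Z_k\to Z_{k+1}\to \Sigma^d$.

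The base case is $Z=\Sigma^n$ for some $n\in\ZZ$: then $V\wedge Z$ is a shift of $V$, and Bousfield localization commutes with suspension, so $V\wedge\Sigma^n$ is $l$-complete whenever $V$ is. For the inductive step, write $Z$ as the cofiber of a map out of $\Sigma^{d-1}$ into some $Z'$ with strictly fewer cells, giving a cofiber sequence $Z'\to Z\to \Sigma^d$. Smashing with $V$ preserves cofiber sequences and yields
\[
V\wedge Z'\to V\wedge Z\to V\wedge \Sigma^d.
\]
The outer terms are $l$-complete by the inductive hypothesis and the base case, respectively. Then $V\wedge Z$ is $l$-complete because the full subcategory of $l$-complete spectra is closed under extensions: being local objects for a Bousfield localization, they are closed under homotopy limits, and in the stable category fiber and cofiber sequences agree, so the third term of any cofiber sequence whose other two terms are local is itself local.

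The argument is essentially formal. The only input beyond the universal property of Bousfield localization is the closure of local spectra under extensions, which holds uniformly for both the $S(\ZZ/l)$- and $H(\ZZ/l)$-localizations considered in the paper, so no genuine obstacle is anticipated; the main step to highlight is simply that smashing with $V$ turns the cellular filtration of $Z$ into a finite tower of cofiber sequences with $l$-complete sub- and quotient-terms.
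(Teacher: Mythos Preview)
Your proof is correct and follows the same overall induction on the cell structure of $Z$ as the paper. The difference lies in the inductive step: the paper passes to homotopy groups, uses the long exact sequence to see that $\pi_*(V\wedge Z)$ is derived $l$-complete, and then invokes the criterion of Barthel et al.\ (which requires $V\wedge Z$ to be bounded below) to conclude. Your argument instead appeals directly to the closure of $E$-local spectra under homotopy limits, hence under fibers, hence under extensions in cofiber sequences. This is more formal and actually more robust: it does not need the bounded-below hypothesis that the paper's proof invokes (a hypothesis not stated in the proposition, though satisfied in the paper's applications where $V$ is connective). Both arguments are short, but yours isolates the purely categorical reason the result holds.
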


\begin{proof}
$V\wedge Z$ is obviously $l$-complete when $Z=\Sigma^n$ for any $n\in \ZZ$. 
Suppose that $V\wedge Y$ is $l$-complete for a finite spectrum $Y$ and that there is a cofibration sequence $\Sigma^n\rightarrow Y\rightarrow Z$.
The long exact sequence of the $V_*(-)$-homologies deduces that $V_*(Z)$ is derived $l$-complete.
Since $V\wedge Z$ is bounded below, \cite{Barthel-Bousfield-Comparison-of-p-completion}*{Corollary 3.3} entails that $V\wedge Z$ is $l$-complete. 
\end{proof}

\begin{corollary}\label{cor of l-complete wedge}
If $X,Y$ are connective spectra, then $(X\wedge Y)_l\rightarrow (X\wedge Y_l)_l$ is a homotopy equivalence.   
\end{corollary}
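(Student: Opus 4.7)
The plan is to reduce the corollary to showing that the natural map $X \wedge Y \to X \wedge Y_l$ is an $H(\ZZ/l)$-equivalence of connective spectra; applying $l$-completion will then give the result via the equivalence between $S(\ZZ/l)$- and $H(\ZZ/l)$-localization on connective spectra (\cite{Bousfield-localiztion-of-spectra}*{Theorem 3.1}), recalled at the start of this subsection.

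First I would let $F$ denote the homotopy fiber of the $l$-completion unit $\eta_Y : Y \to Y_l$, so that $F$ is $H(\ZZ/l)$-acyclic by the defining property of the localization. Smashing the fiber sequence $F \to Y \to Y_l$ with $X$ produces a fiber sequence $X \wedge F \to X \wedge Y \to X \wedge Y_l$; since
\[
H(\ZZ/l) \wedge (X \wedge F) \simeq X \wedge \bigl(H(\ZZ/l) \wedge F\bigr) \simeq 0,
\]
the spectrum $X \wedge F$ is itself $H(\ZZ/l)$-acyclic, and hence $X \wedge Y \to X \wedge Y_l$ is an $H(\ZZ/l)$-equivalence.

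Next I would verify the connectivity of both sides so that the Bousfield comparison can be invoked: $X \wedge Y$ is connective as a smash of connective spectra, while the short exact sequence
\[
0 \to L_0(\pi_n Y) \to \pi_n(Y_l) \to L_1(\pi_{n-1} Y) \to 0
\]
for the derived $l$-completion, together with the fact that only $L_0$ and $L_1$ contribute, shows $Y_l$ is connective, so $X \wedge Y_l$ is also connective. Applying $(\cdot)_l$ to the $H(\ZZ/l)$-equivalence $X \wedge Y \to X \wedge Y_l$ of connective spectra then produces the desired homotopy equivalence $(X \wedge Y)_l \xrightarrow{\sim} (X \wedge Y_l)_l$.

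The step to be most careful with is the connectivity of $Y_l$, since without it the two variants of $l$-completion could disagree and the Bousfield identification would not apply; this is the main obstacle, but it reduces to the standard fact that derived completion of abelian groups has no derived functors beyond $L_0$ and $L_1$.
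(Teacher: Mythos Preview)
Your proof is correct and follows essentially the same approach as the paper: both arguments show that $X\wedge Y\to X\wedge Y_l$ is an $H(\ZZ/l)$-equivalence of connective spectra and then invoke the Bousfield comparison between $S(\ZZ/l)$- and $H(\ZZ/l)$-localization. The paper phrases the $H(\ZZ/l)$-equivalence via the K\"unneth isomorphism $H_*(X;\ZZ/l)\otimes H_*(Y;\ZZ/l)\cong H_*(X\wedge Y;\ZZ/l)$ together with $H_*(Y;\ZZ/l)\cong H_*(Y_l;\ZZ/l)$, whereas you use the equivalent fiber-sequence formulation; your explicit verification that $Y_l$ is connective is a detail the paper leaves implicit.
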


\begin{proof}
Consider the following commutative diagram.
\[
\begin{tikzcd}
H_*(X;\ZZ/l)\otimes H_*(Y;\ZZ/l) \arrow{r}{\cong} \arrow{d}{\cong} &  H_*(X;\ZZ/l)\otimes H_*(Y_l;\ZZ/l) \arrow{d}{\cong} \\
H_*(X\wedge Y;\ZZ/l) \arrow{r} & H_*(X\wedge Y_l,\ZZ/l)
\end{tikzcd}
\]
It follows that $(X\wedge Y)_l\simeq L_{H(\ZZ/l)}(X\wedge Y)\rightarrow L_{H(\ZZ/l)}(X\wedge Y_l)\simeq (X\wedge Y_l)_l$.
\end{proof}

Let $\Sigma^n$ be the $n$-sphere spectrum and let $\Sigma^n_l$ be the $l$-completion of $\Sigma^n$.

Let $Z,Z^*$ be two connective spectra with a map $\mu:Z^*\wedge Z\rightarrow \Sigma^0_l$. 
Let $U$ be an $l$-complete spectrum and $V$ be a connective $l$-complete spectrum  with maps $f:U\rightarrow (V\wedge Z^*)_l$ and $g: U\to (Z\wedge V)_l$. 
Then map $\mu_l$ induces two maps:
\[
D_{\mu}(f):U\wedge Z \xrightarrow{(f\wedge Id_Z)_l} ((V\wedge Z^*)_l\wedge Z)_l\xleftarrow{\simeq} (V\wedge Z^*\wedge Z)_l \xrightarrow{(Id_V\wedge \mu)_l} (V\wedge \Sigma^0_l)_l\simeq (V\wedge \Sigma^0)_l \simeq V,
\]
\[
{}_{\mu}D(g):Z^*\wedge U\xrightarrow{(Id_{Z^*}\wedge g)_l} (Z^*\wedge (Z\wedge V)_l)_l \xleftarrow{\simeq} (Z^*\wedge Z\wedge V)_l \xrightarrow{(\mu\wedge Id_V)_l} (\Sigma^0_l\wedge V)_l\simeq (\Sigma^0\wedge V)\simeq V.
\]
In this way, we obtain two maps $D_{\mu}:[U,(V\wedge Z^*)_l]\rightarrow [U\wedge Z,V]$ and ${}_{\mu}D:[U,(Z\wedge V)_l]\rightarrow [Z^*\wedge U,V]$, which are natural in $U$ and $V$.

\begin{definition}\label{definition-of-l-adic-S-duality}
$\mu:(Z^*\wedge Z)_l\rightarrow \Sigma^0_l$ is called an \textbf{$l$-adic Spanier-Whitehead duality} (or an \textbf{$l$-adic $S$-duality}) with $Z^*,Z$ connective spectra if $D_{\mu}$ and ${}_{\mu}D$ are isomorphisms for any $l$-complete spectrum $U$ and any connective $l$-complete spectrum $V$. 
In this case $Z^*$ is called an \textbf{$l$-adic Spanier-Whitehead dual} (or an \textbf{$l$-adic $S$-dual}) of $Z$.
\end{definition}

\begin{proposition}\label{equivalence-of-S-duality-and-l-adic-S-duality}
Let $Z^*$ and $Z$ be finite spectra with $S$-duality $\mu:Z^*\wedge Z\rightarrow \Sigma^0$.
Then $\mu_l:Z^*\wedge Z\rightarrow (Z^*\wedge Z)_l\rightarrow \Sigma^0_l$ is an $l$-adic $S$-duality.    
\end{proposition}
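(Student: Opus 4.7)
The plan is to reduce the proposition to classical (integral) Spanier--Whitehead duality by showing that, under the $l$-completeness hypotheses on $U$ and $V$, every $l$-completion appearing in the definition of $D_{\mu_l}$ and ${}_{\mu_l}D$ is already an equivalence. The guiding heuristic is that smashing a bounded-below $l$-complete spectrum with a finite spectrum stays $l$-complete, so the $l$-adic pairing is just the classical pairing in disguise.

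First I would apply Proposition \ref{l-complete-wedge-with-finite-spectra} to the connective $l$-complete spectrum $V$ and to each of the finite spectra $Z^{\ast}$, $Z$, and $Z^{\ast}\wedge Z$ to conclude that all of $V\wedge Z^{\ast}$, $V\wedge Z^{\ast}\wedge Z$, $Z\wedge V$, and $Z^{\ast}\wedge Z\wedge V$ are already $l$-complete. Hence the canonical maps from each of these to its $l$-completion are equivalences. Furthermore, Corollary \ref{cor of l-complete wedge} together with $V_l\simeq V$ supplies a canonical equivalence $(V\wedge \Sigma^0_l)_l\simeq V$, so the final step in the definitions of $D_{\mu_l}$ and ${}_{\mu_l}D$ is automatic.

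Next I would unwind the definition of $D_{\mu_l}(f)$ for $f\colon U\to (V\wedge Z^{\ast})_l$. Writing $f'\colon U\to V\wedge Z^{\ast}$ for the map corresponding to $f$ under the equivalence $(V\wedge Z^{\ast})_l\simeq V\wedge Z^{\ast}$, a naturality diagram for the $l$-completion functor shows that, up to the equivalences established above, $D_{\mu_l}(f)$ equals the classical composition
\[
U\wedge Z \xrightarrow{f'\wedge \mathrm{Id}_Z} V\wedge Z^{\ast}\wedge Z \xrightarrow{\mathrm{Id}_V\wedge \mu} V\wedge \Sigma^0 \simeq V,
\]
i.e.\ the classical $D_{\mu}(f')$. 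Classical Spanier--Whitehead duality for the pair $(Z,Z^{\ast})$ with pairing $\mu$ asserts that $D_\mu\colon [U,V\wedge Z^{\ast}]\to [U\wedge Z,V]$ is a bijection for all spectra $U$ and $V$; combined with $[U,(V\wedge Z^{\ast})_l]\cong [U,V\wedge Z^{\ast}]$ from paragraph two, this yields that $D_{\mu_l}$ is a bijection. The analogous argument, applied symmetrically to the $l$-completions of $Z\wedge V$ and $Z^{\ast}\wedge Z\wedge V$, handles ${}_{\mu_l}D$.

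The one place requiring genuine care is the compatibility in paragraph three: verifying that the composite defining $D_{\mu_l}(f)$ really collapses, under the equivalences of paragraph two, to the classical $D_{\mu}(f')$, and likewise for ${}_{\mu_l}D$. This is a routine diagram chase using the naturality of the $l$-completion unit $X\to X_l$ with respect to smash products, together with the identification $(V\wedge \Sigma^0_l)_l\simeq (V\wedge \Sigma^0)_l\simeq V$ from Corollary \ref{cor of l-complete wedge}. Once this check is in place the proposition follows immediately from the classical duality theorem for finite spectra.
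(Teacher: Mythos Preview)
Your proposal is correct and follows essentially the same approach as the paper: both use Proposition \ref{l-complete-wedge-with-finite-spectra} to conclude that $V\wedge Z^*$ (and the analogous smash products) are already $l$-complete, then reduce $D_{\mu_l}$ to the classical $D_\mu$. The paper compresses your explicit unwinding of the intermediate completions into a single commutative triangle, but the argument is the same.
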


\begin{proof}
Let $U$ be an $l$-complete spectrum and let $V$ be a connective $l$-complete spectrum. By Proposition \ref{l-complete-wedge-with-finite-spectra}, $V\wedge Z^*$ and $V\wedge Z$ are both $l$-complete. The following commutative diagram shows that $D_{\mu_l}$ is an isomorphism.
\[
\begin{tikzcd}
    \left [ U,V\wedge Z^* \right ] \arrow{rr}{\cong} \arrow{rd}{D_{\mu,\cong}} & & \left [ U,(V\wedge Z^*)_l \right ] \arrow{ld}{D_{\mu_l}} \\
    & \left [ U\wedge Z,V \right] & 
\end{tikzcd}
\]
The isomorphy of ${}_{\mu_l}D$ is shown similarly.
\end{proof}

\begin{remark}
Let $\tau:Z^*\wedge Z\rightarrow Z\wedge Z^*$ be the map interchanging the two factors. Then there is a commutative diagram.
\[
\begin{tikzcd}
\left [U,(V\wedge Z^*)_l\right ] \arrow{r}{D_{\mu}} \arrow{d}{\cong} & \left [U\wedge Z,V\right ] \arrow{d}{\cong} \\
\left [U,(Z^*\wedge V)_l\right ] \arrow{r}{{}_{\mu\tau}D} & \left [Z\wedge U,V\right ]
\end{tikzcd}
\]
It follows that
$\mu$ is an $l$-adic $S$-duality if and only if so is $\mu\circ\tau$.
\end{remark}

Let $\mu:Z^*\wedge Z\rightarrow \Sigma^0$ and $\nu:Y^*\wedge Y\rightarrow \Sigma^0$ be $l$-adic $S$-duality maps. The composition
\[\theta(\mu,\nu):[Z_l,Y_l]\cong [Z_l,(Y\wedge \Sigma^0_l)_l]\underset{\sim}{\overset{{{}_{\nu}D}}{\rightarrow}}[Y^*\wedge Z_l,\Sigma^0_l] \cong [(Y^*)_l\wedge Z,\Sigma^0_l]\underset{\sim}{\overset{{D_{\mu}}}{\leftarrow}} [(Y^*)_l,(Z^*\wedge \Sigma^0_l)_l]\cong [(Y^*)_l,(Z^*)_l]\]
is an isomorphism. For $f:Z_l\rightarrow Y_l$, define $f^*=\theta(\mu,\nu)(f):(Y^*)_l\rightarrow (Z^*)_l$. 
Note that the homotopy class of $f^*$ is uniquely characterized by the following diagram, commutative up to homotopy:
\[
\begin{tikzcd}
    (Y^*)_l\wedge Z_l \arrow[r,"f^*\wedge Id_{Z_l}"] \arrow[d,"Id_{Y^*}\wedge f"] & (Z^*)_l\wedge Z_l \arrow[d,"\mu_l"] \\
    (Y^*)_l\wedge Y_l \arrow[r,"\nu_l"] & \Sigma^0_l
\end{tikzcd}
\]  
for which we have used the isomorphism $[Z\wedge Z^*,\Sigma^0_l]\cong [(Z\wedge Z^*)_l,\Sigma^0_l]\cong [(Z_l\wedge (Z^*)_l)_l,\Sigma^0_l]\cong [Z_l\wedge (Z^*)_l,\Sigma^0_l]$ and a similar isomorphism for $Y,Y^*$.

The following proposition is obvious.

\begin{proposition}
Let $\mu:Z^*\wedge Z\rightarrow \Sigma^0_l$, $\nu:Y^*\wedge Y\rightarrow \Sigma^0_l$ and $\pi:Z^*\wedge Z\rightarrow \Sigma^0_l$ be $l$-adic S-dualities for connective spectra. For $f:Z_l\rightarrow Y_l$ and $g:Y_l\rightarrow Z_l$,
\begin{enumerate}
    \item $(f^*)^*=f$;
    \item $(g\circ f)^*=f^*\circ g^*$;
    \item $Id^*=Id$.
\end{enumerate}
\end{proposition}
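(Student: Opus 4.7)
The plan is to use the universal characterization of $f^{*}$ given in the commutative square above the proposition, namely that $f^{*}$ is the unique map making
\[
\mu_{l}\circ(f^{*}\wedge \mathrm{Id}_{Z_{l}}) \;=\; \nu_{l}\circ(\mathrm{Id}_{(Y^{*})_{l}}\wedge f)
\]
hold in $[(Y^{*})_{l}\wedge Z_{l},\Sigma^{0}_{l}]$. Each of the three claims is then verified by exhibiting a candidate on the right-hand side of the prescribed square and invoking uniqueness.

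For (3), taking $f=\mathrm{Id}_{Z_{l}}$, both sides of the characterizing equation become $\mu_{l}$, so the identity on $(Z^{*})_{l}$ qualifies, and uniqueness gives $\mathrm{Id}^{*}=\mathrm{Id}$. For (2), given $f\colon Z_{l}\to Y_{l}$ and $g\colon Y_{l}\to W_{l}$ (with dualities $\mu,\nu,\pi$ for $Z,Y,W$), I would test $f^{*}\circ g^{*}$ in the square defining $(g\circ f)^{*}$ by the chain of equalities
\[
\mu_{l}\circ\bigl((f^{*}\!\circ g^{*})\wedge \mathrm{Id}\bigr)
=\mu_{l}\circ(f^{*}\wedge \mathrm{Id})\circ(g^{*}\wedge \mathrm{Id})
=\nu_{l}\circ(\mathrm{Id}\wedge f)\circ(g^{*}\wedge \mathrm{Id})
=\pi_{l}\circ(\mathrm{Id}\wedge(g\circ f)),
\]
applying the defining equations of $f^{*}$ and $g^{*}$ in turn; uniqueness yields (2).

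For (1), I would invoke the remark that $\mu\circ\tau$ and $\nu\circ\tau$ are also $l$-adic $S$-dualities, realizing $Z$ (resp. $Y$) as the $l$-adic $S$-dual of $Z^{*}$ (resp. $Y^{*}$). The iterated dual $(f^{*})^{*}$ is then, by definition, the unique map $Z_{l}\to Y_{l}$ satisfying
\[
(\mu\circ\tau)_{l}\circ(\mathrm{Id}_{Z_{l}}\wedge f^{*}) \;=\; (\nu\circ\tau)_{l}\circ\bigl((f^{*})^{*}\wedge \mathrm{Id}_{(Y^{*})_{l}}\bigr).
\]
Composing the defining equation of $f^{*}$ with the swap map $\tau\colon Z_{l}\wedge(Y^{*})_{l}\to (Y^{*})_{l}\wedge Z_{l}$ on the right and using naturality of $\tau$ with respect to smash products of maps, the left-hand side becomes $(\nu\circ\tau)_{l}\circ(f\wedge\mathrm{Id})$, showing that $f$ satisfies the characterizing equation for $(f^{*})^{*}$; uniqueness then gives $(f^{*})^{*}=f$.

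The only delicate point I anticipate is bookkeeping in (1): one must be careful that the symmetry isomorphism $\tau$ on $l$-completed smash products really does conjugate the characterizing square into itself, i.e. that the dualities $\mu\circ\tau$ and $\nu\circ\tau$ constructed from $\mu,\nu$ genuinely serve as the $l$-adic $S$-duality data needed to define the operation $(-)^{*}$ a second time. Once the remark preceding the proposition is invoked to legitimize this, the entire proposition reduces to three short diagram chases driven by the uniqueness clause of the characterizing square.
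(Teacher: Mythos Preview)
Your proof is correct and is exactly the argument the paper has in mind: the paper declares the proposition ``obvious'' and gives no proof, and the characterizing square you use is precisely the tool the paper sets up immediately before the statement. Your diagram chases for (2) and (3) are straightforward, and your treatment of (1) via the remark that $\mu\circ\tau$ is again an $l$-adic $S$-duality is the intended mechanism. The only cosmetic discrepancy is that in (2) you introduce a third space $W$, whereas the paper's statement has $g\colon Y_l\to Z_l$ with the second duality $\pi$ on $Z^*\wedge Z$; your argument of course specializes to that case without change.
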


\begin{proposition}
If $\mu:Z^*\wedge Z\rightarrow \Sigma^0_l$ and $\widetilde{\mu}:\widetilde{Z^*}\wedge Z\rightarrow \Sigma^0_l$ are both $S$-dualities, then there is a homotopy equivalence $h:Z^*_l\rightarrow (\widetilde{Z^*})_l$, unique up to homotopy, such that $\widetilde{\mu}_l\circ(h\wedge Id_{Z_l})_l\simeq \mu_l$.    
\end{proposition}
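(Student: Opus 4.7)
The plan is to extract $h$ as the preimage of $\mu$ under the bijection coming from the $l$-adic $S$-duality $\widetilde{\mu}$, and then produce a two-sided homotopy inverse by the symmetric construction using $\mu$ itself.

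Concretely, I would apply Definition~\ref{definition-of-l-adic-S-duality} for $\widetilde{\mu}$ with $U=Z^{*}_{l}$ (which is $l$-complete) and $V=\Sigma^{0}_{l}$ (which is connective and $l$-complete). Corollary~\ref{cor of l-complete wedge} gives $(\Sigma^{0}_{l}\wedge\widetilde{Z^{*}})_{l}\simeq(\widetilde{Z^{*}})_{l}$, so $D_{\widetilde{\mu}}$ becomes an isomorphism
\[
D_{\widetilde{\mu}}:[Z^{*}_{l},(\widetilde{Z^{*}})_{l}]\xrightarrow{\;\cong\;}[Z^{*}_{l}\wedge Z,\Sigma^{0}_{l}].
\]
Another application of Corollary~\ref{cor of l-complete wedge}, combined with symmetry of the smash product, identifies $(Z^{*}\wedge Z)_{l}\simeq(Z^{*}_{l}\wedge Z)_{l}$, so $\mu$ determines a canonical class $\bar{\mu}\in[Z^{*}_{l}\wedge Z,\Sigma^{0}_{l}]$. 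Set $h:=D_{\widetilde{\mu}}^{-1}(\bar{\mu})$. Unwinding the explicit formula for $D_{\widetilde{\mu}}$ shows that this $h$ satisfies exactly $\widetilde{\mu}_{l}\circ(h\wedge \mathrm{Id}_{Z_{l}})_{l}\simeq\mu_{l}$.

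Uniqueness of $h$ up to homotopy is the injectivity half of the isomorphism $D_{\widetilde{\mu}}$: any other $h'$ satisfying the same compatibility has $D_{\widetilde{\mu}}(h')=\bar{\mu}=D_{\widetilde{\mu}}(h)$, hence $h'\simeq h$. To see that $h$ is a homotopy equivalence, I perform the symmetric construction with the roles of $(\mu,Z^{*})$ and $(\widetilde{\mu},\widetilde{Z^{*}})$ swapped, producing $k:(\widetilde{Z^{*}})_{l}\to Z^{*}_{l}$ with $\mu_{l}\circ(k\wedge \mathrm{Id}_{Z_{l}})_{l}\simeq\widetilde{\mu}_{l}$. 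Then both $h\circ k$ and $\mathrm{Id}_{(\widetilde{Z^{*}})_{l}}$ are sent to $\widetilde{\mu}_{l}$ by $D_{\widetilde{\mu}}$ at $U=V=(\widetilde{Z^{*}})_{l}$, so injectivity forces $h\circ k\simeq\mathrm{Id}$; symmetrically $k\circ h\simeq\mathrm{Id}$ using the injectivity of $D_{\mu}$.

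The main obstacle I anticipate is purely organizational: one must track the various $l$-completion equivalences $(X\wedge Y)_{l}\simeq(X_{l}\wedge Y)_{l}\simeq(X\wedge Y_{l})_{l}$ from Corollary~\ref{cor of l-complete wedge} carefully enough that the class $\bar{\mu}\in[Z^{*}_{l}\wedge Z,\Sigma^{0}_{l}]$ really corresponds to $\mu_{l}$ under $D_{\widetilde{\mu}}$ on the nose, and not merely up to some additional completion. Once those identifications are pinned down, everything else is a formal consequence of Definition~\ref{definition-of-l-adic-S-duality}.
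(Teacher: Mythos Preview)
Your proposal is correct and is essentially the paper's own argument. The paper packages the construction via the isomorphism $\theta(\widetilde{\mu},\mu):[Z_l,Z_l]\to[(Z^*)_l,(\widetilde{Z^*})_l]$ introduced just before the proposition and sets $h=\mathrm{Id}_{Z_l}^*$; unwinding $\theta$ on $\mathrm{Id}_{Z_l}$ amounts precisely to your step $h=D_{\widetilde{\mu}}^{-1}(\bar{\mu})$, and the inverse $g$ together with the uniqueness argument for $h\circ g$ and $g\circ h$ is identical to yours.
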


\begin{proof}
$Id_{Z_l}:Z_l\rightarrow Z_l$ induces $h=Id_{Z_l}^*:(Z^*)_l\rightarrow (\widetilde{Z^*})_l$ and $g=Id_{Z_l}^*:(\widetilde{Z^*})_l\rightarrow (Z^*)_l$, unique up to homotopy, such that $\widetilde{\mu}_l\circ(h\wedge Id_{Z_l})_l\simeq \mu_l$ and $\mu_l\circ(g\wedge Id_{Z_l})_l\simeq \widetilde{\mu}_l$. Now consider $h\circ g$ and $Id_{(\widetilde{Z^*})_l}$. They are both $Id^*_{Z_l}:(\widetilde{Z^*})_l\rightarrow (\widetilde{Z^*})_l$ so they are homotopic. Similarly, $g\circ h\simeq Id_{(Z^*)_l}$.
\end{proof}

\begin{example}
The $l$-adic $S$-duality of $\Sigma^n$ is $\Sigma^{-n}$.
\end{example}

A spectrum $Z$ is said to be \textbf{$l$-adic homotopically finite} if there is a finite spectrum $X$ such that $X_l\simeq Z_l$. 
Let $V=H(\ZZ/l)$. Using Prop. \ref{l-complete-wedge-with-finite-spectra} and Corollary \ref{cor of l-complete wedge}, $(V\wedge Z)_l\simeq (V\wedge Z_l)_l \simeq (V\wedge X_l)_l\simeq (V\wedge X)_l \simeq V\wedge X$.
It follows that $H_n(X;\ZZ/l)\cong H_n(Z_l;\ZZ/l)\cong H_n(Z;\ZZ/l)$.

\begin{theorem}\label{homological-discription-of-S-duality-1}
Let $Z$ and $Z^*$ be $l$-adic homotopically finite spectra. Then a map $\mu:Z^*\wedge Z\rightarrow \Sigma^0_l$ is an $l$-adic $S$-duality if and only if $D_{\mu}=\mu^*([\Sigma^0_l]_{\ZZ/l}^{\vee})/(-):H_*((Z^*)_l;\ZZ/l)\rightarrow H^{-*}(Z;\ZZ/l)\cong H^{-*}(Z_l;\ZZ/l)$ is an isomorphism, where $[\Sigma^0]_{\ZZ/l}^{\vee}\in H^0(\Sigma^0;\ZZ/l)$ is a generator, $\mu^*$ is the pullback on cohomologies, and $/(-)$ is the slant product.
\end{theorem}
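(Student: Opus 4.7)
The plan is to reduce to the case where $Z$ and $Z^*$ are themselves finite spectra, and then to compare the given $\mu$ with the $l$-completion of a classical Spanier-Whitehead duality. Using the $l$-adic homotopical finiteness assumption, pick finite spectra $X, X^*$ with equivalences $X_l \simeq Z_l$ and $(X^*)_l \simeq (Z^*)_l$. Both the property of being an $l$-adic $S$-duality (since every test spectrum $U, V$ in Definition \ref{definition-of-l-adic-S-duality} is $l$-complete, only the $l$-completed data enter) and the slant-product condition (since the $\ZZ/l$-cohomology of a spectrum is unchanged under $l$-completion) depend only on the $l$-completions of $Z, Z^*$ and $\mu$. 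Thus it suffices to prove the equivalence for a map $\tilde\mu: X^* \wedge X \to \Sigma^0_l$ with $X, X^*$ finite spectra.

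For the forward direction, I would test $D_{\tilde\mu}$ against $U = \Sigma^m_l$ for $m \in \ZZ$ and $V = \Sigma^n H(\ZZ/l)$ for $n \geq 0$. By Proposition \ref{l-complete-wedge-with-finite-spectra}, $V \wedge X^*$ is already $l$-complete, so the source $[U,(V\wedge X^*)_l]$ becomes $\pi_m(V \wedge X^*) \cong H_{m-n}(X^*; \ZZ/l)$, and since $V$ is $l$-complete, the target $[\Sigma^m_l \wedge X, V]$ becomes $H^{n-m}(X; \ZZ/l)$. Unraveling the definition of $D_{\tilde\mu}$ through these identifications, the map is precisely the slant product with $\tilde\mu^*([\Sigma^0_l]_{\ZZ/l}^{\vee})$, which is therefore an isomorphism in every bidegree.

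For the backward direction, let $\mu_0: DX \wedge X \to \Sigma^0$ be a classical Spanier-Whitehead duality; by Proposition \ref{equivalence-of-S-duality-and-l-adic-S-duality}, $\mu_{0,l}: DX \wedge X \to \Sigma^0_l$ is an $l$-adic $S$-duality. Since $X$ is finite, $F(X, \Sigma^0_l) \simeq DX \wedge \Sigma^0_l \simeq (DX)_l$ (using Proposition \ref{l-complete-wedge-with-finite-spectra} and that $DX \wedge \Sigma^0 \to DX \wedge \Sigma^0_l$ is a $\ZZ/l$-homology isomorphism between $l$-complete finite-type spectra), so the adjoint of $\tilde\mu$ yields $g: X^* \to (DX)_l$ with $\tilde\mu$ factoring as $\mu_{0,l} \circ (g \wedge 1)$ after using Proposition \ref{l-complete-wedge-with-finite-spectra} to identify $DX \wedge X$ with $(DX)_l \wedge X$. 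Naturality of the slant product then writes the slant product of $\tilde\mu$ as $g_*: H_*(X^*; \ZZ/l) \to H_*((DX)_l; \ZZ/l)$ followed by the slant product of $\mu_{0,l}$. The latter is an isomorphism by the forward direction, and by hypothesis the composition is an isomorphism, so $g_*$ is a $\ZZ/l$-homology isomorphism; since $(X^*)_l$ and $(DX)_l$ are both $l$-complete, bounded below, and of finite type, a Whitehead-type argument for $l$-complete spectra gives that $g_l: (X^*)_l \to (DX)_l$ is an equivalence. Since the $l$-adic $S$-duality condition depends only on $l$-completed data, transporting through this equivalence shows $\tilde\mu$ is an $l$-adic $S$-duality.

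The main bookkeeping obstacle is tracking $l$-completions through the smash products in the factorization $\tilde\mu = \mu_{0,l} \circ (g \wedge 1)$ and in the identification $F(X,\Sigma^0_l) \simeq (DX)_l$, which requires repeated application of Proposition \ref{l-complete-wedge-with-finite-spectra} and Corollary \ref{cor of l-complete wedge} to freely exchange $(V \wedge X)_l$, $V \wedge X_l$, and $V \wedge X$ when $X$ is finite and $V$ is $l$-complete. Once this is handled, the argument is a direct transport of classical Spanier-Whitehead duality theory to the $l$-adic setting.
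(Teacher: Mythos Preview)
Your proposal is correct and follows essentially the same route as the paper: reduce to finite spectra, compare $\mu$ with the $l$-completion of a classical Spanier--Whitehead duality via a comparison map $(Z^*)_l\to (Z')_l$, show this map is a $\ZZ/l$-homology isomorphism using the slant-product hypothesis, and conclude it is an equivalence of $H(\ZZ/l)$-local spectra. The only cosmetic difference is that you produce the comparison map via the adjunction $F(X,\Sigma^0_l)\simeq (DX)_l$, whereas the paper builds it as $\theta'(\mu,\mu')(\mathrm{Id}_{Z_l})$ using the known isomorphism $D_{\mu'}$; these give the same map and the remaining argument is identical.
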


\begin{proof}
We assume that $D_{\mu}=\mu^*([\Sigma^0_l]_{\ZZ/l}^{\vee})/(-):H_*((Z^*)_l;\ZZ/l)\rightarrow H^{-*}(Z;\ZZ/l)\cong H^{-*}(Z_l;\ZZ/l)$ is an isomorphism since the other direction is obvious. 

 Since $l$-adic $S$-duality is invariant under $l$-completion, we may assume that both $Z^*$ and $Z$ are finite spectra. 
 There is a map $\theta'(\mu,\mu'):[Z_l,Z_l]\xrightarrow{{}_{\mu}D}[(Z^*\wedge Z)_l,\Sigma^0_l]\cong [(Z^*)_l\wedge Z,\Sigma^0_l]\xrightarrow{D_{\mu'},\cong} [(Z^*)_l,Z'\wedge \Sigma^0_l]\cong [(Z^*)_l,(Z')_l]$. Let $h:(Z^*)_l\rightarrow (Z')_l$ be $\theta(\mu,\nu)(Id_{Z_l})$. Then $h$ is uniquely characterized by the following diagram which commutes up to homotopy. 
\[
\begin{tikzcd}
    (Z^*)_l\wedge Z_l \arrow{rr}{h\wedge Id_{Z_l}} \arrow{rd}{\mu_l} & & (Z')_l\wedge Z_l \arrow{ld}{\mu'_l} \\
    & \Sigma^0_l &
\end{tikzcd}
\]
Hence, we have the following commutative diagram
\[
\begin{tikzcd}
    H_q((Z')_l;\ZZ/l) \arrow{rr}{D_{\mu'},\cong} & & H^{-q}(Z_l;\ZZ/l) \\
    & H_q((Z^*)_l;\ZZ/l) \arrow{lu}{h_*} \arrow{ru}{D_{\mu}} &
\end{tikzcd}
\]
Since $D_{\mu}$ is an isomorphism, $h_*$ is an isomorphism for any $q$. Then $h:(Z^*)_l\rightarrow (Z')_l$ is an $H(\ZZ/l)$-equivalence. Since $Z^*,Z$ are both connective, $(Z^*)_l,(Z')_l$ are both $H(\ZZ/l)$-local. Thus, $h$ is a homotopy equivalence. The upper commutative diagram proves that $\mu$ is an $l$-adic $S$-duality.
\end{proof}

For $S$-dualities, there is an alternative way to define it and these two ways are equivalent. We study this phenomenon in the $l$-complete sense.

Again let $Z,Z^*$ be connective spectra with a map $\rho:\Sigma^0\rightarrow (Z\wedge Z^*)_l$. There are two maps for any $l$-complete spectrum $U$ and connective $l$-complete spectrum $V$, natural in both $U,V$.
\[
D^{\rho}:[U\wedge Z^*,V]\rightarrow [U,(V\wedge Z)_l],{}^{\rho}D:[Z\wedge U,V]\rightarrow [U,(Z^*\wedge V)_l]
\]
They are defined in a similar way as $D_{\mu},{}_{\mu}D$.

\begin{lemma}
If $\mu:Z^*\wedge Z\rightarrow \Sigma^0_l$ and $\nu:Y^*\wedge Y\rightarrow \Sigma^0_l$ are $l$-adic $S$-dualities, then so is $(\mu,\nu): Z^*\wedge Y^*\wedge Y\wedge Z\xrightarrow{Id_{Z^*}\wedge \nu\wedge Id_Z} Z^*\wedge \Sigma^0_l\wedge Z\rightarrow (Z^*\wedge \Sigma^0_l\wedge Z)_l\simeq (Z^*\wedge Z)_l\xrightarrow{\mu_l} \Sigma^0_l$.
\end{lemma}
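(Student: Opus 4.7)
The plan is to verify the isomorphy of $D_{(\mu,\nu)}$ and ${}_{(\mu,\nu)}D$ directly by factoring each through successive applications of the assumed duality maps for $\mu$ and $\nu$; I will only spell out $D_{(\mu,\nu)}$ since the other case is strictly symmetric. Let $U$ be an $l$-complete spectrum and $V$ a connective $l$-complete spectrum. First I would introduce the auxiliary connective $l$-complete spectrum $V' := (V \wedge Y^*)_l$ (connective because both $V$ and $Y^*$ are), and use the symmetry of the smash product together with Corollary \ref{cor of l-complete wedge} to identify $(V \wedge Z^* \wedge Y^*)_l \simeq (V' \wedge Z^*)_l$. This puts the expression in the correct shape to apply the assumed isomorphism $D_\mu$ with $V$ replaced by $V'$, yielding a natural bijection
\[
[U,\, (V \wedge Z^* \wedge Y^*)_l] \xrightarrow{\;\sim\;} [U \wedge Z,\, (V \wedge Y^*)_l].
\]

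Next I would invoke the universal property of $l$-completion (valid because $(V \wedge Y^*)_l$ is $l$-complete, hence $l$-local) to rewrite the target as $[(U \wedge Z)_l,\, (V \wedge Y^*)_l]$. Then I would apply $D_\nu$ with the role of $U$ played by the $l$-complete spectrum $U' := (U \wedge Z)_l$, obtaining an isomorphism onto $[(U \wedge Z)_l \wedge Y,\, V]$. A final appeal to the universal property of $l$-completion, combined with Corollary \ref{cor of l-complete wedge} and the commutativity $Z \wedge Y \simeq Y \wedge Z$, identifies this with $[U \wedge Y \wedge Z,\, V]$. Composing the chain of isomorphisms produces a bijection $[U,\, (V \wedge Z^* \wedge Y^*)_l] \xrightarrow{\sim} [U \wedge Y \wedge Z,\, V]$.

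The main obstacle is the concluding step: showing that this composite bijection agrees with $D_{(\mu,\nu)}$ as defined, not merely that some isomorphism exists. For this I would unwind the definitions and use the crucial observation that $\mu$ (acting on $Z^* \wedge Z$) and $\nu$ (acting on $Y^* \wedge Y$) operate on disjoint smash factors, so that $Id_V \wedge (\mu,\nu)$ factors as $(Id_V \wedge \mu) \circ (Id_V \wedge Id_{Z^*} \wedge \nu \wedge Id_Z)$ up to the unit isomorphism $\Sigma^0_l \wedge \Sigma^0_l \simeq \Sigma^0_l$ built into the definition of $(\mu,\nu)$. By naturality of $D_\mu$ and $D_\nu$ in the $U$ and $V$ variables, this spectrum-level factorization translates to the very two-step composition constructed above, completing the identification. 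The argument for ${}_{(\mu,\nu)}D$ is obtained by transposing source and target throughout and using ${}_\mu D$ and ${}_\nu D$ in place of $D_\mu$ and $D_\nu$.
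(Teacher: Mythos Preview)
Your proposal is correct and follows essentially the same approach as the paper: factoring $D_{(\mu,\nu)}$ as a composite of the two assumed duality isomorphisms. The only difference is the order---the paper applies $D_\nu$ first (treating $(V\wedge Z^*)_l$ as the new $V$) and then $D_\mu$ (treating $U\wedge Y$ as the new $U$), which matches the inner-to-outer structure of $(\mu,\nu)$ and makes the commutativity of the resulting triangle immediate; your order ($D_\mu$ then $D_\nu$) is equally valid but costs you the extra symmetry swaps $Z^*\wedge Y^*\simeq Y^*\wedge Z^*$ and $Z\wedge Y\simeq Y\wedge Z$ and the disjoint-factor observation in your final paragraph.
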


\begin{proof} 
The lemma follows from the following commutative diagram:
\[
\begin{tikzcd}
\left [ U,(V\wedge Z^*\wedge Y^*)_l\right ]\cong  \left [ U,((V\wedge Z^*)_l\wedge Y^*)_l\right ]\arrow{rr}{D_{(\mu,\nu)}} \arrow{rd}{D_\nu,\cong} & & \left [ U\wedge Y\wedge Z,V\right ] \\
& \left [ U\wedge Y,(V\wedge Z^*)_l\right ] \arrow{ru}{D_{\mu},\cong}&
\end{tikzcd}
\]
\end{proof}

Let $\mu:Z^*\wedge Z\rightarrow \Sigma^0_l$ and $\nu:Y^*\wedge Y\rightarrow \Sigma^0_l$ be $l$-adic $S$-duality maps. For any $\rho^*: Y\wedge Z\rightarrow \Sigma^0_l$, by Proposition \ref{equivalence-of-S-duality-and-l-adic-S-duality} we have the $S$-dual map $\rho:\Sigma^0\rightarrow \Sigma_l^0 \rightarrow (Z^*\wedge Y^*)_l$.  $\rho$ is uniquely characterized by the following diagram which commutes up to homotopy.
\[
\begin{tikzcd}
    S^0\wedge Y\wedge Z \arrow{r}{(\rho\wedge Id_{Y\wedge Z})_l} \arrow{d}{(Id_{\Sigma^0}\wedge \rho^*)_l} & ((Z^*\wedge Y^*)_l\wedge Y\wedge Z)_l\simeq (Z^*\wedge Y^*\wedge Y\wedge Z)_l \arrow{d}{(\mu,\nu)_l} \\
    (\Sigma^0_l\wedge \Sigma^0)_l\simeq (\Sigma^0\wedge \Sigma^0)_l \arrow{r}{\simeq} & \Sigma^0_l
\end{tikzcd}
\]

\begin{lemma}
$\rho^*$ is an $S$-duality if and only if $D^{\rho},{}^{\rho}D$ are isomorphisms for any $l$-complete $U$ and any connective $l$-complete $V$.
\end{lemma}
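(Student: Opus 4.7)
The plan is to show the two formulations of $l$-adic $S$-duality are equivalent by comparing $D^{\rho}$ to $D_{\rho^*}$ (and ${}^{\rho}D$ to ${}_{\rho^*}D$) via natural isomorphisms coming from the assumed $l$-adic $S$-dualities $\mu$ and $\nu$. The key ingredient is the defining relation between $\rho^*$ and $\rho$ displayed just before the lemma, which is essentially the triangle identity of a duality pairing in the $l$-complete setting.

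Concretely, by the remark following Proposition \ref{equivalence-of-S-duality-and-l-adic-S-duality}, the transposed pairings $\mu\tau: Z\wedge Z^*\rightarrow \Sigma^0_l$ and $\nu\tau: Y\wedge Y^*\rightarrow \Sigma^0_l$ are themselves $l$-adic $S$-dualities. Hence both
$$D_{\nu\tau}: [U, (V\wedge Y)_l]\rightarrow [U\wedge Y^*, V], \qquad D_\mu: [U, (V\wedge Z^*)_l]\rightarrow [U\wedge Z, V]$$
are natural isomorphisms for every $l$-complete $U$ and every connective $l$-complete $V$. I would then verify that the square
$$
\begin{tikzcd}
{[U, (V\wedge Y)_l]} \arrow[r, "D_{\rho^*}"] \arrow[d, "D_{\nu\tau}"', "\cong"] & {[U\wedge Z, V]} \\
{[U\wedge Y^*, V]} \arrow[r, "D^\rho"'] & {[U, (V\wedge Z^*)_l]} \arrow[u, "D_\mu"', "\cong"]
\end{tikzcd}
$$
commutes up to homotopy by chasing a representative $f: U\rightarrow (V\wedge Y)_l$ around both routes and invoking the defining identity $(\mu,\nu)_l\circ (\rho\wedge \mathrm{id}_{Y\wedge Z})_l \simeq \rho^*$ after the canonical reshuffling of smash factors. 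Once established, the square forces $D^\rho$ to be an isomorphism for all $U, V$ if and only if $D_{\rho^*}$ is. An entirely analogous square, using $\mu\tau$ in place of $\nu\tau$ and $\nu$ in place of $\mu$, yields the parallel equivalence between ${}_{\rho^*}D$ being an isomorphism and ${}^\rho D$ being an isomorphism.

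The main obstacle is strictly bookkeeping: one must track the orderings of the smash-product factors through successive applications of the symmetry isomorphisms $\tau$, and verify that the iterated $l$-completions appearing along the two routes coincide up to the natural equivalences supplied by Corollary \ref{cor of l-complete wedge}. No new homotopical input beyond the defining relation between $\rho$ and $\rho^*$ and the hypothesis that $\mu, \nu$ are $l$-adic $S$-dualities is needed.
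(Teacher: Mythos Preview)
Your approach is correct and essentially the same as the paper's: both build a commutative rectangle in which $D_{\rho^*}$ (respectively ${}_{\rho^*}D$) is flanked by isomorphisms coming from the given dualities $\mu$ and $\nu$, forcing it to be an isomorphism precisely when the corresponding $\rho$-map is. The only cosmetic difference is the pairing: the paper links $D_{\rho^*}$ with ${}^{\rho}D$ (using ${}_{\nu}D$ and $D_{\mu}$) and ${}_{\rho^*}D$ with $D^{\rho}$, whereas you link $D_{\rho^*}$ with $D^{\rho}$ (using $D_{\nu\tau}$ and $D_{\mu}$) and ${}_{\rho^*}D$ with ${}^{\rho}D$; since $D_{\nu\tau}$ and ${}_{\nu}D$ differ only by factor swaps, the two arguments are the same up to bookkeeping.
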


\begin{proof}
The following commutative diagram proves that $D_{\rho^*}$ is an isomorphism if and only if ${}^{\rho}D$ is.
\[
\begin{tikzcd}
\left [U,(V\wedge Y)_l\right ] \arrow{r}{\cong} \arrow{d}{D_{\rho^*}} & \left [U,(Y\wedge V)_l\right ] \arrow{r}{{}_{\nu}D,\cong} & \left [Y^*\wedge U,V\right ] \arrow{d}{{}^{\rho}D} \\
\left [U\wedge Z,V\right ] & \left [U,(V\wedge Z^*)_l\right ] \arrow{l}{D_{\mu},\cong} & \left [U,(Z^*\wedge V)_l\right ] \arrow{l}{\cong}
\end{tikzcd}
\]    
Similarly, ${}_{\rho^*}D$ is an isomorphism if and only if $D^{\rho}$ is.
\end{proof}

In particular, let $Y^*=Z$, $Y=Z^*$,  $\nu=\mu\circ \tau$ and $\rho^*=\mu$, then we obtain a map $\rho=\mu^*:\Sigma^0\rightarrow (Z^*\wedge Z)_l$. Then this lemma shows that $\mu$ is an $l$-adic $S$-duality if and only if $D^{\rho},{}^{\rho*}D$ are isomorphisms for any $l$-complete $U$ and any connective $l$-complete $V$. 

So we may equivalently define that $\rho=\mu^*:\Sigma^0\rightarrow (Z^*\wedge Z)_l$ is an \textbf{$l$-adic $S$-duality} if $D^{\rho},{}^{\rho*}D$ are isomorphisms for any $l$-complete $U$ and any connective $l$-complete $V$.  

Analogous to the previous discussion, if $\rho:\Sigma^0\rightarrow (Z^*\wedge Z)_l$ and $\psi:\Sigma^0\rightarrow (Y^*\wedge Y)_l$ are both S-dualities, then for any $f:Z_l\rightarrow Y_l$, there corresponds to a $S$-dual map $f^*:(Y^*)_l\rightarrow (Z^*)_l$, which is uniquely characterized by the following diagram which commutes up to homotopy.
\[
\begin{tikzcd}
    \Sigma^0_l \arrow{r}{\rho} \arrow{d}{\psi} & (Z^*\wedge Z)_l\simeq (Z^*\wedge Z_l)_l \arrow{d}{(Id_{Z^*}\wedge f)_l} \\
    (Y^*\wedge Y)_l\simeq ((Y^*)_l\wedge Y) \arrow{r}{(f^*\wedge Id_Y)_l} & ((Z^*)_l\wedge Y)_l\simeq (Z^*\wedge Y)_l\simeq (Z^*\wedge Y_l)_l
\end{tikzcd}
\]

The proof of the following theorem is similar to that of Theorem \ref{homological-discription-of-S-duality-1}.

\begin{theorem}\label{homological-discription-of-S-duality-2}
Let $Z,Z^*$ be $l$-adic homotopically finite spectra. Then a map $\rho:\Sigma^0\rightarrow (Z^*\wedge Z)_l$ is an $l$-adic $S$-duality if and only if $D^{\rho}=(-)\backslash \rho_*([\Sigma^0_l]_{\ZZ/l}):H^*((Z^*)_l;\ZZ/l)\rightarrow H_{-*}(Z;\ZZ/l)\cong H_{-*}(Z_l;\ZZ/l)$ is an isomorphism, where $[\Sigma^0]_{\ZZ/l}\in H_0(\Sigma^0;\ZZ/l)$ is a generator.
\end{theorem}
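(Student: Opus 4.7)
The plan is to mirror the strategy used for Theorem \ref{homological-discription-of-S-duality-1}, but working with the coevaluation map $\rho$ rather than the evaluation map $\mu$. The ``only if'' direction is immediate: if $\rho$ is an $l$-adic $S$-duality, then by definition $D^{\rho}$ is an isomorphism on every $l$-complete spectrum, in particular on shifted Eilenberg--Maclane spectra, which recovers the claimed cohomology/homology statement via the identification $H^*((Z^*)_l;\ZZ/l)\cong [(Z^*)_l, H(\ZZ/l)^*]$.

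For the ``if'' direction, I would first reduce to the case where $Z^*$ and $Z$ are honest finite spectra. This is legitimate because the $l$-adic $S$-duality property and the hypothesis that $D^{\rho}$ is an $H(\ZZ/l)$-isomorphism depend only on the $l$-completions of $Z^*$ and $Z$, and by assumption each admits a finite model after $l$-completion. With $Z^*$ and $Z$ finite, classical Spanier--Whitehead duality produces a finite spectrum $\widetilde{Z}$ together with a coevaluation $\rho':\Sigma^0\rightarrow \widetilde{Z}\wedge Z$; by Proposition \ref{equivalence-of-S-duality-and-l-adic-S-duality} the $l$-completion $\rho'_l:\Sigma^0_l\rightarrow (\widetilde{Z}\wedge Z)_l$ is a genuine $l$-adic $S$-duality.

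The key step is to use $\rho'$ to construct a comparison map $h:\widetilde{Z}_l\rightarrow (Z^*)_l$ and show it is a homotopy equivalence. Apply ${}^{\rho'}D$ to the identity $Id_{Z_l}$ (thought of as an element of $[Z\wedge \Sigma^0_l, \Sigma^0_l]$ twisted by $\rho$): concretely, $h$ is the unique homotopy class making
\[
\begin{tikzcd}
\Sigma^0_l \arrow{r}{\rho} \arrow{d}{\rho'_l} & (Z^*\wedge Z)_l \\
(\widetilde{Z}\wedge Z)_l \arrow{ur}{(h\wedge Id_Z)_l} &
\end{tikzcd}
\]
commute up to homotopy. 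By the naturality of the slant product, this forces the commutative triangle
\[
\begin{tikzcd}
H^q((Z^*)_l;\ZZ/l) \arrow{r}{D^{\rho}} \arrow{d}{h^*} & H_{-q}(Z;\ZZ/l) \\
H^q(\widetilde{Z}_l;\ZZ/l) \arrow{ur}{D^{\rho'},\cong} &
\end{tikzcd}
\]
in which $D^{\rho'}$ is an isomorphism by the ``only if'' direction applied to the genuine duality $\rho'$. Since $D^{\rho}$ is an isomorphism by hypothesis, $h^*$ is an isomorphism on $\ZZ/l$-cohomology in every degree.

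Finally, since $Z^*$ and $\widetilde{Z}$ are connective and $l$-complete, their $l$-completions are $H(\ZZ/l)$-local, so an $H(\ZZ/l)$-equivalence between them is a genuine homotopy equivalence. Hence $h$ is an equivalence, and transporting the $l$-adic $S$-duality property of $\rho'$ across $h$ via the defining commutative diagram shows that $\rho$ is itself an $l$-adic $S$-duality. The main technical point where care is needed is verifying that the triangle of slant products commutes, i.e.\ that the construction of $h$ from $\rho'$ genuinely realizes $D^{\rho}$ as the composition $h^*\circ D^{\rho'}$ after identifying $\Sigma^0_l$-cohomology with $\ZZ/l$-cohomology; this is a direct but careful diagram chase combining Proposition \ref{l-complete-wedge-with-finite-spectra} and Corollary \ref{cor of l-complete wedge} to move $l$-completions through smash products.
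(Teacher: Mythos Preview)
Your proposal is correct and follows essentially the same approach the paper takes: the paper itself simply states that the proof is ``similar to that of Theorem \ref{homological-discription-of-S-duality-1}'', and your argument is precisely the natural adaptation of that proof to the coevaluation setting---reduce to finite spectra, introduce a genuine Spanier--Whitehead dual $\widetilde{Z}$ with its coevaluation $\rho'$, construct a comparison map $h$ via the duality property of $\rho'$, and deduce that $h$ is an $H(\ZZ/l)$-equivalence (hence a homotopy equivalence of $l$-complete spectra) from the commutative triangle of slant products. One small remark: your phrase ``apply ${}^{\rho'}D$ to the identity $Id_{Z_l}$'' is imprecise, but your subsequent characterization of $h$ via the commuting triangle $\rho\simeq (h\wedge Id_Z)_l\circ\rho'_l$ is exactly right and is what does the work.
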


\subsection{$l$-adic Spherical Fibration}\;

Let $G(n)$ be the topological monoid of self homotopy equivalences of $S^{n-1}$ and let $SG(n)$ be the connected component of $G(n)$ containing the identity. Recall that $BG(n)$ is the classifying space of fiberwise homotopy equivalence classes of $S^{n-1}$-fibrations and $BSG(n)$ is the classifying space of orientation-preserving fiberwise homotopy equivalence classes of oriented $S^{n-1}$-fibrations \cite{Stasheff-spherical-fibrations}.
There are natural inclusions $G(n)\rightarrow G(n+1)$ and let $G=\bigcup_{n} G(n)$. Then $BG$ is the classifying space of stable fiberwise homotopy equivalence classes of stable spherical fibrations.

\begin{definition}[\cite{SullivanMITnotes}*{p.~89, Definition}]\label{definition-l-adic-spherical-fibration}
Let $Z$ be a CW complex. An \textbf{$l$-adic spherical fibration} is a Hurewicz fibration $\gamma: S(\gamma)\rightarrow Z$ with homotopy fiber $(S^{n-1})_l$. 
The \textbf{Thom space $M(\gamma)$} of $\gamma$ is the mapping cone of $S(\gamma)\rightarrow Z$.
An \textbf{orientation} or a \textbf{Thom class} on $\gamma$ is a class $U_{\gamma}\in \widetilde{H}^n(M(\gamma);\widehat{\ZZ}_l)$ whose restriction to each fiber is a generator in $\widetilde{H}^n((S^{n-1})_l;\widehat{\ZZ}_l)$. 
\end{definition}

An orientation $U_{\gamma}\in \widetilde{H}^N(M(\gamma);\widehat{\ZZ}_l)$ induces Thom isomorphisms $H^*(Z;\widehat{\ZZ}_l)\rightarrow \widetilde{H}^{N+*}(M(\gamma);\widehat{\ZZ}_l)$ and $H^*(Z;\widehat{\ZZ}/l)\rightarrow \widetilde{H}^{N+*}(M(\gamma);\ZZ/l)$. 
Below we recall several facts about the classifying space of $l$-adic spherical fibrations (see \cite{SullivanMITnotes}*{Theorem 4.1, Theorem 4.2}).

\begin{fact}
\begin{enumerate}
    \item There exists a classifying space $BG(n)_l$ for the fiberwise homotopy equivalence classes of $(S^{n-1})_l$-fibrations.
    \item The fiberwise $l$-completion of spherical fibrations correspond to a map $BG(n)\rightarrow BG(n)_l$.
    \item $\pi_1(BG(n))\rightarrow \pi_1(BG(n)_l)$ is the homomorphism of units $\{\pm 1\}\cong \ZZ^{\times}\rightarrow \widehat{\ZZ}^{\times}_l$.
    \item The $l$-completion $BSG(n)_l$ of $BSG(n)$ is the classifying space for the orientation-preserving fiberwise homotopy equivalence classes of oriented $(S^{n-1})_l$-fibrations.
    \item The map of universal covers over $BG(n)\rightarrow BG(n)_l$ is the $l$-completion map $BSG(n)\rightarrow BSG(n)_l$.
\end{enumerate}
\end{fact}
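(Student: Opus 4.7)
The approach is to build $BG(n)_l$ from a simplicial monoid of self-equivalences of $(S^{n-1})_l$ and then deduce the remaining items by standard homotopy-theoretic manipulations. Fix a functorial point-set model for $l$-completion (for instance the Bousfield-Kan $\ZZ/l$-completion on simplicial sets), let $G(n)_l$ denote the simplicial monoid of self-maps of $(S^{n-1})_l$ that are homotopy equivalences, with composition as its monoid structure, and define $BG(n)_l$ to be its classifying space. Assertion (1) then follows from the standard classification of fibrations with fiber $F$ by $B\mathrm{Aut}(F)$ (Stasheff, May) applied to $F=(S^{n-1})_l$. For (2), the fiberwise $l$-completion functor sends an $S^{n-1}$-fibration $p\colon E\to B$ to the fibration obtained by completing each fiber through the chosen model; since this respects fiberwise equivalences, it descends to the promised map $BG(n)\to BG(n)_l$, which on underlying monoids is induced by the $l$-completion $G(n)\to G(n)_l$.

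For (3) one has $\pi_1(BG(n)_l)=\pi_0(G(n)_l)$, the group of homotopy classes of self-equivalences of $(S^{n-1})_l$. Any such equivalence is detected by its action on $\widetilde{H}_{n-1}((S^{n-1})_l;\widehat{\ZZ}_l)\cong\widehat{\ZZ}_l$, which must be multiplication by a unit; conversely, every unit is realized by building a self-map through the Postnikov tower of $(S^{n-1})_l$ one stage at a time, the obstruction groups lying in $l$-adic cohomology that vanishes above degree $n-1$. Hence $\pi_0(G(n)_l)\cong\widehat{\ZZ}_l^{\times}$, and the map from $\pi_0(G(n))=\{\pm 1\}$ is visibly given by degree. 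Items (4) and (5) can then be handled together: $BSG(n)$ is simply-connected, so its Bousfield $l$-completion $BSG(n)_l$ is again simply-connected. The inclusion of the identity component $SG(n)_l\hookrightarrow G(n)_l$ induces a map $B(SG(n)_l)\to BG(n)_l$ which, since its source is simply-connected and its fiber is a $K(\widehat{\ZZ}_l^{\times},0)$, realizes $B(SG(n)_l)$ as the universal cover. The $l$-completion functor on simply-connected spaces commutes with the classifying-space construction on connected simplicial monoids, identifying $B(SG(n)_l)\simeq BSG(n)_l$. Finally, (4) follows from the observation that a Thom class is equivalent to a trivialization of the orientation local system, i.e., a lift of the classifying map from $BG(n)_l$ to its universal cover $BSG(n)_l$.

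The main obstacle will be verifying that fiberwise $l$-completion is sufficiently well-behaved to give a genuine map of simplicial monoids $G(n)\to G(n)_l$ (rather than merely a set-theoretic map on bundle equivalence classes), and that the resulting monoid has the claimed homotopy type. Using the Bousfield-Kan $\ZZ/l$-completion or Sullivan's compact Brownian-functor model should settle the point-set issues, but some care is required in pinning down the Postnikov-tower argument that every unit of $\widehat{\ZZ}_l$ is indeed realized by a self-equivalence of $(S^{n-1})_l$. An alternative route is to compute $[(S^{n-1})_l,(S^{n-1})_l]$ directly via the suspension identification $S^{n-1}=\Sigma S^{n-2}$ for $n\geq 2$ together with the Hurewicz theorem for $l$-complete spaces, which yields $\widehat{\ZZ}_l$ with the unit subgroup detecting precisely the homotopy equivalences.
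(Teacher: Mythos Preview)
The paper does not prove this statement: it is recorded as a \emph{Fact} with a citation to Sullivan's MIT notes (\cite{SullivanMITnotes}*{Theorem 4.1, Theorem 4.2}), so there is no in-paper proof to compare against. Your outline is essentially Sullivan's approach---build $G(n)_l$ as the monoid of self-equivalences of $(S^{n-1})_l$, take its classifying space, and read off the homotopy---and items (1)--(3) are handled correctly. In particular, your alternative computation $[(S^{n-1})_l,(S^{n-1})_l]\cong\pi_{n-1}((S^{n-1})_l)\cong\widehat{\ZZ}_l$ via the universal property of $l$-completion is the cleanest route to (3).

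There is, however, a genuine gap in your treatment of (4)--(5). You write ``the inclusion of the identity component $SG(n)_l\hookrightarrow G(n)_l$'' and then ``identifying $B(SG(n)_l)\simeq BSG(n)_l$'', but here you are using the symbol $SG(n)_l$ for two a priori different objects: the identity component of the monoid of self-equivalences of $(S^{n-1})_l$, and the $l$-completion of the monoid $SG(n)$. That these coincide (as $H$-spaces) is precisely the content of the \emph{next} Fact in the paper, also cited to Sullivan (\cite{SullivanMITnotes}*{Chapter VI, Corollary 1}), and it is not a formal consequence of ``$l$-completion commutes with $B$ on connected monoids.'' The latter gives only $(BSG(n))_l\simeq B((SG(n))_l)$; you still owe the identification of $(SG(n))_l$ with the identity component of $G(n)_l$, which requires an actual computation of the higher homotopy of $G(n)_l$ (e.g., via function-space fibrations for self-maps of $(S^{n-1})_l$). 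Once that is supplied, your deduction of (4) and (5) goes through.
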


\begin{fact}[\cite{SullivanMITnotes}*{Chapter VI, Corollary 1}]
Let $G(n)_l$ be the topological monoid of self homotopy equivlalences of $(S^{n-1})_l$. Then 
$\pi_0(G(n)_l)\cong \widehat{\ZZ}^{\times}_l$ and the connected component of $G(n)_l$ containing the identity is equivalent to the $l$-completion $SG(n)_l$ of $SG(n)$ as H-spaces. 
\end{fact}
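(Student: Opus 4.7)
The plan is to translate all assertions about $G(n)_l$ into statements about the classifying space $BG(n)_l$ from the preceding fact, using the H-space equivalence $G(n)_l \simeq \Omega BG(n)_l$.

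Assuming $n\geq 3$ so that $(S^{n-1})_l$ is simply-connected, free and based homotopy classes of self-maps coincide, and by the universal property of $l$-completion applied to the $l$-complete target,
\[
[(S^{n-1})_l,(S^{n-1})_l]_*\;\cong\;[S^{n-1},(S^{n-1})_l]_*\;=\;\pi_{n-1}((S^{n-1})_l)\;\cong\;\widehat{\ZZ}_l.
\]
Composition corresponds to multiplication in $\widehat{\ZZ}_l$, since a self-map acts on $H_{n-1}((S^{n-1})_l;\widehat{\ZZ}_l)\cong \widehat{\ZZ}_l$ by the scalar to which it corresponds. The invertible elements form $\widehat{\ZZ}^{\times}_l$, yielding $\pi_0(G(n)_l)\cong \widehat{\ZZ}^{\times}_l$. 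For $n=2$ one instead notes that $(S^1)_l\simeq K(\widehat{\ZZ}_l,1)$, whose self-equivalences are in bijection with $\Aut(\widehat{\ZZ}_l)=\widehat{\ZZ}^{\times}_l$.

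Next I would identify $G(n)_l^{\circ}$ with $SG(n)_l$ as H-spaces. The identity component of $\Omega BG(n)_l$ is the loop space $\Omega\widetilde{BG(n)_l}$ of the universal cover. By the previously stated facts, $\widetilde{BG(n)_l}\simeq BSG(n)_l\simeq (BSG(n))_l$, so $G(n)_l^{\circ}\simeq \Omega(BSG(n))_l$. The space $BSG(n)$ is simply-connected (since $SG(n)$ is path-connected) and of finite type (each $\pi_k(BSG(n))=\pi_{k-1}(SG(n))$ is controlled by the finitely generated unstable homotopy groups $\pi_{n-1+k}(S^{n-1})$). For such spaces, Bousfield's $l$-completion commutes with the loop space functor, so
\[
\Omega(BSG(n))_l\;\simeq\;(\Omega BSG(n))_l\;\simeq\;SG(n)_l.
\]
The H-space structure is preserved because the canonical H-equivalence $SG(n)\simeq \Omega BSG(n)$ and its $l$-completion are compatible through the functors $B$, $(-)_l$, and $\Omega$.

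The only nontrivial structural input is the identification of the universal cover of $BG(n)_l$ with $(BSG(n))_l$, which is exactly the content of the previously cited fact. Given that, the remainder of the argument is a formal manipulation with loop spaces, $l$-completion, and classifying spaces of simply-connected spaces of finite type, and no additional technical obstacle arises.
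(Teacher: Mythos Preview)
The paper does not supply its own proof of this statement: it is recorded as a \emph{Fact} with a citation to Sullivan's notes, immediately following the list of properties of $BG(n)_l$. Your argument is a correct derivation of this fact from those preceding properties, and is essentially the argument Sullivan gives. The computation of $\pi_0$ via $[(S^{n-1})_l,(S^{n-1})_l]\cong\widehat{\ZZ}_l$ with composition given by multiplication is standard, and your identification of the identity component---passing to $\Omega$ of the universal cover of $BG(n)_l$, invoking item (5) of the preceding Fact to get $(BSG(n))_l$, and then using that $l$-completion commutes with $\Omega$ on simply-connected finite type spaces---is exactly the intended route. One small remark: your parenthetical that $\pi_k(SG(n))$ is ``controlled by $\pi_{n-1+k}(S^{n-1})$'' is slightly imprecise (there is an evaluation fibration relating them rather than an equality), but the only thing you need is finite generation, which does follow.
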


\begin{definition}
Since $((S^m)_l*(S^n)_l)_l\simeq (S^{m+n+1})_l$, the \textbf{fiberwise Whitney join $\gamma\oplus \gamma'$} is defined for any $l$-adic spherical fibrations $\gamma,\gamma'$.
\end{definition}

The rest part of this subsection is an $l$-adic version of the dicussions in \cite{Browder-Surgery-Simply-Connected}*{p.~22-23}.

\begin{definition}
For an $l$-adic spherical fibration $\gamma$ over $Z$, let \textbf{$\Aut(\gamma)$} be the group of fiber homotopy classes of fiberwise homotopy equivalences $\gamma\rightarrow \gamma$ which cover identity on $Z$.    
\end{definition}

\begin{definition}
There is a natual map $\Aut(\gamma)\rightarrow \Aut(\gamma\oplus \epsilon^1)$ given by $f\rightarrow f\oplus Id_{\epsilon^1}$. Define $\mathcal{A}(\gamma)$ to be the colimit of $\Aut(\gamma\oplus \epsilon^n)$ for $n\rightarrow \infty$.  
\end{definition}

The involution in $G(n)$ induced by changing orientation defines an involution $\gamma\mapsto -\gamma$ for an oriented $l$-adic spherical fibration $\gamma$.
Note that $f\mapsto f\oplus Id_{-\gamma}: \Aut(\gamma)\rightarrow \Aut(\gamma\oplus (-\gamma))$ defines a homomorphism $H:\mathcal{A}(\gamma)\rightarrow \mathcal{A}(\epsilon)$ and $g\mapsto Id_{\gamma}\oplus g: \Aut(\epsilon^q)\rightarrow \Aut(\gamma\oplus \epsilon^q)$ defines a homomorphism $I:\mathcal{A}(\epsilon)\rightarrow \mathcal{A}(\gamma)$, where $\epsilon$ stands for the stable trivial fibration. Obviously, $HI=Id,IH=Id$. This deduces the following proposition.

\begin{proposition}
There is a canonical equivalence $\mathcal{A}(\gamma)\cong \mathcal{A}(\gamma')$ for any two oriented $l$-adic spherical fibrations $\gamma,\gamma'$ over $Z$.     
\end{proposition}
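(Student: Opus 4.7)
The plan is to verify the assertions made in the paragraph immediately preceding the proposition, namely that the homomorphisms $H\colon \mathcal{A}(\gamma) \to \mathcal{A}(\epsilon)$ and $I\colon \mathcal{A}(\epsilon) \to \mathcal{A}(\gamma)$ are mutually inverse, and then to use them to produce the canonical equivalence. Concretely, for any two oriented $l$-adic spherical fibrations $\gamma, \gamma'$ over $Z$, the canonical equivalence will be defined to be the composition
\[
\mathcal{A}(\gamma) \xrightarrow{\;H\;} \mathcal{A}(\epsilon) \xrightarrow{\;I\;} \mathcal{A}(\gamma'),
\]
and since the proposition is stated only up to equivalence, it suffices to check $HI = \mathrm{Id}$ and $IH = \mathrm{Id}$ for a single $\gamma$.

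First I would record the key geometric input: for any oriented $l$-adic spherical fibration $\gamma$ over $Z$ with fiber $(S^{n-1})_l$, the fiberwise Whitney join $\gamma \oplus (-\gamma)$ is stably fiber homotopy equivalent to a trivial $l$-adic spherical fibration $\epsilon^{N}$. This is the standard statement that $-\gamma$ represents the inverse of $\gamma$ in the H-space $BSG_l$ obtained by $l$-completing the classical fact for oriented spherical fibrations, and it follows because the Whitney join corresponds to the H-space multiplication on $BSG(n)_l$ (using the fact that the classifying spaces $BSG(n)_l$ assemble into the $l$-completion $BSG_l$ of $BSG$, which remains a grouplike H-space after $l$-completion).

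Next I would verify $HI = \mathrm{Id}$. Given $g \in \Aut(\epsilon^q)$, the composition $HI(g)$ is represented by $(\mathrm{Id}_\gamma \oplus g) \oplus \mathrm{Id}_{-\gamma} \in \Aut\bigl(\gamma \oplus \epsilon^q \oplus (-\gamma)\bigr)$. Using that fiberwise Whitney join is commutative and associative up to fiber homotopy equivalence, this automorphism corresponds to $\mathrm{Id}_{\gamma \oplus (-\gamma)} \oplus g$ on $\gamma \oplus (-\gamma) \oplus \epsilon^q$, and by the previous paragraph $\gamma \oplus (-\gamma)$ is stably equivalent to $\epsilon^N$. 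Thus in the colimit $\mathcal{A}(\epsilon)$ this class coincides with the class of $g$, so $HI(g) = g$. The equation $IH = \mathrm{Id}$ follows symmetrically: starting with $f \in \Aut(\gamma \oplus \epsilon^q)$, the composition $IH(f)$ is $\mathrm{Id}_\gamma \oplus (f \oplus \mathrm{Id}_{-\gamma})$ on $\gamma \oplus \gamma \oplus \epsilon^q \oplus (-\gamma)$; rearranging the summands so that $\gamma \oplus (-\gamma)$ appears as a trivial block and is absorbed into the stabilization recovers $f$.

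The expected main obstacle is making these rearrangement-and-absorption arguments rigorous at the level of automorphism monoids of fibrations rather than only at the level of classifying spaces; one must check that the permutation isomorphisms of summands, together with the stable triviality $\gamma \oplus (-\gamma) \simeq \epsilon^N$, produce well-defined, compatible maps in the colimiting system $\{\Aut(\gamma \oplus \epsilon^n)\}_n$. Once the compatibilities are stated in the colimit, the resulting identifications become independent of the auxiliary choices, and the canonical equivalence $I_{\gamma'} \circ H_{\gamma}\colon \mathcal{A}(\gamma) \to \mathcal{A}(\gamma')$ is well defined and is an isomorphism by the two identities above.
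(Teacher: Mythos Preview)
Your proposal is correct and follows exactly the paper's approach: the paper simply states that $H$ and $I$ are defined as in the paragraph preceding the proposition and that ``Obviously, $HI=Id,\,IH=Id$,'' from which the proposition is deduced. You have essentially unpacked the word ``obviously'' by spelling out the rearrangement-and-absorption argument, which is a faithful elaboration of the same proof.
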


The following is obvious.

\begin{proposition}
Let $\epsilon^{n}$ be the trivial $(S^{n-1})_l$-fibration over $Z$. Then $\Aut(\epsilon^n)\cong [Z,G(n)_l]$.
\end{proposition}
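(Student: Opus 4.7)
The plan is to prove this via the standard mapping space adjunction, treating $\mathrm{Aut}(\epsilon^n)$ as nothing more than the components of a mapping space which happens to be a function space into $G(n)_l$.

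First I would unpack both sides of the claimed isomorphism. On the left, a fiberwise self-map of the trivial fibration $\epsilon^n = Z \times (S^{n-1})_l \xrightarrow{\mathrm{pr}_1} Z$ is, by the universal property of the product, uniquely determined by a map $F \colon Z \times (S^{n-1})_l \to (S^{n-1})_l$, via $(z,x) \mapsto (z, F(z,x))$. Such an $F$ is a fiberwise homotopy equivalence over $Z$ precisely when for every $z \in Z$ the restriction $F_z \colon (S^{n-1})_l \to (S^{n-1})_l$ is a self-homotopy equivalence. On the right, $[Z, G(n)_l]$ denotes homotopy classes of maps into the topological monoid $G(n)_l$ of self-equivalences of $(S^{n-1})_l$, which is exactly the appropriate subspace of components of $\mathrm{Map}((S^{n-1})_l, (S^{n-1})_l)$.

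Next, I would invoke the exponential adjunction $\mathrm{Map}(Z \times (S^{n-1})_l, (S^{n-1})_l) \cong \mathrm{Map}(Z, \mathrm{Map}((S^{n-1})_l, (S^{n-1})_l))$, noting that $Z$ is a CW complex so this is a genuine homeomorphism of mapping spaces. The pointwise-equivalence condition on $F$ translates under adjunction into the statement that the adjoint map $\widetilde{F} \colon Z \to \mathrm{Map}((S^{n-1})_l, (S^{n-1})_l)$ factors through the submonoid $G(n)_l$ of self-equivalences. A fiber homotopy between two such $F, F'$ (a map $H \colon Z \times (S^{n-1})_l \times I \to (S^{n-1})_l$ with the analogous fiberwise-equivalence property at each time) corresponds under adjunction to an ordinary homotopy $Z \times I \to G(n)_l$ between $\widetilde{F}$ and $\widetilde{F'}$.

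Putting these two steps together gives a bijection $\mathrm{Aut}(\epsilon^n) \xrightarrow{\sim} [Z, G(n)_l]$. Finally, one checks that this bijection is a group isomorphism: composition of fiberwise self-equivalences of $\epsilon^n$ corresponds under the adjunction to pointwise multiplication of maps $Z \to G(n)_l$ using the H-space structure on $G(n)_l$, and the identity self-map of $\epsilon^n$ goes to the constant map at the identity element of $G(n)_l$. Since there is no serious obstacle here—the argument is purely formal given the existence of the function-space model for $G(n)_l$ recorded in the immediately preceding fact—the author's designation of the statement as ``obvious'' is justified.
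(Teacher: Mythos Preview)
Your proposal is correct and spells out exactly the standard exponential-adjunction argument that the paper omits; the paper provides no proof at all, merely labeling the proposition as ``obvious,'' and your write-up is the natural justification of that claim. The one step you leave slightly implicit---that a fiber-preserving self-map of $\epsilon^n$ which is a homotopy equivalence on each fiber is automatically a fiber homotopy equivalence---is Dold's theorem over a CW base, but for the trivial bundle this is in any case immediate from the group-like $H$-space structure on $G(n)_l$ you already invoke.
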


\begin{proposition}
$\pi_i(G(n+1)_l,G(n)_l)=0$ for $i\leq n-2$. Moreover, $[Z,G(n)_l]\cong [Z,G(n+1)_l]$ if the dimension of $Z$ is less than $n-2$ and $[Z,BG(n)_l]\cong [Z,BG(n+1)_l]$ if the dimension of $Z$ is less than $n-3$.
\end{proposition}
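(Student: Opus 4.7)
The strategy is to first establish the relative homotopy group vanishing, and then feed it into obstruction theory. For the first statement, I would $l$-adically complete the classical fact that $\pi_i(G(n+1), G(n)) = 0$ for $i \leq n-2$. Classically, this follows by factoring the inclusion through the basepoint-preserving self-homotopy-equivalences: $G(n) \xrightarrow{\Sigma} G(n+1)_\ast \hookrightarrow G(n+1)$. The second map is the inclusion of the fiber of the evaluation fibration $G(n+1)_\ast \to G(n+1) \to S^n$, whose base is $(n-1)$-connected. The first map is suspension, which on identity components is modeled by $\Omega^{n-1}_1 S^{n-1} \to \Omega^n_1 S^n$ and is $(n-2)$-connected by the Freudenthal suspension theorem applied to $S^{n-1}$. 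The orientation-reversing components on both sides are carried into each other by the suspension and are equivalent to the identity components after composing with a fixed degree $-1$ self-equivalence, so the $(n-2)$-connectivity transfers to the full monoids.

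To pass to the $l$-completion, I use the Facts recalled just before the Proposition: $SG(n)_l$ is the $l$-completion of $SG(n)$ as an $H$-space, $\pi_0(G(n)_l) = \widehat{\ZZ}^{\times}_l$, and the identity component of $G(n)_l$ is $SG(n)_l$. The integral homotopy groups $\pi_i(SG(n))$ are finitely generated abelian (by Serre's theorem on stable stems, combined with the evaluation fibration $\Omega^{n-1}_1 S^{n-1} \to SG(n) \to S^{n-1}$). Hence $l$-completion is exact on the long exact sequence of the pair $(G(n+1), G(n))$ and preserves the vanishing, giving $\pi_i(G(n+1)_l, G(n)_l) = 0$ for $i \leq n-2$.

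For the second statement, standard obstruction theory applied to the $(n-2)$-connected inclusion $G(n)_l \hookrightarrow G(n+1)_l$ gives $[Z, G(n)_l] \cong [Z, G(n+1)_l]$ whenever $\dim Z < n-2$. The claim about $BG(n)_l$ follows by looping: the equivalence $\Omega BG(n)_l \simeq G(n)_l$ implies that $BG(n)_l \hookrightarrow BG(n+1)_l$ is $(n-1)$-connected. However, $\pi_1(BG(n)_l) = \widehat{\ZZ}^\times_l$ is nontrivial, so the relevant obstructions live in twisted cohomology, and one loses a dimension to guarantee vanishing with local coefficients, giving the stated range $\dim Z < n-3$.

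The main obstacle is bookkeeping: confirming that the Freudenthal connectivity survives the passage from $SG(n)$ to the two-component monoid $G(n)$, that $l$-completion interacts well with the relative homotopy of the pair (requiring the finite generation input), and finally that the twisted-coefficient obstruction theory on $BG(n)_l$ really does cost the extra dimension, justifying the weaker bound in the $BG$ part of the proposition.
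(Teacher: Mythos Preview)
Your approach is essentially the paper's: reduce to the classical vanishing $\pi_i(G(n+1),G(n))=0$ for $i\le n-2$ (the paper simply cites Browder; you sketch it via Freudenthal and the evaluation fibration) and then pass to the $l$-completion using finite generation of the integral homotopy groups (the paper packages this step as its density-of-relative-completion lemma, which amounts to your exactness-of-completion observation). One minor correction: your explanation for the bound $\dim Z<n-3$ in the $BG(n)_l$ statement---that twisted coefficients cost an extra dimension---is not right, since obstruction theory with local coefficients gives the same range once the map is already a $\pi_1$-isomorphism; the paper's stated bound is simply conservative, and your argument in fact yields the sharper $\dim Z<n-1$.
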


\begin{proof}
$G(n)_l\rightarrow G(n+1)_l$ induces an isomorphism on fundamental group. By Lemma \ref{density-of-relative-completion}, $\pi_i(SG(n+1),SG(n))$ is dense in the finitely generated $\widehat{\ZZ}_l$-module $\pi_i(SG(n+1)_l,SG(n)_l)$. This proposition follows from the fact that $\pi_i(G(n+1),G(n))=0$ for $i\leq n-2$ (\cite{Browder-Surgery-Simply-Connected}*{Proposition I.4.10}).
\end{proof}

\begin{proposition}
$\Aut(\epsilon^n)\cong \mathcal{A}(\epsilon)$ if the dimension of $Z$ is less than $n-2$.
\end{proposition}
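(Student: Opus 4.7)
The plan is to string together the two propositions immediately preceding the statement. By the identification $\Aut(\epsilon^m)\cong [Z,G(m)_l]$ just established, I would first observe that the stabilization map $f\mapsto f\oplus\mathrm{Id}_{\epsilon^1}$ sending $\Aut(\epsilon^m)\to\Aut(\epsilon^{m+1})$ corresponds, under this isomorphism, to the map on homotopy classes $[Z,G(m)_l]\to[Z,G(m+1)_l]$ induced by the canonical inclusion of topological monoids $G(m)_l\hookrightarrow G(m+1)_l$. This compatibility is essentially naturality of the classifying-map description of automorphisms of a trivial $(S^{m-1})_l$-fibration: a map $Z\to G(m)_l$ describing $f$ composes with the monoid inclusion to describe $f\oplus\mathrm{Id}_{\epsilon^1}$.

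Next, I would invoke the previous proposition. Since $\dim Z<n-2$, one has $\dim Z<m-2$ for every $m\geq n$, so the induced map $[Z,G(m)_l]\to[Z,G(m+1)_l]$ is a bijection for every such $m$. Consequently each map in the colimit tower
\[
\Aut(\epsilon^n)\to\Aut(\epsilon^{n+1})\to\Aut(\epsilon^{n+2})\to\cdots
\]
is a bijection, so the colimit $\mathcal{A}(\epsilon^n)=\colim_{k}\Aut(\epsilon^n\oplus\epsilon^k)$ is canonically identified with its first term $\Aut(\epsilon^n)$.

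Finally, the earlier proposition giving a canonical equivalence $\mathcal{A}(\gamma)\cong\mathcal{A}(\gamma')$ for any two oriented $l$-adic spherical fibrations over $Z$ supplies $\mathcal{A}(\epsilon^n)\cong\mathcal{A}(\epsilon)$. Composing the two identifications yields $\Aut(\epsilon^n)\cong\mathcal{A}(\epsilon)$. There is no real obstacle in this argument, as every ingredient has already been established; the only point requiring care is the naturality statement in the first step, which is a direct unwinding of the fact that self fiberwise equivalences of $Z\times (S^{m-1})_l$ are classified by maps $Z\to G(m)_l$ and that stabilization with $\mathrm{Id}_{\epsilon^1}$ post-composes such a classifying map with the monoid inclusion.
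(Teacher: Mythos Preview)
Your argument is correct and is precisely the intended one: the paper states this proposition without proof because it is immediate from the two preceding propositions, and you have spelled out exactly that deduction. The only cosmetic point is that the final appeal to the canonical equivalence $\mathcal{A}(\epsilon^n)\cong\mathcal{A}(\epsilon)$ is not really needed, since $\mathcal{A}(\epsilon^n)=\varinjlim_k\Aut(\epsilon^{n+k})$ is already cofinal in the system defining $\mathcal{A}(\epsilon)$.
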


\begin{proposition}
$\Aut(\gamma)\cong\Aut(\gamma\oplus \epsilon ^1)$ and hence $\Aut(\gamma)\cong \mathcal{A}(\gamma)$ if the dimension of $Z$ is less than $n-2$.
\end{proposition}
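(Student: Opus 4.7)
The plan is to adapt the classical argument of \cite{Browder-Surgery-Simply-Connected}*{Proposition I.4.11}, substituting the $l$-adic stability of $G(n)_l \hookrightarrow G(n+1)_l$ just proved for its integral analog.

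First, I would realize $\Aut(\gamma)$ as sections (up to fiber homotopy) of an associated bundle. Namely, if $P \to Z$ is the principal $G(n)_l$-bundle classified by the map $Z \to BG(n)_l$ corresponding to $\gamma$, let $\mathcal{G}(\gamma) := P \times_{G(n)_l} G(n)_l$, where $G(n)_l$ acts on itself by conjugation. A section of $\mathcal{G}(\gamma) \to Z$ is tautologically a fiberwise self homotopy equivalence of $\gamma$ covering $\mathrm{id}_Z$, so $\Aut(\gamma) = \pi_0 \Gamma(Z, \mathcal{G}(\gamma))$. Likewise $\Aut(\gamma \oplus \epsilon^1) = \pi_0 \Gamma(Z, \mathcal{G}(\gamma \oplus \epsilon^1))$, with $\mathcal{G}(\gamma \oplus \epsilon^1)$ a bundle over $Z$ of fiber $G(n+1)_l$. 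The stabilization map $\varphi \mapsto \varphi \oplus \mathrm{id}_{\epsilon^1}$ arises from the conjugation-equivariant inclusion $G(n)_l \hookrightarrow G(n+1)_l$ and globalizes to a fiberwise map $\mathcal{G}(\gamma) \to \mathcal{G}(\gamma \oplus \epsilon^1)$.

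Next, by the preceding proposition this fiber inclusion is $(n-2)$-connected. I would then invoke relative obstruction theory for sections of Serre fibrations: the obstructions to surjectivity and injectivity on $\pi_0 \Gamma$ lie in cohomology groups $H^j(Z; \pi_j(G(n+1)_l, G(n)_l))$ with coefficients in local systems of finitely generated $\widehat{\ZZ}_l$-modules twisted by $\pi_1(Z)$. Since these homotopy groups vanish for $j \le n-2$ and $\dim Z < n-2$, all obstructions vanish and the stabilization $\Aut(\gamma) \to \Aut(\gamma \oplus \epsilon^1)$ is a bijection.

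Finally, $\gamma \oplus \epsilon^q$ is an $(S^{n+q-1})_l$-spherical fibration satisfying $\dim Z < n-2 \le (n+q)-2$ for every $q \ge 0$, so iterating the same argument gives $\Aut(\gamma \oplus \epsilon^q) \cong \Aut(\gamma \oplus \epsilon^{q+1})$ for all $q$; passing to the colimit yields $\Aut(\gamma) \cong \mathcal{A}(\gamma)$. The main technical subtlety to check is justifying obstruction theory for sections with coefficients in twisted local systems of $\widehat{\ZZ}_l$-modules; this is harmless because $Z$ is a finite-dimensional CW complex and the cellular argument is formally identical to the integral case, with the only input being the connectivity of the fiber inclusion.
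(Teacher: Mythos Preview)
Your proposal is correct and is precisely the argument the paper has in mind: the paper gives no independent proof but simply refers to \cite{Browder-Surgery-Simply-Connected}*{Theorem I.4.12}, and your section-bundle plus obstruction-theory outline is exactly Browder's argument transported to the $l$-adic setting using the connectivity of $G(n)_l\hookrightarrow G(n+1)_l$ established in the preceding proposition. The only cosmetic remark is that $G(n)_l$ is a grouplike topological monoid rather than a group, so ``principal $G(n)_l$-bundle'' should be read in the sense of the bar-construction associated fibration; this changes nothing in the obstruction-theoretic argument.
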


The proof is the same as that of \cite{Browder-Surgery-Simply-Connected}*{Theorem I.4.12} for integral spherical fibrations.

Now assume that $Z$ is a simply-connected finite type CW complex. By the universal property of $l$-completion, an $l$-adic spherical fibration $\gamma:Z\rightarrow BG(N)_l$ factors through $\gamma_l:Z_l\rightarrow BG(N)_l$. 

\begin{lemma}\label{completion-lemma-for-spherical-fibration}
The total space $S(\gamma_l)$ of $\gamma_l$ is the $l$-completion of the total space $S(\gamma)$ of $\gamma$. 
Furthermore, $S(\gamma_l)$ is also the $l$-profinite completion of $S(\gamma)$.
\end{lemma}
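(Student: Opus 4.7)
The plan is to construct the natural map of fibrations $\phi:S(\gamma)\to S(\gamma_l)$ induced by the $l$-completion $Z\to Z_l$, and to compare $\ZZ/l$-homology via the Serre spectral sequence, exploiting the fact that both fibrations have the same fiber $(S^{N-1})_l$. Once I show that $S(\gamma_l)$ is $l$-complete and that $\phi$ induces an isomorphism on $H_*(-;\ZZ/l)$, it will follow that $\phi$ is the $l$-completion of $S(\gamma)$.

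First, I would check that $S(\gamma_l)$ is $l$-complete. Since $Z$ is simply-connected finite type, $\pi_i(Z_l)\cong \pi_i(Z)^{\wedge}_l$ is a finitely generated $\widehat{\ZZ}_l$-module; so is $\pi_i((S^{N-1})_l)$ (assuming $N\geq 3$ so the fiber is simply-connected). Applying Lemma \ref{Serre-Class} to the long exact sequence of homotopy groups of the fibration $(S^{N-1})_l\to S(\gamma_l)\to Z_l$ yields that each $\pi_i(S(\gamma_l))$ is also a finitely generated $\widehat{\ZZ}_l$-module, so $S(\gamma_l)$ is simply-connected and $l$-complete finite type, hence $l$-complete.

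Next, I would compare the two Serre spectral sequences. The map $\phi$ induces a morphism between the spectral sequences converging to $H_*(S(\gamma);\ZZ/l)$ and $H_*(S(\gamma_l);\ZZ/l)$, whose $E_2$-pages are $H_p(Z;H_q((S^{N-1})_l;\ZZ/l))$ and $H_p(Z_l;H_q((S^{N-1})_l;\ZZ/l))$ respectively. Since $Z$ and $Z_l$ are simply-connected the local systems are trivial, and since $Z\to Z_l$ is the $l$-completion of a simply-connected finite type space it induces an isomorphism on $H_*(-;\ZZ/l)$; therefore the map on $E_2$-pages is an isomorphism and so is the map on $E_{\infty}$-pages. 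Thus $\phi$ is a $\ZZ/l$-homology equivalence. Combined with the $l$-completeness of $S(\gamma_l)$, this identifies $S(\gamma_l)$ with the Bousfield $l$-completion of $S(\gamma)$.

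For the final assertion, I would invoke Corollary \ref{equivalence-of-l-completion-and-l-profinite-completion}, which guarantees that the $l$-completion and the $l$-profinite completion of a simply-connected CW complex agree provided each $H_q(-;\ZZ/l)$ is finite. This finiteness for $S(\gamma)$ follows from the Serre spectral sequence of $(S^{N-1})_l\to S(\gamma)\to Z$: the $E_2$-entries $H_p(Z;H_q((S^{N-1})_l;\ZZ/l))$ are finite because $Z$ is simply-connected finite type (so $H_p(Z;\ZZ/l)$ is finite) and $H_q((S^{N-1})_l;\ZZ/l)$ is either $\ZZ/l$ or $0$. The only subtle point, and the main thing to verify carefully, is that the local coefficient systems in both spectral sequences are indeed trivial so that the $E_2$-comparison goes through; this is immediate from the simple-connectedness of $Z$ and $Z_l$.
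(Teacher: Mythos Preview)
Your argument is correct and reaches the same conclusion, but the route differs from the paper's. The paper works entirely on the level of homotopy groups: after observing (as you do) that $S(\gamma_l)$ is $l$-complete finite type, it factors the bundle map as $S(\gamma)\xrightarrow{g_l} S(\gamma)_l\xrightarrow{F_l} S(\gamma_l)$ and sets up a three-row diagram of long exact homotopy sequences for the fibrations over $Z$, $Z_l$, $Z_l$. To show the middle row (for $S(\gamma)_l$) is exact, the paper invokes the derived $l$-completion functors $L_0,L_1$ from \cite{MayMoreConcise}, using the Serre spectral sequence only to verify that $H^*(S(\gamma);\ZZ/l)$ is finite so that $L_1\pi_j(S(\gamma))=0$. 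A five-lemma then shows $F_l$ is a weak equivalence.

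Your approach bypasses the $L_0/L_1$ machinery entirely: you use the Serre spectral sequence directly to compare $H_*(-;\ZZ/l)$ of the two total spaces, and conclude via the universal property of Bousfield $H\ZZ/l$-localization. This is arguably cleaner and more conceptual, since it uses the defining property of $l$-completion rather than a computation with derived functors. The paper's approach, on the other hand, yields a bit more information along the way (the explicit identification $\pi_j(S(\gamma)_l)\cong\pi_j(S(\gamma))_l$), though this is not needed elsewhere. Both arguments share the same minor caveat that one needs $N\geq 3$ so that the fiber and total spaces are simply-connected.
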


\begin{proof}
By the long exact sequence of the homotopy groups for the fibration $\gamma_l$, the homotopy groups of $S(\gamma_l)$ are all finitely generated $\widehat{\ZZ}_l$-modules and hence $S(\gamma_l)$ is $l$-complete finite type. 
It follows that the bundle morphism $F_{\gamma}:S(\gamma)\rightarrow S(\gamma_l)$ induced by the fibration pullback of $f_l$ factors the $l$-completion of $S(\gamma)$, i.e., it factors as $S(\gamma)\xrightarrow{g_l} S(\gamma)_l\xrightarrow{F_l} S(\gamma_l)$. It induces the following commutative diagram.
\[
\begin{tikzcd}
... \arrow[r] & \pi_j((S^{N-1})_l) \arrow[r] \arrow[d,phantom,sloped,"="] & \pi_j(S(\gamma)) \arrow[r] \arrow[d,"(g_l)*"] & \pi_j(Z) \arrow[r] \arrow[d,"(f_l)_*"] & ... \\ 
... \arrow[r] & \pi_j((S^{N-1})_l) \arrow[r] \arrow[d,phantom,sloped,"="] & \pi_j(S(\gamma)_l) \arrow[r] \arrow[d,"F_l"] & \pi_j(Z_l) \arrow[r] \arrow[d,phantom,sloped,"="] & ... \\
... \arrow[r] & \pi_j((S^{N-1})_l) \arrow[r] & \pi_j(S(\gamma_l)) \arrow[r] & \pi_j(Z_l) \arrow[r] & ... 
\end{tikzcd}
\]
The first row and the third row are both exact.
By Leray-Serre spectral sequence, all cohomology groups $H^n(S(\gamma);\ZZ/l)$ are finite. By \cite{MayMoreConcise}*{Corollary 10.1.15}, $L_1\pi_j(S(\gamma))=0$ and $L_0\pi_j(S(\gamma))\cong \pi_j(S(\gamma))_l$. By \cite{MayMoreConcise}*{Theorem 11.1.2}, $\pi_j(S(\gamma)_l)\cong L_0\pi_j(S(\gamma))$ and the same is true for $\pi_j(Z_l)$. \cite{MayMoreConcise}*{Lemma 10.1.8} further implies that the second row is also exact. By five-lemma $F_l$ induces isomorphisms on homotopy groups. Hence, $S(\gamma_l)$ is the $l$-completion of $S(\gamma)$.

Since $H_q(S(\gamma);\ZZ/l)$ is finite for any $q$, Corollary \ref{equivalence-of-l-completion-and-l-profinite-completion} entails that  $S(\gamma_l)$ is the $l$-profinite completion of $S(\gamma)$.
\end{proof}

\begin{corollary}
$M(\gamma)_l$ is both the $l$-completion and the  $l$-profinite completion of $M(\gamma_l)$.    
\end{corollary}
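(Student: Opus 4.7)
The plan is to mimic the proof of Lemma \ref{completion-lemma-for-spherical-fibration} but apply it to the cofiber sequence defining the Thom space. The bundle morphism $S(\gamma) \to S(\gamma_l)$ covering $Z \to Z_l$ sits in a commutative square with $\gamma$ and $\gamma_l$, and taking mapping cones yields a natural map $M(\gamma) \to M(\gamma_l)$. I aim to show this map realizes $M(\gamma_l)$ as both the $l$-completion and the $l$-profinite completion of $M(\gamma)$.

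First I would apply the five-lemma in $\ZZ/l$-homology to the long exact sequences of the two cofiber sequences $S(\gamma) \to Z \to M(\gamma)$ and $S(\gamma_l) \to Z_l \to M(\gamma_l)$. The vertical maps on $S(\gamma)$ and $Z$ are $\ZZ/l$-homology equivalences (the former by Lemma \ref{completion-lemma-for-spherical-fibration}, the latter by definition of $l$-completion), so the induced map $M(\gamma) \to M(\gamma_l)$ is as well. Observing that $M(\gamma_l)$ is simply-connected by van Kampen (the mapping cone collapses $S(\gamma_l)$ into a contractible piece attached to the simply-connected $Z_l$), and that $H_q(M(\gamma_l);\ZZ/l)$ is finite in each degree by the long exact sequence, one is reduced to verifying that $M(\gamma_l)$ is itself $l$-complete.

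For the $l$-completeness, the cleanest route is to invoke that Bousfield's $l$-completion of simply-connected spaces is a left Bousfield localization and therefore preserves homotopy cofibers: applying it to $S(\gamma)\to Z\to M(\gamma)$ and using $S(\gamma)_l\simeq S(\gamma_l)$ from Lemma \ref{completion-lemma-for-spherical-fibration} identifies $M(\gamma)_l$ with $M(\gamma_l)$ directly. Alternatively, one can argue directly by a Postnikov-tower induction in the style of Section \ref{sec: proof of finite-type-theorem}: from the long exact sequence of the cofiber in $\widehat{\ZZ}_l$-coefficient cohomology and the $l$-complete finite type of $S(\gamma_l)$ and $Z_l$ one deduces that $H^*(M(\gamma_l);\widehat{\ZZ}_l)$ is levelwise finitely generated, and then each $\pi_q(M(\gamma_l))$ is seen inductively to be a finitely generated $\widehat{\ZZ}_l$-module, forcing $l$-completeness.

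For the $l$-profinite completion assertion, since $M(\gamma)$ is a simply-connected CW complex whose $\ZZ/l$-homology is finite in each degree (inherited from $S(\gamma)$ and $Z$ via the cofiber long exact sequence, using that $H_*(S(\gamma);\ZZ/l)$ is finite in each degree by a Leray--Serre argument), Corollary \ref{equivalence-of-l-completion-and-l-profinite-completion} identifies the $l$-profinite completion of $M(\gamma)$ with its $l$-completion, hence with $M(\gamma_l)$. The main obstacle is the $l$-completeness of $M(\gamma_l)$: cofibers of maps between $l$-complete spaces are not automatically $l$-complete, so either the left-adjoint argument or the Postnikov-tower bookkeeping is essential to close the loop without circularity.
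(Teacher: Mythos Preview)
The paper states this corollary with no proof, treating it as an immediate consequence of Lemma~\ref{completion-lemma-for-spherical-fibration}. Your approach via the five-lemma on the $\ZZ/l$-homology long exact sequences of the two cofiber sequences is the natural way to supply the details, and that part is entirely correct: the induced map $M(\gamma)\to M(\gamma_l)$ is a $\ZZ/l$-homology equivalence of simply-connected spaces with degreewise finite $\ZZ/l$-homology.

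One point worth noting is the precise wording. Read literally, the corollary asserts that $M(\gamma)_l$ is the $l$-completion and the $l$-profinite completion \emph{of} $M(\gamma_l)$, i.e.\ $(M(\gamma_l))_l\simeq M(\gamma)_l\simeq (M(\gamma_l))^{\wedge}_l$. Both of these follow immediately from the $\ZZ/l$-equivalence together with Corollary~\ref{equivalence-of-l-completion-and-l-profinite-completion} applied to $M(\gamma_l)$, and neither requires knowing that $M(\gamma_l)$ is already $l$-complete. You are instead proving the stronger (and more natural, given the parallel with Lemma~\ref{completion-lemma-for-spherical-fibration}) statement $M(\gamma_l)\simeq M(\gamma)_l$, which does need that extra step. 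For the paper's purposes only the weaker version is ever used, since all subsequent arguments are phrased in terms of $M(\gamma)_l$.

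Regarding your two routes to the stronger claim: route~(a) is not correct as justified. Left Bousfield localization of \emph{spaces} does not make the localization functor preserve homotopy cofibers; the cofiber of a map between $H\ZZ/l$-local simply-connected spaces is not automatically $H\ZZ/l$-local. (The analogous statement for spectra is fine because there cofiber sequences are fiber sequences.) Route~(b) is also incomplete as stated: degreewise finite $\ZZ/l$-homology alone does not force homotopy groups to be finitely generated $\widehat{\ZZ}_l$-modules---$K(\ZZ,2)$ is a counterexample---so the Postnikov induction needs more input than you have indicated. If you want the stronger statement, the cleanest fix is to pass to suspension spectra, where $l$-completion is exact, and then argue back to spaces using the high connectivity of the Thom space when $N$ is large relative to $\dim Z$; in the stable range this identifies the unstable and stable homotopy groups.
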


Hence, the Thom class $U_{\gamma_l}$ of $\gamma_l$ is compatible with $U_{\gamma}$ via the bundle map $F_{\gamma}:S(\gamma)\rightarrow S(\gamma_l)$.

Since $((S^m)_l* S^0)_l\simeq (S^{m+1})_l$, this defines a fiberwise degree one map $S(\gamma\oplus \epsilon^1)\rightarrow S((\gamma\oplus \epsilon^1)_l)\simeq S(\gamma\oplus \epsilon^1)_l\rightarrow M(\gamma)_l$, where $\epsilon^1$ is the trivial $S^0$ bundle over $X$. This construction will be used in the next subsection.

\subsection{$l$-adic Poincar\'e Duality and Mod-$l$ Spivak Normal Spherical Fibration}\label{subsec: normal spherical}\;

In this part, we assume that $Z$ is a simply-connected finite CW complex. Let $f_l:Z\rightarrow Z_l$ be the $l$-completion of $Z$.

\begin{definition}
$Z$ is \textbf{$l$-adic Poincar\'e of formal dimension $m$} if there exists a class $[Z]_l\in H_m(Z;\widehat{\ZZ}_l)$ which induces a $\widehat{\ZZ}_l$-coefficient Poincar\'e duality on $Z$, i.e., $(-) \cap [Z]_l:H^{m-*}(Z;\widehat{\ZZ}_l)\rightarrow H_*(Z;\widehat{\ZZ}_l)$ is an isomorphism.
\end{definition}

\begin{definition}\label{definition-mod-l-Poincare-duality}
$Z$ is \textbf{mod-$l$ Poincar\'e of formal dimension $m$} if there exists a class $[Z]_{\ZZ/l}\in H_m(Z;\ZZ/l)$ which induces a mod-$l$ Poincar\'e duality on $Z$.
\end{definition}

\begin{definition}
$Z$ is \textbf{mod-$l$ $l$-complete Poincar\'e of formal dimension $m$} if there exists a class $[Z_l]_{\ZZ/l}\in H_m(Z_l;\ZZ/l)$ which induces a mod-$\ZZ/l$ Poincar\'e duality on $Z_l$.
\end{definition}

Indeed, the three definitions are equivalent.

\begin{proposition}\label{Z/l-Poincare-duality-equivalence}
$Z$ is mod-$l$ Poincar\'e if and only if $Z$ is mod-$l$ $l$-complete Poincar\'e.
\end{proposition}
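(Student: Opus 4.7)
The plan is to transfer the Poincar\'e duality isomorphism across the $l$-completion map $f_l : Z \to Z_l$ via naturality of the cap product, once we know that $f_l$ is a $\ZZ/l$-(co)homology equivalence.

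First I would observe that since $Z$ is a simply-connected finite CW complex, the groups $H_q(Z;\ZZ/l)$ are finite, so by Corollary \ref{equivalence-of-l-completion-and-l-profinite-completion} (equivalently, by the defining property of Bousfield's $l$-completion on this class of spaces) the map $f_l$ induces isomorphisms
\[
(f_l)_*: H_*(Z;\ZZ/l)\xrightarrow{\cong} H_*(Z_l;\ZZ/l), \qquad f_l^*: H^*(Z_l;\ZZ/l)\xrightarrow{\cong} H^*(Z;\ZZ/l).
\]

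For the forward implication, assume $Z$ is mod-$l$ Poincar\'e with fundamental class $[Z]_{\ZZ/l}\in H_m(Z;\ZZ/l)$, and set $[Z_l]_{\ZZ/l}:=(f_l)_*[Z]_{\ZZ/l}$. Naturality of the cap product gives, for every $\alpha\in H^{m-q}(Z_l;\ZZ/l)$,
\[
\alpha\cap [Z_l]_{\ZZ/l} \;=\; \alpha\cap (f_l)_*[Z]_{\ZZ/l} \;=\; (f_l)_*\bigl(f_l^*(\alpha)\cap [Z]_{\ZZ/l}\bigr).
\]
Hence the cap product with $[Z_l]_{\ZZ/l}$ factors as the composition
\[
H^{m-q}(Z_l;\ZZ/l)\xrightarrow{f_l^*} H^{m-q}(Z;\ZZ/l)\xrightarrow{-\cap[Z]_{\ZZ/l}} H_q(Z;\ZZ/l)\xrightarrow{(f_l)_*} H_q(Z_l;\ZZ/l).
\]
The outer two maps are isomorphisms by the first step and the middle map is an isomorphism by hypothesis, so their composite is an isomorphism, showing $Z_l$ is mod-$l$ $l$-complete Poincar\'e.

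For the reverse implication, define $[Z]_{\ZZ/l}:=(f_l)_*^{-1}[Z_l]_{\ZZ/l}$ using the inverse of the isomorphism $(f_l)_*$. The same naturality identity, read in reverse, identifies $-\cap[Z]_{\ZZ/l}$ with $(f_l)_*^{-1}\circ(-\cap[Z_l]_{\ZZ/l})\circ(f_l^*)^{-1}$, a composition of three isomorphisms, hence itself an isomorphism. There is no real obstacle in this argument: the only nontrivial input beyond naturality of the cap product is the $\ZZ/l$-(co)homology equivalence $f_l$, which is built into the definition of $l$-completion on simply-connected finite CW complexes.
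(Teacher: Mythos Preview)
Your proof is correct and is exactly the argument the paper has in mind: the paper's own proof is a one-line remark that this is ``obvious by the definition of $l$-completion and the universal coefficient theorem,'' and you have simply spelled out that sentence via cap-product naturality. The only quibble is that invoking Corollary~\ref{equivalence-of-l-completion-and-l-profinite-completion} is unnecessary here---the $\ZZ/l$-homology isomorphism $H_*(Z;\ZZ/l)\cong H_*(Z_l;\ZZ/l)$ is the defining property of Bousfield's $l$-completion, and the cohomology isomorphism then follows from the universal coefficient theorem---but you already note this parenthetically.
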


\begin{proof}
This is obvious by the definition of $l$-completion and the universal coefficient theorem.
\end{proof}

\begin{proposition}\label{Z/l-Poincare-equivalent-to-l-adic-Poincare}
$Z$ is mod-$l$ Poincar\'e if and only if $Z$ is $l$-adic Poincar\'e.
\end{proposition}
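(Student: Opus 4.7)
My strategy is to reduce to a chain-level statement using two facts peculiar to this setting: since $Z$ is a finite CW complex, $C_*(Z;\mathbb{Z})$ is a bounded complex of finitely generated free abelian groups, so $C_*(Z;\widehat{\ZZ}_l)=C_*(Z;\mathbb{Z})\otimes\widehat{\ZZ}_l$ and $C^*(Z;\widehat{\ZZ}_l)$ are bounded complexes of finitely generated free $\widehat{\ZZ}_l$-modules; and since $Z$ is simply-connected, $H^1(Z;\ZZ/l)=0$ by the universal coefficient theorem. Throughout, the cap product is realised on chains by choosing a cycle representative $z\in C_m(Z;\widehat{\ZZ}_l)$ of the fundamental class, giving a chain map $\cap z\colon C^{m-\bullet}(Z;\widehat{\ZZ}_l)\to C_\bullet(Z;\widehat{\ZZ}_l)$ whose mod-$l$ reduction is chain-level cap with the reduction of the class.

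For the direction $l$-adic $\Rightarrow$ mod-$l$, I set $[Z]_{\ZZ/l}$ to be the image of $[Z]_l$ under coefficient reduction. The chain map $\cap z$ is a quasi-isomorphism of bounded complexes of finitely generated free $\widehat{\ZZ}_l$-modules, hence its cone is contractible. Tensoring with $\ZZ/l$ preserves contractibility, and the tensored chain map represents $\cap [Z]_{\ZZ/l}$; so the latter is a quasi-isomorphism, giving mod-$l$ Poincar\'e duality.

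For the harder direction mod-$l$ $\Rightarrow$ $l$-adic, the main step is to produce a lift $[Z]_l\in H_m(Z;\widehat{\ZZ}_l)$ of $[Z]_{\ZZ/l}$. Simply-connectedness gives $H^1(Z;\ZZ/l)=0$, and mod-$l$ Poincar\'e duality then forces $H_{m-1}(Z;\ZZ/l)=0$. The universal coefficient sequence for $C_*(Z;\widehat{\ZZ}_l)$ base-changed along $\widehat{\ZZ}_l\to \ZZ/l$ provides an injection $H_{m-1}(Z;\widehat{\ZZ}_l)\otimes_{\widehat{\ZZ}_l}\ZZ/l\hookrightarrow H_{m-1}(Z;\ZZ/l)=0$; since $H_{m-1}(Z;\widehat{\ZZ}_l)$ is finitely generated over the local ring $\widehat{\ZZ}_l$, Nakayama's lemma forces it to vanish. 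The $\Tor$ term in the UCT for $H_m(Z;\ZZ/l)$ therefore vanishes, so the reduction map $H_m(Z;\widehat{\ZZ}_l)\twoheadrightarrow H_m(Z;\ZZ/l)$ is surjective; pick any lift $[Z]_l$ of $[Z]_{\ZZ/l}$ and represent it by a cycle $z\in C_m(Z;\widehat{\ZZ}_l)$. The chain map $\cap z$ is between bounded complexes of finitely generated free $\widehat{\ZZ}_l$-modules, with mod-$l$ reduction a quasi-isomorphism by hypothesis. Its cone is a bounded complex of finitely generated free $\widehat{\ZZ}_l$-modules whose mod-$l$ reduction is acyclic; the UCT and Nakayama applied to each homology group of the cone make them all vanish, so $\cap z$ is a quasi-isomorphism and $[Z]_l$ witnesses $\widehat{\ZZ}_l$-Poincar\'e duality.

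The main obstacle is producing the lift $[Z]_l$: absent simple-connectivity one encounters a Bockstein obstruction $\beta[Z]_{\ZZ/l}\in H_{m-1}(Z;\ZZ/l)$ that is genuinely nonzero in general (witness the non-simply-connected example $\mathbb{R}\mathbb{P}^{2k}$, which is mod-$2$ Poincar\'e but whose top $\widehat{\ZZ}_2$-homology vanishes). Simple-connectivity annihilates the entire obstruction group and makes the lift automatic, after which Nakayama on the cone converts the mod-$l$ duality into $\widehat{\ZZ}_l$-duality with no further work.
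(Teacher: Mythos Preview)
Your proof is correct and follows the same overall shape as the paper's: both directions pass to a chain-level cap-product map between bounded complexes of finitely generated free $\widehat{\ZZ}_l$-modules, and for the harder direction both first lift $[Z]_{\ZZ/l}$ to $H_m(Z;\widehat{\ZZ}_l)$. The paper obtains the lift more tersely, asserting $H_m(Z;\widehat{\ZZ}_l)\cong\widehat{\ZZ}_l$ directly from the universal coefficient theorem, while you spell out the vanishing of $H_{m-1}(Z;\widehat{\ZZ}_l)$ via Nakayama explicitly; these are the same computation. The one genuine difference is the final step: the paper shows $(-)\cap x$ is a quasi-isomorphism by running the Bockstein spectral sequence on the two chain complexes (mod-$l$ duality gives an isomorphism on the $E^1$-page, hence on every page, hence on integral torsion orders), whereas you observe directly that the cone has finitely generated $\widehat{\ZZ}_l$-homology whose mod-$l$ reduction vanishes and invoke Nakayama. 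Your route is more elementary and self-contained; the Bockstein spectral sequence is heavier machinery than the situation demands but reaches the same conclusion.
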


\begin{proof}
The `if' part is obvious. Assume $Z$ is $l$-adic Poincar\'e. By the cohomological universal coefficient theorem (see \cite[p.~246, Theorem 10]{Spanier-Algebraic-Topology}) and the usual universal coefficient theorem, we have the following commutative diagram.
\[
\begin{tikzcd}
 0 \arrow[r] & H^*(Z;\widehat{\ZZ}_l)\otimes_{\widehat{\ZZ}_l} \ZZ/l \arrow[r] \arrow[d] & H^*(Z;\ZZ/l) \arrow[r] \arrow[d] & \Tor^{\widehat{\ZZ}_l}(H^{*+1}(Z;\widehat{\ZZ}_l),\ZZ/l) \arrow[r] \arrow[d] &  0   \\
 0 \arrow[r] & H_{m-*}(Z;\widehat{\ZZ}_l)\otimes_{\widehat{\ZZ}_l} \ZZ/l \arrow[r] & H_{m-*}(Z;\ZZ/l) \arrow[r] & \Tor^{\widehat{\ZZ}_l}(H_{m-*-1}(Z;\widehat{\ZZ}_l),\ZZ/l) \arrow[r] &  0
\end{tikzcd}
\]
The rows are exact and the vertical arrows are induced by capping with $[Z]_l$. The vertical arrows on the two sides are isomorphisms by assumption. Therefore, the middle arrow is an isomorphism.

For the `only if' part, assume that $Z$ is mod-$l$ Poincar\'e. By universal coefficient theorem, $H_{m}(Z;\widehat{\ZZ}_l)\cong \widehat{\ZZ}_l$. Choose a lifting $x\in H_{m}(Z;\widehat{\ZZ}_l)\cong \widehat{\ZZ}_l$ whose mod-$l$ reduction is $[Z]_{\ZZ/l}$.
We claim that $x$ induces a $\widehat{\ZZ}_l$-coefficient Poincar\'e duality on $Z$.

Let $C_*$ be the cellular chain complex of $Z$ with $\widehat{\ZZ}_l$-coefficient and let $D_*$ be $\homo_{\widehat{\ZZ}_l}(C_*,\widehat{\ZZ}_l)$ shifted by degree $m$. The $-\cap x$ induces a chain map $D_*\rightarrow C_*$ of bounded finitely generated free $\widehat{\ZZ}_l$-chain complexes. 
Consider the Bockstein spectral sequence (see \cite[Example 24.2.3]{MayMoreConcise}) for both $C_*$ and $D_*$.
The mod-$l$ Poincar\'e duality on $Z$ induces an isomorphism on the $E^1$-page so it induces an isomorphism on all pages of the spectral sequences. By the relation between the order of torsions and the Bockstein spectral sequence (see the last two paragraphs in \cite[p.~481]{MayMoreConcise}), $(-)\cap x$ induces a quasi-isomorphism $D_*\rightarrow C_*$, which proves the proposition.
\end{proof}

\begin{definition}\label{definition-of-mod-l-spivak}
If $Z$ is mod-$l$ Poincar\'e with respect to the fundamental class $[Z]_{\ZZ/l}\in H_m(Z;\ZZ/l)$, then
a \textbf{mod-$l$ Spivak normal spherical fibration over $Z$} consists of an oriented $l$-adic spherical fibration $Z\xrightarrow{\gamma} BSG(N)_l$ together with a map $\phi:S^{N+m}\rightarrow M(\gamma)_l$ such that 
$\phi_*[S^{N+m}]\cap U_{\gamma_l}=(f_l)_*[Z]_{\ZZ/l}$ in $H_m(Z_l;\ZZ/l)$.
\end{definition}

\begin{lemma}
Let $F\rightarrow E\xrightarrow{\pi} Z$ be a fibration with $Z,F$ simply-connected. Then $F_l\simeq (S^{N-1})_l$ if and only if there exists $U\in H^{N}(\pi;\ZZ/l)$ such that $(-)\cup U:H^q(Z;\ZZ/l)\rightarrow H^{q+N}(\pi;\ZZ/l)$ is an isomorphism for any $q$. 
\end{lemma}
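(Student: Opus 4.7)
The plan is to work with the Serre spectral sequence of the pair $(D(\pi), E) \to Z$, where $D(\pi)$ denotes the mapping cylinder of $\pi$ (so $D(\pi) \simeq Z$ and $D(\pi)/E = M(\pi)$). Its abutment is $H^*(\pi;\ZZ/l) = H^*(D(\pi), E;\ZZ/l) \cong \widetilde{H}^*(M(\pi);\ZZ/l)$, and the fiber of this pair fibration is $(CF, F)$. Since $Z$ is simply-connected, the local coefficient system is trivial, and the $E_2$-page reads
\[
E_2^{p,q} = H^p(Z;\ZZ/l) \otimes_{\ZZ/l} \widetilde{H}^{q-1}(F;\ZZ/l),
\]
using $H^q(CF, F;\ZZ/l) \cong \widetilde{H}^{q-1}(F;\ZZ/l)$ (from the long exact sequence of the pair and contractibility of $CF$).

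For the direction $(\Rightarrow)$: If $F_l \simeq (S^{N-1})_l$, then $H^*(F;\ZZ/l) \cong H^*(S^{N-1};\ZZ/l)$, so $\widetilde H^{q-1}(F;\ZZ/l)$ is $\ZZ/l$ for $q = N$ and zero otherwise. The spectral sequence is therefore concentrated in the single row $q = N$, collapses at $E_2$, and yields an isomorphism $H^p(Z;\ZZ/l) \xrightarrow{\sim} H^{p+N}(\pi;\ZZ/l)$. By the multiplicativity of the spectral sequence, this isomorphism is cup product with the class $U \in H^N(\pi;\ZZ/l)$ corresponding to $1 \in H^0(Z;\ZZ/l)$.

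For the direction $(\Leftarrow)$: Suppose such a $U$ exists. Then the abutment $H^*(\pi;\ZZ/l)$ is a free $H^*(Z;\ZZ/l)$-module of rank one, generated by $U$ in fiber degree $N$. Since $E_\infty$ is a subquotient of $E_2 = H^*(Z;\ZZ/l) \otimes \widetilde H^{*-1}(F;\ZZ/l)$ and all differentials are $H^*(Z;\ZZ/l)$-linear, a row-by-row analysis — beginning with the lowest nonvanishing row of $E_2$, observing that the bottommost surviving row cannot be killed by differentials and must coincide with the unique generating row of the abutment — forces $\widetilde{H}^{*-1}(F;\ZZ/l)$ to be one-dimensional and concentrated in degree $N$. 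Hence $H^*(F;\ZZ/l) \cong H^*(S^{N-1};\ZZ/l)$. Finally, since $F$ is simply-connected (so $N \geq 3$) and mod-$l$ cohomologically a sphere, the mod-$l$ Hurewicz theorem gives $\pi_{N-1}(F)/l \cong H_{N-1}(F;\ZZ/l) \cong \ZZ/l$, yielding a map $S^{N-1} \to F$ inducing a $\ZZ/l$-homology isomorphism. By the Bousfield-Kan theory of $\ZZ/l$-localization, its $l$-completion $(S^{N-1})_l \to F_l$ is a weak equivalence, so $F_l \simeq (S^{N-1})_l$.

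The main obstacle is the backward direction: the row-by-row spectral sequence argument that extracts $H^*(F;\ZZ/l) \cong H^*(S^{N-1};\ZZ/l)$ from rank-one freeness of the abutment requires careful bookkeeping of the $H^*(Z;\ZZ/l)$-module structure to rule out both extraneous low-degree classes (which would survive as permanent cycles and contribute to $H^{<N}(\pi;\ZZ/l)$, contradicting the Thom isomorphism) and extraneous high-degree classes (which would enlarge the abutment beyond the rank-one target).
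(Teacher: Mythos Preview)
Your proposal is correct and follows essentially the same approach as the paper, which defers the spectral-sequence argument to Browder's Lemma I.4.3 and then supplies only the extra step of deducing $F_l\simeq (S^{N-1})_l$ from $H^*(F;\ZZ/l)\cong H^*(S^{N-1};\ZZ/l)$. The one minor difference is in that last step: the paper first invokes Corollary~\ref{equivalence-of-l-completion-and-l-profinite-completion} to know $F_l$ is $l$-complete finite type and then builds the map $S^{N-1}\to F_l$, whereas you work directly with $F$ via a mod-$\mathcal{C}$ Hurewicz argument; both are valid, and your route has the slight advantage of not needing finiteness of $H_*(F;\ZZ/l)$.

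One caution on your backward spectral-sequence sketch: the phrase ``row-by-row analysis'' hides the key point that row $q=N$ must survive \emph{intact} to $E_\infty$ (i.e.\ receives no nontrivial differentials), which is what forces the classes in $E_2^{0,q_1}$ for $q_1>N$ to be permanent cycles and hence contradict the rank-one abutment. The cleanest way to see this is to note that cup product with $U$ gives a map of spectral sequences from the trivial one (with $E_2=H^*(Z)$ in a single row) to the relative one, landing exactly in row $N$; since this map is an isomorphism on $E_\infty$, row $N$ cannot shrink. You flag this as the main obstacle, and it is---but it is a standard maneuver, so the proposal stands.
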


\begin{proof}
 The proof is exactly the same as that of \cite{Browder-Surgery-Simply-Connected}*{Lemma I.4.3}, except the following claim: if we have that $F$ is simply-connected, that  $H^q(F;\ZZ/l)=0$ for any $q\neq N-1$, and that $H^{N-1}(F;\ZZ/l)\cong\ZZ/l$, then $F_l\simeq (S^{N-1})_l$. By universal coefficient theorem, $H_q(F;\ZZ/l)=0$ for any $q>0$ except $H_{N-1}(F;\ZZ/l)\cong \ZZ/l$. By Corollary \ref{equivalence-of-l-completion-and-l-profinite-completion}, $F_l$ is $l$-complete finite type. By Hurewicz theorem and universal coefficient theorem, $\pi_{N-1}(F)/l\cdot \pi_{N-1}(F)\cong H_{N-1}(F;\ZZ/l) \cong \ZZ/l$. Choose a map $g:S^{N-1}\rightarrow F_l$ which is a generator of $\ZZ/l$. Then $g_*:H_q(S^{N-1};\ZZ/l)\rightarrow H_q(F_l;\ZZ/l)$ is an isomorphism for any $q$ and hence $g$ induces an $l$-complete equivalence.
\end{proof}

\begin{proposition}\label{Z/l-poincare-to-spivak}
If $Z$ is mod-$l$ Poincar\'e with respect to the fundamental class $[Z]_{\ZZ/l}\in H_m(Z;\ZZ/l)$, then there exists a mod-$l$ Spivak normal spherical fibration over $Z$.
\end{proposition}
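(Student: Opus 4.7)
\textit{Proof proposal.} The plan is to $l$-adically adapt Spivak's classical construction using regular neighborhoods. Since $Z$ is a simply-connected finite CW complex, we may realize it (up to homotopy) as a finite subpolyhedron of $S^{N+m}$ for some large $N\geq 3$, and let $W\subset S^{N+m}$ be a closed regular neighborhood, so that $W$ is a compact topological manifold with boundary $\partial W$ and the inclusion $Z\hookrightarrow W$ is a homotopy equivalence. Let $\pi:\partial W\to W\simeq Z$ be the restriction of the retraction, and replace it by an equivalent Hurewicz fibration $\widetilde\pi:\widetilde E\to Z$ with homotopy fiber $F$. Because $Z$ has codimension $\geq 3$ in $S^{N+m}$, a general-position/van Kampen argument gives that $\partial W\simeq S^{N+m}\setminus Z$ is simply-connected.

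To produce a Thom class, I would use Poincar\'e-Lefschetz duality for the manifold pair $(W,\partial W)$:
\[
H^N(\widetilde\pi;\ZZ/l) \;\cong\; H^N(W,\partial W;\ZZ/l) \;\cong\; H_{m}(W;\ZZ/l) \;\cong\; H_m(Z;\ZZ/l),
\]
and let $U\in H^N(\widetilde\pi;\ZZ/l)$ correspond to $[Z]_{\ZZ/l}$ under this composition. A diagram chase using naturality of the cap/cup pairing identifies $(-)\cup U:H^q(Z;\ZZ/l)\to H^{q+N}(\widetilde\pi;\ZZ/l)$ with $(-)\cap [Z]_{\ZZ/l}:H^q(Z;\ZZ/l)\to H_{m-q}(Z;\ZZ/l)$, which is an isomorphism by the assumed mod-$l$ Poincar\'e duality on $Z$. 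Granted simple-connectedness of $F$, the lemma immediately preceding the proposition then yields $F_l\simeq (S^{N-1})_l$, producing a classifying map $\gamma:Z\to BG(N)_l$. Orientability, i.e.~a lift to $BSG(N)_l$, is automatic since the obstruction sits in $H^1(Z;\widehat{\ZZ}_l^{\times})=0$ by the simple-connectedness of $Z$.

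Finally, the map $\phi:S^{N+m}\to M(\gamma)_l$ is constructed as the Thom collapse composition
\[
S^{N+m} \twoheadrightarrow S^{N+m}/\overline{S^{N+m}\setminus\operatorname{int}(W)} \;\simeq\; W/\partial W \;\to\; M(\widetilde\pi) \;\to\; M(\gamma)_l.
\]
By naturality of the cap product and the defining property of $U$ as the Lefschetz dual of $[Z]_{\ZZ/l}$, one obtains the required identity $\phi_*[S^{N+m}]\cap U_{\gamma_l}=(f_l)_*[Z]_{\ZZ/l}$.

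The main obstacle I anticipate is verifying that the homotopy fiber $F$ is simply-connected so that the preceding lemma applies. The long exact sequence of $\widetilde\pi$ only shows $\pi_1(F)$ is a quotient of $\pi_2(Z)$ (using $\pi_1(\partial W)=1$), and in the mod-$l$ setting one cannot directly detect $\pi_1$ from $\ZZ/l$-homology. The cleanest route is to combine the already-established cohomological computation of $F$ via the Serre spectral sequence (forcing $H^*(F;\ZZ/l)$ to agree with that of $(S^{N-1})_l$) with the classical fact that a sufficiently large-codimensional regular neighborhood of a simply-connected polyhedron in a sphere has simply-connected homotopy fiber of its boundary projection; this is where the simply-connected hypothesis on $Z$ and the choice of large embedding dimension are crucial.
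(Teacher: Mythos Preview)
Your proposal follows the same strategy as the paper: embed $Z$ in a large Euclidean space (the paper uses $\RR^{m+N}$ rather than $S^{m+N}$, an inessential difference), take a regular neighborhood $W$, establish a mod-$l$ Thom isomorphism for $(W,\partial W)$ via Lefschetz duality, invoke the preceding lemma to identify the $l$-completed fiber as $(S^{N-1})_l$, fiberwise $l$-complete, and finish with the Thom--Pontryagin collapse.

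Your explicit concern about $\pi_1(F)$ is well-placed; the paper simply asserts that $F$ is simply-connected without comment. The correct resolution is the second route you allude to, made precise as follows: by \emph{integral} Lefschetz duality, $H_i(W,\partial W;\ZZ)\cong H^{m+N-i}(Z;\ZZ)$, and this vanishes for $i<m+N-d$, where $d$ is the CW-dimension of $Z$. Since $W$ and $\partial W$ are both simply-connected (codimension $\geq 3$), the relative Hurewicz theorem gives that $(W,\partial W)$ is $(m+N-d-1)$-connected, hence $F$ is $(m+N-d-2)$-connected, and in particular simply-connected once $N\geq d-m+3$. Your first suggestion---the mod-$l$ Serre spectral sequence computation of $H^*(F;\ZZ/l)$---cannot by itself settle $\pi_1(F)$, so it is the large-codimension integral argument that does the work here.
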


\begin{remark}
In general this Spivak normal spherical fibration
is not the fiberwise $\ZZ/l$-localization of some (integral) spherical fibration.    
\end{remark}

\begin{proof}
Up to homotopy, we may assume that $Z$ is a finite simplicial complex. Then embed $Z$ piecewise-linearly in a Euclidean space $\RR^{m+N}$ with $N$ large enough. Let $W$ be a regular neighborhood of $Z$. Then $W$ is a smooth manifold with boundary $\partial W$. $W$ is homotopy equivalent to $Z$. Consider the following two Poincar\'e dualities.
\[
(-) \cap [W,\partial W]_{l}: H^{m+N-*}(W,\partial W;\widehat{\ZZ}_l)\xrightarrow{\cong} H_{*}(W;\widehat{\ZZ}_l)
\]
\[
(-) \cap [Z]_{\ZZ/l}:H^{m-*}(Z;\ZZ/l)\xrightarrow{\cong} H_*(Z;\ZZ/l)\cong H_*(W;\ZZ/l)
\]
Hence, there exists $U\in H^N(W,\partial W ;\ZZ/l)$ such that $U\cap [W,\partial W]_{\ZZ/l}=[Z]_{\ZZ/l}$ and $(-) \cup U: H^*(W;\ZZ/l)\rightarrow H^{*+N}(W,\partial W;\ZZ/l)$
is an isomorphism.

Then the homotopy fiber $F$ of the map $\partial W\rightarrow W\simeq Z$ is a simply-connected, finite type CW complex and $F_l\simeq (S^{N-1})_l$. Fiberwise $l$-complete the fibration replacement of the map $\partial W\rightarrow Z$, we get a $(S^{N-1})_l$-fibration $\gamma$ over $Z$. Moreover, $(W/\partial W)^{\wedge}_l$ is the $l$-completion of the Thom space $M(\gamma)$.

The conclusion that the map $\phi:S^{N+m}\rightarrow W/\partial W\rightarrow (W/\partial W)^{\wedge}_l$ induces $\phi_*[S^{N+m}]\cap U_{\gamma}=(f_l)_*[Z]_{\ZZ/l}$ follows from the mod-$l$ Poincar\'e duality of $Z$ and the isomorphism $(-) \cup U: H^*(W;\ZZ/l)\xrightarrow{\cong} H^{*+N}(W,\partial W;\ZZ/l)$. 
\end{proof}

\begin{definition}\label{definition-of-equivalence-of-spivak}
Two mod-$l$ Spivak normal spherical fibrations $(\gamma_1,\phi_1)$ and $(\gamma_2,\phi_2)$ over $Z$ are \textbf{equivalent} if there exists a stably orientation-preserving fiberwise homotopy equivalent $F:\gamma_1\rightarrow \gamma_2$ such that $F_*[\phi_1]=[\phi_2]$. 
\end{definition}

\begin{theorem}\label{uniqueness-of-l-adic-Spivak}
Any two mod-$l$ Spivak normal spherical fibrations $(\gamma_1,\phi_1),(\gamma_2,\phi_2)$ over $Z$ are equivalent.
\end{theorem}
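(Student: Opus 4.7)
The proof will follow the classical Spivak--Browder uniqueness strategy, with Spanier--Whitehead duality replaced by the $l$-adic version developed in Subsection \ref{subsection-l-adic-s-duality} and the stable-range results on $\mathcal{A}(\gamma)$ from the preceding subsection.

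First, I would reduce to equal sphere dimensions. Given Spivak data $(\gamma_i,\phi_i)$ of dimensions $N_i-1$, fiberwise Whitney-summing with trivial $l$-adic spherical bundles equalizes the sphere dimensions to a common $N-1$; the maps $\phi_i$ extend naturally via the fiberwise degree-one maps $S(\gamma\oplus\epsilon^1)\to M(\gamma)_l$ constructed at the end of the previous subsection. By taking $N$ large compared to the CW dimension of $Z$, we may assume we are in the stable range where $\Aut(\gamma_i)\cong \mathcal{A}(\gamma_i)$ and $[Z,G(N)_l]\cong [Z,G(N+1)_l]$.

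Second, I would realize each Spivak datum as an $l$-adic Spanier--Whitehead duality. Set
\[
\rho_i:\Sigma^{N+m}\xrightarrow{\phi_i} M(\gamma_i)_l\xrightarrow{\Delta_i}(M(\gamma_i)\wedge Z_+)_l,
\]
where $\Delta_i$ is the Thom diagonal induced from the diagonal $Z\to Z\times Z$ on the mapping cylinder of $\gamma_i$. By Theorem \ref{homological-discription-of-S-duality-2} it suffices to check that the slant product with $(\rho_i)_\ast[\Sigma^{N+m}]_{\ZZ/l}$ induces an isomorphism $H^\ast(M(\gamma_i);\ZZ/l)\to H_{N+m-\ast}(Z;\ZZ/l)$. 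This slant product factors as the inverse of the Thom isomorphism from $U_{\gamma_i}$ followed by cap product with $(f_l)_\ast[Z]_{\ZZ/l}$, both of which are isomorphisms, the latter by the Spivak condition together with Propositions \ref{Z/l-Poincare-duality-equivalence} and \ref{Z/l-Poincare-equivalent-to-l-adic-Poincare}. This exhibits $M(\gamma_i)_l$, shifted by $N+m$, as an $l$-adic $S$-dual of $(Z_+)_l$.

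Third, the uniqueness of $l$-adic $S$-duals stated in Subsection \ref{subsection-l-adic-s-duality} yields a homotopy equivalence of $l$-complete spectra $h:M(\gamma_1)_l\to M(\gamma_2)_l$ satisfying $(h\wedge\mathrm{id}_{Z_+})_l\circ\rho_1\simeq \rho_2$. Smashing with the collapse $Z_+\to \mathrm{pt}_+$ on the target side of this compatibility translates it into $h_\ast[\phi_1]=[\phi_2]$ in $\pi_{N+m}(M(\gamma_2)_l)$. The remaining and, I expect, principal technical difficulty is to lift this stable Thom-spectrum equivalence $h$ to an honest fiberwise homotopy equivalence $F:\gamma_1\to\gamma_2$ covering $\mathrm{id}_Z$. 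In the stable range, this is the $l$-adic analog of Browder's Proposition I.4.19: a Thom-class preserving map of Thom spaces over $Z_l$ which is a fiberwise mod-$l$ equivalence is induced by a fiberwise homotopy equivalence of spherical fibrations, because the relevant obstructions live in $H^q(Z;\pi_q(SG(N)_l))$ and vanish in the stable range. Once such an $F$ is produced, the identity $F_\ast[\phi_1]=[\phi_2]$ is immediate from the construction of $h$ in the previous step, completing the proof.
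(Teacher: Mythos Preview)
Your outline follows the classical Browder strategy, and the first three steps are fine: equalizing fiber dimensions, exhibiting the Thom diagonal map $\rho_i$ as an $l$-adic $S$-duality via Theorem \ref{homological-discription-of-S-duality-2}, and obtaining a stable equivalence $h:M(\gamma_1)_l\to M(\gamma_2)_l$ intertwining the dualities. The gap is in step four. Your justification for lifting $h$ to a fiberwise equivalence---``the relevant obstructions live in $H^q(Z;\pi_q(SG(N)_l))$ and vanish in the stable range''---is not correct: these groups do not vanish, and Browder's Proposition I.4.19 is not a vanishing-obstruction argument. Its proof requires the hypothesis $h^*U_{\gamma_2}=U_{\gamma_1}$, which you never verify (it does follow from the $S$-duality compatibility, but this needs to be said), and then a fiberwise construction showing $h$ restricts to a degree-one map on each fiber before obstruction theory can even be set up. Moreover, Browder's statement produces a bundle map covering \emph{some} map of bases, not automatically the identity on $Z$; you would still need to argue that the induced self-map of $Z_l$ is homotopic to the identity. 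None of this is insurmountable, but you have not supplied it, and an $l$-adic version of I.4.19 does not appear anywhere in the paper.

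The paper's proof sidesteps this lifting problem entirely by a different use of $S$-duality. Rather than dualizing $M(\gamma_i)$ against $Z_+$, Lemma \ref{uniqueness-of-l-adic-Spivak-Lemma-1} invokes the $l$-adic Atiyah duality (Theorem \ref{l-adic-Atiyah-duality}) between $M(\gamma_1)$ and $M(\gamma_1\oplus(-\gamma_2))$: the $S$-dual of the collapse $\phi_1:\Sigma^{m+k}\to M(\gamma_1)_l$ is then a map $D(\phi_1):M(\gamma_1\oplus(-\gamma_2)\oplus\epsilon)_l\to (S^{r})_l$ which one checks pulls the generator back to the Thom class. Such a map \emph{is} a fiberwise trivialization of $\gamma_1\oplus(-\gamma_2)$---no Thom-space-to-bundle-map lifting is needed. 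This yields a fiberwise equivalence $b:\gamma_1\to\gamma_2$ but with no control over $b_*[\phi_1]$; Lemma \ref{uniqueness-of-l-adic-Spivak-Lemma-2} then handles the collapse-map compatibility separately, by showing that $\mathcal{A}_l(\nu)$ acts simply transitively on the set of classes in $\pi_{m+k}(M(\nu)_l)$ satisfying the Spivak condition, again via $S$-duality (now between $M(\nu)$ and $M(\epsilon)$). The two-step decomposition---first produce \emph{any} fiberwise equivalence, then correct it by a self-equivalence---is what lets the paper avoid the delicate step you flagged.
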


The proof of this theorem is at the end of this subsection.

\begin{theorem}[$l$-adic Atiyah duality]\label{l-adic-Atiyah-duality}
Let $(\nu,\phi)$ be a mod-$l$ Spivak normal spherical fibration over $Z$. Let $\xi$ be an oriented $l$-adic spherical fibration over $Z$ and let $-\xi$ be an inverse of $\xi$. Then $M(\nu)$ is $l$-adic S-dual to $M(\nu \oplus (-\xi))$.
\end{theorem}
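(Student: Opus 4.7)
The plan is to invoke the $l$-adic homological criterion for $S$-duality (Theorem \ref{homological-discription-of-S-duality-1}) via an explicit pairing
\[
\mu\colon M(\nu)\wedge M(\nu\oplus(-\xi))\;\longrightarrow\; S^{N+m+M}_l,
\]
where $N$ is the rank of $\nu$, $m$ is the formal dimension of $Z$, and $M$ is determined by a trivialization $\xi\oplus(-\xi)\simeq \epsilon^M$ coming from $-\xi$ being an inverse.

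First I would handle the untwisted case $\xi=\epsilon$. Following the proof of Proposition \ref{Z/l-poincare-to-spivak}, embed $Z\hookrightarrow\RR^{N+m}$ and take a regular neighborhood $W$ with boundary $\partial W$, so that $M(\nu)$ is realized (after $l$-completion) as $W/\partial W$ and the Spivak map $\phi\colon S^{N+m}\to M(\nu)_l$ is the Pontryagin--Thom collapse. The diagonal $W\to W\times W$ composed with this collapse produces a pairing $\mu_0\colon M(\nu)\wedge Z_+\to S^{N+m}_l$. By Theorem \ref{homological-discription-of-S-duality-1}, verifying that $\mu_0$ is an $l$-adic $S$-duality reduces to checking that the induced slant product in $\ZZ/l$-homology is an isomorphism; this is the composite of the Thom isomorphism for $\nu$ (Definition \ref{definition-l-adic-spherical-fibration}) with the mod-$l$ Poincar\'e duality on $Z$ guaranteed by Proposition \ref{Z/l-Poincare-equivalent-to-l-adic-Poincare}. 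This is the $l$-adic Atiyah duality between $M(\nu)$ and $Z_+$ up to appropriate shift.

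For the full twisted statement, I would use the fiberwise Whitney join on oriented $l$-adic spherical fibrations to identify $M(\nu\oplus(-\xi))$ with the fiberwise smash $M(\nu)\wedge_Z M(-\xi)$. The trivialization $\xi\oplus(-\xi)\simeq \epsilon^M$ furnishes a fiberwise degree one map $M(\xi)\wedge_Z M(-\xi)\to \Sigma^M Z_+$ after fiberwise $l$-completion (Lemma \ref{completion-lemma-for-spherical-fibration}). Assembling these pieces together with $\mu_0$ through the diagonal of $Z$ yields the desired pairing $\mu$. Verification that $\mu$ is an $l$-adic $S$-duality again reduces, by Theorem \ref{homological-discription-of-S-duality-1}, to checking that the induced slant product on $\ZZ/l$-homology is an isomorphism; the Thom isomorphisms for $\nu$, $-\xi$, and $\nu\oplus(-\xi)$ together convert this to the mod-$l$ Poincar\'e duality isomorphism on $Z$ used in the untwisted case.

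The main obstacle is the placement of $M(\nu)$ in the pairing: classical Atiyah duality most naturally relates $M(\xi)$ with $M(\nu\oplus(-\xi))$, and obtaining a duality with $M(\nu)$ on the left forces the diagonal of $Z$ together with the Spivak data $\phi$ into the construction in an essential way, rather than just the trivialization $\xi\oplus(-\xi)\simeq\epsilon^M$. Consequently, the delicate point is carefully tracking orientations and Thom classes through the fiberwise $l$-completion and ensuring the Spivak class $\phi$ interacts correctly with the $\epsilon^M$-trivialization of $\xi\oplus(-\xi)$, so that the candidate $\mu$ is an honest $l$-adic map whose slant product has the required degree one property. Once this bookkeeping is in place, the homological verification via Theorem \ref{homological-discription-of-S-duality-1} is formal.
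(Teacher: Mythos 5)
Your untwisted case is fine and matches the paper's Proposition \ref{Z/l-poincare-to-spivak} together with the corollary following the theorem, and your reduction to a homological slant-product criterion is the same mechanism the paper uses (it works with the coevaluation form, Theorem \ref{homological-discription-of-S-duality-2}, rather than Theorem \ref{homological-discription-of-S-duality-1}). But your twisted construction has a genuine gap, precisely at the point you flag as ``the main obstacle.'' A pairing $\mu\colon M(\nu)\wedge M(\nu\oplus(-\xi))\to S^{c}_l$ built through the diagonal of $Z$ lives on the Thom space of the diagonal pullback $\nu\oplus\nu\oplus(-\xi)$, and the degree-one property you need for the slant product (via the Thom isomorphisms and Poincar\'e duality) is exactly a reducibility $S^{c}\to M(\nu\oplus\nu\oplus(-\xi))_l$ hitting the fundamental class; by the uniqueness of mod-$l$ Spivak fibrations (Theorem \ref{uniqueness-of-l-adic-Spivak}) that would force $\nu\oplus\nu\oplus(-\xi)\simeq\nu$, i.e.\ $\xi\simeq\nu$ stably. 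Equivalently, since $l$-adic $S$-duals are unique up to $l$-complete equivalence and the dual of $M(\nu\oplus(-\xi))$ is $M(\xi)$, a duality with $M(\nu)$ would force $M(\nu)_l\simeq M(\xi)_l$ up to suspension. So no amount of bookkeeping with $\phi$ and the trivialization $\xi\oplus(-\xi)\simeq\epsilon^M$ will produce the pairing you describe; the only evaluation/coevaluation data available are $\phi\colon S^{N}\to M(\nu\oplus\epsilon)_l$ and the trivialization of $\xi\oplus(-\xi)$, and neither can be fed a copy of $M(\nu)$ on the left.

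The duality that does exist, and that the paper's proof actually establishes, is between $M(\xi)$ and $M(\nu\oplus(-\xi))$. The paper uses $(\nu\oplus(-\xi))\oplus\xi=\nu\oplus\epsilon$, the fiberwise map $\rho\colon M(\nu\oplus\epsilon)\to M(\nu\oplus(-\xi))\wedge M(\xi)$ over the diagonal $\Delta\colon Z\to Z\times Z$, and the composite $S^{N}\xrightarrow{\phi}M(\nu\oplus\epsilon)_l\xrightarrow{\rho}(M(\nu\oplus(-\xi))\wedge M(\xi))_l$; it then checks the slant-product condition of Theorem \ref{homological-discription-of-S-duality-2} by combining the Thom isomorphisms for $\xi$, $\nu\oplus(-\xi)$, and $\nu\oplus\epsilon$ with the mod-$l$ Poincar\'e duality of $Z$, in a single step with no separate untwisted case. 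This reading is consistent with the corollary (take $\xi$ trivial, so $M(\xi)\simeq Z_+$ and $M(\nu\oplus(-\xi))\simeq M(\nu)$, recovering ``$M(\nu)$ dual to $Z_+$''). In short, the statement as printed should be read with $M(\xi)$ in place of $M(\nu)$ on one side of the duality; prove that version via the coevaluation above, and your homological verification then goes through exactly as you outline.
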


\begin{proof}
Note that $(\nu\oplus (-\xi))\oplus \xi=\nu\oplus \epsilon$, where $\epsilon$ is a trivial $l$-adic spherical fibration. Let $D(-)$ be the disc bundle of an $l$-adic spherical fibration, namely, it is defined as the mapping cylinder.  Consider the definition of the direct sum of spherical fibrations.
There is a fibration map $\rho:(D(\nu\oplus \epsilon),S(\nu\oplus \epsilon))\rightarrow (D(\nu\oplus (-\xi))\times D(\xi),S(\nu\oplus (-\xi))\times D(\xi)\bigcup D(\nu\oplus (-\xi))\times S(\xi))$ over the diagonal map $\Delta:Z\rightarrow Z\times Z$. Then we have the following commutative diagram.
\[
\begin{tikzcd}
    \widetilde{H}_{*+N}(M(\nu\oplus \epsilon)_l;\ZZ/l) \arrow[r,"\rho_*"] \arrow[d,"(-)\cap U_{\nu\oplus \epsilon}"] & \widetilde{H}_{*+N}((M(\nu\oplus (-\xi))\wedge M(\xi))_l;\ZZ/l) \arrow[d,"(-)\cap (U_{\nu\oplus(-\xi)}\times U_{\xi})"] \\
    H_*(Z_l;\ZZ/l) \arrow[r,"\Delta_*"] & H_*(Z_l\times Z_l;\ZZ/l)
\end{tikzcd}
\]

Consider the map $\nu:S^N\xrightarrow{\phi} M(\nu\oplus \epsilon)_l\xrightarrow{\rho} (M(\nu\oplus (-\xi))\wedge M(\xi))_l$. It suffices to prove that $\nu$ induces an isomorphism  $\nu_*([S^N]_{\ZZ/l})/(-):H^{*}(M(\nu \oplus (-\xi))_l;\ZZ/l)\rightarrow H_{N-*}(M(\xi)_l;\ZZ/l)$. Any cohomology class in $H^{*}(M(\nu \oplus (-\xi))_l;\ZZ/l)$ is uniquely represented by $x\cup U_{\nu \oplus (-\xi)}$ for some $x\in H^*(Z;\ZZ/l)$. Then 
\begin{align*}
    (\nu_*([S^N]_{\ZZ/l})/(x\cup U_{\nu\oplus (-\xi)}))\cap U_{\xi} & = \nu_*([S^N]_{\ZZ/l})/(x\cup U_{\nu\oplus (-\xi)}\cup U_{\xi}) \\
    & = (\nu_*([S^N]_{\ZZ/l})\cap (U_{\nu\oplus (-\xi)}\cup U_{\xi}))/x \\
    & = \Delta_*(\phi_*([S^N]_{\ZZ/l}\cap U_{\nu\oplus \epsilon})/x \\
    & = \Delta_*[Z]_{\ZZ/l}/x=[Z]_{\ZZ/l}\cap x
\end{align*}
By the mod-$l$ Poincar\'e dualities of $Z,Z_l$ and the Thom isomorphisms for all the spherical fibrations in this proof, we have proved that $\nu_*([S^N]_{\ZZ/l})/(-)$ is an isomorphism.
\end{proof}

In the case when $\xi$ is the trivial bundle of $Z$. Theorem \ref{l-adic-Atiyah-duality} specializes to:

\begin{corollary}
Let $(\nu,\phi)$ be a mod-$l$ Spivak normal spherical fibration over $Z$. Then $M(\nu)$ is $l$-adic dual to $Z_{+}$, where $Z_{+}$ is the disjoint union of $Z$ and a base point.
\end{corollary}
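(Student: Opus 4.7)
The plan is to apply Theorem \ref{l-adic-Atiyah-duality} with $\xi$ taken to be a trivial oriented $l$-adic spherical fibration $\epsilon^n$ over $Z$ of sufficiently large fiber dimension. Two identifications drive the specialization. First, the Thom space of a trivial fibration satisfies $M(\epsilon^n)\simeq \Sigma^n Z_+$, by the standard mapping-cone argument applied to the projection $(S^{n-1})_l\times Z\to Z$. Second, the stable inverse $-\xi$ of a trivial spherical fibration is itself stably trivial, so $\nu\oplus(-\xi)$ is stably $\nu$, and hence $M(\nu\oplus(-\xi))$ is stably a suspension of $M(\nu)$.

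Feeding these identifications into the proof of Theorem \ref{l-adic-Atiyah-duality}, the map $S^{N}\to (M(\nu\oplus(-\xi))\wedge M(\xi))_l$ constructed there exhibits an $l$-adic S-duality between $M(\nu\oplus(-\xi))$ and $M(\xi)$ (this is what the slant-product computation at the end of the proof of Theorem \ref{l-adic-Atiyah-duality}, via the characterization of Theorem \ref{homological-discription-of-S-duality-1}, actually verifies). Substituting $M(\xi)\simeq \Sigma^n Z_+$ and $M(\nu\oplus(-\xi))\simeq \Sigma^n M(\nu)$ and desuspending in the stable category yields the claim that $M(\nu)$ is $l$-adic S-dual to $Z_+$.

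The main obstacle is purely bookkeeping: tracking the various suspension shifts (the factor $\Sigma^n$ from trivializing $\xi$, and the dimension of the source sphere coming from the Spivak class $\phi:S^{N+m}\to M(\nu)_l$) so that the resulting duality map $\Sigma^0_l\to (M(\nu)\wedge Z_+)_l$ indeed satisfies the isomorphism condition required by Definition \ref{definition-of-l-adic-S-duality}. One should also observe that the various $l$-completion identifications of Thom spaces established in Lemma \ref{completion-lemma-for-spherical-fibration} and its corollary make these reductions legitimate. No conceptual input beyond Theorem \ref{l-adic-Atiyah-duality} is needed.
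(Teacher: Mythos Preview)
Your proposal is correct and follows exactly the paper's approach: the corollary is deduced from Theorem~\ref{l-adic-Atiyah-duality} by taking $\xi$ to be a trivial fibration. You are also right to read the actual duality pairing off the proof of Theorem~\ref{l-adic-Atiyah-duality} (namely that $M(\nu\oplus(-\xi))$ and $M(\xi)$ are $l$-adic $S$-dual via the displayed slant-product computation), since the theorem's stated conclusion as written appears to contain a typo; the rest of your suspension bookkeeping is just making explicit what the paper leaves to the reader.
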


\begin{lemma}\label{uniqueness-of-l-adic-Spivak-Lemma-1}
If $(\xi_1,\phi_1),(\xi_2,\phi_2)$ are two mod-$l$ Spivak normal spherical fibrations of the same rank $k$ with $k$ much larger than the dimension $m$ of $Z$, then there is an orientation-preserving fiberwise homotopy equivalence $\xi_1\rightarrow \xi_2$.
\end{lemma}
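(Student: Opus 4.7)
The argument adapts the classical uniqueness proof for Spivak normal fibrations (as in Browder's \emph{Surgery on Simply-Connected Manifolds}, Theorem I.4.19) to the $l$-adic setting, combining the $l$-adic Atiyah duality just established (Theorem \ref{l-adic-Atiyah-duality}) with the $l$-adic stability of $BG(k)_l$ for $k\gg m$ recorded in the preceding subsection.

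The plan is first to produce a stable homotopy equivalence $\Psi\colon M(\xi_1)_l\to M(\xi_2)_l$ compatible with the Spivak data, and then to descend $\Psi$ to an unstable fiberwise equivalence. By Theorem \ref{l-adic-Atiyah-duality} with auxiliary $\xi$ trivial (i.e.\ the Corollary immediately after it), each pair $(\xi_i,\phi_i)$ exhibits $M(\xi_i)_l$ as an $l$-adic $S$-dual of $Z_+$, with duality pairing built out of $\phi_i$ and the Thom class $U_{\xi_i,l}$. The uniqueness of $l$-adic $S$-duals established in Subsection \ref{subsection-l-adic-s-duality} (the proposition on unique characterization of $f^{*}$), applied to the identity $Z_+\to Z_+$, then yields a stable homotopy equivalence $\Psi$, well-defined up to homotopy, satisfying $\Psi\circ\phi_1\simeq\phi_2$ and $\Psi^{*}U_{\xi_2,l}=U_{\xi_1,l}$.

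For the descent step, classify $\xi_i$ by maps $c_i\colon Z\to BSG(k)_l$, where the preferred orientation coming from $U_{\xi_i,l}$ pins down the lift across the universal cover $BSG(k)_l\to BG(k)_l$. The stable equivalence $\Psi$ together with its compatibility with Thom classes forces the stable classes $[c_1]=[c_2]$ in the colimit $[Z,BSG_l]:=\mathrm{colim}_{k}[Z,BSG(k)_l]$. By the stability isomorphism $[Z,BG(n)_l]\cong[Z,BG(n+1)_l]$ valid for $\dim Z<n-3$ (the proposition recorded just before this lemma), together with the orientation-preserving refinement on the universal covers, we already have $c_1\simeq c_2$ in $[Z,BSG(k)_l]$ whenever $k>m+3$, which produces the desired orientation-preserving fiberwise homotopy equivalence $\xi_1\to\xi_2$.

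The main technical obstacle is the passage from the stable equivalence $\Psi$ of Thom spaces (preserving Thom class) to the identification $[c_1]=[c_2]$ in the stable set $[Z,BSG_l]$. Classically this rests on the fact that the functor sending an oriented stable spherical fibration to its Thom spectrum equipped with a Thom class is faithful on equivalence classes; the $l$-adic analogue requires combining the $l$-adic Thom isomorphism (a consequence of the orientation condition in Definition \ref{definition-l-adic-spherical-fibration}) with the $l$-adic stability of $G(k)_l$ recorded in the preceding propositions, and verifying this correspondence carefully is where the technical content concentrates. Once this link is established, the remainder of the argument is a formal descent through the stability range.
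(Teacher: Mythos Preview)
Your outline has a genuine gap at precisely the step you flag as the ``main technical obstacle,'' and you do not actually close it. A stable equivalence $\Psi\colon M(\xi_1)_l\to M(\xi_2)_l$ that pulls back Thom classes is \emph{not} automatically induced by a fiberwise map: nothing forces $\Psi$ to respect the zero-section inclusions $Z_l\hookrightarrow M(\xi_i)_l$ or, equivalently, to be a map over $Z_l$. Without that compatibility there is no mechanism to read off $[c_1]=[c_2]$ in $[Z,BSG_l]$. Your appeal to the assertion that ``Thom spectrum with Thom class'' is faithful on equivalence classes is exactly the missing lemma, and neither Browder~I.4.19 nor the preceding material in this paper supplies it. (Also, the uniqueness of $l$-adic $S$-duals only gives an $h$ intertwining the duality pairings $\rho_i$; the further claims $\Psi\circ\phi_1\simeq\phi_2$ and $\Psi^*U_{\xi_2,l}=U_{\xi_1,l}$ would themselves require argument.)

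The paper---and Browder's classical proof, which you cite but do not actually follow---sidesteps this issue entirely. Rather than comparing the two Thom spaces, it applies Atiyah duality (Theorem~\ref{l-adic-Atiyah-duality}) with the auxiliary bundle chosen nontrivially so that $M(\xi_1)$ is $l$-adic $S$-dual to $M(\xi_1\oplus(-\xi_2))$. Dualizing $\phi_1$ then produces a single concrete map $D(\phi_1)\colon M(\xi_1\oplus(-\xi_2)\oplus\epsilon^{r'})_l\to (S^{r+r'})_l$, and the $S$-duality computations force $D(\phi_1)^*[S^{r+r'}]^{\vee}_{\ZZ/l}=U_{\xi_1\oplus(-\xi_2)\oplus\epsilon^{r'}}$. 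That equation says $D(\phi_1)$ has degree one on each fiber, so it \emph{is} a fiber homotopy trivialization of the difference bundle; no passage from Thom-space equivalence to bundle equivalence is needed. Only after $\xi_1\oplus(-\xi_2)$ is trivialized does the stability of $BSG(N)_l$ enter, to descend from the stable range back to rank $k$.
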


\begin{proof}
Assume $\xi_1\oplus (-\xi_2)$ has rank $r$.
Theorem \ref{l-adic-Atiyah-duality} proves that there is an $l$-adic Spanier-Whitehead duality map $S^{m+k+r}\rightarrow (M(\xi_1)\wedge M(\xi_1\oplus (-\xi_2)))_l$. The map $\phi_1:\Sigma^{m+k}_l\rightarrow M(\xi_1)_l\wedge \Sigma^0_l$ has an $l$-adic $S$-dual map of spectra $D(\phi_1)':\Sigma^{\infty} M(\xi_1\oplus (-\xi_2))_l\rightarrow \Sigma^r_l$. Then there is some finite $r'$ and a map $D(\phi_1):M(\xi_1\oplus (-\xi_2)\oplus \epsilon^{r'}))_l\rightarrow (S^{r+r'})_l$ represents $D(\phi_1)'$. 

Let $[S^n]^{\vee}_{\ZZ/l}\in H^n(S^n;\ZZ/l)$ be the dual of the fundamental class $[S^n]_{\ZZ/l}$. Let $j:(S^{r+r'})_l\rightarrow M(\xi_1\oplus (-\xi_2)\oplus \epsilon^{r'})_l$ be an inclusion onto any fiber. Since $(\phi_1)_*([S^{m+k}]_{\ZZ/l})\cap U_{\gamma_1}=[Z]_{\ZZ/l}$, by the computations in the proof of Theorem \ref{l-adic-Atiyah-duality}, $(D(\phi_1))^*([S^{r+r'}]^{\vee}_{\ZZ/l}) =U_{\xi_1\oplus (-\xi_2)\oplus \epsilon^{r'}}$. Then $j^*\circ (D(\phi_1))^*([S^{r+r'}]^{\vee}_{\ZZ/l})$ is a generator of $H^n((S^{r+r'})_l;\ZZ/l)$. Hence, the composition $D((\xi_1\oplus (-\xi_2)\oplus \epsilon^{r'})_l)\rightarrow M(\xi_1\oplus (-\xi_2)\oplus \epsilon^{r'}))_l\rightarrow (S^{r+r'})_l$ is a fiber homotopy trivialization. One may apply a self homotopy equivalence on $(S^{r+r'})_l$ to make the previous fiber homotopy trivialization preserves the orientation on each fiber. This proves that $(\xi_1)_l$ and $(\xi_2)_l$ are stably oriented fiber homotopy equivalent.  This means that the classifying map $Z_l\xrightarrow{(\xi_1\oplus\epsilon)_l,(\xi_2\oplus \epsilon)_l}BSG(N')_l$ are homotopic for some large $N'$. By composing with the $l$-completion $f_l:Z\rightarrow Z_l$, $\xi_1$ and $\xi_2$ are stably oriented fiber homotopy equivalent. But the rank of $\xi_1,\xi_2$ are much larger than the dimension $Z$, this stable equivalence can descend to $\xi_1,\xi_2$.
\end{proof}

Next we prove that there exists an orientation-preserving fiberwise homotopy equivalence $b:\xi_1\rightarrow \xi_2$ such that $b\circ \phi_1=\phi_2\in \pi_{N+m}(M(\xi_2)_l)$.

Let $\epsilon^k$ be the trivial $l$-adic spherical fibration over $Z$. Let $\pi:M(\epsilon^k)\rightarrow (S^{k})_l$ be the fiber homotopy trivialization induced by the projection. Then $\pi^*([(S^{k})_l]^{\vee}_{\widehat{\ZZ}_l})$ is a Thom class of $\epsilon^k$, where $[(S^{k})_l]^{\vee}_{\widehat{\ZZ}_l}\in\widetilde{H}^k((S^{k})_l;\widehat{\ZZ}_l)$ is the dual to the fundamental class. Let $j:(S^{k})_l\rightarrow M(\epsilon^k)$ be the inclusion of the fiber. Let $\mathcal{A}_0(\gamma)$ be the submonoid of those orientation-preserving stable fiber homotopy equivalence of $\gamma$.

\begin{lemma}
Let $k$ be much larger than the dimension of $Z$. The map $\Psi_0:\mathcal{A}_0(\gamma)\rightarrow [M(\epsilon^k), (S^{k})_l]$ given by $\Psi_0(b)=\pi \circ M(b)$ is bijectively mapped onto the subset of those $g:M(\epsilon^k)\rightarrow (S^{k})_l$ such that $g_l\circ j$ is homotopy equivalent to $Id_{(S^{k})_l}$, where $g_l:M(\epsilon^k_l)\simeq M(\epsilon^k)_l\rightarrow (S^{k})_l$ is the $l$-adic completion of $g$ and $j:(S^{k})_l\rightarrow M(\epsilon^k_l)$ be the orientation-preserving inclusion of any fiber.
\end{lemma}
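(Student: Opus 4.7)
The plan is to factor $\Psi_0$ as a composite of natural bijections passing through $[Z, SG(k)_l]$, mirroring the classical argument for integral spherical fibrations (compare \cite{Browder-Surgery-Simply-Connected}*{Theorem I.4.12}), and to appeal to an $l$-complete Freudenthal-type stability result.

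First I would identify the source of $\Psi_0$. By the earlier canonical equivalence $\mathcal{A}(\gamma) \cong \mathcal{A}(\epsilon^k)$ and the propositions $\Aut(\epsilon^n) \cong [Z, G(n)_l]$ and $\Aut(\epsilon^n) \cong \mathcal{A}(\epsilon^n)$ valid for $\dim Z < n-2$, passing to identity components gives $\mathcal{A}_0(\gamma) \cong [Z, SG(k)_l]$. For the target, I would use $M(\epsilon^k) \simeq S^k \wedge Z_+$ together with smash--loop adjunction to obtain
\[
[M(\epsilon^k),\,(S^k)_l] \;\cong\; [S^k \wedge Z_+,\,(S^k)_l] \;\cong\; [Z,\,\Omega^k (S^k)_l].
\]
Under this adjunction the condition $g_l \circ j \simeq \mathrm{Id}_{(S^k)_l}$ translates into the assertion that the adjoint $\widetilde g : Z \to \Omega^k(S^k)_l$ sends some point (hence every point, by connectedness of $Z$) into the degree-one path component $\Omega^k_1 (S^k)_l$. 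Thus the target subset corresponds bijectively to $[Z, \Omega^k_1(S^k)_l]$.

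The key step is then the stability bijection $[Z, SG(k)_l] \cong [Z, \Omega^k_1 (S^k)_l]$. In the stable range one has $SG(k) \simeq \Omega^{k-1}_1 S^{k-1}$, since every pointed degree-one self-map of $S^{k-1}$ is a homotopy equivalence. The natural suspension map $SG(k) \to \Omega^k_1 S^k$ is obtained by applying $\Omega^{k-1}$ to the Freudenthal unit $S^{k-1} \to \Omega S^k$, which by Freudenthal is $(2k-2)$-connected; consequently $SG(k) \to \Omega^k_1 S^k$ is $(k-1)$-connected. Because the relative homotopy groups $\pi_i(\Omega^k_1 S^k, SG(k))$ are finitely generated abelian groups in the relevant range, Lemma \ref{density-of-relative-completion} implies that $SG(k)_l \to \Omega^k_1(S^k)_l$ is also $(k-1)$-connected. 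For $\dim Z < k - 1$ this yields the desired bijection on $[Z, -]$.

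Finally I would verify that $\Psi_0$ realises this composite bijection. Given $b \in \mathcal{A}_0(\epsilon^k)$ classified by $\widetilde b : Z \to SG(k)_l$, the Thom-space map $M(b)$ on the fiber over $z \in Z$ is the unreduced suspension of $\widetilde b(z)$, and the identification $\pi|_{\text{fiber}} = \mathrm{Id}_{(S^k)_l}$ shows that the adjoint of $\pi \circ M(b)$ is precisely the composite of $\widetilde b$ with the suspension map $SG(k)_l \to \Omega^k_1(S^k)_l$. The main technical obstacle is the $l$-adic Freudenthal step: controlling how the connectivity of a map between simply-connected finite-type spaces behaves under $l$-completion. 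Lemma \ref{density-of-relative-completion} handles this, given the finite generation of the relevant (unstable) stems in the range $i < k-1$; the remaining issues, namely pointed-versus-unpointed adjunctions and the identification of orientation-preserving self-equivalences with the degree-one component, are routine bookkeeping.
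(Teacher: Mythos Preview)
Your approach is correct in outline but genuinely different from the paper's. The paper argues injectivity and surjectivity separately and directly, using the fiberwise degree-one map $S(\epsilon^{k+1})\to M(\epsilon^k)_l$ described at the end of the $l$-adic spherical fibration subsection: for surjectivity, a map $g:M(\epsilon^k)\to (S^k)_l$ is precomposed with this map to produce a fiberwise self-map of $\epsilon^{k+1}$ of degree one, which then destabilizes to $\epsilon^k$; for injectivity, a homotopy between $\pi\circ M(b_0)$ and $\pi\circ M(b_1)$ is pulled back along the same map to yield a fiberwise homotopy $b_0\oplus Id\simeq b_1\oplus Id$. No adjunction, no Freudenthal, no passage through $[Z,SG(k)_l]$ is made explicit. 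Your route instead identifies both source and target as $[Z,-]$ of suitable spaces and compares those spaces; this is more conceptual and makes the role of stability transparent, whereas the paper's argument is shorter and stays closer to the geometry of Thom spaces.

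Two small points to tighten in your version. First, the assertion ``$SG(k)\simeq \Omega^{k-1}_1 S^{k-1}$'' is not literally true: the evaluation fibration $\Omega^{k-1}_1 S^{k-1}\to SG(k)\to S^{k-1}$ shows only that the inclusion of pointed into unpointed degree-one maps is $(k-2)$-connected, which is what you actually need. Second, $M(\epsilon^k)$ is $Z_+\wedge \Sigma\bigl((S^{k-1})_l\bigr)$ with the \emph{unreduced} suspension in the fiber, not literally $S^k\wedge Z_+$; the resulting discrepancy between $\Sigma\bigl((S^{k-1})_l\bigr)$ and $(S^k)_l$ disappears after mapping into the $l$-complete target, but you should say so. With these adjustments your factorization through $[Z,SG(k)_l]\to[Z,\Omega^k_1(S^k)_l]$ goes through, and Lemma~\ref{density-of-relative-completion} is indeed the right tool for the $l$-completed connectivity statement.
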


\begin{proof}
Injectivity: Let $b_0,b_1\in \Aut(\gamma)$ with $\pi \circ M(b_0)$ homotopic to $\pi \circ M(b_1)$. So they are homotopic after completion. Let $H:M(\epsilon^k)\times I\rightarrow M(\epsilon^k_l)\times I\xrightarrow{H_l} S^k$ be a homotopy between them. Notice that there is a canonical map $S(\epsilon^{k+1})\rightarrow M(\epsilon^k)_l\simeq M(\epsilon^k_l)$. Then $S(\epsilon^{k+1})\times I\rightarrow M(\epsilon^k_l)\times I\xrightarrow{H_l} (S^{k})_l$ produces a homotopy between $b_0\oplus Id_{\epsilon^1}$ and $b_1\oplus Id_{\epsilon^1}$.

Surjectivity: Given $g:M(\epsilon^k)\rightarrow (S^{k})_l$ with the assumption in the statement, $S(\epsilon^{k+1})\rightarrow M(\epsilon^k_l)\xrightarrow{g_l} (S^{k})_l$ has degree $1$ on each oriented fiber. This is the projection onto the fibers of a fiberwise homotopy equivalence $S(\epsilon^{k+1})\rightarrow S(\epsilon^{k+1})$, which is mapped to the stablization of $g$ by $\Psi$. But by the assumption of $k$, this fiberwise homotopy equivalence descends to  $S(\epsilon^{k})$.
\end{proof}

Let $\mathcal{A}_l(\gamma)$ be the \textbf{submonoid of those stable fiber homotopy equivalence of $\gamma$ which preserves $\ZZ/l$-orientations}. By the same argument, we can prove the following.

\begin{lemma}
Let $k$ be much larger than the dimension of $Z$. The map $\Psi_l:\mathcal{A}_l(\gamma)\rightarrow [M(\epsilon^k), (S^{k})_l]$ given by $\Psi_l(b)=\pi \circ M(b)$ is bijectively mapped onto the subset of those $g:M(\epsilon^k)\rightarrow (S^{k})_l$ such that $(g_l\circ j)^*=Id$ on $H^*((S^{k})_l;\ZZ/l)$.
\end{lemma}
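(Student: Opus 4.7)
The plan is to imitate the proof of the preceding lemma for $\Psi_0$ essentially verbatim, with ``orientation-preserving fiber homotopy equivalence'' replaced by ``$\ZZ/l$-orientation preserving fiber self-map'' at each step. The one new ingredient is the observation that a self-map of the $l$-complete sphere $(S^k)_l$ inducing the identity on $H^{\ast}(-;\ZZ/l)$ is automatically a homotopy equivalence; this follows from the $H(\ZZ/l)$-local Whitehead theorem applied to the simply-connected $l$-complete space $(S^k)_l$.

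First I would verify that $\Psi_l$ lands in the advertised subset: if $b \in \mathcal{A}_l(\gamma)$, then by definition $M(b)$ acts as the identity on the mod-$l$ Thom class, hence $(\pi \circ M(b))_l \circ j$ induces the identity on $H^{\ast}((S^k)_l;\ZZ/l)$. For injectivity, suppose $b_0, b_1 \in \mathcal{A}_l(\gamma)$ with $\pi \circ M(b_0) \simeq \pi \circ M(b_1)$. Choose a homotopy between them and pass to its $l$-completion $H_l : M(\epsilon^k)_l \times I \to (S^k)_l$. Precomposing with the canonical map $S(\epsilon^{k+1}) \times I \to M(\epsilon^k)_l \times I$ yields a homotopy between the stabilizations $b_0 \oplus Id_{\epsilon^1}$ and $b_1 \oplus Id_{\epsilon^1}$ in $\Aut(\epsilon^{k+1})$ which preserves $\ZZ/l$-orientations fiberwise, so $b_0 = b_1$ in the stable colimit $\mathcal{A}_l(\gamma)$.

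For surjectivity, given $g : M(\epsilon^k) \to (S^k)_l$ with $(g_l \circ j)^{\ast} = Id$ on $H^{\ast}(-;\ZZ/l)$, form the composite
\[
\widetilde{g} : S(\epsilon^{k+1}) \to M(\epsilon^k)_l \xrightarrow{g_l} (S^k)_l.
\]
On each fiber, $\widetilde{g}$ restricts to a self-map of $(S^k)_l$ that induces the identity on mod-$l$ cohomology, hence is a homotopy equivalence by the key observation. Therefore $\widetilde{g}$ is the fiber projection of a fiberwise self-equivalence of the trivial $(S^k)_l$-bundle $S(\epsilon^{k+1})$ preserving $\ZZ/l$-orientations, yielding an element of $\Aut(\epsilon^{k+1})$ whose class in $\mathcal{A}_l(\gamma)$ is sent by $\Psi_l$ to the stabilization of $g$.

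The main (and essentially only) obstacle is justifying that a self-map of $(S^k)_l$ inducing identity on mod-$l$ cohomology is a genuine homotopy equivalence; once this is granted, everything else is a formal rerun of the $\Psi_0$ argument, since the canonical map $S(\epsilon^{k+1}) \to M(\epsilon^k)_l$ and the stable colimit formalism for $\mathcal{A}_l(\gamma)$ are insensitive to whether one works with strict or mod-$l$ orientations.
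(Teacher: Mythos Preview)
Your proposal is correct and follows exactly the approach the paper intends: the paper's own proof is simply ``By the same argument, we can prove the following,'' referring to the preceding lemma for $\Psi_0$, and you have correctly spelled out that rerun. Your explicit identification of the one genuinely new step --- that a self-map of $(S^k)_l$ inducing the identity on $H^*(-;\ZZ/l)$ is a homotopy equivalence via the $H(\ZZ/l)$-local Whitehead theorem --- is precisely what distinguishes the $\mathcal{A}_l$ case from the $\mathcal{A}_0$ case, and the paper leaves this implicit.
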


\begin{lemma}\label{uniqueness-of-l-adic-Spivak-Lemma-2}
Fix a mod-$l$ Spivak normal spherical fibration $(\nu,\phi:S^{m+k}\rightarrow M(\nu)_l)$ over $Z$ with $k$ much larger than the dimension of $Z$, then the map $\Phi:\mathcal{A}_l(\nu)\rightarrow \pi_{m+k}(M(\nu)_l)$ given by $\Phi(b)=M(b)\circ \phi$ is a bijection onto the subset of those $h:S^{m+k}\rightarrow M(\nu)_l$ such that $h_*[S^{N+m}]\cap U_{\nu}=[Z]_{\ZZ/l}\in H_m(Z_l;\ZZ/l)\cong H_m(Z;\ZZ/l)$.
\end{lemma}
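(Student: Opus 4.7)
The plan is to reduce this lemma to the bijectivity of $\Psi_l$ in the previous lemma via $l$-adic Spanier-Whitehead duality. Choose an inverse $-\nu$ of $\nu$ such that $\nu \oplus (-\nu) \cong \epsilon^r$ is the trivial oriented $l$-adic spherical fibration of some rank $r$ much larger than $m = \dim Z$. Applying Theorem \ref{l-adic-Atiyah-duality} with $\xi = \nu$ yields an $l$-adic $S$-duality between $M(\nu)$ and $M(\nu \oplus (-\nu)) = M(\epsilon^r)$, realized by the coevaluation $S^{m+k+r} \to (M(\nu) \wedge M(\epsilon^r))_l$ constructed in the proof of the Atiyah duality from the Spivak map $\phi$ together with the fiberwise diagonal over $Z$.

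By Theorem \ref{homological-discription-of-S-duality-2}, this $S$-duality induces a bijection
\[
\mathcal{D}: \pi_{m+k}(M(\nu)_l) \xrightarrow{\sim} [M(\epsilon^r)_l, (S^r)_l], \quad h \mapsto D(h).
\]
The cap-slant product manipulation used in the proof of the $l$-adic Atiyah duality shows that, for $h \in \pi_{m+k}(M(\nu)_l)$, the fundamental-class condition $h_*[S^{m+k}] \cap U_\nu = [Z]_{\ZZ/l}$ is equivalent to the condition $(D(h)_l \circ j)^* = \mathrm{Id}$ on $H^*((S^r)_l;\ZZ/l)$. Thus $\mathcal{D}$ restricts to a bijection between the target subset in the present lemma and the target subset of $\Psi_l$ in the previous lemma.

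To match $\Phi$ with $\Psi_l$, use the canonical equivalence $\mathcal{A}_l(\nu) \cong \mathcal{A}_l(\epsilon^r)$ (from the independence of $\mathcal{A}(\gamma)$ on $\gamma$ via the homomorphisms $H$, $I$), sending $b \mapsto b \oplus \mathrm{Id}_{-\nu}$. Naturality of $l$-adic $S$-duality with respect to this operation yields $\mathcal{D}(M(b) \circ \phi) = D(\phi) \circ M(b \oplus \mathrm{Id}_{-\nu})$. Since $\phi$ satisfies the fundamental-class condition, $D(\phi)$ lies in the $\Psi_l$-subset; by the previous lemma, $D(\phi) = \pi \circ M(c)$ for some $c \in \mathcal{A}_l(\epsilon^r)$, where $\pi: M(\epsilon^r)_l \to (S^r)_l$ is the canonical fiber trivialization. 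Therefore
\[
\mathcal{D}(M(b) \circ \phi) = \pi \circ M(c) \circ M(b \oplus \mathrm{Id}_{-\nu}) = \Psi_l\bigl(c \cdot (b \oplus \mathrm{Id}_{-\nu})\bigr),
\]
and $\Phi$ is identified (via $\mathcal{D}$, left-multiplication by $c$, and $\mathcal{A}_l(\nu) \cong \mathcal{A}_l(\epsilon^r)$) with $\Psi_l$. Each of these identifications is a bijection, so the bijectivity of $\Phi$ onto the stated subset follows from the bijectivity of $\Psi_l$.

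The main obstacle will be verifying the naturality identity $\mathcal{D}(M(b) \circ \phi) = D(\phi) \circ M(b \oplus \mathrm{Id}_{-\nu})$, which amounts to showing that the $l$-adic $S$-dual of the Thom-space self-map $M(b): M(\nu) \to M(\nu)$ is $M(b \oplus \mathrm{Id}_{-\nu}): M(\epsilon^r) \to M(\epsilon^r)$. This reduces to a diagram chase tracking the fiberwise diagonal map over $Z$ (from the proof of Theorem \ref{l-adic-Atiyah-duality}) through the bundle self-equivalence $b \oplus \mathrm{Id}_{\epsilon}$ on $\nu \oplus \epsilon$, using the stable isomorphism $\nu \oplus \epsilon \cong \epsilon^r \oplus \nu$ induced by $\nu \oplus (-\nu) = \epsilon^r$. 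The secondary compatibility — that the fundamental-class condition on $h$ is interchanged with the fiber-trivialization condition on $D(h)$ — is the same slant/cap computation that appears in the proof of the $l$-adic Atiyah duality.
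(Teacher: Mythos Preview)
Your proposal is correct and follows essentially the same route as the paper: both arguments use the $l$-adic Atiyah duality (with $\xi=\nu$) to identify $M(\nu)$ and $M(\epsilon^r)$ as $l$-adic $S$-duals, transport $\Phi$ to $\Psi_l$ via this duality together with the canonical isomorphism $\mathcal{A}_l(\nu)\cong\mathcal{A}_l(\epsilon^r)$, and then match the fundamental-class condition on $h$ with the fibre-restriction condition on $D(h)$ by the same slant/cap computation. The paper packages this as a single commuting square passing through the self-map groups $[\Sigma^\infty M(\epsilon^{k'})_l,\Sigma^\infty M(\epsilon^{k'})_l]\cong[\Sigma^\infty M(\nu)_l,\Sigma^\infty M(\nu)_l]$, whereas you work directly with the bijection $\mathcal{D}$ on the targets; your extra device of writing $D(\phi)=\pi\circ M(c)$ and absorbing $c$ by left-multiplication is a harmless workaround for not first checking that $D(\phi)=\pi$ (which is what makes the paper's bottom square commute on the nose). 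The naturality identity $M(b)^*=M(b\oplus\mathrm{Id}_{-\nu})$ that you flag as the main obstacle is exactly the commutativity of the paper's top square, and is established by the same diagonal-over-$Z$ diagram chase.
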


\begin{proof}
By Theorem \ref{l-adic-Atiyah-duality}, $M(\epsilon)$ is $l$-adic S-dual to $M(\nu)$. So there is commutative diagrams:
\[
\begin{tikzcd}
\mathcal{A}_l(\epsilon^{k'}) \arrow{r}{\cong} \arrow{d} & \mathcal{A}_l(\nu) \arrow{d} \\
\left [ \Sigma^{\infty}M(\epsilon^{k'})_l,\Sigma^{\infty}M(\epsilon^{k'})_l \right ] \arrow{r}{(-)^*} \arrow{d}{\Psi} & \left [ \Sigma^{\infty}M(\nu)_l,\Sigma^{\infty}M(\nu)_l \right ] \arrow{d}{\Phi}     \\
\left [ \Sigma^{\infty}M(\epsilon^{k'})_l,\Sigma^{k'}_l \right ] \arrow{r}{(-)^*} & \left [ \Sigma^{m+k}_l, \Sigma^{\infty}M(\nu)_l \right ]
\end{tikzcd}
\]
where the horizontal maps are the $S$-dual for maps with suitable suspensions. By the previous lemma, we know $\Psi$ is bijectively mapped onto the subset of those $g:M(\epsilon^k)\rightarrow (S^{k})_l$ such that $g_l\circ j$ preserves mod-$l$ fundamental class of $(S^{k})_l$. Then the $S$-duality between $g$ and $g^*$ is given by the following diagram which commutes up to homotopy.
\[
\begin{tikzcd}
S^{m+k+k'} \arrow{r}{\simeq} \arrow{d}{\phi} & S^{m+k}\wedge S^{k'} \arrow{dd}{(g^* \wedge Id_{S^{k'}})_l} \\
M(\nu)\wedge (S^{k'})_l\simeq  M(\nu\oplus \epsilon^{k'}) \arrow{d}{\Delta} \arrow{rd}{Id_{M(\nu_l)}\wedge f} & \\
M(\nu)\wedge M(\epsilon^{k'}) \arrow{r}{(Id_{M(\nu_l)}\wedge g)_l} & (M(\nu_l)\wedge S^{k'})_l\simeq M((\nu\oplus \epsilon^{k'})_l)
\end{tikzcd}
\]
Then $g_l\circ j$ preserves the mod-$l$ fundamental class of $(S^{k})_l$ if and only if $f$ preserves the mod-$l$ fundamental class of $(S^{k})_l$, if and only if $(\phi\circ (Id_{M(\nu_l)}\wedge f))_*[S^{m+k+k'}]_{\ZZ/l} \cap U_{M(\nu\oplus \epsilon^{k'})}=[Z]_{\ZZ/l}$ because of the mod-$l$ Thom isomorphism.
\end{proof}

\begin{proof}[Proof of Theorem \ref{uniqueness-of-l-adic-Spivak}]
By Lemma \ref{uniqueness-of-l-adic-Spivak-Lemma-1}, there exists a stable orientation-preserving fiberwise homotopy equivalence $b:\xi_1\rightarrow \xi_2$. By Lemma \ref{uniqueness-of-l-adic-Spivak-Lemma-2}, there exists a stable orientation-preserving fiberwise homotopy equivalence $b':\xi_2\rightarrow \xi_2$ such that $M(b')\circ M(b)\circ \phi_1$ is homotopy equivalent to $\phi_2$. Then $b'\circ b$ is an equivalence of mod-$l$ Spivak normal spherical fibrations $\xi_1,\xi_2$.
\end{proof}

\subsection{Mod-$l$ Poincar\'e Duality on Smooth Projective Varieties}\label{subsection-mod-l-poincare-duality-on-varieties}\;

The goal of this part is to prove the existence of mod-$l$ Poincar\'e duality on the $l$-adic \'etale homotopy type of a simply-connected smooth projective variety as in Theorem \ref{existence-of-mod-Poincare-duality}.

\begin{definition}
A spectrum $E$ is \textbf{$l$-finite} if its homotopy groups are all finite abelian $l$-groups.
\end{definition}

For a spectrum $E$, let $E[r,s]$ be the truncation of $E$ for any $-\infty\leq r\leq s\leq \infty$.

\begin{definition}[\cite{Joshua-Spanier-Whitehead}*{Definition 1.1}]
For a pro-space or pro-spectrum $U=\{U_i\}_{i\in I}$ and a spectrum $E$, define the $E$-homology/cohomology of $U$ by 
\[
h_*(U;E)=\varinjlim_{r\rightarrow -\infty}\varprojlim_i h_*(U_i;E[r,\infty]), h^*(U;E)=\varprojlim_{r\rightarrow \infty}\varinjlim_i h^*(U_i;E[-\infty,r])
\]
\end{definition}

The following result will be used in Definition \ref{definition-of-mod-l-S-duality-for-pro-spaces}.

\begin{fact}[\cite{Joshua-Spanier-Whitehead}*{Corollary 1.3}]\label{Joshua-lemma-for-l-finite-homology-theories}
Let $E$ be an $l$-finite spectrum and let $U=\{U_i\}_{i\in I}$ be a pro-space or a pro connective spectrum with a uniform lower bound. Then there are isomorphisms:
\[
h_*(U;E)\cong\varinjlim_{r\rightarrow -\infty} h_*(U;E[r,\infty]) \cong \varprojlim_{s\rightarrow \infty}\varinjlim_{r\rightarrow -\infty} h_*(U;E[r,s]) \cong \varinjlim_{r\rightarrow -\infty} \varprojlim_{s\rightarrow \infty} h_*(U;E[r,s]) \\
\]
\[
h^*(U;E)\cong\varprojlim_{s\rightarrow \infty} h^*(U;E[-\infty,s]) \cong \varprojlim_{s\rightarrow \infty}\varinjlim_{r\rightarrow -\infty} h^*(U;E[r,s]) \cong \varinjlim_{r\rightarrow -\infty} \varprojlim_{s\rightarrow \infty} h^*(U;E[r,s])
\]
\end{fact}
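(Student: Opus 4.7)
My plan is to reduce each of the three isomorphisms (and their cohomological duals) to a single stabilization phenomenon: for each fixed $n$, each $i$, and each sufficiently small $r$ and sufficiently large $s$ with bounds uniform in $i$, the natural map $h_n(U_i;E[r,s])\to h_n(U_i;E)$ is an isomorphism. The $l$-finiteness of $E$ then enforces Mittag--Leffler over $i$, so all the $\varprojlim_i$, $\varprojlim_s$, $\varinjlim_r$ operations commute freely in the relevant range.

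The first isomorphism is essentially tautological. Unwinding the definition, for each fixed $r$ one has
\[
h_*(U;E[r,\infty])=\varinjlim_{r'\to -\infty}\varprojlim_i h_*(U_i;E[r,\infty][r',\infty]),
\]
and for $r'\leq r$ the double truncation $E[r,\infty][r',\infty]$ collapses back to $E[r,\infty]$; the outer $\varinjlim_{r'}$ is therefore eventually constant and equal to $\varprojlim_i h_*(U_i;E[r,\infty])$. Taking $\varinjlim_{r\to -\infty}$ of both sides recovers $h_*(U;E)$ by its definition. The cohomology case is dual, with $E[-\infty,s]$ stabilizing the outer $\varprojlim_s$.

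For the second and third isomorphisms I would establish the stabilization lemma as follows. Let $N$ be the uniform lower bound, so each $U_i$ is $(N-1)$-connective. For any bounded $l$-finite spectrum $F$ the Atiyah--Hirzebruch spectral sequence $H_p(U_i;\pi_q F)\Rightarrow h_{p+q}(U_i;F)$ (with $p\geq N$) shows $h_n(U_i;F)$ is determined by $\pi_q F$ for $q$ in a finite range around $n-N$. Applied to $F=E[r,s]$ and $F=E[r,\infty]$, this produces bounds $r_0(n,N)$ and $s_0(n,N)$, independent of $i$, such that the truncation $E[r,s]\to E$ induces an isomorphism on $h_n(U_i;-)$ whenever $r\leq r_0$ and $s\geq s_0$. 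Thus the bi-indexed pro-tower $\{h_n(U_i;E[r,s])\}_{(r,s)}$ is eventually constant for each $i$ and $n$, and by $l$-finiteness each term is a profinite abelian $l$-group, so Mittag--Leffler trivializes every $\varprojlim^1$ in sight. Both iterated constructions $\varinjlim_r\varprojlim_s$ and $\varprojlim_s\varinjlim_r$ then compute the same stable value $\varprojlim_i h_n(U_i;E)$ in every degree, yielding the claimed chain of isomorphisms.

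The main obstacle is securing the uniformity of the stabilization across $i$, which is what allows the $\varprojlim_i$ to pass freely past $\varprojlim_s$ and $\varinjlim_r$. For homology this is genuinely delicate because a pro-space $U_i$ is bounded below but not necessarily above, so controlling contributions from $\pi_q E$ for very negative $q$ in the AHSS requires the full force of $l$-finiteness, which makes the Postnikov sections of $E$ behave like a tower of finite-type coefficient systems. Once this uniformity is secured, the remaining argument is a routine bookkeeping exercise on cofiltered limits of finite abelian $l$-groups.
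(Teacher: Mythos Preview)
The paper does not supply its own proof of this statement: it is recorded as a Fact with a citation to \cite{Joshua-Spanier-Whitehead}*{Corollary 1.3} and is used as a black box. So there is no in-paper argument to compare against; I will just comment on your sketch.

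Your treatment of the first isomorphism is fine, and your instinct to exploit the uniform lower bound $N$ via an AHSS/Postnikov argument is correct. The error is in claiming two-sided stabilization. For homology, the $s$-direction does stabilize uniformly in $i$: from the cofiber sequence $E[s+1,\infty]\to E[r,\infty]\to E[r,s]$ and $(N-1)$-connectivity of $U_i$ one gets $h_n(U_i;E[r,\infty])\cong h_n(U_i;E[r,s])$ once $s\ge n-N$. But the $r$-direction does \emph{not} stabilize: since $U_i$ is not bounded above, $h_n(U_i;E[-\infty,r-1])$ can receive contributions from $H_p(U_i;\pi_qE)$ with $p$ arbitrarily large and $q=n-p$ arbitrarily negative, so no bound $r_0(n,N)$ independent of $i$ exists. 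Your appeal to $l$-finiteness to ``control contributions from $\pi_qE$ for very negative $q$'' names the difficulty but supplies no mechanism, and there is none.

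Fortunately the argument does not need two-sided stabilization. Because the $s$-system $\{\varprojlim_i h_n(U_i;E[r,s])\}_s$ is eventually constant with value $\varprojlim_i h_n(U_i;E[r,\infty])$, both $\varprojlim_s\varinjlim_r$ and $\varinjlim_r\varprojlim_s$ applied to $\varprojlim_i h_n(U_i;E[r,s])$ collapse directly to $\varinjlim_r\varprojlim_i h_n(U_i;E[r,\infty])=h_n(U;E)$; one never needs to pass $\varinjlim_r$ through $\varprojlim_i$, so the Mittag--Leffler step and the purported ``stable value $\varprojlim_i h_n(U_i;E)$'' are unnecessary. The cohomology case is exactly dual: there the $r$-direction stabilizes (with $r_0\approx N-n$) while the $s$-direction remains a genuine limit. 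In particular, the $l$-finiteness hypothesis plays no role in these specific isomorphisms; only the uniform connectivity bound is used.
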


Let $U=\{U_i\}_{i\in I},U^*=\{U^*_j\}$ be pro finite-type spaces or pro finite-type connective spectra with a uniform lower bound. Let $E$ be an $l$-finite spectrum. 
A map of pro-spaces or pro-spectra $\rho:\Sigma^0\to (U^*\wedge U)^{\wedge}_l$ induces the following maps:
\[
D^{\rho}:h^q(U^*;E)\rightarrow h_{-q}(U;E),
\quad {}^{\rho}\!D:h^{q}(U;E)\rightarrow h_{-q}(U^*;E).
\]
We only review the construction of the first map here. The construction of the second one is similar. It suffices to restrict to the case when $q=0$. Let $\{f_{r,s}:U^*\rightarrow E[r,s]\}_{s}$ represent a class in $h^0(U^*;E)$ for some sufficiently small $r$. Then we have $\{f_{r,s}\wedge Id_{U}:U^*\wedge U\rightarrow E[r,s]\wedge U\}_s$. 
By induction on $s-r$ and the assumptions of $E$ and $U_j$, each homotopy group of $E[r,s]\wedge U^*_j$ is finite $l$-primary.
It follows that the morphism $f_{r,s}\wedge Id_{U}$ factors through $(f_{r,s}\wedge Id_{U})^{\wedge}_l: (U^*\wedge U)^{\wedge}_l \to E[r,s]\wedge U$. 
Therefore, we obtain $\{\Sigma^0\xrightarrow{\rho}(U^*\wedge U)^{\wedge}_l\xrightarrow{(f_{r,s}\wedge Id_{U})^{\wedge}_l}E[r,s]\wedge U \}_{s}$ for some sufficiently small $r$. By Fact \ref{Joshua-lemma-for-l-finite-homology-theories}, one gets an element in $h_{-q}(U;E)$, which defines $D^{\rho}$.

\begin{definition}[\cite{Joshua-Spanier-Whitehead}*{Definition 4.5}]\label{definition-of-mod-l-S-duality-for-pro-spaces}
A map $\rho:\Sigma^0\to (U^*\wedge U)^{\wedge}_l$ with $U,U^*$ pro finite-type spaces or pro finite-type connective spectra with a uniform lower bound is a \textbf{mod-$l$ Spanier-Whitehead duality (or mod-$l$ $S$-duality)} if $D^{\rho},{}^{\rho}D$ are isomorphisms for any $l$-finite spectrum $E$.
\end{definition}

\begin{definition}
Let $X$ be a smooth, connected scheme over a separably closed field.
Let $\gamma$ be an algebraic vector bundle over $X$. 
Define the \textbf{\'etale Thom space} $M(\gamma)_{\et}$ of $\gamma$ to be the pro-space given by the homotopy cofiber of the natural map $(\gamma-X)_{\et}\rightarrow \gamma_{\et}$.
\end{definition}

Recall the Euler short exact sequence (c.f. \cite[II, Theorem 8.13]{hartshorne2013algebraic})
\[0\to \mathcal{O}_{\PP^n_{\ZZ}}(-1)\to \mathcal{O}_{\PP^n_{\ZZ}}^{\oplus n+1}\to T_{\PP^n_{\ZZ}}(-1)\to 0.\]
Let $\alpha$ be the total space of $\Big(T_{\PP^n_{\ZZ}}(-1)\Big)^{\oplus n+1}$.
The analytification $\PP^n(\CC)^{an}$ is topologically embedded in $S^{2N}$ for some large $N$, unique up to isotopy. 
It is shown in \cite[p. 157]{Joshua-Spanier-Whitehead} that the analytification $(\alpha_{\CC})^{an}$ of the complex base change of $\alpha$ is 
, as real vector bundles, stably isomorphic to the normal bundle $\nu_{\PP^n/S^{2N}}$ of $\PP^n(\CC)^{an}$ in $S^{2N}$.
Let $M(\alpha_{\CC}^{an})$ be the Thom space of the topological bundle $\alpha_{\CC}^{an}$.
By Artin-Mazur's comparison theorem \cite{artin-mazur-etale-homotopy}*{Theorem 12.9, Corollary 12.13}, there are canonical weak equivalences of pro-spaces $M(\alpha_{\CC}^{an})^{\wedge}_l\simeq M(\alpha_{\CC})_{\et,l}^{\wedge}\simeq M(\alpha_{k})_{\et,l}^{\wedge}$. Then the usual Thom-Pontryagin map can be made as follows.:
\[
S^{2N}\rightarrow M(\nu_{\PP^n/S^{2N}})\simeq M(\alpha_{\CC}^{an})\rightarrow M(\alpha_{\CC}^{an})^{\wedge}_l\simeq M(\alpha_{\CC})_{\et,l}^{\wedge}\simeq M(\alpha_{k})_{\et,l}^{\wedge}
\]

Now assume that $X$ is projective and let us fix an embedding $X\hookrightarrow \PP^n_k$.
Let $\nu_X$ be the normal bundle of $X$ to the composite embeddings $X\hookrightarrow \PP^n_k\hookrightarrow \alpha$. 
By composing with the Thom-Pontryagin collapse map, we obtain (see \cite{Joshua-Spanier-Whitehead}*{p.~160})
\[
\rho: S^{2N}\rightarrow M(\alpha_{k})_{\et,l}^{\wedge}\rightarrow M(\nu_X)_{\et,l}^{\wedge}\xrightarrow{\Delta^{\wedge}_l} (M(\nu_X)_{\et}\wedge X_{\et,+})^{\wedge}_l
\]
where $\Delta: M(\nu_X)_{\et}\rightarrow M(\nu_X)_{\et}\wedge X_{\et,+}$ is the diagonal map.
A main result of \cite{Joshua-Spanier-Whitehead} is the following:

\begin{fact}[\cite{Joshua-Spanier-Whitehead}*{Theorem 4.7}]\label{construction-of-algebraic-normal-bundle}
$\rho:S^{2N}\rightarrow (M(\nu_X)_{\et}\wedge X_{\et,+})^{\wedge}_l$ is a mod-$l$ $S$-duality.
\end{fact}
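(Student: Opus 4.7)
The plan is to verify the duality maps $D^{\rho}$ and ${}^{\rho}\!D$ on mod-$l$ (co)homology first, bootstrap to arbitrary $l$-finite coefficient spectra, and then identify the resulting map with cap product against the \'etale fundamental class of $X$, at which point mod-$l$ \'etale Poincar\'e duality for smooth projective varieties closes the argument. For the bootstrap, every $l$-finite spectrum $E$ has a finite Postnikov tower whose layers are Eilenberg-MacLane spectra $H(A)$ with $A$ a finite abelian $l$-group, and each such $H(A)$ is built from $H\ZZ/l$ by iterated Bockstein cofiber sequences. The pro-homology and pro-cohomology functors $h_{\ast}(-;E)$, $h^{\ast}(-;E)$ in the sense of Joshua send cofibers to long exact sequences, using Fact \ref{Joshua-lemma-for-l-finite-homology-theories} to commute limits and colimits, and $D^{\rho}, {}^{\rho}\!D$ are natural in $E$; a five-lemma induction therefore reduces the verification to $E=H\ZZ/l$.

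Next set $d=\dim X$ and $r=\mathrm{rank}\,\nu_{X}$, so that $2N=2d+r$. By the mod-$l$ Thom isomorphism for the \'etale spherical fibration underlying $\nu_{X}$, every class in $H^{\ast}(M(\nu_{X})_{\et};\ZZ/l)$ has the form $x\cup U_{\nu_{X}}$ for a unique $x\in H^{\ast}(X_{\et};\ZZ/l)$. Unwinding $D^{\rho}$ through the diagonal $\Delta$ and the slant product, in the spirit of the computation in the proof of Theorem \ref{l-adic-Atiyah-duality}, yields
\[
D^{\rho}(x\cup U_{\nu_{X}}) \;=\; [X]_{\ZZ/l}\cap x,
\]
where $[X]_{\ZZ/l}\in H_{2d}(X_{\et};\ZZ/l)$ is the class $\rho_{\ast}[S^{2N}]_{\ZZ/l}/U_{\nu_{X}}$. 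Once $[X]_{\ZZ/l}$ is identified with the genuine \'etale fundamental class of $X$, mod-$l$ \'etale Poincar\'e duality for smooth projective varieties \cite{Milne-Etale-cohomology}*{VI} guarantees that $(-)\cap[X]_{\ZZ/l}$ is an isomorphism; an analogous calculation handles ${}^{\rho}\!D$.

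The main obstacle is the identification of $[X]_{\ZZ/l}$ with the \'etale fundamental class. Over $k=\CC$ this is essentially free: Artin-Mazur's comparison \cite{artin-mazur-etale-homotopy}*{Theorem 12.9} identifies the $l$-completed \'etale Thom spaces of $\alpha_{\CC}$ and $\nu_{X,\CC}$ with those of the analytified topological Thom spaces, and under this identification $\rho$ becomes the classical Pontryagin-Thom collapse for the closed embedding $X^{an}\subset\PP^{n}(\CC)^{an}\subset S^{2N}$, so that Atiyah duality delivers $[X]_{\ZZ/l}$ as the topological fundamental class. In characteristic $p\neq l$ there is no analytification and the Pontryagin-Thom collapse is not available topologically, so one must reconstruct it \'etale-algebraically; the clean approach is to use deformation to the normal cone for the closed embedding $X\hookrightarrow\alpha$, which is \'etale-locally modeled on the zero section of the algebraic bundle $\nu_{X}$ and therefore intrinsically realizes the \'etale Thom class in arbitrary characteristic. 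To confirm that $[X]_{\ZZ/l}$ coming from this construction is the \'etale fundamental class, one then specializes: $(\PP^{n},\alpha)$ is defined over $\Spec\ZZ[1/l]$, and smooth/proper base change for mod-$l$ \'etale cohomology transports the characteristic zero identification to characteristic $p$ along a Noetherian henselization, while the mod-$l$ fundamental class of a smooth projective variety is preserved under such specialization. Getting this last verification right, without the crutch of analytification, is the step I expect to occupy the bulk of the technical work.
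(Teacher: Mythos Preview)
The paper does not prove this statement; it is recorded as a Fact with a bare citation to \cite{Joshua-Spanier-Whitehead}*{Theorem 4.7}. Your outline --- reduce to $E=H\ZZ/l$ by filtering an $l$-finite $E$ through its Postnikov truncations $E[r,s]$ and Bockstein cofiber sequences, then unwind $D^{\rho}$ through the Thom isomorphism and slant product into a cap product, then invoke \'etale Poincar\'e duality --- is a correct strategy and is essentially how Joshua's argument runs. (Minor quibble: an $l$-finite spectrum need not have a \emph{finite} Postnikov tower, but Fact~\ref{Joshua-lemma-for-l-finite-homology-theories} lets you work with the truncations $E[r,s]$, which do.)

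One point in your framing of the ``main obstacle'' deserves correction. You write that in characteristic $p$ the Pontryagin--Thom collapse is unavailable topologically and must be reconstructed \'etale-algebraically, via deformation to the normal cone and specialization. But reread how $\rho$ is actually built in the paragraphs preceding the Fact: the map $S^{2N}\to M(\alpha_k)_{\et,l}^{\wedge}$ is \emph{already} the topological Pontryagin--Thom collapse for $\PP^n(\CC)^{an}\subset S^{2N}$, transported to characteristic $p$ through the specialization equivalence $M(\alpha_{\CC})_{\et,l}^{\wedge}\simeq M(\alpha_k)_{\et,l}^{\wedge}$; this works because $(\PP^n,\alpha)$ is defined over $\ZZ$ and lifts regardless of whether $X$ does. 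The only step performed intrinsically in characteristic $p$ is the further collapse $M(\alpha_k)_{\et}\to M(\nu_X)_{\et}$ for the closed immersion $X\hookrightarrow\alpha_k$, and that is precisely the \'etale purity/Gysin map, which sends Thom class to Thom class by definition and is valid directly over any base with $l$ invertible. So the identification $\rho_*[S^{2N}]_{\ZZ/l}\cap U_{\nu_X}=[X]_{\ZZ/l}$ is not the bulk of the work you anticipate; it is a bookkeeping check once the construction of $\rho$ is in hand.
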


In the remainder of this subsection, we show that $\holim M(\nu_X)_{\et}$ is $l$-adic S-dual to $\holim (X_{\et,+})^{\wedge}_l$.

\begin{lemma}
If $A$ and $B$ are simply-connected pro-spaces, then $(A\wedge B)^{\wedge}_l\rightarrow (A^{\wedge}_l\wedge B^{\wedge}_l)^{\wedge}_l$ is a weak equivalence.
\end{lemma}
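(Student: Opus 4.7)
The plan is to prove first that the underlying map $A\wedge B\to A^{\wedge}_l\wedge B^{\wedge}_l$ (obtained by smashing the two individual completion maps) is already an $l$-adic weak equivalence of pro-spaces in the sense of Definition~\ref{definition-of-l-adic-weak-equivalence}. Once this is established, applying the $l$-profinite completion functor to this map yields a weak equivalence $(A\wedge B)^{\wedge}_l\to (A^{\wedge}_l\wedge B^{\wedge}_l)^{\wedge}_l$, because $l$-profinite completion takes $l$-adic weak equivalences to weak equivalences of pro-spaces by definition.

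To run the criterion, I would use Definition~\ref{definition-of-l-adic-weak-equivalence}(3). Since $A$ and $B$ are simply-connected pro-spaces, so is the levelwise smash $\{A_i\wedge B_j\}$; the same holds after $l$-completion since $\pi_1$ of the $l$-profinite completion is the $l$-profinite completion of $\pi_1$. Hence the $\pi_1$ condition is automatic and every finite-abelian-$l$-group local system on these simply-connected pro-spaces is constant. By the Bockstein exact sequence, it then suffices to check the induced map on $\tilde H^*(-;\ZZ/l)$ is an isomorphism.

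For this I invoke the reduced Künneth formula. For pointed simply-connected CW complexes $X,Y$, since $\ZZ/l$ is a field, the Tor terms vanish and
\[\tilde H^*(X\wedge Y;\ZZ/l)\cong \tilde H^*(X;\ZZ/l)\otimes_{\ZZ/l}\tilde H^*(Y;\ZZ/l).\]
Applied levelwise to $A=\{A_i\}_{i\in I}$ and $B=\{B_j\}_{j\in J}$, and using that filtered colimits of $\ZZ/l$-vector spaces are exact and commute with tensor products, this gives
\[\tilde H^*(A\wedge B;\ZZ/l)\cong \tilde H^*(A;\ZZ/l)\otimes_{\ZZ/l}\tilde H^*(B;\ZZ/l),\]
and likewise for $A^{\wedge}_l\wedge B^{\wedge}_l$. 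Since the completion maps $A\to A^{\wedge}_l$ and $B\to B^{\wedge}_l$ are $l$-adic weak equivalences, they induce isomorphisms on mod-$l$ cohomology; tensoring the two isomorphisms yields the required cohomology isomorphism for the smash.

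The one technical point to verify is that the pro-smash $\{A_i\wedge B_j\}_{(i,j)\in I\times J}$ really has its mod-$l$ cohomology computed as the colimit over $I\times J$ of the levelwise cohomology groups, so that the levelwise Künneth isomorphisms assemble into the claimed isomorphism of tensor products. This is routine from the definition of pro-space cohomology and the cofinality of the product in filtered colimits, so I do not expect a serious obstacle here.
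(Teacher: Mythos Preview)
Your proposal is correct and follows essentially the same approach as the paper: both arguments reduce to the reduced K\"unneth isomorphism over $\ZZ/l$ applied levelwise, then push the filtered colimits through the tensor product to identify $\tilde H^*(A\wedge B;\ZZ/l)$ and $\tilde H^*(A^{\wedge}_l\wedge B^{\wedge}_l;\ZZ/l)$ with $\tilde H^*(A;\ZZ/l)\otimes \tilde H^*(B;\ZZ/l)$. The only cosmetic difference is that you phrase it as first showing the uncompleted map $A\wedge B\to A^{\wedge}_l\wedge B^{\wedge}_l$ is an $l$-adic weak equivalence and then completing, whereas the paper computes the mod-$l$ cohomology of the two completed sides directly; the underlying computation is identical.
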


\begin{proof}
Recall from \cite[\href{https://stacks.math.columbia.edu/tag/00DD}{Tag 00DD}]{stacks-project} that colimits commute with tensor product. Let $A=\{A_i\}$ and $B=\{B_j\}$. Hence, 
\begin{align*}
\widetilde{H}^{q}((A\wedge B)^{\wedge}_l;\ZZ/l) & \cong \widetilde{H}^{q}(A\wedge B;\ZZ/l)=\varinjlim_{i,j} \widetilde{H}^{q}(A_i\wedge B_j;\ZZ/l) \\
& \cong \varinjlim_{i,j} \bigoplus_{0\leq r\leq q}\widetilde{H}^{r}(A_i;\ZZ/l)\otimes \widetilde{H}^{q-r}(B_j;\ZZ/l) \\
& \cong \bigoplus_{0\leq r\leq q} \varinjlim_{i}\varinjlim_{j} \widetilde{H}^{r}(A_i;\ZZ/l)\otimes \widetilde{H}^{q-r}(B_j;\ZZ/l) \\
& \cong \bigoplus_{0\leq r\leq q} \varinjlim_{i} (\widetilde{H}^{r}(A_i;\ZZ/l)\otimes \varinjlim_{j} \widetilde{H}^{q-r}(B_j;\ZZ/l)) \\
& \cong \bigoplus_{0\leq r\leq q} (\varinjlim_{i} \widetilde{H}^{r}(A_i;\ZZ/l))\otimes (\varinjlim_{j} \widetilde{H}^{q-r}(B_j;\ZZ/l))\\
& \cong \widetilde{H}^*(A;\ZZ/l)\otimes \widetilde{H}^*(B;\ZZ/l)   
\end{align*}

By the same argument, $\widetilde{H}^*(A^{\wedge}_l\wedge B^{\wedge}_l;\ZZ/l)\cong \widetilde{H}^*(A;\ZZ/l)\otimes \widetilde{H}^*(B;\ZZ/l)$.
\end{proof}

\begin{corollary}
Let $A$ and $B$ be simply-connected pro-spaces. Assume that $H^q(A;\ZZ/l)$ and $H^q(B;\ZZ/l)$ are finite for any $q$. Then $(\holim A^{\wedge}_l \wedge \holim B^{\wedge}_l)_l\rightarrow \holim(A^{\wedge}_l \wedge B^{\wedge}_l)^{\wedge}_l$ is a homotopy equivalence.
\end{corollary}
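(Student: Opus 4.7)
The plan is to show that both the source and the target of the comparison map are simply-connected $l$-complete spaces with the same $\ZZ/l$-cohomology, and that the induced map on $\ZZ/l$-cohomology is the identity under the natural identifications. Since a map of simply-connected $l$-complete spaces inducing an isomorphism on $\FF_l$-cohomology is a homotopy equivalence, this suffices.

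First I would note that both spaces are $l$-complete and simply-connected. The source is $l$-complete by definition, and the target is a homotopy limit of $l$-profinite spaces, hence $l$-complete. Simple-connectedness follows by replacing each of the pro-systems $A^{\wedge}_l$, $B^{\wedge}_l$, and $(A^{\wedge}_l\wedge B^{\wedge}_l)^{\wedge}_l$ by cofinal subsystems of pointed connected CW complexes each with finite $l$-primary homotopy groups, so that Lemma \ref{commutativity-of-limit-and-homology} is applicable.

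To compute the $\ZZ/l$-cohomology of the source, the hypothesis that $H^q(A;\ZZ/l)$ is finite in each degree implies, via Pontryagin duality, that $\varprojlim H_q(A^{\wedge}_l;\ZZ/l)$ is finite in each degree, and similarly for $B$. Lemma \ref{commutativity-of-limit-and-homology} then gives $H^*(\holim A^{\wedge}_l;\ZZ/l)\cong H^*(A;\ZZ/l)$ and the analogous formula for $B$, both finite in each degree. The Künneth formula then yields
\[H^*\bigl(\holim A^{\wedge}_l\wedge \holim B^{\wedge}_l;\ZZ/l\bigr)\cong H^*(A;\ZZ/l)\otimes H^*(B;\ZZ/l),\]
and $l$-completion preserves $\ZZ/l$-cohomology, giving the same answer for the source. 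For the target, the preceding lemma in the excerpt applied to the pro-system $A^{\wedge}_l\wedge B^{\wedge}_l$ identifies the pro-$\ZZ/l$-cohomology of $(A^{\wedge}_l\wedge B^{\wedge}_l)^{\wedge}_l$ with $H^*(A;\ZZ/l)\otimes H^*(B;\ZZ/l)$. Finiteness in each degree together with another application of Lemma \ref{commutativity-of-limit-and-homology} then gives $H^*\bigl(\holim(A^{\wedge}_l\wedge B^{\wedge}_l)^{\wedge}_l;\ZZ/l\bigr)\cong H^*(A;\ZZ/l)\otimes H^*(B;\ZZ/l)$.

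The main obstacle I anticipate is not the cohomology computations themselves, but the naturality verification: one must trace through the two chains of identifications to confirm that the comparison map (which at the pro-system level is obtained from the product of the canonical projections $\holim A^{\wedge}_l\to A^{\wedge}_l$ and $\holim B^{\wedge}_l\to B^{\wedge}_l$, smashed and then $l$-completed) induces the identity map under the above identifications, rather than some other endomorphism of $H^*(A;\ZZ/l)\otimes H^*(B;\ZZ/l)$. This ultimately reduces to the naturality of the Künneth isomorphism and of the isomorphisms in Lemma \ref{commutativity-of-limit-and-homology}, both of which are natural with respect to maps of (pro-)spaces, so the verification is routine but must be carried out carefully with attention to the replacements made at the outset.
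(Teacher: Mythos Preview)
Your proposal is correct and follows essentially the same route as the paper: both arguments reduce the claim to a $\ZZ/l$-cohomology comparison, compute each side via Lemma \ref{commutativity-of-limit-and-homology} together with the K\"unneth formula (the paper's chain of isomorphisms is exactly your computation, written out), and conclude because both spaces are simply-connected and $l$-complete. The only cosmetic difference is that the paper invokes Theorem \ref{finite-type-theorem} explicitly to certify that all three homotopy limits are simply-connected $l$-complete finite type, whereas you fold this into your ``replacement by cofinal subsystems'' step; your added care about naturality is well-placed, since the paper leaves that verification implicit.
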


\begin{proof}
By Lemma \ref{finite-type-theorem}, $\holim A^{\wedge}_l$, $\holim B^{\wedge}_l$ and $\holim(A^{\wedge}_l \wedge B^{\wedge}_l)^{\wedge}_l$ are all simply-connected, $l$-complete finite type. Then
\begin{align*}
\widetilde{H}^q(\holim(A^{\wedge}_l \wedge B^{\wedge}_l)^{\wedge}_l;\ZZ/l) & \cong \varinjlim \widetilde{H}^q ((A^{\wedge}_l \wedge B^{\wedge}_l)^{\wedge}_l;\ZZ/l) \,\,\,\text{(Lemma \ref{commutativity-of-limit-and-homology})} \\
& \cong \varinjlim \widetilde{H}^q (A^{\wedge}_l \wedge B^{\wedge}_l;\ZZ/l) \cong \bigoplus_{0\leq r\leq q} \varinjlim \widetilde{H}^{r} (A^{\wedge}_l;\ZZ/l)\otimes \varinjlim \widetilde{H}^{q-r} (B^{\wedge}_l;\ZZ/l) \\
& \cong \bigoplus_{0\leq r\leq q} \widetilde{H}^{r} (\holim A^{\wedge}_l;\ZZ/l) \otimes \widetilde{H}^{q-r} (\holim B^{\wedge}_l;\ZZ/l) \\
& \cong \widetilde{H}^{q} (\holim A^{\wedge}_l\wedge \holim B^{\wedge}_l;\ZZ/l)
\end{align*}
Then the conclusion holds.
\end{proof}

Assume that $\pi^{\et}_1(X)^{\wedge}_l=0$ and $X^{\wedge}_{\et,l}$ admits an $l$-local lifting. By Corollary \ref{finiteness-thereom-on-variety}, $X_{\et,l}^{\wedge}$ is $l$-adic weak equivalent to a simply-connected finite CW complex $Z$. The mod-$l$ $S$-duality $\rho:S^{2N}\rightarrow (M(\nu_X)_{\et}\wedge X_{\et,+})^{\wedge}_l$ induces a map
\begin{align*}
\rho':S^{2N}\rightarrow \holim(M(\nu_X)_{\et}\wedge X_{\et,+})^{\wedge}_l & \simeq (\holim (M(\nu_X)_{\et})^{\wedge}_l \wedge \holim (X_{\et,+})^{\wedge}_l)_l \\
& \simeq (\holim (M(\nu_X)_{\et})^{\wedge}_l \wedge (Z_{+})_l)_l    
\end{align*}

Consider the spectrum $E=H(\ZZ/l)$. Then we have the following commutative diagram.
\[
\begin{tikzcd}
H^{q}(\holim (M(\nu_X)_{\et})^{\wedge}_l ;\ZZ/l) \arrow{r}{D^{\rho'}} & H_{2N-q}(\holim (X_{\et,+})^{\wedge}_l;\ZZ/l) \arrow{d}{\cong} \\
H^{q}((M(\nu_X)_{\et})^{\wedge}_l ;\ZZ/l) \arrow{u}{\cong} \arrow{r}{D^{\rho},\cong} & H_{2N-q}((X_{\et,+})^{\wedge}_l;\ZZ/l) 
\end{tikzcd}
\]
By Theorem \ref{homological-discription-of-S-duality-2}, it follows that

\begin{corollary}\label{cor: duality for sullivan}
$\rho':S^{2N}\rightarrow (\holim (M(\nu_X)_{\et})^{\wedge}_l \wedge \holim (X_{\et,+})^{\wedge}_l)_l\simeq (\holim (M(\nu_X)_{\et})^{\wedge}_l \wedge Z_l)_l$ is an $l$-adic $S$-duality. In other words, $\holim (M(\nu_X)_{\et})^{\wedge}_l$ is $l$-adic S-dual to $Z$.
\end{corollary}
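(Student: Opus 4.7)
The plan is to invoke Theorem \ref{homological-discription-of-S-duality-2}, which characterizes an $l$-adic $S$-duality between $l$-adic homotopically finite spectra by the condition that the slant product against the duality class is an isomorphism on mod-$l$ (co)homology. The commutative diagram displayed just before the statement already outlines the comparison: the bottom row is Joshua's mod-$l$ $S$-duality from Fact \ref{construction-of-algebraic-normal-bundle} (in the pro-space sense of Definition \ref{definition-of-mod-l-S-duality-for-pro-spaces}), and the top row is the slant product induced by $\rho'$ on the homotopy limits. The strategy is to show that the two vertical comparison maps in the diagram are isomorphisms and then transport isomorphy along them.

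First, one needs both $\holim (M(\nu_X)_{\et})^{\wedge}_l$ and $Z$ to be $l$-adic homotopically finite in the sense required by Theorem \ref{homological-discription-of-S-duality-2}. For $Z$ this is immediate from Corollary \ref{finiteness-thereom-on-variety}. For the Thom pro-spectrum, I would combine the $l$-adic Thom isomorphism with the finiteness of $H^*_{\et}(X;\ZZ/l)$ to conclude that $\varprojlim_i H_q(M(\nu_X)_{\et,i};\ZZ/l)$ is finite in each degree, and then apply Theorem \ref{finite-type-theorem} together with Theorem \ref{finite-CW-complex-1} — the same mechanism used in the proof of Theorem \ref{finiteness-theorem-1} — to exhibit $\holim (M(\nu_X)_{\et})^{\wedge}_l$ as the $l$-completion of a finite CW spectrum.

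The two vertical isomorphisms in the diagram are then supplied by Lemma \ref{commutativity-of-limit-and-homology}: mod-$l$ (co)homology of the pro-space coincides with mod-$l$ (co)homology of its homotopy limit, provided the relevant simply-connectedness and finite-$l$-group hypotheses hold on the pro-system. These hypotheses are met for $(X_{\et,+})^{\wedge}_l$ because $\pi_1^{\et}(X)^{\wedge}_l=0$ and $H^*_{\et}(X;\ZZ/l)$ is finite, and for $(M(\nu_X)_{\et})^{\wedge}_l$ by the Thom isomorphism argument above; Pontryagin duality converts the cohomological statement to the homological one needed on the right-hand side of the diagram.

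The main obstacle I anticipate is bookkeeping rather than conceptual: one must carefully distinguish Artin-Mazur's pro-space $l$-profinite completion $(-)^{\wedge}_l$ from Sullivan's space-level $l$-completion $(-)_l$, distinguish the pro-space notion of mod-$l$ $S$-duality (Definition \ref{definition-of-mod-l-S-duality-for-pro-spaces}) from the spectrum-level $l$-adic $S$-duality (Definition \ref{definition-of-l-adic-S-duality}), and verify that the duality class $\rho'$ is genuinely obtained from Joshua's $\rho$ by passage to homotopy limits in the way the diagram asserts — essentially because $\holim$ is lax monoidal for the smash product up to the comparison $(\holim A\wedge \holim B)_l\to \holim(A\wedge B)^{\wedge}_l$. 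Once these identifications are in place, the bottom arrow is an isomorphism by Fact \ref{construction-of-algebraic-normal-bundle}, the verticals are isomorphisms by Lemma \ref{commutativity-of-limit-and-homology}, hence the top slant product is an isomorphism, and Theorem \ref{homological-discription-of-S-duality-2} concludes that $\rho'$ is an $l$-adic $S$-duality.
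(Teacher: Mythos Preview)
Your proposal is correct and follows essentially the same route as the paper: the paper's argument is precisely the commutative diagram you describe, with the bottom row an isomorphism by Fact \ref{construction-of-algebraic-normal-bundle}, the verticals by Lemma \ref{commutativity-of-limit-and-homology}, and the conclusion drawn from Theorem \ref{homological-discription-of-S-duality-2}. You are in fact more careful than the paper in one respect: you explicitly verify the $l$-adic homotopical finiteness hypothesis of Theorem \ref{homological-discription-of-S-duality-2} for $\holim (M(\nu_X)_{\et})^{\wedge}_l$, which the paper leaves implicit.
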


Recall the Thom-Gysin \'etale cohomology class $U_{\nu_X}^{alg}\in H^{2N-2m}_{X,\et}(\nu_X;\ZZ/l)$ which induces a Gysin-Thom isomorphism $(-)\cup U_{\nu_X}^{alg}:H^*_{\et}(X;\ZZ/l)\rightarrow H^{2N-2m+*}_{X,\et}(\nu_X;\ZZ/l)$. Consider the following commutative diagram: 
\[
\begin{tikzcd}
H^*_{\et}(X;\ZZ/l) \arrow[r,"(-)\cup U_{\nu_X}^{alg}"] \arrow[d,phantom,sloped,"\cong"] & H^{2N-2m+*}_{X,\et}(\nu_X;\ZZ/l) \arrow[d, phantom,sloped,"\cong"] \\
H^*(X_{\et};\ZZ/l) \arrow[r,"(-)\cup U_{\nu_X,\et}"] \arrow[d, phantom,sloped,"\cong"] & \widetilde{H}^{2N-2m+*}(M(\nu_X)_{\et};\ZZ/l) \arrow[d, phantom,sloped,"\cong"] \\
H^*(Z;\ZZ/l) \arrow[r,"(-)\cup U_{\nu_X}"] & \widetilde{H}^{2N-2m+*}(\holim(M(\nu_X)_{\et})^{\wedge,l};\ZZ/l)
\end{tikzcd}
\]
where $U_{\nu_X,\et}\in \widetilde{H}^{2N-2m}(M(\nu_X)_{\et};\ZZ/l)$ and $\widetilde{H}^{2N-2m}(\holim(M(\nu_X)_{\et})^{\wedge,l};\ZZ/l)$ are the Thom classes. It follows that all the horizontal arrows are isomorphisms. Let $[X_{\et}]_{\ZZ/l}=U_{\nu_X,\et}\backslash \rho_*[S^{2N}]$ and $[Z]_{\ZZ/l}=U_{\nu_X}\backslash (\rho')_*[S^{2N}]$.
Together with the mod-$l$ $S$-duality for pro-spectra and the $l$-adic $S$-duality for spectra, we have proven the followings and recovered a special case of \cite{Joshua-Spanier-Whitehead}*{Proposition 5.4} or \cite{Friedlander-etale-homotopy}*{Theorem 17.6}.

\begin{proposition}\label{existence-of-mod-Poincare-duality}
Let $X$ be a smooth, projective, connected scheme over a separably closed field $k$ of dimension $m$. Let $l$ be a prime number away from the characteristic of $k$. Assume that $\pi^{\et}_1(X)^{\wedge}_l=0$.
\begin{enumerate}
    \item There exists $[X_{\et}]_{\ZZ/l}\in \varprojlim H_{2m}(X_{\et};\ZZ/l)$ such that $(-)\cap [X_{\et}]_{\ZZ/l}:\varinjlim H^*(X_{\et};\ZZ/l)\rightarrow \varprojlim H_{2m-*}(X_{\et};\ZZ/l)$ is an isomorphism;
    \item Assume that $X^{\wedge}_{\et,l}$ admits an $l$-local lifting. Let $Z$ be a simply-connected finite CW which is $l$-adic weak equivalent to the \'etale homotopy type $X_{\et}$ of $X$. Then there exists $[Z]_{\ZZ/l}\in H_{2m}(Z;\ZZ/l)$ such that $(-)\cap [Z]_{\ZZ/l}:H^*(Z;\ZZ/l)\rightarrow H_{2m-*}(Z;\ZZ/l)$ is an isomorphism.
\end{enumerate}
\end{proposition}

\section{$l$-adic Manifold Structures on Varieties}

We develop the theory of $l$-adic formal manifold structures on an $l$-adic Poincar\'e duality space (see Definition \ref{definition-of-l-adic-formal-manifolds}) and prove the existence of an $l$-adic formal manifold structure on a smooth projective variety of characteristic away from $l$ (see Theorem \ref{existence-of-formal-manifold-structure}).

\subsection{$l$-adic Formal Manifolds}\;

In this subsection, we motivate and define the notion of $l$-adic formal manifold structures. The definition of $l$-adic formal manifolds is motivated by the following result in surgery theory.

\begin{theorem}
Let $Z$ be a simply-connected finite CW complex with Poincar\'e duality of dimension $m$ at least $5$. Then $Z$ is homotopy equivalent to a closed topological manifold if and only if the Spivak normal spherical fibration $\nu_Z$ lifts to a $TOP$ bundle.
\end{theorem}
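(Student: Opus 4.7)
The plan is to prove both directions via classical Browder--Novikov--Sullivan surgery theory in the topological category.

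For necessity, if $Z\simeq M$ with $M$ a closed topological $m$-manifold, then the stable TOP normal bundle of $M$ transfers along the homotopy equivalence to a TOP bundle over $Z$. Its underlying stable spherical fibration is Spivak-normal for $M$, hence for $Z$, and by uniqueness of the Spivak normal fibration (the integral analogue of Theorem \ref{uniqueness-of-l-adic-Spivak}) it is fiberwise homotopy equivalent to $\nu_Z$, producing the desired TOP lift.

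For sufficiency, fix a TOP bundle $\tilde{\nu}:Z\to BTOP(N)$ lifting $\nu_Z$, together with a Spivak collapse $\phi:S^{m+N}\to M(\nu_Z)$. I would first lift $\phi$ through the canonical homotopy equivalence $M(\tilde{\nu})\xrightarrow{\sim} M(\nu_Z)$ to a map $\tilde{\phi}:S^{m+N}\to M(\tilde{\nu})$. Since $\tilde{\nu}$ is a locally trivial TOP $\RR^N$-bundle, its total space $E(\tilde{\nu})$ is a topological manifold containing the zero section $Z$ as a locally flat submanifold, so Kirby--Siebenmann topological transversality (valid since $m\geq 5$) homotopes $\tilde{\phi}$ to be transverse to $Z$. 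The preimage $M=\tilde{\phi}^{-1}(Z)$ is then a closed topological $m$-manifold and $f=\tilde{\phi}|_M:M\to Z$ is a degree one normal map whose normal data is specified by $\tilde{\nu}$.

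It remains to convert $f$ to a homotopy equivalence. The simply-connected surgery obstruction $\sigma(f)\in L_m(\ZZ)$ (cyclic: $\ZZ,0,\ZZ/2,0$ for $m\equiv 0,1,2,3\pmod 4$) need not vanish for the given lift. The set of TOP lifts of $\nu_Z$ is a torsor under $[Z,G/TOP]$, and twisting the lift by $\eta\in[Z,G/TOP]$ shifts $\sigma(f)$ by $\sigma_*(\eta)\in L_m(\ZZ)$. The crucial input is that $\sigma_*:[Z,G/TOP]\to L_m(\ZZ)$ is surjective in the simply-connected case: pulling back along a degree one collapse $Z\to S^m$ onto a top cell exhibits a subgroup of $[Z,G/TOP]$ isomorphic to $\pi_m(G/TOP)$ on which $\sigma_*$ restricts to Sullivan's isomorphism $\pi_m(G/TOP)\xrightarrow{\sim} L_m(\ZZ)$. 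Choosing $\eta$ to cancel $\sigma(f)$ and then invoking Wall's theorem on surgery below the middle dimension in the TOP category converts the modified normal map to a homotopy equivalence $M''\xrightarrow{\sim} Z$. The main obstacle is precisely this closing step, which demands simultaneously: topological transversality in $E(\tilde{\nu})$, surjectivity of $\sigma_*$ so that the $L$-group obstruction can be killed by a suitable modification of the reduction, and Wall's surgery machinery in the TOP category --- all three relying essentially on Kirby--Siebenmann's extension of the smoothing/surgery toolkit from PL to TOP for $m\geq 5$.
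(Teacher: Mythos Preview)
Your proof is correct and follows the standard Browder--Novikov--Sullivan surgery exact sequence approach, but the paper takes a somewhat different route for the sufficient direction. After producing a degree one normal map, you kill the surgery obstruction by modifying the TOP reduction itself: you pull back a class from $\pi_m(G/TOP)\cong L_m(\ZZ)$ along a collapse $Z\to S^m$ to shift $\sigma(f)$ to zero, then invoke the fundamental theorem of simply-connected surgery. The paper instead punctures $Z$ by removing a top-dimensional disc to obtain a Lefschetz pair $(Y,\partial Y)$ with $\partial Y\simeq S^{m-1}$, applies the $\pi$--$\pi$ theorem (or equivalently connect-sums with plumbings) to the resulting relative normal map so that the obstruction vanishes automatically, and then cones off the boundary using the generalized Poincar\'e conjecture to recover a closed manifold. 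Your approach is cleaner in that it avoids the Poincar\'e conjecture and the passage to a manifold with boundary; the paper's approach has the virtue of sidestepping the explicit identification $\pi_m(G/TOP)\cong L_m(\ZZ)$. One small imprecision: what you call ``Wall's theorem on surgery below the middle dimension'' is really the full fundamental theorem of simply-connected surgery (vanishing obstruction implies normal cobordance to a homotopy equivalence), not just the below-middle-dimension part.
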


\begin{proof}
The necessary part is obvious. We focus on the sufficient part. In the case of dimension $5$, we conclude by \cite[Theorem II.3.1]{Browder-Surgery-Simply-Connected}. From now on, we assume that the dimension is at least $6$. \cite[Theorem 2.2]{Wall-Poincare} shows that $Z$ is homotopy equivalent to an $m$-dimensional CW complex.
Remove an embedded $m$-dimensional disc in $Z$ 
and we get a Lefshetz duality pair $(Y,\partial Y)$, where $\partial Y$ is $S^{m-1}$. The bundle lifting condition induces a degree one normal map $(M,\partial M)\rightarrow (Y,\partial Y)$, where $M$ is a $TOP$ manifold with boundary. 
Applying the $\pi-\pi$ theorem (see \cite[Theorem 3.3]{Wall-Surgery}) or the trick from \cite[Theorem II.3.6]{Browder-Surgery-Simply-Connected} (i.e., $(M,\partial M)$ connect sum with some numbers of plumbings of disc tangential bundle of spheres to kill the surgery obstruction), $(M,\partial M)\rightarrow (Y,\partial Y)$ is normally bordant to a homotopy equivalence of pairs $(N,\partial N)\rightarrow (Y,\partial Y)$. By the genearalized Poincar\'e conjecture, $\partial N$ is homeomorphic to $S^{m-1}$. Coning off the boundaries we get a homotopy equivalence from a closed manifold to $Z$.
\end{proof}

The obstruction for lifting a spherical fibration to a $TOP$ bundle is understood at all primes.

\begin{proposition}[\cite{Brumfiel-Morgan}*{Theorem E and p.~9} for prime $2$; \cite{SullivanMITnotes}*{Theorem 6.5} for odd primes]
Let $\gamma$ be a spherical fibration over a CW complex $Z$. Then 
\begin{enumerate}
    \item there exist characteristic classes $k^G(\gamma)\in H^{4*+3}(Z;\ZZ/2)$ and $l^G(\gamma)\in H^{4*}(Z;\ZZ/8)$ such that $\gamma$ lifts to a $TOP$ bundle at prime $2$ if and only if $k^G(\gamma)$ vanishes and $l^G(\gamma)$ has a $\ZZ_{(2)}$-lifting;
    \item $\gamma$ lifts to a $TOP$ bundle at an odd prime $l$ if and only if $\gamma$ admits an $l$-local $KO$-theory orientation (see \cite{Madsen-Milgram-surgery}*{p.~82, the paragraph above (4.12)}).
\end{enumerate} 
\end{proposition}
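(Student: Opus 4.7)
The statement is a cited result from Brumfiel-Morgan and Sullivan's MIT notes, so the plan is to recall how the arguments in those references proceed, rather than to reprove them from scratch. The unifying point of view is that lifting a spherical fibration $\gamma : Z \to BG$ to a $TOP$ bundle is controlled by the homotopy fiber sequence
\[
G/TOP \longrightarrow BTOP \longrightarrow BG,
\]
so the obstructions are computed from the homotopy type of $G/TOP$ after localizing at the relevant prime.

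For the odd prime case, the plan is to quote Sullivan's analysis (see \cite{SullivanMITnotes}*{Chapter 6} and the summary in \cite{Madsen-Milgram-surgery}*{p.~82}) showing that at any odd prime $l$ there is an equivalence $(G/TOP)_{(l)} \simeq BSO_{(l)}$, with the equivalence realized by the $l$-local $KO$-theory orientation class. Consequently, a lift of $\gamma$ to $BTOP_{(l)}$ corresponds precisely to an $l$-local $KO$-orientation on $\gamma$, yielding (2).

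For the prime $2$ case, the plan is to recall from \cite{Brumfiel-Morgan}*{Theorem E} the additive splitting of $(G/TOP)_{(2)}$ as a product of Eilenberg-MacLane spaces, after accounting for the low-dimensional $k$-invariant coming from the nontrivial action of the Bockstein on the first $\ZZ/2$ factor. The projections onto the Eilenberg-MacLane factors yield universal characteristic classes in $H^{4\ast+3}(BG;\ZZ/2)$ and $H^{4\ast}(BG;\ZZ_{(2)})$; pulling them back along $\gamma$ produces the classes $k^G(\gamma)$ and (after reduction mod $8$) $l^G(\gamma)$ of the statement. The obstruction-theoretic reading of the fiber sequence then converts the lifting problem to the vanishing of $k^G(\gamma)$ together with the liftability of the $\ZZ/8$-class $l^G(\gamma)$ to a $\ZZ_{(2)}$-class, giving (1).

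The only subtle point that an expanded proof would need to address carefully is the interaction between the $\ZZ/8$ classes and the $\ZZ/2$ classes through the Bockstein, which encodes why $l^G(\gamma)$ must be lifted to integral $\ZZ_{(2)}$-coefficients rather than merely to $\ZZ/8$; this is exactly the content of \cite{Brumfiel-Morgan}*{p.~9}, so I would simply quote that discussion rather than reprove it here.
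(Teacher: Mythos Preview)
The paper does not supply a proof of this proposition at all; it is stated purely as a citation to Brumfiel--Morgan and Sullivan's MIT notes, and the text moves directly to the definition of $l$-adic formal manifolds. Your proposal correctly recognizes this and offers a reasonable sketch of how those references establish the result, so there is nothing to compare against and no gap to flag.
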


Because of this, we propose the following definition of $l$-adic formal manifolds.

\begin{definition}\label{definition-of-l-adic-formal-manifolds}
Let $Z$ be a simply-connected finite CW complex with a mod-$l$ Poincar\'e duality of formal dimension $m\geq 5$.
\begin{enumerate}
    \item If $l$ is an odd prime number, then an \textbf{$l$-adic formal manifold structure} on $Z$ is an orientation $\Delta_Z\in KO_l^\wedge(\Sigma^{\infty} M(\nu_Z)_l)$ for its stable mod-$l$ Spivak normal spherical fibration $\nu_Z$ provided by Proposition \ref{Z/l-poincare-to-spivak}, where $KO_l^\wedge$ is the $l$-complete real $K$-theory.
    \item If $l=2$, then a \textbf{$2$-adic formal manifold structure} on $Z$ consists of an $\widehat{\ZZ}_2$-lifting $L_Z\in H^{4*}(Z;\widehat{\ZZ}_2)$ of the characteristic class $l^G(\nu_Z)\in H^{4*}(Z;\ZZ/8)$ and a homotopy class of a null homotopy of $k^G(\nu_Z):Z\rightarrow \prod_{k}(\ZZ/2;4k+3)$.
\end{enumerate}
\end{definition}

By definition, we have:

\begin{proposition}\label{prop: structure iff lift}
$Z$ has an $l$-adic formal manifold structure if and only if its mod-$l$ Spivak normal spherical fibration $\nu_Z:Z\rightarrow BSG_l$ has an $l$-lifting $Z\rightarrow BSTOP_l$.
\end{proposition}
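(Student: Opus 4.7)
The plan is to reduce the proposition to the classical obstruction-theoretic characterizations cited immediately above the statement, by $l$-adically completing them. The framework is that at any prime $l$, the fibration sequence $G/TOP \to BSTOP \to BSG$ induces an $l$-complete fibration $(G/TOP)_l \to BSTOP_l \to BSG_l$, so lifts of $\nu_Z \colon Z \to BSG_l$ to $BSTOP_l$ are in bijection with null-homotopies of the composite $Z \to BSG_l \to B(G/TOP)_l$. Thus the task in each case is to identify this null-homotopy datum with the datum listed in Definition \ref{definition-of-l-adic-formal-manifolds}.

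For odd $l$, I would invoke Sullivan's theorem \cite{SullivanMITnotes}*{Theorem 6.5} together with the discussion in Chapter 6 of loc.\ cit.: at odd primes, the map $BSG \to B(G/TOP)$ factors through the $KO$-theoretic Thom-class construction, so that the existence of a lift $Z \to BSTOP_l$ of a spherical fibration is equivalent to the existence of a $KO$-orientation. After $l$-adic completion, and using that $Z$ is a finite CW complex (so cohomology with $\widehat{\ZZ}_l$- and $KO_l^\wedge$-coefficients commutes appropriately with completion of the target), this becomes exactly the existence of an orientation $\Delta_Z \in KO_l^\wedge(\Sigma^\infty M(\nu_Z)_l)$, matching Definition \ref{definition-of-l-adic-formal-manifolds}(1).

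For $l = 2$, I would invoke Brumfiel–Morgan \cite{Brumfiel-Morgan}*{Theorem E}, whose content is that $(B(G/TOP))_{(2)}$, in the range relevant to $\nu_Z$ over a finite $Z$, is a product of Eilenberg–MacLane spaces detected by the characteristic classes $k^G \in H^{4*+3}(-;\ZZ/2)$ and $l^G \in H^{4*}(-;\ZZ/8)$, so that lifting is equivalent to $k^G(\nu_Z) = 0$ plus a $\ZZ_{(2)}$-lift of $l^G(\nu_Z)$. After $2$-completion, the $\ZZ/2$-valued class $k^G(\nu_Z)$ is unchanged, while the $\ZZ_{(2)}$-lifting condition becomes a $\widehat{\ZZ}_2$-lifting condition because $Z$ is a finite CW complex and $H^{4*}(Z;\ZZ_{(2)})^\wedge_2 \cong H^{4*}(Z;\widehat{\ZZ}_2)$. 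Collecting a null-homotopy of $k^G(\nu_Z)$ and a $\widehat{\ZZ}_2$-lift $L_Z$ of $l^G(\nu_Z)$ then recovers Definition \ref{definition-of-l-adic-formal-manifolds}(2).

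I expect the main obstacle to be the $l = 2$ case, where one must verify carefully that $2$-adic completion of the Brumfiel–Morgan Postnikov decomposition of $(G/TOP)_{(2)}$ continues to split as a product of Eilenberg–MacLane spaces over the finite CW complex $Z$, so that the null-homotopy of $k^G$ and the $\widehat{\ZZ}_2$-lifting of $l^G$ can genuinely be chosen independently and together constitute the complete obstruction datum. At odd primes by contrast everything is controlled by the single $KO$-theoretic invariant, and the reduction is essentially formal from Sullivan's theorem combined with the finite-type hypothesis that underlies the rest of this section.
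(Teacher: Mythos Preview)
Your proposal is correct and follows the same underlying logic as the paper, but the paper's own proof consists of a single line: ``By definition, we have:''. The point is that Definition~\ref{definition-of-l-adic-formal-manifolds} was crafted precisely as the $l$-adic completion of the lifting criteria in the preceding proposition (Brumfiel--Morgan for $l=2$, Sullivan for odd $l$), so the equivalence is intended to be tautological once those classical results are granted.

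What you have written is essentially the justification for \emph{why} the definition was made this way, i.e., the content that the paper leaves implicit in the word ``definition''. Your analysis of the fibration $(G/TOP)_l \to BSTOP_l \to BSG_l$ and the identification of the obstruction data at each prime is exactly right, and your caution about the $l=2$ splitting after completion is well-placed (and is something the paper does not address explicitly). So there is no gap in your approach; if anything you are being more careful than the paper, which simply takes the $l$-adic analogues of the cited classical theorems as the \emph{definition} and declares the proposition immediate.
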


\begin{example}
Any closed topological manifold $M$ with $\pi_1(M)^{\wedge}_l=0$ of dimension at least $5$ has an $l$-adic formal manifold structure for any prime $l$. Indeed, $M$ is $l$-adic weak equivalent to a simply-connected CW complex $Z$. The Poincar\'e duality of $M$ induces a mod-$l$ Poincar\'e duality on $Z$. By the uniqueness of mod-$l$ Spivak normal spherical fibrations on $Z$ (Theorem \ref{uniqueness-of-l-adic-Spivak}), the stable normal bundle of $M$ provides an $l$-lifting of the mod-$l$ Spivak normal spherical fibration $\nu_Z$ of $Z$.
\end{example}

We are curious about the following question:

\begin{question}
Let $Z$ be a $2$-adic Poincar\'e simply-connected finite CW complex. Then the mod-$2$ Spivak normal spherical fibration $\nu_Z$ has a mod-$8$ characteritic class $l^G(\nu_Z)\in H^{4*}(Z;\ZZ/8)$. If $Z$ has formal dimension $4k$, what is the meaning of the evaluation $\langle l^G(\nu_Z),[\ZZ]_{2} \rangle\in \ZZ/8$?
\end{question}

\begin{fact}[\cite{Brumfiel-Morgan}*{p.~59, 8.1}]
Let $Z$ be a finite CW complex with integral Poincar\'e duality and the Spivak normal spherical fibration $\nu_Z$, then $\langle l^G(\nu_Z),[\ZZ]_{2} \rangle$ is the signature of $Z$ modulo $8$.
\end{fact}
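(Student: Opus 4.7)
The plan is to reduce the statement to the topological-manifold case, where it follows from the Hirzebruch signature theorem modulo $8$, and then to bootstrap to arbitrary Poincar\'e duality complexes via a surgery-theoretic argument. First I would recall the construction of $l^G$: the Brumfiel-Morgan analysis of the homotopy fiber of $BSTOP\to BSG$ at the prime $2$ shows that this fiber splits as a product of Eilenberg-MacLane spaces, so the obstruction to lifting a spherical fibration to a $TOP$ bundle at $2$ is captured precisely by the mod-$2$ classes $k^G\in H^{4*+3}(-;\ZZ/2)$ and the mod-$8$ classes $l^G\in H^{4*}(-;\ZZ/8)$. Crucially, $l^G$ is defined so that its pullback along $BSTOP\to BSG$ agrees with the modulo-$8$ reduction of the Hirzebruch $L$-polynomial in the Pontryagin classes of the inverse stable tangent bundle.

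For a closed topological manifold $Z$ of dimension $4k$ with stable normal bundle $\nu_Z$, the topological Hirzebruch signature theorem (available in the $TOP$ category thanks to Kirby-Siebenmann) gives $\operatorname{sign}(Z)=\langle L(-TZ),[Z]\rangle$. Reducing modulo $8$ and invoking the defining property of $l^G$ above yields $\langle l^G(\nu_Z),[Z]_{2}\rangle\equiv\operatorname{sign}(Z)\pmod{8}$, which is the desired identity in the manifold case.

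For a general integral Poincar\'e duality complex $Z$ of dimension $4k$, one produces a degree-one normal map $f\colon M\to Z$ from a topological manifold $M$, possibly after first stabilizing by a product with $\mathbb{CP}^{2}$ (or a similar simply-connected manifold) to kill low-dimensional lifting obstructions and multiplying the answer back by the known $L$-genus of the auxiliary factor. By naturality of the Spivak normal fibration, $f^*\nu_Z$ and $\nu_M$ are stably fiber homotopy equivalent, so $\langle l^G(\nu_Z),[Z]\rangle=\langle l^G(\nu_Z),f_*[M]\rangle=\langle l^G(\nu_M),[M]\rangle\equiv\operatorname{sign}(M)\pmod{8}$ by the manifold case. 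On the other hand, the Kervaire-Milnor computation of the simply-connected $4k$-dimensional surgery obstruction gives $\operatorname{sign}(M)-\operatorname{sign}(Z)\in 8\ZZ$, since this difference is eight times the surgery obstruction in $L_{4k}(\ZZ)\cong\ZZ$. Combining these two congruences completes the proof.

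The main obstacle is the last step: one must construct the normal map to $Z$ and simultaneously control the modulo-$8$ signature defect. This construction, together with the verification that the signature of a Poincar\'e complex is accessible by normal maps in the first place, is the technical core of Brumfiel-Morgan's work and rests on their refinement of Sullivan's characteristic variety theorem; for a rigorous account I would defer to their original analysis rather than reproduce it here.
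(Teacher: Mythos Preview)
The paper does not give a proof of this Fact: it is stated with a bare citation to \cite{Brumfiel-Morgan}*{p.~59, 8.1} and no argument. So there is no ``paper's own proof'' to compare your proposal against.

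That said, your proposal has a genuine gap in the passage from manifolds to arbitrary Poincar\'e complexes. A degree-one normal map $M\to Z$ from a topological manifold exists if and only if the Spivak fibration $\nu_Z$ lifts to $BSTOP$; for a general Poincar\'e complex this need not happen, and indeed the classes $k^G(\nu_Z)$ and the non-liftability of $l^G(\nu_Z)$ to $\ZZ_{(2)}$ are exactly the obstructions. Your suggested fix of ``stabilizing by a product with $\mathbb{CP}^2$'' does not remove these obstructions: the Spivak fibration of $Z\times\mathbb{CP}^2$ is the external Whitney sum $\nu_Z\boxplus\nu_{\mathbb{CP}^2}$, and since $\nu_{\mathbb{CP}^2}$ already lifts, the lifting obstruction for the product is essentially that of $\nu_Z$ crossed with the identity. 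So the normal map you need may simply fail to exist, and the surgery-obstruction step $\operatorname{sign}(M)-\operatorname{sign}(Z)\in 8\ZZ$ never gets off the ground.

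Your manifold case is fine, and your own closing paragraph correctly locates the difficulty; but deferring that step entirely to Brumfiel--Morgan means the proposal is not an independent argument. The actual Brumfiel--Morgan proof does not proceed via normal maps to the given $Z$: they build $l^G$ directly from the $2$-local Postnikov analysis of $BSG$ and identify the evaluation $\langle l^G(\nu_Z),[Z]\rangle$ with the signature mod $8$ through the Thom-spectrum map $MSG\to K(\ZZ/8)$ and Wu-class/quadratic-form considerations, bypassing the existence of a manifold model altogether.
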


For any nonsingular symmetric bilinear form $(V,B(-,-))$ over $\ZZ/2$, there exists a characteristic element $v\in V$ such that $B(x,x)=B(x,v)$ for any $x\in V$.

\begin{definition}
Let $(W,B(-,-))$ be a nonsingular symmetric bilinear form over $\ZZ/8$. Let $w$ be an element of $W$ such that its mod-$2$ reduction $\overline{w}\in W\otimes \ZZ/2$ is a characteristic element. Define the \textbf{mod-$8$ signature of $(W,B)$} to be $B(w,w)\in \ZZ/8$.  
\end{definition}

The mod-$8$ signature of $(W,B)$ is well defined. Indeed, if $w'=w+2u$, then $B(w',w')=B(w,w)+4B(w,u)+4B(u,u)$ is $B(w,w)$ modulo $8$, since $B(w,u)=B(u,u)$ modulo $2$.

\begin{fact}[\cite{Milnor-Husemoller}*{p.~24, Lemma II.5.2}]
If $(U,B(-,-))$ is a nonsingular symmetric bilinear form over $\ZZ$, then the mod-$8$ signature of $U\otimes \ZZ/8$ is the signature of $(U,B)$ modulo $8$.
\end{fact}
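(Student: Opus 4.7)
The plan is to reduce to the classical van der Blij lemma: for any nonsingular symmetric bilinear form $(U,B)$ over $\ZZ$ and any characteristic element $u\in U$ (meaning $B(x,x)\equiv B(x,u)\pmod{2}$ for all $x\in U$), one has $B(u,u)\equiv \mathrm{sgn}(U,B)\pmod{8}$. Granted this, the Fact follows immediately: the mod-$2$ reduction $\overline{u}\in U\otimes \ZZ/2$ is automatically a characteristic element for $B\otimes \ZZ/2$, so the image of $u$ in $U\otimes \ZZ/8$ is a legitimate choice of $w$ in the definition of the mod-$8$ signature, and $B(u,u)\bmod 8$ thereby computes that invariant.

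First I would establish existence of a characteristic element over $\ZZ$. Since $U$ is a finitely generated free $\ZZ$-module and $B$ is unimodular by nonsingularity, the assignment $x\mapsto B(x,x)\bmod 2$ defines a homomorphism $U\to \ZZ/2$ which, by nondegeneracy of $B\otimes \ZZ/2$, is represented by pairing against some $u\in U$.

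Next I would check that $B(u,u)\bmod 8$ is independent of the choice of $u$. If $u,u'$ are both characteristic, then $B(-,u-u')$ vanishes mod $2$, so unimodularity forces $u-u'=2v$ for some $v\in U$. Expanding gives
\[
B(u,u)-B(u',u') \;=\; 4\bigl(B(u',v)+B(v,v)\bigr),
\]
and $B(u',v)\equiv B(v,v)\pmod{2}$ because $u'$ is characteristic, so the right-hand side is divisible by $8$. In particular $\tau(U,B):=B(u,u)\bmod 8$ is a well-defined invariant.

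The main obstacle is van der Blij's lemma itself. The invariant $\tau$ is manifestly additive under orthogonal direct sums, as is $\mathrm{sgn}$, and both vanish on the hyperbolic plane $H$ (which is even, so $0$ is characteristic, and has signature $0$). Stabilizing with copies of $H$ brings the form into the indefinite range, and by Milnor's classification of indefinite unimodular forms over $\ZZ$ (see \cite{Milnor-Husemoller}*{Ch.~II}) such a form is isometric to either $p\langle 1\rangle \oplus q\langle -1\rangle$ or $aE_8 \oplus bH$. A direct computation on each summand verifies the formula: $\langle \pm 1\rangle$ admits $1$ as a characteristic element, giving $\tau=\pm 1=\mathrm{sgn}$; $E_8$ is even so $0$ is characteristic and $\tau(E_8)=0$, matching $\mathrm{sgn}(E_8)=8\equiv 0\pmod 8$; and $H$ has been handled. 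Additivity then extends the equality $\tau=\mathrm{sgn}\pmod 8$ to the general case, completing the proof.
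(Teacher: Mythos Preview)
The paper does not prove this statement; it is recorded as a Fact with a citation to \cite{Milnor-Husemoller}*{p.~24, Lemma II.5.2}, which is precisely van der Blij's formula. Your reduction of the Fact to that formula is correct, and you have supplied a full argument where the paper gives none.

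One caution about your proof of van der Blij itself: in \cite{Milnor-Husemoller}, the classification of indefinite \emph{even} unimodular forms (Theorem II.5.3) is deduced using the corollary of van der Blij that an even unimodular form has signature divisible by $8$. Invoking the full classification from Ch.~II is therefore circular in the even case. A painless repair is to stabilize with $\langle 1\rangle \oplus \langle -1\rangle$ rather than with $H$: this still preserves both $\tau$ and $\mathrm{sgn}$, still eventually produces an indefinite form, and forces the result to be \emph{odd}, so that only the odd case of the classification is needed---and that case follows from Meyer's theorem by elementary induction, independently of van der Blij. Alternatively, the proof actually given in \cite{Milnor-Husemoller} proceeds via a Gauss-sum computation and avoids the classification theorem entirely.
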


By \cite{Brumfiel-Morgan}*{p.~59, 8.1}, the mod-$2$ reduction $l^G_{4n}\otimes \ZZ/2$ of $l^G_{4n}$ is $v^2_{2n}$, where $v_{2n}$ is the Wu class. This motivates us to conjecture that

\begin{conjecture}
Let $Z$ be a $2$-adic Poincar\'e simply-connected finite CW complex of dimension $4n$ with the mod-$2$ Spivak normal spherical fibration $\nu_Z:Z\rightarrow BSG_2$, then $\langle l^G(\nu_Z),[Z]_2 \rangle\in \ZZ/8$ is the mod-$8$ signature of the symmetric bilinear form on $H^{2n}(Z;\ZZ/8)$ induced by the Poincar\'e duality.
\end{conjecture}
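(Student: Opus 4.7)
The plan is to reduce the conjecture to the classical signature-mod-$8$ formula for integral Poincar\'e duality complexes due to Brumfiel-Morgan, together with van der Blij's theorem identifying the integral signature modulo $8$ with $B(u,u)\bmod 8$ on any integral characteristic element. Both sides of the conjectured identity depend only on the $\widehat{\ZZ}_2$-cohomology ring of $Z$ together with its fundamental class: the left side is determined by $\nu_Z$, a $2$-adic invariant of $Z$ by Theorem \ref{uniqueness-of-l-adic-Spivak}, and the right side is manifestly cohomological. Hence it suffices to prove the identity after replacing $Z$ by any $2$-adically equivalent model preserving fundamental classes.

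I would first treat the case in which $Z$ admits an integral Poincar\'e lift: a simply-connected integral Poincar\'e duality complex $Z'$ of dimension $4n$ together with a $2$-adic weak equivalence $Z'\to Z$ matching fundamental classes. This already covers the setting of primary interest in the paper, because Corollary \ref{characteristic-0-lifting-homotopically} provides such a $Z'$ (indeed a complex variety) whenever $Z$ arises as $X_{\et}$ for a smooth projective variety $X$. For such $Z'$, the Brumfiel-Morgan fact gives $\langle l^G(\nu_{Z'}),[Z']_2\rangle \equiv \operatorname{sign}(Z')\pmod 8$, and van der Blij's theorem gives $\operatorname{sign}(Z')\equiv B_{\ZZ}(u,u)\pmod 8$ for any integral characteristic element $u$ of the intersection form modulo torsion. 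The mod-$2$ reduction $\bar{u}\in H^{2n}(Z';\ZZ/2)$ is then the unique characteristic element of the non-singular mod-$2$ intersection pairing, hence equals the Wu class $v_{2n}$. Reducing $u$ modulo $8$ therefore produces a class $w\in H^{2n}(Z';\ZZ/8)$ lifting $v_{2n}$, and $B_{\ZZ/8}(w,w)\equiv B_{\ZZ}(u,u)\pmod 8$ is by definition the mod-$8$ signature, completing this case.

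The main obstacle is the general case: a simply-connected $2$-adic Poincar\'e complex need not admit an integral Poincar\'e lift, so Brumfiel-Morgan cannot be invoked directly. Two routes seem available. The first is a Sullivan-style arithmetic fracture argument: rationally the Hirzebruch signature theorem pins down a rational avatar of $l^G$, odd-primely the statement is empty as $l^G$ is $2$-local, and one would need a $2$-local surgery-theoretic gluing whose obstructions live precisely where $l^G$ takes values. The second route is to adapt the Brumfiel-Morgan construction of $l^G$ and its chain-level Wu-formula computation directly to the $2$-adic Poincar\'e setting, replacing the integral intersection form by its $\widehat{\ZZ}_2$-coefficient refinement and by its mod-$2^k$ reductions. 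I expect the second approach to be cleaner, and the technical heart of the conjecture is precisely transporting the chain-level combinatorics of \cite{Brumfiel-Morgan} into the $2$-adic Poincar\'e setting where the integral intersection form is unavailable.
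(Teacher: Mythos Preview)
The statement you are attempting to prove is explicitly labeled a \emph{Conjecture} in the paper, and the paper offers no proof of it; the authors only remark on what would follow if it were true. So there is no paper proof to compare your proposal against, and the relevant question is simply whether your argument closes the gap.

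It does not, and there is a concrete error in the part you describe as already settled. You claim that Corollary~\ref{characteristic-0-lifting-homotopically} furnishes, for $Z$ arising from a smooth projective variety $X$, an integral Poincar\'e duality complex $Z'$ with a $2$-adic weak equivalence $Z'\to Z$. But that corollary only produces a simply-connected \emph{complex variety} $2$-adically equivalent to $Z$, and the construction (the Deligne--Sullivan trick of complexifying the affine spans of simplices of a polyhedron) yields a highly singular union of affine subspaces in $\CC^N$. Such a space is essentially never an integral Poincar\'e duality space; it has no reason to satisfy Poincar\'e duality with $\ZZ$-coefficients, only with $\ZZ/2$-coefficients via the equivalence to $Z$. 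So the hypothesis of the Brumfiel--Morgan fact (an integral Poincar\'e complex) is not available, and your reduction to the classical signature formula collapses even in the case of varieties. The existence of an integral Poincar\'e lift of a given $2$-adic Poincar\'e complex is exactly the nontrivial point, and it is not supplied by anything in the paper.

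For the general case you correctly identify that one would need to redo the Brumfiel--Morgan chain-level analysis over $\widehat{\ZZ}_2$ or its finite quotients, but you do not carry this out; this is precisely the content of the conjecture and remains open.
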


If this conjecture is true, then we would have the following corollary: for a $2n$-dimensional smooth projective variety $X$ over a field of characteristic away from $2$, the evaluation of the $l^G$-class of the stable ``normal'' bundle $\nu_X$ (see the paragraph above Fact \ref{construction-of-algebraic-normal-bundle}) on the $\ZZ/8$-coefficient \'etale fundamental class of $X$ is the mod-$8$ signature of the $\ZZ/8$-coefficient \'etale cohomology of $X$.

\subsection{$l$-adic Manifold Structures on Smooth Projective Varieties}\;

Let $k$ be a separably closed field of characteristic $p>0$. Fix embeddings $W(\overline{\FF}_p)\rightarrow \CC$ and $\overline{\FF}_p\rightarrow k$. Let $l\neq p$ be a prime.

Let $G=GL(N)$ be the general linear group scheme over $\spec(\ZZ)$. Consider the algebraic stack $\left[ */G \right]$, there is a hypercovering $B_*G$ of $\left[ */G \right]$, where $B_*G$ is a simplicial scheme obtained by the Bar construction of the group scheme $G$. By \cite{Chough-Etale-Homotopy-Algebraic-Stacks}*{Theorem 2.3.40}, there is a canonical weak equivalence between the \'etale homotopy types $(B_*G_R)_{\et}$ and $\left[ */G_R \right]_{\et}$ for any ring $R$. 

By \cite{artin-mazur-etale-homotopy}*{Corollary 12.12} or \cite{Friedlander-etale-homotopy}*{Proposition 8.8}, the embedding $\overline{\FF}_p\rightarrow k$ induces a weak equivalence $(B_*G_k)^{\wedge}_{\et}\rightarrow (B_*G_{\overline{\FF}_p})^{\wedge}_{\et}$. By \cite{Friedlander-etale-homotopy}*{Proposition 8.8} the base-change maps $(B_*G_{\overline{\FF}_p})^{\wedge}_{\et,l} \rightarrow (B_*G_{W(\overline{\FF}_p)})^{\wedge}_{\et,l} \leftarrow (B_*G_{\CC})^{\wedge}_{\et,l}$ are $l$-adic weak equivalences. 
From \cite{artin-mazur-etale-homotopy}*{Corollary 12.12} or \cite{Friedlander-etale-homotopy}*{Theorem 8.4 or Proposition 8.8}, there is a canonical weak equivalence between the pro-spaces $(B_*G_{\CC})^{\wedge}_{\et}$ and $BGL(N,\CC)^{\wedge}\simeq BU(N)^{\wedge}$, where $BGL(N,\CC),BU(N)$ are the topological classifying spaces for complex vector bundles. To summarize, we have the following commutative diagram with each arrow a weak equivalence.
\begin{equation}\label{Bundle-Diagram}
\begin{tikzcd}
    (B_*G_k)^{\wedge}_{\et,l} \arrow[r] \arrow[d] & (B_*G_{\overline{\FF}_p})^{\wedge}_{\et,l} \arrow[r] \arrow[d] & (B_*G_{W(\overline{\FF}_p)})^{\wedge}_{\et,l} \arrow[d] & (B_*G_{\CC})^{\wedge}_{\et,l} \arrow[l] \arrow[d] \arrow[r] & BU(N)^{\wedge}_{l} \\
    \left[ */G_k\right]^{\wedge}_{\et,l} \arrow{r} & \left[ */G_{\overline{\FF}_p}\right]^{\wedge}_{\et,l} \arrow{r} & \left[ */G_{W(\overline{\FF}_p)}\right]^{\wedge}_{\et,l} & \left[ */G_{\CC}\right]^{\wedge}_{\et,l} \arrow{l} &
\end{tikzcd}    
\end{equation}

Assume that $\pi^{\et}_1(X)^{\wedge}_l=0$.
By Corollary \ref{finiteness-thereom-on-variety}, we know that the \'etale homotopy type $X_{\et}$ is $l$-adic weak equivalent to a simply-connected finite CW complex $Z$. By Theorem \ref{existence-of-mod-Poincare-duality}, $Z$ carries a mod-$l$ Poincar\'e duality of formal dimension $2m$, which is induced by the \'etale Poincar\'e duality on $X$. By Proposition \ref{Z/l-poincare-to-spivak} and Theorem \ref{uniqueness-of-l-adic-Spivak}, $Z$ has a mod-$l$ Spivak normal spherical fibration $\nu_Z$, unique up to equivalence.

Recall that \cite{Joshua-Spanier-Whitehead}*{\S4.5, Theorem 4.7} (alternatively, see the Fact \ref{construction-of-algebraic-normal-bundle} and the paragraphs above it) constructs a stable algebraic ``normal'' bundle $\nu_X:X\rightarrow \left[ */G_k \right]$ of $X$ and the \'etale Thom space $M(\nu_X)_{\et}$ of $\nu_X$ is a mod-$l$ S-dual of $X_{\et}$. 
By Diagram (\ref{Bundle-Diagram}), $\nu_X$ induces an ``$l$-adic complex bundle'' $\tau:Z\rightarrow BU(N)_l$ over $Z$. 

\begin{lemma}
The $l$-completed Thom space $M(\tau)_l$ of $\tau$ is canonically homotopy equivalent to $\holim M(\nu_X)^{\wedge}_{\et,l}$, where $M(\nu_X)_{\et}$ is the \'etale Thom space of $X$.
\end{lemma}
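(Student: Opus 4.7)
The plan is to identify both sides as the homotopy cofiber of the same $l$-adic sphere fibration over $Z$.

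First, I would extend Diagram (\ref{Bundle-Diagram}) to the complement of the zero section in the universal bundle. Let $\gamma_{N,R}^{\circ}$ denote the complement of the zero section in the universal $N$-plane bundle $\gamma_{N,R}$ over $B_{\ast}G_R$. Applying the same arguments establishing Diagram (\ref{Bundle-Diagram})—namely \cite{Friedlander-etale-homotopy}*{Proposition 8.8} and \cite{artin-mazur-etale-homotopy}*{Corollary 12.12}, now for the simplicial scheme $\gamma_{N,R}^{\circ}$, which is smooth at each level—one obtains a chain of $l$-adic weak equivalences of pro-spaces
\[(\gamma_{N,k}^{\circ})^{\wedge}_{\et,l}\simeq (\gamma_{N,\overline{\FF}_p}^{\circ})^{\wedge}_{\et,l}\simeq (\gamma_{N,W(\overline{\FF}_p)}^{\circ})^{\wedge}_{\et,l} \simeq (\gamma_{N,\CC}^{\circ})^{\wedge}_{\et,l}\simeq S(E_N)^{\wedge}_{l},\]
fitting over the corresponding chain in Diagram (\ref{Bundle-Diagram}); here $S(E_N)\to BU(N)$ denotes the universal topological $S^{2N-1}$-bundle.

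Second, specialize to $\nu_X$ via functoriality of \'etale homotopy. Because $\nu_X$ (and $\nu_X^{\circ}=\nu_X-X$) is the pullback of $\gamma_{N,k}$ (resp.\ $\gamma_{N,k}^{\circ}$) along the classifying map $X\to [\ast/G_k]$, the pro-space $(\nu_X^{\circ})^{\wedge}_{\et,l}$ sits over $X^{\wedge}_{\et,l}$ as an $l$-adic fibration with fiber $(S^{2N-1})_{l}$, and under the identifications from Diagram (\ref{Bundle-Diagram}) and Step 1 its classifying map is the composition $Z\xrightarrow{\tau} BU(N)_l \to BSG(2N)_l$. By Lemma \ref{completion-lemma-for-spherical-fibration}, $\holim (\nu_X^{\circ})^{\wedge}_{\et,l}$ is canonically identified with the $l$-completion $S(\tau)_l$ of the topological sphere fibration classified by $\tau$. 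The projection $\nu_X\to X$, being a Zariski-locally-trivial $\mathbb{A}^N$-bundle with $\mathbb{A}^N$ having trivial mod-$l$ \'etale cohomology, induces an $l$-adic weak equivalence $(\nu_X)^{\wedge}_{\et,l}\to X^{\wedge}_{\et,l}$; hence $\holim (\nu_X)^{\wedge}_{\et,l}\simeq Z$.

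Third, conclude by commuting the defining homotopy cofiber with $l$-completion and $\holim$. All pro-spaces in sight have simply-connected $l$-adic completions of finite type—by Corollary \ref{finiteness-thereom-on-variety} for $X_{\et}$, and by a Leray--Serre argument over $Z$ for $\nu_X^{\circ}$ whose fiber $(S^{2N-1})_l$ is itself $l$-complete finite type—so Lemma \ref{commutativity-of-limit-and-homology} applies, and the Thom isomorphism on $\ZZ/l$-cohomology is preserved by $\holim$. Hence
\[\holim M(\nu_X)^{\wedge}_{\et,l} \simeq \mathrm{hocof}\bigl(\holim (\nu_X^{\circ})^{\wedge}_{\et,l}\to \holim (\nu_X)^{\wedge}_{\et,l}\bigr)\simeq \mathrm{hocof}(S(\tau)_l \to Z)\simeq M(\tau)_l,\]
the final equivalence being the definition of the $l$-completed Thom space.

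The main obstacle is justifying the interchange of homotopy cofibers with $\holim$ and $l$-completion in the third step, as inverse limits are generally incompatible with cofibers. The simply-connected finite-type control provided by Section \ref{Section-Finiteness-of-Varieties} makes this tractable: the cohomological Thom isomorphism together with Lemma \ref{commutativity-of-limit-and-homology} reduces the verification to the $\ZZ/l$-cohomological equivalence already encoded in (the extended) Diagram (\ref{Bundle-Diagram}).
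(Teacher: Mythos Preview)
Your approach is correct and lands in the same place as the paper's proof, but the paper is more economical. The paper does not extend Diagram~(\ref{Bundle-Diagram}) to the universal complement of the zero section at all; instead it works directly with the comparison square
\[
\begin{tikzcd}
    S(\holim (\nu_X)^{\wedge}_{\et,l}) \arrow[r] \arrow[d,"\phi"] & \holim X^{\wedge}_{\et,l} \arrow[d,"\psi"] \\
    (\nu_X-X)_{\et} \arrow[r] & X_{\et},
\end{tikzcd}
\]
where the top row is the sphere bundle of the spherical fibration classified by $\holim X^{\wedge}_{\et,l}\to BSG_l$. The map $\psi$ is an $l$-adic weak equivalence by Lemma~\ref{commutativity-of-limit-and-homology}, and then the five-lemma on homotopy groups of the two fibrations (both with fiber $(S^{2N-1})_l$) forces $\phi$ to be one as well. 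Your Step~1 essentially justifies the existence and fiber-identification implicit in this square, so it is not wrong, just more scaffolding than the paper chooses to display.

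Where the two arguments genuinely differ is your Step~3. You phrase the conclusion as ``$\holim$ commutes with $\mathrm{hocof}$ under finite-type control,'' invoking Lemma~\ref{commutativity-of-limit-and-homology} somewhat abstractly. The paper instead takes cofibers of the square above to produce an explicit pro-map $\xi:M(\holim(\nu_X)^{\wedge}_{\et,l})\to M(\nu_X)^{\wedge}_{\et,l}$, checks via the five-lemma on the mod-$l$ homology long exact sequences of the two cofiber sequences that $\xi$ is an $l$-adic weak equivalence, and only then applies Lemma~\ref{commutativity-of-limit-and-homology} once more to pass to $\holim M(\nu_X)^{\wedge}_{\et,l}$. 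This is a cleaner and more direct verification than appealing to a general interchange principle. Also, your citation of Lemma~\ref{completion-lemma-for-spherical-fibration} in Step~2 is slightly misplaced: that lemma compares $S(\gamma)$ with $S(\gamma_l)$ for a fibration over a \emph{space}, whereas you need to compare a sphere bundle over $\holim X^{\wedge}_{\et,l}$ with the $\holim$ of a pro-space; the paper's five-lemma on homotopy groups is the right tool there.
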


\begin{proof}
The map $(\nu_X)^{\wedge}_{\et,l}:X^{\wedge}_{\et,l}\rightarrow \left[ */G_k \right]^{\wedge}_{\et,l}\simeq BU^{\wedge}_l\rightarrow BSG^{\wedge}_l$ induces $\holim (\nu_X)^{\wedge}_{\et,l}:\holim X^{\wedge}_{\et,l}\rightarrow BSG_l$. It suffices to prove that $M(\holim (\nu_X)^{\wedge}_{\et,l})_l$ is canonically homotopy equivalent to $\holim M(\nu_X)^{\wedge}_{\et,l}$. Consider the following commutative diagram of pro-spaces
\[
\begin{tikzcd}
    S(\holim (\nu_X)^{\wedge}_{\et,l}) \arrow[r] \arrow[d,"\phi"] & \holim X^{\wedge}_{\et,l} \arrow[d,"\psi"] \\
    (\nu_X-X)_{\et} \arrow[r] & X_{\et},
\end{tikzcd}
\]
where the upper left corner is the associated sphere bundle.
By Lemma \ref{commutativity-of-limit-and-homology}, $\psi$ is an $l$-adic weak equivalence. By the five-lemma on the long exact sequence of homotopy groups, $\phi$ is also an $l$-adic weak equivalence. Taking homotopy cofibers, we get a pro-map $\xi:M(\holim (\nu_X)^{\wedge}_{\et,l})\rightarrow M(\nu_X)^{\wedge}_{\et,l}$. 
The five-lemma on the long exact sequence of the mod-$l$ homology groups for the two row maps in the diagram shows that $\xi$ is an $l$-adic weak equivalence. By Lemma \ref{commutativity-of-limit-and-homology} again, $\holim M(\nu_X)^{\wedge}_{\et,l}\rightarrow M(\nu_X)^{\wedge}_{\et,l}$ is also an $l$-adic weak equivalence.
\end{proof}

\begin{corollary}
The composition $\tau:Z\rightarrow BU(N)_l\rightarrow BSG(N)_l$ is a mod-$l$ Spivak normal spherical fibration of $Z$. 
\end{corollary}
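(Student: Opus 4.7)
The plan is to verify, for $\tau : Z \to BU(N)_l \to BSG(N)_l$, the three ingredients in Definition \ref{definition-of-mod-l-spivak}: (i) an oriented $l$-adic spherical fibration, (ii) a Thom-Pontryagin map $\phi : S^{2N+2m} \to M(\tau)_l$, and (iii) the identity $\phi_*[S^{2N+2m}] \cap U_\tau = (f_l)_*[Z]_{\ZZ/l}$ in $H_{2m}(Z_l;\ZZ/l)$. The first is automatic: $\tau$ factors through $BU(N)_l$, so the complex structure supplies a canonical orientation on each $l$-completed fiber $(S^{2N-1})_l$, i.e., $\tau$ lifts to $BSU(N)_l$ and hence to $BSG(N)_l$ with a preferred Thom class $U_\tau \in \widetilde{H}^{2N}(M(\tau)_l;\ZZ/l)$.

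For (ii), I would use Joshua's construction (Fact \ref{construction-of-algebraic-normal-bundle}), which produces $\rho : S^{2N+2m} \to (M(\nu_X)_{\et} \wedge X_{\et,+})^{\wedge}_l$ as the composite $S^{2N+2m} \to M(\alpha_k)^{\wedge}_{\et,l} \to M(\nu_X)^{\wedge}_{\et,l} \xrightarrow{\Delta^{\wedge}_l} (M(\nu_X)_{\et} \wedge X_{\et,+})^{\wedge}_l$. Let $\phi_0 : S^{2N+2m} \to M(\nu_X)^{\wedge}_{\et,l}$ be the composition before the diagonal. Applying the homotopy limit and invoking the preceding lemma, $\holim M(\nu_X)^{\wedge}_{\et,l} \simeq M(\tau)_l$, we obtain the required map $\phi : S^{2N+2m} \to M(\tau)_l$. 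By construction, the $l$-completion of the Thom diagonal $\Delta \circ \phi : S^{2N+2m} \to M(\tau)_l \wedge Z_+$ corresponds under the equivalence of the preceding lemma exactly to the map $\rho'$ of Corollary \ref{cor: duality for sullivan}.

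For (iii), I would use the standard cap/slant identity $\phi_*[S^{2N+2m}] \cap U_\tau = (\Delta_* \phi_*[S^{2N+2m}])/U_\tau$ in $H_{2m}(Z_l;\ZZ/l)$, which reduces the claim to $(\rho')_*[S^{2N+2m}] / U_\tau = (f_l)_*[Z]_{\ZZ/l}$. Since $[Z]_{\ZZ/l}$ was defined in the proof of Theorem \ref{existence-of-mod-Poincare-duality} precisely by $[Z]_{\ZZ/l} = U_{\nu_X} \backslash (\rho')_*[S^{2N+2m}]$, it suffices to prove that the Thom class $U_\tau$ coincides with $U_{\nu_X}$ under the identification $M(\tau)_l \simeq \holim M(\nu_X)^{\wedge}_{\et,l}$. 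The hard part will be this last compatibility: both classes are characterized by restricting to the preferred generator on each completed fiber $(S^{2N-1})_l$, and one must verify that the chain of equivalences in Diagram (\ref{Bundle-Diagram})—from the algebraic étale side through mixed characteristic to the topological classifying space $BU(N)$—preserves the complex orientation on fibers. This follows from the naturality of the top Chern class (equivalently, the Euler class) under all base-change and analytification functors appearing in Diagram (\ref{Bundle-Diagram}), together with the fact that $l$-completion is a functor and the equivalence in the preceding lemma is induced by the functorial Thom-space construction. Once the orientations are matched, the three conditions of Definition \ref{definition-of-mod-l-spivak} hold, and $\tau$ is a mod-$l$ Spivak normal spherical fibration of $Z$.
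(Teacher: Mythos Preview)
Your argument is essentially correct, but it works harder than the paper's proof and leans on an orientation-compatibility check that you yourself flag as ``the hard part.'' The paper sidesteps this entirely. Its argument runs as follows: from Corollary \ref{cor: duality for sullivan} one has the $l$-adic $S$-duality map $\mu:S^{N+2m}\to (M(\tau)\wedge Z_{+})_l$; let $\phi$ be $\mu$ followed by the projection to $M(\tau)_l$. Then $\phi_*[S^{N+2m}]\cap U_{\tau}$ automatically induces a mod-$l$ Poincar\'e duality on $Z$, simply because the cap product factors as the $S$-duality isomorphism $D^{\rho'}$ composed with the Thom isomorphism. Since any two mod-$l$ fundamental classes on $Z$ differ by a unit in $(\ZZ/l)^{\times}$, one precomposes $\phi$ with a self-map of the sphere of the appropriate degree to force $\phi_*[S^{N+2m}]\cap U_{\tau}=[Z]_{\ZZ/l}$ on the nose.

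The difference is that you try to prove equality with $[Z]_{\ZZ/l}$ directly, which forces you to match $U_{\tau}$ with $U_{\nu_X}$ through the entire chain of base-change and analytification equivalences in Diagram (\ref{Bundle-Diagram}). That is doable via naturality of the Euler/top Chern class as you outline, but it is unnecessary: the freedom to twist $\phi$ by a unit absorbs any discrepancy of orientations. The paper's route is shorter and more robust; yours gives slightly more, namely that no twist is needed once the complex orientations are chosen compatibly, at the cost of a bookkeeping argument you have only sketched.
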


\begin{proof}
By Corollary \ref{cor: duality for sullivan}, $M(\tau)$ is an $l$-adic $S$-dual of $Z$ induced by $\mu:S^{N+2m}\rightarrow (M(\tau)\wedge Z_{+})_l$. Let $\phi:S^{N+2m}\rightarrow M(\tau)_l$ be the composition of $\mu$ and the projection $(M(\tau)\wedge Z_{+})_l\rightarrow M(\tau)_l$. Then $\phi_*[S^{N+2m}]\cap U_{\tau}$, where $U_{\tau}$ is the Thom class, induces a mod-$l$ Poincar\'e duality on $Z$ by the $l$-adic $S$-duality and the Thom isomorphism. One can modify $\phi$ by composing with certain self map of $S^{N+2m}$ so that $\phi_*[S^{N+2m}]\cap U_{\tau}=[Z]_{\ZZ/l}$. 
\end{proof}

By the uniqueness of $l$-adic Spivak normal spherical fibrations on $Z$, $\tau$ induces a $BU_l$-lifting of $\nu_Z$, in particular, a $BTOP_l$-lifting. 

In the following theorem, we consider both positive and zero characteristic cases.
Let $k$ be a separably closed field of characteristic $p\geq 0$. Let $l\neq p$ be a prime number. We will use the following setup:

\hypertarget{field-embeddings}{(*)} If $p$ is positive, then fix embeddings $\overline{\FF}_p\rightarrow k$ and $W(\overline{\FF}_p)\rightarrow \CC$; if $p=0$, then fix an embedding $k\rightarrow \CC$ or $\CC\rightarrow k$ depending on the cardinality of $k$.

\begin{theorem}\label{existence-of-formal-manifold-structure}
Let $X$ be a connected, smooth, projective $m$-dimensional variety over $k$. Assume $m\geq 3$, $\pi^{\et}_1(X)^{\wedge}_l=0$ and that $X^{\wedge}_{\et,l}$ admits an $l$-local lifting. Let $Z$ be a simply-connected finite CW complex together with an $l$-adic weak equivalence $Z\rightarrow X^{\wedge}_{\et,l}$ and an induced mod-$l$ Poincar\'e duality.  
\begin{enumerate}
    \item There exists a canonical $l$-adic formal manifold structure on $Z$ which depends on the map $Z\rightarrow X^{\wedge}_{\et,l}$ and the condition \hyperlink{field-embeddings}{(*)}.
    \item If $p=0$, given the embedding $k\rightarrow \CC$, then the $l$-adic formal manifold structure on $Z$ is the same as the one induced by the underlying manifold structure on $X_{\CC}^{an}$ and the map $Z\rightarrow X^{\wedge}_{\et,l}\simeq (X_{\CC})^{\wedge}_{\et,l}\simeq (X_{\CC}^{an})^{\wedge}_l$.
    \item Assume $p>0$. Let $R$ be a mixed characteristic discrete valuation ring with residue field $k$. Let $W(\overline{\FF}_p)\rightarrow R\rightarrow \CC$ be embeddings compatible with the condition \hyperlink{field-embeddings}{(*)}.
    If $X$ has a lifting $\widetilde{X}_{R}$ over $R$, then the $l$-adic formal manifold structure on $Z$ is the same as the one induced by the underlying manifold structure on $(\widetilde{X}_{\CC}^{an})_l$ and the map $Z\rightarrow X^{\wedge}_{\et,l}\simeq (\widetilde{X}_{R})^{\wedge}_{\et,l}\simeq(\widetilde{X}_{\CC})^{\wedge}_{\et,l}\simeq (\widetilde{X}_{\CC}^{an})^{\wedge}_l$.
\end{enumerate}
\end{theorem}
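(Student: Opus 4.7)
The plan has three pieces, corresponding to the three parts of the theorem. Part (1) is essentially a summary of the construction carried out in the paragraphs preceding the theorem. The $l$-adic complex bundle $\tau: Z \to BU(N)_l$ (obtained from Joshua's algebraic stable normal bundle $\nu_X$ via Diagram \ref{Bundle-Diagram}) was just shown to be a mod-$l$ Spivak normal spherical fibration of $Z$. Composing with the canonical map $BU(N)_l \to BSTOP(N)_l$ produces a $BSTOP(N)_l$-lifting of $\nu_Z$, which by Proposition \ref{prop: structure iff lift} is exactly an $l$-adic formal manifold structure. To see that the structure is canonical, note that any two projective embeddings of $X$ yield stably $K$-equivalent algebraic normal bundles, hence the same class in $[Z, BU_l]$ after stabilization; combined with Theorem \ref{uniqueness-of-l-adic-Spivak} (uniqueness of mod-$l$ Spivak fibrations), the resulting structure in the sense of Definition \ref{definition-of-l-adic-formal-manifolds} depends only on the map $Z \to X^{\wedge}_{\et,l}$ and the field embeddings used to build Diagram \ref{Bundle-Diagram}.

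For part (2), with $p=0$ and $k \to \CC$ fixed, I would use the base-change chain $X_{\et,l}^{\wedge} \simeq (X_{\CC})_{\et,l}^{\wedge} \simeq (X_{\CC}^{an})_l^{\wedge}$ from Artin--Mazur comparison. Joshua's construction is natural under base change, so $\nu_X$ corresponds to $\nu_{X_{\CC}}$; the analytification of the latter is, by the Euler-sequence construction recalled above Fact \ref{construction-of-algebraic-normal-bundle}, stably isomorphic as a complex (hence topological) vector bundle to the normal bundle of $X_{\CC}^{an}$ in $S^{2N}$. Chasing this through Diagram \ref{Bundle-Diagram} identifies $\tau \in [Z, BU(N)_l]$ with the $l$-completion of the classifying map of the topological stable normal bundle of the manifold $X_{\CC}^{an}$. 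Both formal manifold structures, the one from part (1) and the one coming from the underlying topological manifold $X_{\CC}^{an}$, therefore arise from the same $BSTOP(N)_l$-lifting of $\nu_Z$ and coincide.

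For part (3), one applies base change twice. The lifting $\widetilde{X}_R$ gives proper smooth base changes $X \leftarrow \widetilde{X}_R \to \widetilde{X}_{\CC}$ compatible with the prescribed field embeddings, hence compatible $l$-adic weak equivalences $X_{\et,l}^{\wedge} \simeq (\widetilde{X}_R)_{\et,l}^{\wedge} \simeq (\widetilde{X}_{\CC})_{\et,l}^{\wedge} \simeq (\widetilde{X}_{\CC}^{an})_l^{\wedge}$. Choosing compatible projective embeddings of $\widetilde{X}_R$ over $R$, Joshua's algebraic normal bundles match under these base changes, so the classes $\tau_X$ and $\tau_{\widetilde{X}_{\CC}}$ in $[Z, BU(N)_l]$ agree; part (2) applied to $\widetilde{X}_{\CC}$ then finishes the argument. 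The main obstacle throughout parts (2) and (3) is verifying that Joshua's Thom--Pontryagin construction, namely the map $S^{2N} \to M(\nu_X)_{\et,l}^{\wedge}$ above Fact \ref{construction-of-algebraic-normal-bundle}, is natural under the base change morphisms on \'etale homotopy types in Diagram \ref{Bundle-Diagram}, and that it matches the genuine analytic Thom--Pontryagin collapse for $X_{\CC}^{an} \subset S^{2N}$ on the complex side. This is a naturality check for regular neighborhoods and \'etale base change, which should follow from the naturality of the weak equivalences in \cite{artin-mazur-etale-homotopy} and \cite{Friedlander-etale-homotopy}, but must be assembled carefully over the zig-zag of comparison equivalences.
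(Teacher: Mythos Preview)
Your proposal is correct and follows essentially the same route as the paper: construct $\tau:Z\to BU(N)_l$ from Joshua's $\nu_X$ via Diagram~(\ref{Bundle-Diagram}), invoke Proposition~\ref{prop: structure iff lift} for part~(1), and for parts~(2)--(3) compare classifying maps to $BU_l$ under base change. The paper packages part~(3) into a single explicit commutative diagram extending Diagram~(\ref{Bundle-Diagram}) to include $R$, but the content is the same as what you describe.

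One remark: your final paragraph overstates the difficulty. To identify the two formal manifold structures you only need the two classifying maps $Z_l\to BU_l$ to be homotopic, since by Proposition~\ref{prop: structure iff lift} the structure is determined by the $BSTOP_l$-lift of $\nu_Z$. The naturality of Joshua's Thom--Pontryagin collapse $S^{2N}\to M(\nu_X)_{\et,l}^{\wedge}$ under base change was needed earlier (to show $\tau$ is a mod-$l$ Spivak fibration), but for the comparison in (2) and (3) themselves the paper just uses that the algebraic normal bundle $\nu_X$ lifts to $\nu_{\widetilde{X}_R}$ over $R$ and that its analytification is the stable normal bundle of $\widetilde{X}_{\CC}^{an}$; the commutativity of the bundle-classifying diagram over the zig-zag then follows directly from functoriality of \'etale homotopy types applied to the maps $X\to[*/G_k]$, $\widetilde{X}_R\to[*/G_R]$, etc. No separate assembly of the collapse maps is required at this stage.
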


\begin{proof}
(1) and (2). The case for $p>0$ follows from Proposition \ref{prop: structure iff lift}, Fact \ref{construction-of-algebraic-normal-bundle}, and the fact that the Spivak fibration in this case comes from a vector bundle $\nu_X$.
The case $p=0$ is similar. 
Namely, the map $Z\rightarrow \holim X^{\wedge}_{\et,l}\simeq \holim(X_{\CC})^{\wedge}_{\et,l}\simeq \holim (X_{\CC}^{an})^{\wedge}_l=(X_{\CC}^{an})_l\xrightarrow{\nu_{X^{an}_{\CC}}} BU_l$ induces an $l$-adic formal manifold structure on $Z$.

(3). By construction (Paragraphs above Fact \ref{construction-of-algebraic-normal-bundle}), the ``normal bundle'' $\nu_X$ over $X$ also has a lifting $\nu_{\widetilde{X}_R}$ over $R$. The analytification $(\nu_{\widetilde{X}_{\CC}})^{an}$ is the stable normal of $\widetilde{X}_{\CC}^{an}$. By the diagram \ref{Bundle-Diagram}, there is a commutative diagram:
\[
\begin{tikzcd}
X^{\wedge}_{\et,l} \arrow{d} \arrow{r} & (\widetilde{X}_{R})^{\wedge}_{\et,l} \arrow{d} &(\widetilde{X}_{\CC})^{\wedge}_{\et,l} \arrow{l} \arrow{d} \arrow{r} & (\widetilde{X}_{\CC}^{an})^{\wedge}_l \arrow{d} \\
\left [ */G_k\right ]^{\wedge}_{\et,l} \arrow{r} \arrow{d} & \left [ */G_R\right ]^{\wedge}_{\et,l} \arrow{d} & \left [ */G_{\CC}\right ]^{\wedge}_{\et,l} \arrow{l} \arrow{r} \arrow{ld} & BGL(N,\CC)_l\simeq BU(N)_l \\
\left [ */G_{\overline{\FF}_p}\right ]^{\wedge}_{\et,l} \arrow{r} & \left [ */G_{W(\overline{\FF}_p)}\right ]^{\wedge}_{\et,l} & & 
\end{tikzcd}
\]
This shows that the map $Z_l\simeq \holim (\widetilde{X}_{\CC}^{an})^{\wedge}_l=(\widetilde{X}_{\CC}^{an})_l\xrightarrow{(\nu_{\widetilde{X}})^{an}_{\CC}} BU_l$ is homotopic to $Z_l\simeq \holim X^{\wedge}_{\et,l}\xrightarrow{\nu_X} BU_l$.
\end{proof}

\begin{remark}
The $l$-adic formal manifold structure on a positive characteristic smooth, projective variety $X$ on  gives rise to an invariant, i.e., the $KO$-theory orientation $\Delta_X$, and/or another invariant, i.e. the $\widehat{\ZZ}_2$-coefficient characteristic class $L_X$.
\end{remark}

For a real vector bundle, the Hirzebruch $L$-genus $L$ is a combination of Pontryagin classes with coefficient in $\ZZ[\frac{1}{3},\frac{1}{5},...]=\ZZ_{(2)}$: $L=L(p_1,p_2,...)$. If this real vector bundle is the realization of a complex bundle, then the $L$-genus is a combination of Chern classes with coefficient in $\QQ$: 
\[
L=\widetilde{L}(c_1,c_2,...)
\]

\begin{proposition}
Let $X$ be a connected, smooth, projective $m$-dimensional variety over $k$. Assume that the characteristic of $k$ is not $2$. Then $L_X=\widetilde{L}(c_1,c_2,...)\in H^{4*}_{\et}(X;\QQ_2)$, where $c_1+c_2+...$ is the Chern class of the tangent bundle of $X$.  
\end{proposition}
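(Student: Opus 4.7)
The plan is to reduce the identity to the classical Hirzebruch formula for the $L$-class of a complex vector bundle, exploiting that the $2$-adic formal manifold structure on $X$ is induced by a genuine algebraic complex vector bundle.

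First, by Theorem \ref{existence-of-formal-manifold-structure} and Diagram \eqref{Bundle-Diagram}, the structure on the finite CW model $Z$ of $X^{\wedge}_{\et,2}$ is classified through the factorization
\[
Z \;\to\; \left[ */G_k\right]^{\wedge}_{\et,2} \;\simeq\; BU(N)_2^{\wedge} \;\to\; BSTOP(N)_2^{\wedge} \;\to\; BSG(N)_2^{\wedge},
\]
so $l^G(\nu_Z)\in H^{4*}(Z;\ZZ/8)$ is the pullback of the universal $l^G$-class via the algebraic classifying map into $BU(N)_2^{\wedge}$. By \cite{Brumfiel-Morgan}, the restriction of the universal $l^G$-class from $BSG$ to $BSTOP$ agrees with the mod-$8$ reduction of the rational Hirzebruch $L$-class of the underlying real bundle; a canonical $\widehat{\ZZ}_2$-valued lift $L_X$ is therefore obtained by pulling back the Hirzebruch $L$-class from $H^{4*}(BU;\ZZ_{(2)})$ into $H^{4*}(Z;\ZZ_{(2)})\subset H^{4*}(Z;\widehat{\ZZ}_2)\cong H^{4*}_{\et}(X;\widehat{\ZZ}_2)$.

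Second, the splitting principle combined with Hirzebruch's formula shows that for a complex vector bundle $E$ with Chern roots $\{a_i\}$ one has
\[
L(E_{\RR}) \;=\; \prod_i \frac{a_i}{\tanh a_i} \;=\; \widetilde L\bigl(c_1(E),c_2(E),\ldots\bigr),
\]
a universal polynomial in Chern classes with denominators coprime to $2$. Applied with $E$ the (stable) tangent bundle $T_X$, whose \'etale Chern classes match the topological Chern classes of its analytification under the identifications of Diagram \eqref{Bundle-Diagram}, this yields the asserted equality in $H^{4*}_{\et}(X;\QQ_2)$.

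The main obstacle is the tangent-versus-normal bookkeeping in the Brumfiel-Morgan formalism: Joshua's algebraic bundle $\nu_X$ is stably the $K$-theoretic inverse of $T_X$, which follows from the analytic identification of $\alpha^{an}$ as the stable normal of $\PP^n(\CC)$ in a large sphere together with the sequence $T_X + N_{X/\PP^n} = T_{\PP^n}|_X$. Using the multiplicativity $L(T_X)\cdot L(\nu_X) = 1$ of the $L$-genus, together with the Hirzebruch signature formula $\sigma(X)=\langle L(T_X),[X]\rangle$, one verifies that the convention of the paper singles out $L(T_X)$ as the natural $\widehat{\ZZ}_2$-lift, matching the polynomial $\widetilde L(c_1(T_X),c_2(T_X),\ldots)$ as claimed.
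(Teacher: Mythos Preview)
Your argument is correct and follows the same idea as the paper's proof, which is the one-liner ``This directly follows from the map $X^{\wedge}_{\et,2}\xrightarrow{TX}[*/G_k]^{\wedge}_{\et,2}\simeq BU^{\wedge}_2\rightarrow BSO^{\wedge}_2$.'' You have simply unpacked that line: the factorization through $BU^{\wedge}_2$ via Diagram~\eqref{Bundle-Diagram}, the Brumfiel--Morgan identification of $l^G|_{BSTOP}$ with the mod-$8$ Hirzebruch class, the splitting-principle expression $\prod a_i/\tanh a_i$ in Chern classes with $\ZZ_{(2)}$-coefficients, and the tangent/normal bookkeeping are exactly what makes that one line work. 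The paper does not spell any of this out, so your added care (especially the third paragraph) is not a different route but a justified expansion of the same argument.
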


\begin{proof}
This directly follows from the map $X^{\wedge}_{\et,2}\xrightarrow{TX} \left[ */G_k\right]^{\wedge}_{\et,l}\simeq BU^{\wedge}_l\rightarrow BSO^{\wedge}_l$.
\end{proof}

\begin{proposition}
Let $X$ be a connected, smooth, projective $m$-dimensional variety over $k$. Assume that $X$ admits a $l$-adic formal manifold structure $\Delta_X$ for $l\neq 2$. Let $l$ be an odd prime number away from the characteristic of $k$. Then $ph (\Delta_X)=\widetilde{L}(c_1,c_2,...)\in H^{4*}_{\et}(X;\QQ_l)$, where $ph$ is the Pontryagin character of real bundles and $c_1+c_2+...$ is the Chern class of the tangent bundle of $X$.  
\end{proposition}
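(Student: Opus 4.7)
The plan is to reduce the proposition to a universal Pontryagin character computation on the classifying space $BU^{\wedge}_l$, combined with the stable identification of Joshua's normal bundle $\nu_X$ with $-TX$ in $K$-theory, which holds because $\nu_X$ is constructed as an algebraic normal bundle of the embedding $X \hookrightarrow \PP^n_k \hookrightarrow \alpha$ with $\alpha$ built from the Euler sequence.

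First I would invoke Theorem \ref{existence-of-formal-manifold-structure} and the classifying-space diagram \eqref{Bundle-Diagram} to see that, under the equivalence $Z \simeq X^{\wedge}_{\et,l}$, the $l$-adic formal manifold structure $\Delta_X$ is obtained by pulling back the universal $KO_l$-Thom class along the composition
\[
    Z \simeq X^{\wedge}_{\et,l} \xrightarrow{\;\nu_X\;} [*/G_k]^{\wedge}_{\et,l} \simeq BU(N)^{\wedge}_l \longrightarrow BSO(2N)^{\wedge}_l.
\]
At an odd prime $l$, every oriented real bundle carries a canonical $KO_l$-orientation (since $BSpin^{\wedge}_l \simeq BSO^{\wedge}_l$), so this pullback is unambiguous. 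This reduces the computation of $ph(\Delta_X)$ to evaluating a universal characteristic class of the bundle $\nu_X$.

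Second, I would use the classical Pontryagin-character formula for the $KO_l$-Thom class of a complex vector bundle at odd primes. Since $l$ is odd, $KO_l$ is a direct summand of $KU_l$, and the Pontryagin character factors as $ph = ch \circ c$ through complexification. A direct calculation on $BU(N)^{\wedge}_l$ using the Chern-character formula $ch(\Delta^{K}_\xi) = U_H \cdot Td(\xi)^{-1}$ for complex Thom classes and the identity $\prod_i \frac{\alpha_i}{\tanh \alpha_i}$ for the $L$-genus in Chern roots yields, for any complex vector bundle $\xi$ over a finite CW complex,
\[
    \mathrm{Thom}^{-1}\!\bigl(ph(\Delta^{KO_l}_\xi)\bigr) \;=\; \widetilde{L}(c_1(\xi), c_2(\xi), \dots)^{-1} \;=\; L(\xi_{\RR})^{-1} \;\in\; H^{4*}(-;\QQ_l).
\]
Applied to $\xi = \nu_X$, this gives $ph(\Delta_X) = \widetilde{L}(c_1(\nu_X), c_2(\nu_X), \dots)^{-1}$.

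Third, I would identify $\nu_X$ stably with $-TX$ in $K(X)$. By construction, $\nu_X$ is the normal bundle of the composite embedding $X \hookrightarrow \PP^n_k \hookrightarrow \alpha$, where $\alpha$ is the total space of $(T_{\PP^n}(-1))^{\oplus n+1}$. The Euler sequence forces $\alpha|_{\PP^n} \oplus \mathcal{O} = (\mathcal{O}(1))^{\oplus (n+1)^2}$ and, together with $T\PP^n \oplus \mathcal{O} = \mathcal{O}(1)^{\oplus n+1}$, yields $\alpha|_{\PP^n} + T\PP^n = 0$ in $K(\PP^n)$. Restricting to $X$ and using $\nu_X = \alpha|_X + T\PP^n|_X - TX$ in $K(X)$ gives $\nu_X + TX = 0$, hence $c(\nu_X) = c(TX)^{-1}$ and, by multiplicativity of the $L$-genus, $\widetilde{L}(c_\bullet(\nu_X)) = \widetilde{L}(c_\bullet(TX))^{-1}$. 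Substituting into the previous step,
\[
    ph(\Delta_X) \;=\; \widetilde{L}(c_1(\nu_X), c_2(\nu_X), \dots)^{-1} \;=\; \widetilde{L}(c_1(TX), c_2(TX), \dots) \;\in\; H^{4*}_{\et}(X;\QQ_l),
\]
which is the desired equality.

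The main obstacle will be pinning down the universal Pontryagin-character formula in the second step at the correct level of precision, in particular verifying that the $l$-adic completion of the classifying map is compatible with the topological $KO_l$-orientation and that the Chern-character calculation carries over from complex $K$-theory (via the splitting at odd $l$). Once this is established, the remaining steps are routine consequences of multiplicativity and of Joshua's algebraic identification of $\nu_X$ with the stable inverse of the tangent bundle.
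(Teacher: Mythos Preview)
Your approach is essentially the paper's: both reduce to the universal Pontryagin-character computation for the Sullivan $KO_l$-orientation of a real vector bundle (Sullivan's MIT notes p.~203, Madsen--Milgram p.~84), pulled back along $X^{\wedge}_{\et,l} \to BU^{\wedge}_l \to BSO^{\wedge}_l$. The paper's proof is a single sentence citing exactly this; you have unpacked the normal-versus-tangent passage explicitly rather than leaving it implicit.

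One correction is needed in your step~3: the identity $\alpha|_{\PP^n} + T\PP^n = 0$ fails in complex $K$-theory. From the Euler sequence one finds $\alpha = (n+1)^2 - (n+1)\mathcal{O}(-1)$ and $T\PP^n = (n+1)\mathcal{O}(1) - 1$ in $K(\PP^n)$, so
\[
\alpha + T\PP^n \;=\; (n+1)\bigl(\mathcal{O}(1) - \mathcal{O}(-1)\bigr) + (n+1)^2 - 1,
\]
which is nonzero in $K(\PP^n)$. (Your stated formula $\alpha \oplus \mathcal{O} = \mathcal{O}(1)^{\oplus(n+1)^2}$ is also not what the Euler sequence gives.) What \emph{is} true, and what Joshua actually establishes, is that $\alpha_\RR$ is stably the real normal bundle of $\PP^n(\CC)$ in a sphere; equivalently $(\alpha + T\PP^n)_\RR$ is stably trivial, since $\mathcal{O}(1)_\RR \cong \mathcal{O}(-1)_\RR$. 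That suffices for your argument: the $L$-genus and the Pontryagin character depend only on the underlying real bundle, so the multiplicativity $\widetilde{L}(c_\bullet(\nu_X)) \cdot \widetilde{L}(c_\bullet(TX)) = 1$ still holds and your conclusion goes through.
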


\begin{proof}
This directly follows from the map $X^{\wedge}_{\et,2}\xrightarrow{TX} \left[ */G_k\right]^{\wedge}_{\et,l}\simeq BU^{\wedge}_l\rightarrow BSO^{\wedge}_l$ and the computation of the odd-prime local universal orientation for real vector bundles (see \cite{SullivanMITnotes}*{p.~203} or \cite{Madsen-Milgram-surgery}*{p.~84, 4.14}).
\end{proof}

\section{$l$-adic Formal Manifold Structures and Galois Symmetry}

In this section, we study various Galois symmetries on $l$-adic formal manifolds.

\subsection{$l$-adic Normal Structure Set}\;

Recall that the structure set $\mathbf{S}^{TOP}(M)$ of a simply-connected closed topological manifold $M$ consists of equivalence classes of homotopy equivalences $N\rightarrow M$ with $N$ a closed manifold, where $N\rightarrow M$ is equivalent to $N'\rightarrow M$  if there exists a homeomorphism $N\rightarrow N'$ such that the following diagram commutes up to homotopy.
\[
\begin{tikzcd}
    N \arrow[r] \arrow[d] & M \\
    N' \arrow[ru] &
\end{tikzcd}
\]

It is technically difficult to directly $l$-adic complete this structure set. Instead, recall the surgery exact sequence of based sets (\cite{Hauptvermutung-book}*{p.~82, Theorem 3}):
\[
0\rightarrow \mathbf{S}^{TOP}(M)\rightarrow [M,G/TOP]\rightarrow P_m,
\]
where $G/TOP$ is the fiber of $BTOP\rightarrow BG$ and $P_m$ is the simply-connected surgery obstruction group. Our idea is to take the $l$-adic completion of ``normal structure set'' $[M,G/TOP]$. 

The local homotopy type of the surgery space $G/TOP$ are known (see \cite{Kirby-Siebenmann}*{p.~329, 15.3}).

\begin{fact}\label{local-information-of-G/TOP}
\begin{enumerate}
    \item $(G/TOP)_{(2)}\simeq \prod_{k>0}(K(\ZZ_{(2)},4k)\times K(\ZZ/2,4k-2))$;
    \item If $l$ is odd, then $(G/TOP)_{(l)}\simeq BO_{(l)}$.
\end{enumerate}
\end{fact}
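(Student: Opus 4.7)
The plan is to analyze $G/TOP$ via its Postnikov tower one prime at a time, using the identification $\pi_n(G/TOP)\cong L_n(\ZZ)$ from simply-connected surgery theory.

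First, I would record the homotopy groups. The surgery fibration $G/TOP\to BTOP\to BG$, together with Browder-Novikov-Sullivan realization of normal invariants, identifies $\pi_n(G/TOP)$ with the simply-connected surgery obstruction groups $L_n(\ZZ)$: these are $4$-periodic with $L_{4k}(\ZZ)=\ZZ$ (signature modulo $8$), $L_{4k+2}(\ZZ)=\ZZ/2$ (Arf/Kervaire invariant), and $L_{\mathrm{odd}}(\ZZ)=0$. This already matches the homotopy groups of the two targets in the statement on the nose.

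For item (2), at an odd prime $l$ the Arf-invariant summands vanish, so $(G/TOP)_{(l)}$ and $BO_{(l)}$ have isomorphic homotopy groups by Bott periodicity. I would promote this to a homotopy equivalence by exhibiting Sullivan's canonical $KO$-orientation of the universal surgery bundle at odd primes as a map $G/TOP\to BO$, or equivalently by using the Adams-Sullivan classifying map for the universal $ko$-theory Thom class. Checking it is an equivalence reduces to a low-degree computation on $\pi_*\otimes\ZZ_{(l)}$.

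For item (1), the strategy is to show the 2-local Postnikov tower of $G/TOP$ splits into its Eilenberg-MacLane factors. I would construct explicit detecting cohomology classes---Kervaire-type classes $\kappa_{4k-2}\in H^{4k-2}(G/TOP;\ZZ/2)$ and Sullivan-Morgan $L$-classes $\ell_{4k}\in H^{4k}(G/TOP;\ZZ_{(2)})$---and assemble them into a map $G/TOP\to\prod_{k>0}\bigl(K(\ZZ_{(2)},4k)\times K(\ZZ/2,4k-2)\bigr)$. The main obstacle is checking that all Postnikov $k$-invariants vanish $2$-locally: this requires computing the relevant Steenrod and Bockstein operations on each Postnikov stage and verifying that each candidate $k$-invariant is trivial. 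This Steenrod-algebraic bookkeeping is the technical core of Sullivan's analysis at the prime $2$, and is what makes this case substantially harder than the odd-primary one. Once the candidate map is shown to induce an isomorphism on $\pi_*\otimes\ZZ_{(2)}$ and on $\pi_*\otimes\ZZ/2$, the equivalence follows from the fact that both source and target are simple spaces with finitely generated homotopy groups in each degree.
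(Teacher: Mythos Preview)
The paper does not prove this statement at all: it is labeled as a \emph{Fact} and simply cited from Kirby--Siebenmann (p.~329, 15.3). So there is no ``paper's own proof'' to compare against.

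Your outline is a reasonable sketch of the classical Sullivan argument, and the ingredients you name (the identification $\pi_*(G/TOP)\cong L_*(\ZZ)$, the $KO$-orientation at odd primes, and the Kervaire/$L$-class splitting at the prime $2$) are indeed the ones used in the literature. If you were actually asked to supply a proof, the main technical gap in your sketch is the $2$-local step: you correctly flag the vanishing of $k$-invariants as the heart of the matter, but you do not indicate how you would carry this out. In practice this is done either by Sullivan's original characteristic-variety argument or by the Kirby--Siebenmann comparison with $G/PL$ (where the $2$-local structure of $G/PL$ is already known from Sullivan's work and one uses that $TOP/PL\simeq K(\ZZ/2,3)$). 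Since the paper only quotes the result, none of this is required here.
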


This motivates the following definitions.

\begin{definition}\label{definition-of-l-adic-homotopy-manifold-structures}
Let $Z$ be a simply-connected, mod-$l$ Poincar\'e finite CW complex of formal dimension $m\geq 5$, with an $l$-adic formal manifold structure.
\begin{enumerate}
\item if $l=2$, a \textbf{$2$-adic normal homotopy manifold structure} over $Z$ is a pair $(L,K)$ of positive graded classes $L\in H^{4*}(Z;\widehat{\ZZ}_2)$ and $K\in H^{4*-2}(Z;\ZZ/2)$.
    \item If $l$ is odd, an \textbf{$l$-adic normal homotopy manifold structure} on $Z$ is an element $\phi\in 1+\widetilde{KO_l^\wedge}(Z)$. The abelian group $1+\widetilde{KO_l^\wedge}(Z)$ is defined by the classifying space $BO^{\wedge}_l\times \{1\}\subset BO^{\wedge}_l\times \ZZ$. 
\end{enumerate}
\end{definition}

\begin{remark}
Note that the $H$-space structure on $BO^{\wedge}_l\times \{1\}$ is induced by tensor product of vector bundles.
\end{remark}

\begin{definition}\label{definition-of-l-adic-normal-structure-set}
The \textbf{$l$-adic normal structure set} $\mathbf{S}(Z)^\wedge_{N,l}$ of $Z$ is the set of all $l$-adic normal homotopy manifold structures over $Z$. 
\end{definition}

\begin{definition}
A map $g:Z_l\rightarrow Z'_l$ of $l$-completions of simply-connected, mod-$l$ Poincar\'e finite CW complexes is said to have \textbf{degree one} if $g_*\circ (f_l)_*[Z]_{\ZZ/l}= (f'_l)_*[Z']_{\ZZ/l}$, where $f_l:Z\rightarrow Z_l$ and $f'_l:Z'\rightarrow Z'_l$ are $l$-completions.   
\end{definition}

The following result implies that our definition of $l$-adic normal structure set contains the examples we want, i.e., an $l$-adic homotopy equivalence of $l$-adic formal manifolds is an $l$-adic normal homotopy manifold structure.

\begin{proposition}\label{example-of-homotopy-manifold-structure}
Let $Z$ and $Z'$ be simply-connected mod-$l$ Poincar\'e finite CW complexes of formal dimension $m\geq 5$ with $l$-adic formal manifold structures. If $g:Z'_l\rightarrow Z_l$ is a degree $1$ homotopy equivalence, then $g$ gives rise to an $l$-adic normal homotopy manifold structure on $Z$. 
More explicitly,
\begin{enumerate}
    \item if $l$ is odd, then $\phi\cdot \Delta_Z=(g^{-1})^* \Delta_{Z'}$, where $\phi$ is the $l$-adic normal homotopy manifold structure on $Z$ induced by $g$ and $\Delta_Z,\Delta_{Z'}$ correspond to the $l$-adic formal manifold structures of $Z,Z'$ respectively;
    \item if $l=2$, then $(1+8L)\cdot L_Z=(g^{-1})^* L_{Z'}$, where $L$ is part of the $2$-adic normal homotopy manifold structure on $Z$ induced by $g$ and $L_Z,L_{Z'}$ correspond to part of the $2$-adic formal manifold structures of $Z,Z'$ respectively.
\end{enumerate}
\end{proposition}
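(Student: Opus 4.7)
The plan is to exploit Theorem \ref{uniqueness-of-l-adic-Spivak}, the uniqueness of mod-$l$ Spivak normal spherical fibrations, to place the formal manifold data on $Z$ and the $g^{-1}$-pullback of the formal manifold data on $Z'$ on a common Spivak fibration, and then compare. The first step is to show that $g^*\nu_Z$, over $Z'_l$, is itself a mod-$l$ Spivak normal spherical fibration with respect to $[Z']_{\ZZ/l}$. Pulling back $\nu_Z$ along the homotopy equivalence $g\colon Z'_l\to Z_l$ yields a Thom-space equivalence $M(g^*\nu_Z)_l\to M(\nu_Z)_l$ covering $g$, through which the Thom collapse $\phi_Z\colon S^{N+m}\to M(\nu_Z)_l$ lifts to a map $\phi'\colon S^{N+m}\to M(g^*\nu_Z)_l$. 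Since the Thom class pulls back to a Thom class and $g_*[Z']_{\ZZ/l}=[Z]_{\ZZ/l}$, the projection formula gives $\phi'_*[S^{N+m}]_{\ZZ/l}\cap U_{g^*\nu_Z}=[Z']_{\ZZ/l}$. Hence $(g^*\nu_Z,\phi')$ is a mod-$l$ Spivak normal spherical fibration on $Z'$, and Theorem \ref{uniqueness-of-l-adic-Spivak} identifies it with $(\nu_{Z'},\phi_{Z'})$.

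For case (1) with $l$ odd, this identification makes $g^*\Delta_Z$ and $\Delta_{Z'}$ into two $KO_l^\wedge$-theory orientations of the same mod-$l$ Spivak normal fibration on $Z'$. By the $l$-complete $KO$-theory Thom isomorphism, the set of such orientations is a torsor over $KO_l^\wedge(Z')^{\times}$, and any two orientations that restrict to the same generator on each fiber differ by multiplication by an element of $1+\widetilde{KO_l^\wedge}(Z')$. Transporting this ratio back along $g^{-1}$ produces $\phi\in 1+\widetilde{KO_l^\wedge}(Z)$ with $\phi\cdot\Delta_Z=(g^{-1})^*\Delta_{Z'}$.

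For case (2) with $l=2$, the class $l^G(\nu_Z)\in H^{4*}(Z;\ZZ/8)$ depends only on the underlying spherical fibration, so by the first step both $L_Z$ and $(g^{-1})^*L_{Z'}$ are $\widehat{\ZZ}_2$-lifts of $l^G(\nu_Z)$. Since the total $L$-type class has augmentation $1$ in degree zero, each is a unit in the graded ring $H^*(Z;\widehat{\ZZ}_2)$, and their multiplicative ratio reduces to $1$ modulo $8$; it therefore has the form $1+8L$ for a unique $L\in H^{4*}(Z;\widehat{\ZZ}_2)$ concentrated in positive degrees, giving $(1+8L)\cdot L_Z=(g^{-1})^*L_{Z'}$. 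Analogously, the two chosen null-homotopies of $k^G(\nu_Z)$ differ by an element of $[Z,\Omega\prod_{k}K(\ZZ/2,4k+3)]=H^{4*-2}(Z;\ZZ/2)$, yielding $K$.

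The main technical difficulty is the first step: lifting the Thom collapse through the equivalence covering $g$ so that the precise normalization of the Spivak condition is preserved, together with a careful handling of $l$-completion at the level of Thom spaces. Once this identification is in place, the remaining comparisons are standard torsor computations via the $l$-complete Thom isomorphism and the elementary fact that two $\widehat{\ZZ}_2$-lifts of a mod-$8$ cohomology class, both of augmentation one, differ multiplicatively by a unit congruent to $1$ modulo $8$.
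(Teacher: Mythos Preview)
Your proposal is correct and follows essentially the same strategy as the paper. Both arguments first use the uniqueness of the mod-$l$ Spivak normal fibration (Theorem \ref{uniqueness-of-l-adic-Spivak}) to identify $\nu_{Z'}$ with $g^*\nu_Z$, and then compare the two resulting $TOP$-liftings of $\nu_Z$. The paper phrases this comparison as a difference map $Z_l\to (G/TOP)_l$ and then invokes Fact \ref{local-information-of-G/TOP} together with the cited surgery references to read off the explicit formulas; you instead unpack this directly at the level of $KO_l^\wedge$-orientations (odd $l$) and lifts of $l^G$ and null-homotopies of $k^G$ ($l=2$), which amounts to the same computation since the equivalences $(G/TOP)_l\simeq BO_l$ and $(G/TOP)_2\simeq \prod K(\widehat{\ZZ}_2,4k)\times K(\ZZ/2,4k-2)$ are given precisely by these characteristic data. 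Your version is more self-contained, while the paper's version makes the connection to the classical surgery exact sequence more visible.
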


\begin{proof}
Let $\tau:Z_l\rightarrow BTOP_l$ and $\tau':Z'_l\rightarrow BTOP_l$ be the $l$-adic formal manifold structures on $Z,Z'$ respectively. Then $\tau$ and $\tau\circ g$ are two $l$-adic $TOP$-bundle liftings of the $l$-adic Spivak normal spherical fibrations $\nu_Z$ of $Z$. This corresponds to a map $Z_l\rightarrow (G/TOP)_l$. Then the rest follows from the surgery theory (see \cite{SullivanMITnotes}*{p.~218} and \cite{MOrgan-Sullivan-surgery}*{p.~543, the paragraph above Notes}) and Fact \ref{local-information-of-G/TOP}.
\end{proof}

\begin{corollary}\label{composition-of-manifolds}
Let $Z,Z',Z''$ be simply-connected, mod-$l$ Poincar\'e finite CW complexes of formal dimension $m\geq 5$ with $l$-adic formal manifold structures. Let $Z''_l\xrightarrow{g} Z'_l\xrightarrow{f} Z_l$ be degree $1$ homotopy equivalences.
\begin{enumerate}
    \item If $l$ is odd, let $\phi_f,\phi_{gf}$ be the $l$-adic normal homotopy manifold structures on $Z$ induced by $f$ and $gf$ respectively and let $\phi_g$ be the $l$-adic normal homotopy manifold structure on $Z'$ induced by $g$. Then $\phi_{gf}=\phi_f\cdot (f^{-1})^*\phi_{g}$.
    \item If $l=2$, let $(L_f,K_f),(L_{gf},K_{gf})$ be the $2$-adic normal homotopy manifold structures on $Z$ induced by $f$ and $gf$ respectively and let $(L_g,K_g)$ be the $l$-adic normal homotopy manifold structure on $Z'$ induced by $g$. Then $L_{gf}=L_{f}+(f^{-1})^*L_{g}+8(L_{f}\cdot (f^{-1})^*L_{g})$ and $K_{gf}=K_{f}+(f^{-1})^*K_{g}$.
\end{enumerate}
\end{corollary}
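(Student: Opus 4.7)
My plan is to deduce this corollary directly from Proposition \ref{example-of-homotopy-manifold-structure}, which provides the fundamental relation between an $l$-adic normal homotopy manifold structure induced by a degree one homotopy equivalence and the underlying formal manifold structures. The strategy is to apply that proposition three times---once each to $f$, $g$, and the composite $gf$---and then eliminate the formal manifold structures $\Delta_{Z''}$ (or $L_{Z''}$, $K_{Z''}$) by chaining the three relations together.

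For the odd prime case, I would write the three identities in the multiplicative group $1+\widetilde{KO_l^{\wedge}}$ provided by Proposition \ref{example-of-homotopy-manifold-structure}(1):
\[
\phi_f\cdot \Delta_Z=(f^{-1})^*\Delta_{Z'},\quad \phi_g\cdot \Delta_{Z'}=(g^{-1})^*\Delta_{Z''},\quad \phi_{gf}\cdot \Delta_Z=((gf)^{-1})^*\Delta_{Z''}.
\]
Then I would pull back the middle equation by $(f^{-1})^*$, which is a ring map on $KO$-theory, substitute the first equation on the right-hand side, and compare with the third (using functoriality $((gf)^{-1})^*=(f^{-1})^*(g^{-1})^*$) to obtain $\phi_{gf}=\phi_f\cdot(f^{-1})^*\phi_g$.

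For the prime $2$ case, I would argue the $L$-part and the $K$-part separately, using the splitting of $(G/TOP)_{(2)}$ recorded in Fact \ref{local-information-of-G/TOP}. The $L$-part is computed by the same chaining argument starting from the formulas $(1+8L_f)L_Z=(f^{-1})^*L_{Z'}$ and $(1+8L_g)L_{Z'}=(g^{-1})^*L_{Z''}$; applying $(f^{-1})^*$ to the second and substituting gives $(1+8L_{gf})L_Z=\bigl(1+8L_f+8(f^{-1})^*L_g+64L_f\cdot (f^{-1})^*L_g\bigr)L_Z$, from which the claimed formula $L_{gf}=L_f+(f^{-1})^*L_g+8L_f\cdot(f^{-1})^*L_g$ drops out after canceling $L_Z$ and dividing by $8$. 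For the $K$-part, since the $\ZZ/2$-factors in the splitting of $(G/TOP)_{(2)}$ form an additive $H$-group with no $8L$-type correction term (the mod-$2$ classes do not see the multiplicative twist present in $L$), the relation is simply $K_{gf}=K_f+(f^{-1})^*K_g$.

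The one step requiring a bit of care is the $K$-part at the prime $2$: one should check that, when one stacks two normal bordisms of formal manifolds, the $\ZZ/2$-characteristic class $K$ really adds without an interaction term with $L$. This follows from the fact that the $H$-space structure on $(G/TOP)_{(2)}$ inherited from surgery theory is a product of $K(\ZZ_{(2)},4k)$ and $K(\ZZ/2,4k-2)$ factors, and the interaction between these factors vanishes upon mod-$2$ reduction (see \cite{MOrgan-Sullivan-surgery}*{p.~543} used in the proof of Proposition \ref{example-of-homotopy-manifold-structure}). Beyond that, the corollary is pure algebraic manipulation and I expect no further obstacle.
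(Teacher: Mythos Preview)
Your proposal is correct and follows essentially the same approach as the paper: deduce the corollary from Proposition \ref{example-of-homotopy-manifold-structure} together with the $H$-space structure of $G/TOP$ (the paper cites \cite{Madsen-Milgram-surgery}*{Corollary 4.31 and 4.37} and \cite{MOrgan-Sullivan-surgery}*{Theorem 8.8} for the latter). Your explicit chaining of the three identities for $\phi$ and $L$ is exactly what the paper's one-line proof leaves implicit, and your handling of the $K$-part via the additive $H$-structure on the $K(\ZZ/2,4k-2)$ factors is precisely what those cited references supply.
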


\begin{proof}
This is a direct corollary of Proposition \ref{example-of-homotopy-manifold-structure} and the $H$-space structure of $G/TOP$ (see \cite{Madsen-Milgram-surgery}*{Corollary 4.31 and 4.37} and \cite{MOrgan-Sullivan-surgery}*{Theorem 8.8}).
\end{proof}

\subsection{Galois Symmetry}\;

Recall that for each prime $l$, there is an action of $\widehat{\ZZ}^{\times}_l$ on the $l$-complete complex $K$-theory and $l$-complete real $K$-theory induced by the Adams operations $\psi^{\sigma_l}$, where $\sigma_l\in \widehat{\ZZ}^{\times}_l$ (see \cite{SullivanMITnotes}*{p.~156}). 
The following fact follows from Sullivan's proof of the Adams conjecture in \cite{Sullivan-adams-conjecture} (or see \cite{Madsen-Milgram-surgery}*{p.~106, 5.13}):

\begin{fact}
The map $BU^{\wedge}_l\xrightarrow{\psi^{\sigma_l}-1} BU^{\wedge}_l$ canonically factors through $(G/U)^{\wedge}_l$. The same is true for $BO$ and $G/O$.     
\end{fact}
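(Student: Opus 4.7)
The content of this Fact is the $l$-adic Adams conjecture, and the plan is to prove it by the Friedlander / Quillen \'etale-homotopy strategy, using that Diagram \ref{Bundle-Diagram} already identifies $BU(N)^{\wedge}_l$ with the \'etale homotopy type of a positive-characteristic classifying stack. Via the fiber sequence $G/U \to BU \to BG$, the desired factorization is equivalent to the assertion that the composite $J\circ(\psi^{\sigma_l}-1)\colon BU^{\wedge}_l \to BG^{\wedge}_l$ is null-homotopic for every $\sigma_l\in\widehat{\ZZ}^{\times}_l$, where $J$ is the stable $J$-homomorphism; it suffices to produce a null-homotopy on each finite stratum $BU(N)^{\wedge}_l$ compatibly in $N$.

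I would first treat the case $\sigma_l=p$ for a single prime $p\neq l$. Apply Diagram \ref{Bundle-Diagram} with $G=GL(N)$ over $\overline{\FF}_p$ to identify $BU(N)^{\wedge}_l$ with $[*/GL(N)_{\overline{\FF}_p}]^{\wedge}_{\et,l}$. The geometric Frobenius $\varphi_p\in\Gal(\overline{\FF}_p/\FF_p)$ acts on this \'etale homotopy type, and the standard calculation---Galois multiplies the degree-$2i$ \'etale Chern class by $p^i$---identifies $\varphi_p$ with the $l$-adic Adams operation $\psi^p$ on $BU(N)^{\wedge}_l$.

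The heart of the argument is the claim $J_N\circ \varphi_p \simeq J_N$ in $[BU(N)^{\wedge}_l, BSG(2N)^{\wedge}_l]$, where $J_N$ classifies the $l$-adic sphere fibration associated to the tautological bundle (cf.\ Lemma \ref{completion-lemma-for-spherical-fibration}). Since $\varphi_p$ lifts from the base to the total space of the tautological bundle, on each fiber it restricts to the geometric Frobenius on $(\mathbb{A}^N-0)_{\overline{\FF}_p}$; because the \'etale homotopy type of $(\mathbb{A}^N-0)_{\overline{\FF}_p}$ is $l$-adically $(S^{2N-1})^{\wedge}_l$ and the Frobenius acts on its top $\widehat{\ZZ}_l$-cohomology by multiplication by $p^N$, which is a unit for $l\neq p$, this fiberwise map is an $l$-adic weak equivalence. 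Consequently the Frobenius pullback of the sphere fibration is $l$-adic fiber homotopy equivalent to the original, and so $J_N\circ \varphi_p \simeq J_N$, i.e., $J_N\circ (\psi^p - 1) \simeq *$.

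To extend to arbitrary $\sigma_l\in\widehat{\ZZ}^{\times}_l$, one uses that by Dirichlet's theorem the primes $p\neq l$ are dense in $\widehat{\ZZ}^{\times}_l$, and the assignment $\sigma\mapsto [J\circ(\psi^{\sigma}-1)]$ is continuous in the profinite topology on $[BU^{\wedge}_l, BSG^{\wedge}_l]$ by continuity of the Adams operations on $l$-completed $K$-theory; this transfers the null-homotopy to every $\sigma_l$. The statement for $BO$ and $G/O$ is identical upon replacing $GL(N)$ with $O(N)$ in Diagram \ref{Bundle-Diagram} (choosing $p\neq 2,l$ so the orthogonal group behaves well). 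The main obstacle is the fiberwise weak-equivalence step: verifying that the \'etale-homotopic Frobenius on $(\mathbb{A}^N-0)^{\wedge}_l$ is invertible after $l$-completion is the geometric core of the Adams conjecture, and careful bookkeeping is needed to match this total-space equivariance with the identification $\varphi_p=\psi^p$ on the base.
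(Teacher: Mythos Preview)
The paper does not supply a proof of this Fact: it is stated with a citation to Sullivan's proof of the Adams conjecture \cite{Sullivan-adams-conjecture} and to \cite{Madsen-Milgram-surgery}*{p.~106, 5.13}. Your sketch is precisely an outline of the Quillen--Friedlander \'etale-homotopy argument (closely intertwined with Sullivan's), so in spirit you are reconstructing the very reference the paper invokes rather than offering an alternative route.

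Two points deserve tightening. First, you reduce ``canonically factors through $(G/U)^{\wedge}_l$'' to ``$J\circ(\psi^{\sigma_l}-1)$ is null-homotopic,'' but these are not equivalent: a lift along the fiber sequence $G/U\to BU\to BG$ depends on a \emph{choice} of null-homotopy, and different choices differ by maps into $\Omega BG\simeq G$. The canonical lift is the one coming from the explicit fiberwise Frobenius equivalence you construct, so you should say this produces the factorization directly rather than passing through an existence statement. Second, the density/continuity step is the most delicate part and you wave at it: one must argue either that the full profinite Galois group $\Gal(\overline{\QQ}/\QQ)$ (or $\widehat{\ZZ}\cong\Gal(\overline{\FF}_p)$) acts compatibly on the universal sphere fibration so that the null-homotopies for varying $p$ assemble, or that the relevant mapping set is genuinely profinite so that continuity plus density suffices. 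Sullivan handles this by working with the Galois action on $[*/GL(N)_{\overline{\QQ}}]^{\wedge}_{\et}$ globally; your ``one prime at a time then pass to the limit'' formulation needs that coherence made explicit. The orthogonal case over $\overline{\FF}_p$ with $p$ odd is fine as you say, though one typically routes through $BU$ via complexification rather than redoing the argument for $O(N)$.
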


When $l=2$, there is a composition of maps $f_{\sigma_2}:BU^{\wedge}_2\rightarrow (G/U)^{\wedge}_2\rightarrow (G/TOP)^{\wedge}_2$. Recall that there are characteristic classes $K^q\in H^{4*+2}((G/TOP)^{\wedge}_2;\ZZ/2)$ for the Kervaire surgery obstructions (see \cite{Hauptvermutung-book}*{p.~88, Corollary 1}). Let $\widetilde{K}^{\sigma_2}=f^*_{\sigma_2}K^q$.

Since $\widetilde{K}^{\sigma_2}$ is a combination of Stiefel-Whitney classes, this combination defines a characteristic class $K^{\sigma_2}$ in $H^{4*+2}(BSG_2;\ZZ/2)$.

\begin{lemma}
Let $\sigma_2,\tau_2\in \widehat{\ZZ}^{\times}_2$, $K^{\sigma_2\tau_2}=K^{\tau_2}+K^{\sigma_2}$.
\end{lemma}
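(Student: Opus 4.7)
The plan is to reduce the identity on $BSG_2$ to an identity between $\widetilde K^{\sigma_2},\widetilde K^{\tau_2},\widetilde K^{\sigma_2\tau_2}$ in $H^{4*+2}(BU^{\wedge}_2;\ZZ/2)$, and then use that by construction $K^{\sigma_2}$ is the Stiefel--Whitney polynomial expression of $\widetilde K^{\sigma_2}$ transplanted onto $BSG_2$. Concretely, if we can show
\[
\widetilde K^{\sigma_2\tau_2}=\widetilde K^{\sigma_2}+\widetilde K^{\tau_2}\in H^{4*+2}(BU^{\wedge}_2;\ZZ/2),
\]
then subtracting the Stiefel--Whitney polynomials representing the two sides gives the zero polynomial in Stiefel--Whitney classes, whence the corresponding relation holds on $BSG_2$.

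To establish the identity on $BU^{\wedge}_2$, I start from the composition rule $\psi^{\sigma_2\tau_2}=\psi^{\sigma_2}\circ\psi^{\tau_2}$ together with additivity of Adams operations under the Whitney-sum $H$-space structure on $BU^{\wedge}_2$, which yields
\[
\psi^{\sigma_2\tau_2}-1=(\psi^{\sigma_2}-1)+\psi^{\sigma_2}\circ(\psi^{\tau_2}-1).
\]
Sullivan's proof of the Adams conjecture factors each $\psi^{\sigma_2}-1$ canonically through $(G/U)^{\wedge}_2$, and this factorization is natural under the additive $H$-space structure. Composing with $(G/U)^{\wedge}_2\to(G/TOP)^{\wedge}_2$, I obtain the identity
\[
f_{\sigma_2\tau_2}=f_{\sigma_2}+\psi^{\sigma_2}\circ f_{\tau_2}\quad\text{in}\quad [BU^{\wedge}_2,(G/TOP)^{\wedge}_2].
\]

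Next, I pull back $K^q$. By Fact \ref{local-information-of-G/TOP}, $(G/TOP)^{\wedge}_2$ splits as a product of Eilenberg--MacLane spaces, with $K^q$ corresponding to the fundamental class of the $K(\ZZ/2,4k+2)$ factors; in particular $K^q$ is primitive for the additive $H$-space structure. Pulling back therefore gives
\[
\widetilde K^{\sigma_2\tau_2}=\widetilde K^{\sigma_2}+f_{\tau_2}^{\ast}(\psi^{\sigma_2})^{\ast}K^q.
\]
Since $\sigma_2\in\widehat{\ZZ}^{\times}_2$ is a $2$-adic unit, hence odd, the Adams operation $\psi^{\sigma_2}$ acts on $H^{2i}(BU^{\wedge}_2;\ZZ/2)$ by $\sigma_2^i\equiv1\pmod 2$, i.e.\ as the identity; the induced endomorphism of $(G/TOP)^{\wedge}_2$ likewise acts trivially on the mod-$2$ Kervaire class, so $(\psi^{\sigma_2})^{\ast}K^q=K^q$ and $f_{\tau_2}^{\ast}(\psi^{\sigma_2})^{\ast}K^q=\widetilde K^{\tau_2}$, giving the desired identity on $BU^{\wedge}_2$.

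The main obstacle I expect is step two: pinning down a well-defined endomorphism of $(G/TOP)^{\wedge}_2$ induced by $\psi^{\sigma_2}$ and verifying $(\psi^{\sigma_2})^{\ast}K^q=K^q$ by a genuine naturality argument rather than heuristic. A cleaner alternative, which I would try in parallel, is to sidestep the lifted Adams operation entirely: compute $\widetilde K^{\sigma_2}$ as an explicit polynomial in mod-$2$ Chern classes of the universal bundle on $BU^{\wedge}_2$ (using Sullivan's formulae for the mod-$2$ Kervaire components of $f_{\sigma_2}$) and verify $\widetilde K^{\sigma_2\tau_2}=\widetilde K^{\sigma_2}+\widetilde K^{\tau_2}$ by direct inspection of the Chern-class polynomial, using only the multiplicativity $\psi^{\sigma_2\tau_2}=\psi^{\sigma_2}\psi^{\tau_2}$ at the level of Chern characters and the vanishing of cross terms modulo $2$.
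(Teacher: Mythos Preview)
Your approach is essentially the same as the paper's: decompose $\psi^{\sigma_2\tau_2}-1$, use primitivity of $K^q$, and then note that the Adams operation acts trivially on mod-$2$ cohomology since $\sigma_2$ is a $2$-adic unit. The one place where you make life harder for yourself is the composition order. You write $f_{\sigma_2\tau_2}=f_{\sigma_2}+\psi^{\sigma_2}\circ f_{\tau_2}$, which forces you to lift $\psi^{\sigma_2}$ to an endomorphism of $(G/TOP)^{\wedge}_2$ --- the obstacle you flag. But Adams operations commute, so $\psi^{\sigma_2}(\psi^{\tau_2}-1)=(\psi^{\tau_2}-1)\psi^{\sigma_2}$ as operations on $BU^{\wedge}_2$; writing the decomposition with $\psi^{\sigma_2}$ \emph{precomposed} gives
\[
f_{\sigma_2\tau_2}=f_{\sigma_2}+f_{\tau_2}\circ\psi^{\sigma_2}\quad\text{in }[BU^{\wedge}_2,(G/TOP)^{\wedge}_2],
\]
which needs no lift at all. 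Pulling back $K^q$ then yields $\widetilde K^{\sigma_2\tau_2}=\widetilde K^{\sigma_2}+(\psi^{\sigma_2})^*\widetilde K^{\tau_2}$ with $(\psi^{\sigma_2})^*$ acting on $H^*(BU^{\wedge}_2;\ZZ/2)$, where it is the cohomological Adams operation $\psi^{\sigma_2}_H$ and hence the identity. This is exactly the paper's two-line argument; your alternative route via explicit Chern-class polynomials is unnecessary.
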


\begin{proof}
Since $\psi^{\sigma_2\tau_2}-1=\psi^{\sigma_2}(\psi^{\tau_2}-1)+(\psi^{\sigma_2}-1)$, $\widetilde{K}^{\sigma_2\tau_2}=(\psi^{\sigma_2})^*\widetilde{K}^{\tau_2}+\widetilde{K}^{\sigma_2}$. Furthermore, $(\psi^{\sigma_2})^*=\psi^{\sigma_2}_H$. So $(\psi^{\sigma_2})^*$ is the identity for the $\ZZ/2$-coefficient.
\end{proof}

\begin{definition}
Define $K^{\sigma_2}_Z=(\nu_Z)^* K^{\sigma_2}\in H^{4*+2}(Z;\ZZ/2)$, where $\nu_Z:Z\rightarrow BG_2$ is the stable $2$-adic Spivak normal spherical fibration of $Z$.    
\end{definition}

Sullivan's proof of Adams conjecture motivates the construction of an abelianized Galois action of $\widehat{\ZZ}^{\times}_l$ on an $l$-adic normal structure set $\mathbf{S}(Z)^{\wedge}_{N,l}$.

\begin{definition}\label{Def: ablianized Galois action}
Let $\sigma_l\in \widehat{\ZZ}^{\times}_l$. The \textbf{abelianized Galois action of $\widehat{\ZZ}^{\times}_l$} on $\mathbf{S}(Z)^{\wedge}_{N,l}$ is defined as follows: 
\begin{enumerate}
    \item if $l$ is odd, $\sigma_l\cdot \phi =\psi^{\sigma_l}\phi\cdot \frac{\psi^{\sigma_l}\Delta_Z}{\Delta_Z} $, where $\phi\in 1+\widetilde{KO_p^\wedge}(Z)$ is an $l$-adic normal homotopy manifold structure on $Z$;
    \item if $l=2$, $\sigma_2\cdot L= \frac{-1+L_Z^{-1}\cdot \psi^{\sigma_2}_HL_Z}{8}+ L_Z^{-1}\cdot \psi^{\sigma_2}_HL_Z\cdot \psi^{\sigma_2}_HL$ and $\sigma_2\cdot K=K+K^{\sigma_2}_Z$, where $(L,K)$ is an $l$-adic normal homotopy manifold structure on $Z$ and $\psi^{\sigma_2}_H$ is the cohomological Adams operation on $H^{2*}(Z;\widehat{\ZZ}_2)$.
\end{enumerate}
\end{definition}

\begin{remark}
Notice that the coefficient of $-1+L_Z^{-1}\cdot \psi^{\sigma_2}_HL_Z$ at degree $4n$ is $\sigma_2^{2n}-1$, which is divisible by $8$.
\end{remark}

The Galois group of the underlying field acts on the $l$-adic formal manifolds of varieties naturally.

Let $k$ be a separably closed field of characteristic $p$. Let $l\neq p$ be a prime number. Assume the condition of field embeddings \hyperlink{field-embeddings}{(*)}.

Let $X,X'$ be smooth, projective, connected, pointed $m$-dimensional varieties of dimension $m$ over $k$. Assume that $(\pi^{\et}_1(X))^{\wedge}_l=(\pi^{\et}_1(X'))^{\wedge}_l=0$ and that $X^{\wedge}_{\et,l},(X')_{\et,l}$ both admit $l$-localization liftings. Let $Z,Z'$ be simply-connected finite CW complex $l$-adic weak equivalent to $X,X'$ respectively. By Theorem \ref{existence-of-formal-manifold-structure}, there exist $l$-adic formal manifolds on $Z,Z'$ induced from $X,X'$ respectively. 

\begin{proposition}\label{algebraic-homotopy-manifold-structure}
Let $f:X'\rightarrow X$ be an algebraic morphism over some field automorphism of $k$. Assume that $f_{\et}:X'_{\et}\rightarrow X_{\et}$ is an $l$-adic weak equivalence. Then $f$ represents an element in the $l$-adic normal structure set $\mathbf{S}(Z)^{\wedge}_{N,l}$ of $Z$, which depends on the field embedding condition \hyperlink{field-embeddings}{(*)}.
\end{proposition}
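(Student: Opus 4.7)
The plan is to produce, from $f$, a degree one homotopy equivalence $g: Z'_l \to Z_l$ between two $l$-adic formal manifolds, and then to apply Proposition \ref{example-of-homotopy-manifold-structure}.

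First, combining the hypothesis that $f_{\et}: X'_{\et} \to X_{\et}$ is an $l$-adic weak equivalence with the given $l$-adic weak equivalences $Z \to X^{\wedge}_{\et,l}$ and $Z' \to {X'}^{\wedge}_{\et,l}$, Definition \ref{definition-of-l-adic-weak-equivalence}(2) produces a homotopy equivalence $g: Z'_l \to Z_l$ after passing to homotopy inverse limits of $l$-profinite completions. Theorem \ref{existence-of-formal-manifold-structure} then endows both $Z$ and $Z'$ with canonical $l$-adic formal manifold structures inherited from $X$ and $X'$, whose dependence on \hyperlink{field-embeddings}{(*)} (and, for $Z'$, on the field automorphism of $k$ over which $f$ is defined) is tracked by that theorem. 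The formal dimension of $Z$ is $2m\geq 6\geq 5$, so Proposition \ref{example-of-homotopy-manifold-structure} is applicable as soon as the degree one condition is verified.

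Next, I would verify the degree one condition. Since $f$ is a morphism between smooth projective connected $m$-dimensional varieties and $f_{\et,*}$ is an isomorphism on $H_{2m}(-;\ZZ/l) \cong \ZZ/l$, $f$ must be dominant of some algebraic degree $d \in \ZZ_{\geq 1}$ coprime to $l$. The fundamental class $[X_{\et}]_{\ZZ/l}$ of Theorem \ref{existence-of-mod-Poincare-duality} is built from Joshua's Spanier-Whitehead duality (Fact \ref{construction-of-algebraic-normal-bundle}), and tracking the Pontryagin-Thom collapse along $f$ shows $f_{\et,*}[X'_{\et}]_{\ZZ/l} = d \cdot [X_{\et}]_{\ZZ/l}$. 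Rescaling $[Z']_{\ZZ/l}$ by a lift of $d^{-1} \in (\ZZ/l)^{\times}$ to $\widehat{\ZZ}_l^{\times}$ yields another valid mod-$l$ Poincaré duality on $Z'$; by the uniqueness in Theorem \ref{uniqueness-of-l-adic-Spivak}, the underlying mod-$l$ Spivak normal spherical fibration $\nu_{Z'}$ is unchanged up to equivalence (only the accompanying collapse map is rescaled), so the $l$-adic formal manifold structure on $Z'$ is preserved. With this normalization, $g$ is degree one.

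Finally, Proposition \ref{example-of-homotopy-manifold-structure} applied to $g$ produces the desired element of $\mathbf{S}(Z)^{\wedge}_{N,l}$, characterized for odd $l$ by $\phi \cdot \Delta_Z = (g^{-1})^{*} \Delta_{Z'}$, and by the analogous formulas at $l=2$; the dependence on \hyperlink{field-embeddings}{(*)} is inherited from Theorem \ref{existence-of-formal-manifold-structure}. The main obstacle is the degree computation in the middle paragraph: one must confirm that the algebraic degree of $f$ governs the behavior on the fundamental classes built out of the Pontryagin-Thom data of Fact \ref{construction-of-algebraic-normal-bundle}, which amounts to establishing a naturality statement for Joshua's construction along the morphism $f$ (or, equivalently, invoking the compatibility of the cycle class map with proper pushforward in \'etale cohomology).
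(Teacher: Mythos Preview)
Your approach is the same as the paper's: produce from $f$ a homotopy equivalence $g:Z'_l\to Z_l$ between $l$-adic formal manifolds and invoke Proposition~\ref{example-of-homotopy-manifold-structure}. The paper's own proof is two lines and does exactly this, without further comment.

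Where you diverge is in taking the degree-one hypothesis of Proposition~\ref{example-of-homotopy-manifold-structure} seriously. The paper simply applies that proposition and does not verify that $g_*[Z']_{\ZZ/l}=[Z]_{\ZZ/l}$; you correctly observe that a general $f$ as in the statement need only have $g_*[Z']_{\ZZ/l}=d\cdot[Z]_{\ZZ/l}$ with $d\in(\ZZ/l)^\times$, and you propose rescaling the fundamental class of $Z'$ by $d^{-1}$. This is a legitimate fix: the underlying oriented $l$-adic spherical fibration $\nu_{Z'}$ is unchanged (Lemma~\ref{uniqueness-of-l-adic-Spivak-Lemma-1} shows the fibration itself is independent of the collapse datum), and the $l$-adic formal manifold structure of Definition~\ref{definition-of-l-adic-formal-manifolds} is formulated purely in terms of $\nu_{Z'}$, not the collapse map $\phi$. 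So your argument goes through, and in fact fills a small gap in the paper's two-line proof. In the applications the paper actually cares about (namely $X^\sigma\xrightarrow{\sigma}X$ for $\sigma\in\Gal(k)$, and compositions thereof), $f$ is a scheme isomorphism and $d=1$, which is presumably why the authors did not dwell on this point.

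One minor comment: your middle paragraph appeals to naturality of Joshua's Pontryagin--Thom construction along $f$ to get $f_{\et,*}[X'_{\et}]_{\ZZ/l}=d\cdot[X_{\et}]_{\ZZ/l}$. This is correct but heavier than needed; since $H_{2m}(-;\ZZ/l)\cong\ZZ/l$ and $g_*$ is an isomorphism, you already know $g_*[Z']_{\ZZ/l}$ is a unit multiple of $[Z]_{\ZZ/l}$, and that is all the rescaling argument requires.
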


\begin{proof}
$f$ induces homotopy equivalences $(Z')_l\simeq \holim (X')^{\wedge}_{\et,l}\rightarrow \holim X^{\wedge}_{\et,l} \simeq Z_l$. By Proposition \ref{example-of-homotopy-manifold-structure}, this corresponds to an $l$-adic normal homotopy manifold structure on $Z$.
\end{proof}

\begin{definition}\label{definition-of-k-algebraic-elements}
Such an element in $\mathbf{S}(Z)^{\wedge}_{N,l}$ induced by an $f$ in Proposition \ref{algebraic-homotopy-manifold-structure} is called \textbf{$k$-algebraic} with respect to the $l$-adic weak equivalence $Z\rightarrow X^{\wedge}_{\et,l}$.
\end{definition}

\begin{definition}
Let $\sigma\in \Gal(k)$. There is a \textbf{Galois action of $\Gal(k)$ on the $k$-algebraic elements} in $\mathbf{S}(Z)^{\wedge}_{N,l}$ defined by $\sigma(X'\xrightarrow{f} X)= (X')^{\sigma}\xrightarrow{\sigma} X'\xrightarrow{f} X$.
\end{definition}

Consider the following homomorphisms of Galois symmetries \hypertarget{homomorphism-of-Galois}{(**)}: 
\begin{enumerate}
    \item if the characteristic $p$ of $k$ is positive, define $\omega_k:\Gal(k)\xrightarrow{\mu_k} \Gal(\overline{\FF}_p)=\widehat{\ZZ}\xrightarrow{e_p}\widehat{\ZZ}^{\times}_l$, where $\mu_k$ is the natural restriction induced by the field embedding \hyperlink{field-embeddings}{(*)} and $e_p$ induced by $e_p(x)=p^x$ for $x\in \ZZ$;
    \item if $p=0$, define $\omega_k:\Gal(k)\xrightarrow{\mu} \widehat{\ZZ}^{\times}= \prod_q \widehat{\ZZ}^{\times}_q\xrightarrow{\pi} \widehat{\ZZ}^{\times}_l$, where $\mu_k$ is the restriction to the roots of unity and $\pi$ is the projection.
\end{enumerate}

\begin{lemma}\label{Galois-action-on-homotopy-manifolds}
Let $\sigma\in \Gal(k)$. Let $X^{\sigma}\xrightarrow{\sigma} X$ be the algebraic morphism induced by the field automorphism $\sigma$. Then
\begin{enumerate}
    \item if $l$ is odd, the homotopy $l$-adic manifold structure on $Z$ induced by $X^{\sigma}\xrightarrow{\sigma} X$ is $\phi_{\sigma}=\frac{\psi^{\omega_k(\sigma)}\Delta_Z}{\Delta_Z}$;
    \item if $l=2$, the homotopy $2$-adic manifold structure on $Z$ induced by $X^{\sigma}\xrightarrow{\sigma} X$ is $(L_{\sigma},K_{\sigma})=(\frac{L_Z^{-1}\cdot \psi^{\omega_k(\sigma)}_HL_Z-1}{8},K^{\omega_k(\sigma)}_Z)$.
\end{enumerate}
\end{lemma}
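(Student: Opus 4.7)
The plan is to recognize the element of $\mathbf{S}(Z)^{\wedge}_{N,l}$ induced by $\sigma : X^\sigma \to X$ as the ``difference'' between two $BU^{\wedge}_l$-bundle liftings of the mod-$l$ Spivak fibration $\nu_Z$, and then invoke the proofs of the Adams conjecture to identify this difference with an Adams operation applied to $\Delta_Z$ (respectively, to $L_Z$ and the Kervaire class).

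First I would recall from the proof of Theorem \ref{existence-of-formal-manifold-structure} that the $l$-adic formal manifold structure on $Z$ coming from $X$ is encoded by the composite $\tau_X : Z_l \simeq \holim X^{\wedge}_{\et,l} \xrightarrow{\nu_X} [\ast/G_k]^{\wedge}_{\et,l} \simeq BU^{\wedge}_l$, where $\nu_X$ is the algebraic normal bundle of Fact \ref{construction-of-algebraic-normal-bundle} and the right-hand equivalence comes from diagram (\ref{Bundle-Diagram}). The Galois twist $X^\sigma$ yields the composite $\tau_{X^\sigma} = \tau_X \circ \sigma_{\ast}$, and by Proposition \ref{example-of-homotopy-manifold-structure} the element of $\mathbf{S}(Z)^{\wedge}_{N,l}$ attached to $\sigma$ is exactly the ratio (resp. additive correction) between these two liftings. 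The crucial step is then to identify $\tau_{X^\sigma}$ with $\psi^{\omega_k(\sigma)} \tau_X$ in $[Z_l, BU^{\wedge}_l]$. When $k$ has characteristic $p > 0$ this is the content of Quillen and Friedlander's proof of the Adams conjecture: under the equivalence $(B_*G_{\overline{\FF}_p})^{\wedge}_{\et,l} \simeq BU^{\wedge}_l$, the geometric Frobenius acts on the target as $\psi^p$, and passing to the profinite group $\Gal(\overline{\FF}_p) = \widehat{\ZZ}$ with its map to $\widehat{\ZZ}^{\times}_l$ produces the full Adams operation $\psi^{\omega_k(\sigma)}$ as defined in \hyperlink{homomorphism-of-Galois}{(**)} (see \cite{Quillen-Adams-Conjecture-1}, \cite{Friedlander-Adams-Conjecture}). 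When $k$ has characteristic zero this is Sullivan's \cite{Sullivan-adams-conjecture}: the Galois action on $X^{\wedge}_{\et,l}$ factors through its action on $l^{\infty}$-th roots of unity, i.e. through $\widehat{\ZZ}^{\times} \to \widehat{\ZZ}^{\times}_l$, and the induced action on $BU^{\wedge}_l$ is precisely $\psi^{\omega_k(\sigma)}$.

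Once this identity is in hand, formula (1) for odd $l$ follows immediately because the $KO$-orientation $\Delta_Z$ is functorial in Adams operations: $\Delta_{Z,\sigma} = \psi^{\omega_k(\sigma)} \Delta_Z$, so the ratio $\phi_\sigma = \Delta_{Z,\sigma}/\Delta_Z$ matches Definition \ref{Def: ablianized Galois action}. For $l = 2$, formula (2) splits into two pieces: the $\widehat{\ZZ}_2$-valued class $L$ comes from the same Adams-operation identity applied to $L_Z$, where divisibility of $\sigma_2^{2k} - 1$ by $8$ guarantees that $(L_Z^{-1} \cdot \psi^{\omega_k(\sigma)}_H L_Z - 1)/8$ is a well-defined $\widehat{\ZZ}_2$-coefficient characteristic class; the Kervaire class $K_\sigma = K^{\omega_k(\sigma)}_Z$ arises by pulling back the universal $K^q \in H^{4*+2}((G/TOP)^{\wedge}_2 ; \ZZ/2)$ along the classifying map of $\psi^{\omega_k(\sigma)} - 1 : BU^{\wedge}_2 \to (G/TOP)^{\wedge}_2$, and its identification with $\nu_Z^* K^{\omega_k(\sigma)}$ uses the explicit characteristic-class formulas of \cite{MOrgan-Sullivan-surgery} and \cite{Brumfiel-Morgan}.

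The main obstacle will be the rigorous proof of the key identity $\tau_{X^\sigma} = \psi^{\omega_k(\sigma)} \tau_X$, which requires carefully threading the Frobenius-or-roots-of-unity-to-Adams correspondence through the stacky classifying diagram (\ref{Bundle-Diagram}) and the field-embedding choices in \hyperlink{field-embeddings}{(*)}. The $l = 2$ case is additionally delicate because $L$ lives in $\widehat{\ZZ}_2$-cohomology rather than in $K$-theory, and the Kervaire secondary invariant lies outside $K$-theory altogether, so the Adams-operation compatibility must be traced through the auxiliary surgery space $(G/TOP)^{\wedge}_2$ in a way that genuinely uses the $2$-local structural results of \cite{MOrgan-Sullivan-surgery} and \cite{Brumfiel-Morgan}.
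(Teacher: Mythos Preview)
Your proposal is correct and follows essentially the same approach as the paper: both identify the induced normal homotopy manifold structure as the difference of two $BU^{\wedge}_l$-liftings of $\nu_Z$, invoke the Sullivan and Quillen--Friedlander proofs of the Adams conjecture to convert the Galois action on $[\ast/G_k]^{\wedge}_{\et,l}\simeq BU^{\wedge}_l$ into the Adams operation $\psi^{\omega_k(\sigma)}$, and then read off the formulas for $\phi_\sigma$, $L_\sigma$, $K_\sigma$ via Proposition~\ref{example-of-homotopy-manifold-structure}. The paper organizes the key identity $(\sigma^{-1})^*\Delta_{Z^\sigma}=\psi^{\omega_k(\sigma)}\Delta_Z$ through an explicit commutative square with $\psi^{\omega_k(\sigma)^{-1}}$ on the right column, which is exactly your equation $\tau_{X^\sigma}=\psi^{\omega_k(\sigma)}\tau_X$ rephrased.
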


\begin{proof}
If $p>0$, by Sullivan's proof of the Adams conjecture in \cite{Sullivan-adams-conjecture}*{p.~69 the third paragraph}  and Quillen-Friedlander's proof in \cite{Quillen-Adams-Conjecture-1}*{p.~112 \S3}\cite{Friedlander-Adams-Conjecture}, the natural maps $[*/G_k]^{\wedge}_{\et,l}\simeq [*/G_{\overline{\FF}_p}]^{\wedge}_{\et,l}\simeq [*/G_{W(\overline{\FF}_p)}]^{\wedge}_{\et,l}\simeq [*/G_{\CC}]^{\wedge}_{\et,l}\rightarrow BU^{\wedge}_l$ are equivariant via $\Gal(k)\xrightarrow{\mu_k} \Gal(\overline{\FF}_p)\cong\widehat{\ZZ}\leftarrow \Gal(\CC)\xrightarrow{1/\omega_{\CC}} \widehat{\ZZ}^{\times}_l$, where the action of $\widehat{\ZZ}^{\times}_l$ on $BU^{\wedge}_l$ is induced by the Adams operations. Since $W(\overline{\FF}_p)$ is the extension of $W(\FF_p)$ by all the roots of unity except the $p$-primary roots of unity, the map $[*/G_k]^{\wedge}_{\et,l}\rightarrow BU^{\wedge}_l$ is equivariant exactly via $\omega_k$. Similarly, if $p=0$, the natural map $[*/G_k]^{\wedge}_{\et,l}\rightarrow BU^{\wedge}_l$ is equivariant via $\omega_k$ as well. 

Assume that $Z^{\sigma}$ is the simply-connected CW complex $l$-adic weak equivalent to $X^{\sigma}$ and carries an $l$-adic formal manifold structure induced from $X^{\sigma}$. The following commutative diagram shows that $(\sigma^{-1})^*\Delta_{Z^{\sigma}}=\psi^{\omega_k(\sigma)}\Delta_Z$ and $(\sigma^{-1})^*L_{Z^{\sigma}}=\psi^{\omega_k(\sigma)}L_Z$. 
\[
\begin{tikzcd}
    Z^{\sigma}_l \arrow{r} \arrow{d}{\sigma} & (X^{\sigma})^{\wedge}_{\et,l} \arrow{r}{\nu_{X^{\sigma}}} \arrow{d}{\sigma} & \left [*/G_k \right ]^{\wedge}_{\et,l} \arrow{d}{\sigma} \arrow{r} & BU^{\wedge}_l \arrow{d}{\psi^{\omega_k(\sigma)^{-1}}} \\
    Z_l \arrow{r} & X^{\wedge}_{\et,l} \arrow{r}{\nu_{X}}  & \left [*/G_k \right ]^{\wedge}_{\et,l} \arrow{r} & BU^{\wedge}_l,
\end{tikzcd}
\]
where $\sigma:Z^{\sigma}_l\rightarrow Z_l$ is the composition $Z^{\sigma}_l\simeq \holim (X^{\sigma})^{\wedge}_{\et,l}\rightarrow \holim X^{\wedge}_{\et,l}\simeq Z_l$.
This proves the formula for $\phi_{\sigma}$ and $L_{\sigma}$. $K_{\sigma}=K^{\omega_k(\sigma)}_Z$ directly follows from this diagram and the definition of $K^{\omega_k(\sigma)}_Z$.
\end{proof}

\begin{theorem}\label{Galois-action-on-l-adic-normal-structure-set}
Let $X$ be a smooth, projective, connected $m$-dimensional variety of dimension $m$ over a separably closed field $k$ with the field embedding condition \hyperlink{field-embeddings}{(*)}. Assume $m\geq 3$, $(\pi^{\et}_1(X))^{\wedge}_l=0$ and that $X^{\wedge}_{\et,l}$ admits an $l$-local lifting. Let $Z$ be a simply-connected finite CW complex $l$-adic weak equivalent to $X$ together with an $l$-adic formal manifold structure induced from $X$. Then the Galois action of $\Gal(k)$ on the $k$-algebraic elements in $\mathbf{S}(Z)^{\wedge}_{N,l}$  factors through the abelianized Galois action of $\widehat{\ZZ}^{\times}_l$ on $\mathbf{S}(Z)^{\wedge}_{N,l}$ via $\omega_k:\Gal(k)\rightarrow \widehat{\ZZ}^{\times}_l$ in \hyperlink{homomorphism-of-Galois}{(**)}.
\end{theorem}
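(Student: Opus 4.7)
Let $\sigma \in \Gal(k)$ and let $[f\colon X'\to X]$ be a $k$-algebraic element of $\mathbf{S}(Z)^\wedge_{N,l}$ as in Definition \ref{definition-of-k-algebraic-elements}, so $f$ is an algebraic morphism inducing an $l$-adic weak equivalence on \'etale homotopy types. Choose simply-connected finite CW models $Z'$ and $(Z')^{\sigma}$ for $(X')^{\wedge}_{\et,l}$ and $((X')^{\sigma})^{\wedge}_{\et,l}$, each equipped with the induced $l$-adic formal manifold structure from Theorem \ref{existence-of-formal-manifold-structure}. By definition $\sigma\cdot[f] = [f\circ \sigma_{X'}\colon (X')^{\sigma}\to X'\to X]$, and the induced $l$-adic homotopy equivalence $((Z')^{\sigma})_l\to Z_l$ factors as $((Z')^{\sigma})_l \to Z'_l \to Z_l$. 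The strategy is to combine Corollary \ref{composition-of-manifolds}, which expresses the normal structure of a composition, with Lemma \ref{Galois-action-on-homotopy-manifolds}, which computes the normal structure induced by the Galois twist $\sigma_{X'}$ alone, and then verify the result matches Definition \ref{Def: ablianized Galois action}.

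\textbf{Odd $l$.} By Corollary \ref{composition-of-manifolds}, the new element of $\mathbf{S}(Z)^\wedge_{N,l}$ equals $\phi_f \cdot (f^{-1})^* \phi_{\sigma_{X'}}$, where $\phi_f$ is the original normal structure associated to $f$ and $\phi_{\sigma_{X'}}$ is the normal structure on $Z'$ induced by $\sigma_{X'}\colon (X')^{\sigma}\to X'$. Lemma \ref{Galois-action-on-homotopy-manifolds} applied to $X'$ yields $\phi_{\sigma_{X'}} = \psi^{\omega_k(\sigma)}\Delta_{Z'}/\Delta_{Z'}$, while Proposition \ref{example-of-homotopy-manifold-structure} gives $(f^{-1})^*\Delta_{Z'} = \phi_f \cdot \Delta_Z$. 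Since the Adams operation is natural under pullback, substituting and cancelling the $\phi_f$-factors yields $\phi_f\cdot (f^{-1})^*\phi_{\sigma_{X'}} = \psi^{\omega_k(\sigma)}\phi_f \cdot \psi^{\omega_k(\sigma)}\Delta_Z/\Delta_Z$, which is precisely $\omega_k(\sigma)\cdot \phi_f$.

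\textbf{$l=2$ and the main obstacle.} Here Corollary \ref{composition-of-manifolds} is best packaged multiplicatively as $1 + 8 L_{\text{comp}} = (1+8L_f)(1 + 8(f^{-1})^*L_{\sigma_{X'}})$, while Proposition \ref{example-of-homotopy-manifold-structure} gives $(1+8L_f)L_Z = (f^{-1})^*L_{Z'}$ and Lemma \ref{Galois-action-on-homotopy-manifolds} gives $1 + 8L_{\sigma_{X'}} = L_{Z'}^{-1}\psi^{\omega_k(\sigma)}_H L_{Z'}$. Pulling back the latter through $f^{-1}$ and substituting telescopes the product to $L_Z^{-1}\psi^{\omega_k(\sigma)}_H L_Z\cdot\psi^{\omega_k(\sigma)}_H(1+8L_f)$, matching $1 + 8(\omega_k(\sigma)\cdot L_f)$. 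For the Kervaire piece, Corollary \ref{composition-of-manifolds} yields $K_{\text{comp}} = K_f + (f^{-1})^*K^{\omega_k(\sigma)}_{Z'}$; since $K^{\omega_k(\sigma)}_{(-)}$ is pulled back from a universal class on $BSG_2$ and a degree one $l$-adic equivalence preserves the mod-$l$ Spivak fibration up to equivalence by Theorem \ref{uniqueness-of-l-adic-Spivak}, one has $(f^{-1})^*K^{\omega_k(\sigma)}_{Z'} = K^{\omega_k(\sigma)}_Z$, again matching Definition \ref{Def: ablianized Galois action}. The main obstacle is the $l=2$ bookkeeping: organising the interaction of the multiplicative $(1+8(-))$-structure on $L$-classes with $(f^{-1})^*$ and the cohomological Adams operation so that the telescoping is clean, together with invoking uniqueness of the mod-$l$ Spivak fibration to identify the two Kervaire classes.
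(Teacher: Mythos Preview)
Your proposal is correct and follows essentially the same route as the paper: both proofs decompose the Galois-twisted element via Corollary \ref{composition-of-manifolds} and feed in Lemma \ref{Galois-action-on-homotopy-manifolds} together with Proposition \ref{example-of-homotopy-manifold-structure}, and the odd-$l$ and $L$-class computations are effectively identical (the paper carries them out implicitly, you spell out the telescoping).

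There is one genuine difference worth noting, in the treatment of the Kervaire class $K$ at $l=2$. The paper first reduces to the case where $f$ is defined over the identity of $k$, then uses the commutative square $(X')^{\sigma}\to X^{\sigma}$ over $X'\to X$ to rewrite $K+(f^{-1})^*K^{\omega_k(\sigma)}_{Z'}$ as $K^{\omega_k(\sigma)}_Z+(\sigma^{-1})^*K(f^{\sigma})$, and finally identifies $(\sigma^{-1})^*K(f^{\sigma})$ with $K$ via a diagram in $BU^{\wedge}_2$. Your argument is more direct: you observe that $K^{\omega_k(\sigma)}_{(-)}$ is pulled back from a universal class on $BSG_2$ and that the mod-$2$ Spivak fibration is preserved by a degree-one $l$-adic equivalence (Theorem \ref{uniqueness-of-l-adic-Spivak}), giving $(f^{-1})^*K^{\omega_k(\sigma)}_{Z'}=K^{\omega_k(\sigma)}_Z$ immediately. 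This bypasses the case distinction on $\tau$ entirely and is a small but genuine simplification over the paper's proof.
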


\begin{proof}
Let $\phi$ or $(L,K)$ be an $k$-algebraic element in $\mathbf{S}(Z)^{\wedge}_{N,l}$ represented by an algebraic morphism $f:X'\rightarrow X$ over some field automorphism $\tau$ of $k$. Let $\sigma\in \Gal(k)$. Let $\phi^{\sigma}$ or $(L^{\sigma},K^{\sigma})$ be the $k$-algebraic element in $\mathbf{S}(Z)^{\wedge}_{N,l}$ represented by $(X')^{\sigma}\xrightarrow{\sigma} X'\xrightarrow{f} X$. It suffices to check that 
$\phi^{\sigma}=\omega_k(\sigma)\cdot \phi$, $L^{\sigma}=\omega_k(\sigma)\cdot L$ and $K^{\sigma}=\omega_k(\sigma)\cdot K$.

First consider the special case when $\tau=Id$. Then the formulas for $\phi^{\sigma}$ and $L^{\sigma}$ hold by Corollary \ref{composition-of-manifolds} and Lemma \ref{Galois-action-on-homotopy-manifolds}. For $K^{\sigma}$, consider the following commutative diagram.
\begin{equation}\label{key-diagram}
\begin{tikzcd}
    ((X')^{\sigma})^{\wedge}_{\et,l} \arrow{r}{f^{\sigma}} \arrow{d}{\sigma} & (X^{\sigma})^{\wedge}_{\et,l} \arrow{d}{\sigma} \\
    (X')^{\wedge}_{\et,l} \arrow{r}{f} & X^{\wedge}_{\et,l} 
\end{tikzcd}    
\end{equation}
Then $K^{\sigma}=K+(f^{-1})^*K^{\omega_k(\sigma)}_{X'}$ is actually $K^{\omega_k(\sigma)}_{X}+ (\sigma^{-1})^*K(f^{\sigma})$.
$K(f^{\sigma})$ is represented by the ``difference'' of the two liftings $(X^{\sigma})^{\wedge}_{\et,l}\xrightarrow{\nu_{X^{\sigma}}} [*/G_k]^{\wedge}_{\et,l}\rightarrow BU^{\wedge}_l\rightarrow BSTOP^{\wedge}_l$ and $(X^{\sigma})^{\wedge}_{\et,l}\xrightarrow{(f^{\sigma})^{-1}} ((X')^{\sigma})^{\wedge}_{\et,l}\xrightarrow{\nu_{(X')^{\sigma}}} [*/G_k]^{\wedge}_{\et,l}\rightarrow  BU^{\wedge}_l\rightarrow BSTOP^{\wedge}_l$ of $(X^{\sigma})^{\wedge}_{\et,l}\rightarrow BSG^{\wedge}_l$. More explicitly, it is represented by the following diagram
\[
\begin{tikzcd}
((X')^{\sigma})^{\wedge}_{\et,l} \arrow{r}{\nu_{(X')^{\sigma}}} & BU^{\wedge}_l \arrow{d} \\
& BSTOP^{\wedge}_l \arrow{d} \\
(X^{\sigma})^{\wedge}_{\et,l} \arrow{uu}{(f^{\sigma})^{-1}} \arrow{ruu}{\nu_{X^{\sigma}}} \arrow{r} & BSG^{\wedge}_l 
\end{tikzcd}
\]
Then $(\sigma^{-1})^*K(f^{\sigma})$ is represented by the `difference' of the two liftings of $X^{\wedge}_{\et,l}\rightarrow BSG^{\wedge}_l$, induced by the map $X\xrightarrow{\sigma^{-1}}X^{\sigma}$ and by the two above liftings $(X^{\sigma})^{\wedge}_{\et,l}\rightarrow BSG^{\wedge}_l$. The diagram \ref{key-diagram} and the previous diagram shows that these two liftings over $X^{\wedge}_{\et,l}\rightarrow BSG^{\wedge}_l$ can be alternatively represented by the following diagram
\[
\begin{tikzcd}
(X')^{\wedge}_{\et,l} \arrow{r}{\nu_{(X')}} & BU^{\wedge}_l \arrow{r}{\psi^{\omega_k(\sigma)}} & BU^{\wedge}_l \arrow{d} \\
& & BSTOP^{\wedge}_l \arrow{d} \\
X^{\wedge}_{\et,l} \arrow{uu}{(f)^{-1}} \arrow{ruu}{\nu_{X}} \arrow{rr} &  & BSG^{\wedge}_l 
\end{tikzcd}
\]
That is, $(\sigma^{-1})^*K(f^{\sigma})=\psi^{\omega_k(\sigma)}_H(K)=K$. So the special case is proven.

Now consider the general case. Then $f:X'\rightarrow X$ factors as $X'\xrightarrow{\tau} (X')^{\tau} \xrightarrow{f'} X$, where $f'$ is over the identity of $k$. Let $\phi'$ or $(L',K')$ be the $k$-algebraic element in $\mathbf{S}(Z)^{\wedge}_{N,l}$ represented by $f'$. By the result of the special case, $\phi=\omega_k(\tau)\cdot \phi'$, $L=\omega_k(\tau)\cdot L'$ and $K=\omega_k(\tau)\cdot K'$. Then $\phi^{\sigma}=\omega_k(\tau\sigma)\cdot \phi'=\omega_k(\sigma)\cdot\omega_k(\tau)\cdot\phi'=\omega_k(\sigma)\cdot \phi$. It is similar for $L^{\sigma}$ and $K^{\sigma}$.
\end{proof}

Assume that $R$ is a discrete valuation ring with residue field $k$ and that the characteristic of $k$ is $p>0$. Let $W(\overline{\FF}_p)\rightarrow R\rightarrow \CC$ be embeddings compatible with the condition \hyperlink{field-embeddings}{(*)}. Consider the following homomorphism of Galois groups
\hypertarget{homomorphism-of-Galois-III}{(***)}: \\
$\Gal(k)\rightarrow \Gal(\overline{\FF}_p)\cong \Gal(\QQ^{un}_p/\QQ_p)=\widehat{\ZZ} \rightarrow \Gal(\QQ[\{\zeta_n\}_{p\nmid n}]/\QQ)\cong \widehat{\ZZ}^{\times} \leftarrow \Gal(\CC)$\\
where $\QQ^{un}_p=W(\overline{\FF}_p)$ is the maximal unramified extension of $\QQ_p$ and $\QQ[\{\zeta_n\}_{p\nmid n}]$ is the extension of $\QQ$ by the roots of unity which are not $p$-primary. 

Then \hyperlink{homomorphism-of-Galois-III}{(***)} connects the two homomorphisms of Galois groups in \hyperlink{homomorphism-of-Galois}{(**)} for $k$ and $\CC$. Thus the following is a direct corollary of Theorem \ref{Galois-action-on-l-adic-normal-structure-set}.

\begin{corollary}
With the above assumptions for $R,k$, let $X$ be a smooth, projective, connected $m$-dimensional varieties of dimension $m$ over $R$. Assume $m\geq 3$, $(\pi^{\et}_1(X))^{\wedge}_l=0$ and that $X^{\wedge}_{\et,l}$ admits an $l$-local lifting. Let $Z$ be a simply-connected finite CW complex $l$-adic weak equivalent to $(X_k)_{\et}$ together with an $l$-adic formal manifold structure induced from $X_k$. Then the Galois action of $\Gal(k)$ on the $k$-algebraic elements in $\mathbf{S}(Z)^{\wedge}_{N,l}$ is equivariant with the Galois action of $\Gal(\CC)$ on the $\CC$-algebraic elements in $\mathbf{S}(Z)^{\wedge}_{N,l}$, via the homomorphisms in \hyperlink{homomorphism-of-Galois-III}{(***)}.   
\end{corollary}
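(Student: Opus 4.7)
The plan is to reduce the corollary to Theorem \ref{Galois-action-on-l-adic-normal-structure-set}, applied separately to the $k$- and $\CC$-fibers of $X/R$, and then to chase the two resulting factorizations through the bridge homomorphism \hyperlink{homomorphism-of-Galois-III}{(***)}. First, I would use the lifting $X/R$ to compare the relevant \'etale homotopy types: by \cite{Friedlander-etale-homotopy}*{Proposition 8.8} (equivalently \cite{artin-mazur-etale-homotopy}*{Corollary 12.12}), the specialization and base-change maps yield $l$-adic weak equivalences $(X_k)^{\wedge}_{\et,l}\simeq (X_R)^{\wedge}_{\et,l}\simeq (X_\CC)^{\wedge}_{\et,l}$, and by Theorem \ref{existence-of-formal-manifold-structure}.(3) the induced $l$-adic formal manifold structures on $Z$ from $X_k$ and from $X_\CC$ agree. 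Thus the $\Gal(k)$-action on $k$-algebraic elements and the $\Gal(\CC)$-action on $\CC$-algebraic elements both operate inside a single set $\mathbf{S}(Z)^{\wedge}_{N,l}$ with a single chosen $l$-adic formal manifold structure.

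Next, I would apply Theorem \ref{Galois-action-on-l-adic-normal-structure-set} to both viewpoints: the $\Gal(k)$-action factors through $\omega_k:\Gal(k)\to \widehat{\ZZ}^{\times}_l$, and the $\Gal(\CC)$-action factors through $\omega_\CC:\Gal(\CC)\to \widehat{\ZZ}^{\times}_l$, and both arrive at the same abelianized action of $\widehat{\ZZ}^{\times}_l$ on $\mathbf{S}(Z)^{\wedge}_{N,l}$ of Definition \ref{Def: ablianized Galois action}. Equivariance thus reduces to checking that the two maps $\omega_k$ and $\omega_\CC$ to $\widehat{\ZZ}^{\times}_l$ fit into a commutative diagram with the bridge \hyperlink{homomorphism-of-Galois-III}{(***)}.

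To verify this, I would trace a topological generator $1\in \widehat{\ZZ}=\Gal(\QQ_p^{un}/\QQ_p)=\Gal(\overline{\FF}_p)$: through $\omega_k$ it maps to $p\in \widehat{\ZZ}^{\times}_l$ by definition of $e_p$, whereas through the identification $\Gal(\QQ[\{\zeta_n\}_{p\nmid n}]/\QQ)\cong \widehat{\ZZ}^{\times}$ the Frobenius acts on prime-to-$p$ roots of unity as $\zeta\mapsto \zeta^p$, hence also sends $1$ to $p\in \widehat{\ZZ}^{\times}$; projection to $\widehat{\ZZ}^{\times}_l$ yields the same answer. This identifies the two downward composites in \hyperlink{homomorphism-of-Galois-III}{(***)} with $\omega_k$ and $\omega_\CC$, which makes the outer diagram commute.

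The main obstacle is really only careful bookkeeping: ensuring that the specialization, base-change, and $l$-completion comparison isomorphisms are mutually compatible with the field-embedding conventions imposed by \hyperlink{field-embeddings}{(*)}, and that an $R$-morphism $X'\to X$ truly produces the same point in $\mathbf{S}(Z)^{\wedge}_{N,l}$ whether interpreted via its $k$-fiber or its $\CC$-fiber (which is where Theorem \ref{existence-of-formal-manifold-structure}.(3) is essential). The substantive content, namely that the Galois actions abelianize through Adams operations, is already absorbed into Theorem \ref{Galois-action-on-l-adic-normal-structure-set}, so once the diagrams are in place the corollary follows formally.
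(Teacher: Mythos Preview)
Your proposal is correct and follows essentially the same approach as the paper: the paper simply remarks that the chain \hyperlink{homomorphism-of-Galois-III}{(***)} connects $\omega_k$ and $\omega_\CC$ from \hyperlink{homomorphism-of-Galois}{(**)}, whence the corollary follows directly from Theorem \ref{Galois-action-on-l-adic-normal-structure-set}. Your write-up is in fact more careful than the paper's one-line justification, in that you explicitly invoke Theorem \ref{existence-of-formal-manifold-structure}(3) to identify the two induced $l$-adic formal manifold structures on $Z$ and trace the Frobenius through the cyclotomic identification to verify commutativity.
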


\bibliographystyle{amsalpha}
\bibliography{ref}

\Addresses

\end{document}